\numberwithin{equation}{section}
\theoremstyle{plain}
\newtheorem{maintheorem}{Theorem}
\newtheorem{theorem}{Theorem}[section]
\newtheorem{proposition}[theorem]{Proposition}
\newtheorem{prop}[theorem]{Proposition}
\newtheorem*{prop*}{Proposition}
\newtheorem{lemma}[theorem]{Lemma}
\theoremstyle{remark}
\newtheorem{remark}[theorem]{Remark}
\newtheorem{example}[theorem]{Example}
\theoremstyle{definition}
\newcommand{\HH}{\mathcal{H}}
\newcommand{\MM}{\mathcal{M}}
\newcommand{\PP}{\mathcal{P}}
\newcommand{\LL}{\mathcal{L}}
\newcommand{\R}{\mathbb{R}}
\newcommand{\Z}{\mathbb{Z}}
\newcommand{\N}{\mathbb{N}}
\newcommand{\iii}{\mathtt{i}}
\newcommand{\eps}{\varepsilon}
\DeclareMathOperator{\diag}{diag}
\DeclareMathOperator{\inter}{int}
\DeclareMathOperator{\dimh}{dim_H}
\DeclareMathOperator{\GL}{{\it GL_d}(\R)}
\newcommand{\T}{\mathbb{T}}
\newcommand{\B}{\mathcal{B}}
\newcommand{\var}{\text{var}}
\newcommand{\D}{\mathcal{D}}
\newcommand{\A}{\mathcal{A}}
\newcommand{\M}{\mathcal{M}}
\newcommand{\F}{\mathcal{F}}
\newcommand{\supp}{\mbox{supp}}
\begin{document}

\title[Multifractal analysis of typical self-affine sets]{Multifractal analysis of Birkhoff averages for typical infinitely generated self-affine sets}

\author{Antti K\"{a}enm\"{a}ki}
\address{Antti K\"{a}enm\"{a}ki \\
Department of Mathematics and Statistics \\
P.O.\ Box 35 \\
FI-40014 University of Jyv\"{a}skyl\"{a} \\
Finland}
\email{antti.kaenmaki@jyu.fi}

\author{Henry WJ Reeve}
\address{Henry WJ Reeve \\
School of Mathematics \\
The University of Bristol \\
University Walk \\
Clifton \\
Bristol \\
BS8 1TW \\
UK}
\email{henrywjreeve@googlemail.com}

\thanks{HWJ Reeve would like to thank his supervisor Dr. Thomas Jordan for all of his help and guidance.}
\subjclass[2010]{Primary 28A80; Secondary 37D35}
\keywords{Multifractal analysis, self-affine set, infinite iterated function system, thermodynamic formalism}
\date{\today}

\begin{abstract}
We develop a thermodynamic formalism for quasi-multiplicative potentials on a countable symbolic space and apply these results to the dimension theory of infinitely generated self-affine sets. The first application is a generalisation of Falconer's dimension formula to include typical infinitely generated self-affine sets and show the existence of an ergodic invariant measure of full dimension whenever the pressure function has a root. Considering the multifractal analysis of Birkhoff averages of general potentials $\Phi$ taking values in $\R^{\N}$, we give a formula for the Hausdorff dimension of $J_\Phi(\alpha)$, the $\alpha$-level set of the Birkhoff average, on a typical infinitely generated self-affine set. We also show that for bounded potentials $\Phi$, the Hausdorff dimension of $J_\Phi(\alpha)$ is given by the maximum of the critical value for the pressure and the supremum of Lyapunov dimensions of invariant measures $\mu$ for which $\int\Phi\,d\mu=\alpha$. Our multifractal results are new in both the finitely generated and the infinitely generated setting.
\end{abstract}

\maketitle
\tableofcontents

\section{Introduction}

Let $F$ be the repeller of a piecewise smooth map $f \colon X \rightarrow X$. Given a continuous potential $\varphi \colon F \rightarrow \R^N$ and $\alpha \in \R^N$, we are interested in the set of points in the repeller for which the Birkhoff average converges to $\alpha$,
\begin{equation} \label{eq:central_birkhoff}
J_{\varphi}(\alpha) = \{ x \in F : \lim_{n \rightarrow \infty} \frac{1}{n} \sum_{i=0}^{n-1} \varphi(f^i(x))=\alpha \}.
\end{equation}
The central question in the multifractal analysis of Birkhoff averages is to determine the Hausdorff dimension of  the level sets $J_{\varphi}(\alpha)$. For conformal expanding maps on compact repellers the Hausdorff dimension is given by a well known conditional variational principle; see e.g.\ Pesin and Weiss \cite{PesinWeiss2001}, Fan, Feng and Wu \cite{FanFengWu2001}, Barreira and Saussol \cite{BarreiraSaussol2001}, Feng, Lau and Wu \cite{FengLauWu2002} and Olsen \cite{Olsen2003, Olsen2008}.

Situations in which either the map $f$ is non-conformal or the repeller $F$ is non-compact are far less well understood. Thus far most work on non-conformal systems has focused on maps which are obtained as skew products of conformal systems; see e.g.\ Barral and Mensi \cite{BarralMensi2008}, Barral and Feng \cite{BarralFeng2009} and Reeve \cite{Reeve2011, Reeve2012}. Jordan and Simon \cite{JordanSimon2007} have given a conditional variational principle for typical members of parameterizable families of self-affine iterated function systems with a simultaneously diagonalizable linear part.

Recently there has also been a great deal of work dealing with cases in which the repeller $F$ is a non-compact limit set of a countable collection of contractions; see e.g.\ \cite{FanLiaoMaWang2010, FanJordanLiaoRams2011, FanLiaoWangWu2009, FanLiaoMa2010, IommiJordan2010, JaerischKessebohmer2011, KessebohmerMundayStratmann2010, KessebohmerStratmann2007, Reeve2012}. All but one of these results have concerned situations in which the map $f$ is conformal. The only exception being \cite{Reeve2012} which deals with a family of skew products including the direct product of the Gauss map and the doubling map.

There are two facts concerning the the space of invariant measures for a continuous map of a compact metric space which make the dimension theory of compact systems a great deal easier to handle. The first is that if the space itself is compact, then the space of invariant probability measures is also compact. Thus given a sequence of invariant measures one can always extract a convergent subsequence. The second fact is that for compact systems entropy is an upper-semicontinuous function on the space of invariant measures, so given a sequence of invariant measures one may extract a weak star limit point with entropy equal to the limit superior of the entropies of the measures in the sequence. Since in our setting the underlying space $\N^\N$ is non-compact the main challenge comes from the lack of these two facts.

The article is organized as follows. In \S \ref{sec:preli}, we exhibit and motivate the results, and in \S \ref{sec:gibbs}--\ref{sec:conditional}, we provide the reader with all the necessary details.

\section{Preliminaries and statement of results} \label{sec:preli}

\subsection{Thermodynamic formalism for sub-multiplicative potentials} \label{sec:preli_thermo}

Define $\Sigma = \N^{\N}$ to be the set of all infinite words constructed from the integers. Let $\Sigma_n = \N^n$ for all $n \in \N$ and $\Sigma_* = \bigcup_{n \in \N} \Sigma_n$ be the collection of all finite word. If $\omega \in \Sigma_*$ and $\tau \in \Sigma_* \cup \Sigma$, then $\omega \tau$ denotes the concatenation of $\omega$ and $\tau$. Furthermore, if $\omega \in \Sigma_* \cup \Sigma$ and $n \in \N$, then $\omega|_n$ is the unique word in $\Sigma_n$ for which there is $\tau \in \Sigma$ so that $\omega|_n \tau = \omega$. If $\omega,\tau \in \Sigma_* \cup \Sigma$, then by $\omega \wedge \tau$ we mean the common beginning of $\omega$ and $\tau$. Given $n \in \N$ and $\omega \in \Sigma_n$ we set $|\omega|=n$ and define the cylinder set given by $\omega$ to be $[\omega] = \left\lbrace \omega\tau : \tau \in \Sigma \right\rbrace$. We denote the left shift operator by $\sigma$ and let $\M_{\sigma}(\Sigma)$ be the set of all $\sigma$-invariant Borel probability measures on $\Sigma$.

We equip $\Sigma$ with the discrete topology and call it a \emph{shift space}. If the shift space is constructed by using a finite alphabet, i.e.\ $\Sigma = I^\N$ for some finite set $I \subset \N$, then we say that the shift space is \emph{finitely generated}. The shift space is compact if and only if it is finitely generated.
Moreover, the cylinder sets are open and closed and they generate the Borel $\sigma$-algebra.

We shall consider maps $\varphi \colon \Sigma_* \rightarrow (0,\infty)$. We refer to such maps as \emph{potentials}. We say that a potential $\varphi$ is \emph{sub-multiplicative} if
\begin{equation*}
  \varphi(\omega \tau) \leq \varphi(\omega)\varphi(\tau).
\end{equation*}
for all $\omega, \tau \in \Sigma_*$. A sub-multiplicative $\varphi$ potential is said to be \emph{quasi-multiplicative} if there exist a constant $c \ge 1$ and a finite subset $\Gamma \subset \Sigma_*$ such that for any given pair $\omega, \tau \in \Sigma_*$ there exists $\kappa \in \Gamma$ with
\begin{equation} \label{eq:quasi_multi}
  \varphi(\omega) \varphi(\tau) \leq c \varphi(\omega \kappa \tau).
\end{equation}
We also define $K = \max\{ |\omega| : \omega \in \Gamma \} + 1$. A sub-multiplicative $\varphi$ potential is said to be \emph{almost-multiplicative} if there exists a constant $c>0$ such that
\begin{equation*}
  \varphi(\omega) \varphi(\tau) \leq c \varphi(\omega \tau).
\end{equation*}
for all $\omega, \tau \in \Sigma_*$. We note that quasi-multiplicativity is significantly less restrictive than the conditon of almost-multiplicativity which also appears in the literature; see e.g.\ Iommi and Yayama \cite{IommiYayama2012}.

If $\varphi$ is a sub-multiplicative potential, then we define the \emph{pressure} $P(\varphi)$ by setting
\begin{equation*}
  P(\varphi) = \lim_{n \rightarrow \infty} \tfrac{1}{n} \log Z_n(\varphi) = \inf_{n \in \N} \tfrac{1}{n} \log Z_n(\varphi),
\end{equation*}
where $Z_n(\varphi) = \sum_{\omega \in \Sigma_n}\varphi(\omega)$ for all $n \in \N$. Note that by the sub-multiplicativity, the pressure is well-defined, although it may not be finite. It is immediate that $P(\varphi)=\infty$ if and only if $Z_n(\varphi)=\infty$ for all $n \in \N$. Thus, if the shift space is finitely generated, then $P(\varphi)<\infty$. Observe that even if the shift space is finitely generated, the pressure can be negative infinity. Let $\psi \colon \Sigma_* \to (0,\infty)$ be a sub-multiplicative potential so that $P(\psi) < \infty$ and $Z_{n+m}(\psi) \ge cZ_n(\psi)Z_m(\psi)$ for some constant $c>0$. If the shift space is finitely generated, then the potential $\psi \equiv 1$ satisfies these assumptions. Now defining $\varphi \colon \Sigma_* \to (0,\infty)$ by setting $\varphi(\omega) = (cZ_n(\psi)n!)^{-1} \psi(\omega)$ for all $\omega \in \Sigma_*$, it is easy to see that $\varphi$ is sub-multiplicative with $P(\varphi) = -\lim_{n \to \infty} \tfrac{1}{n} \log n! = -\infty$.

We let $\MM_\sigma(\Sigma)$ denote the set of all $\sigma$-invariant Borel probability measures on $\Sigma$. Given $\mu \in \M_{\sigma}(\Sigma)$ along with a sub-multiplicative potential $\varphi$, we define the \emph{measure-theoretical pressure} $P_\mu(\varphi)$ by setting
\begin{equation} \label{eq:def_measure_pressure}
  P_\mu(\varphi) = \inf_{n \in \N} \tfrac{1}{n} \sum_{\omega \in \Sigma_n} \mu([\omega]) \log \frac{\varphi(\omega)}{\mu([\omega])}.
\end{equation}
We adopt the usual convention according to which $0\log(x/0) = 0\log 0 = 0$ for all $x > 0$.

\begin{lemma} \label{measure_theoretical_pressure}
  If $\varphi$ is a sub-multiplicative potential and $\mu \in \MM_\sigma(\Sigma)$, then
  \begin{equation*}
    P_\mu(\varphi) = \lim_{n \to \infty} \tfrac{1}{n} \sum_{\omega \in \Sigma_n} \mu([\omega]) \log \frac{\varphi(\omega)}{\mu([\omega])}.
  \end{equation*}
\end{lemma}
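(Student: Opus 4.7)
The plan is to show that the sequence
\[
  a_n := \sum_{\omega \in \Sigma_n} \mu([\omega]) \log \frac{\varphi(\omega)}{\mu([\omega])}
\]
is subadditive, i.e.\ $a_{n+m} \le a_n + a_m$ for all $n,m \in \N$, and then invoke Fekete's lemma to conclude that $\lim_{n\to\infty} a_n/n = \inf_{n\in\N} a_n/n$, which is precisely the claim (the limit and infimum are allowed to be $-\infty$, and the convention $0\log(x/0) = 0$ ensures every term is well defined; we split $a_n$ into the nonnegative entropy part and the $\varphi$-part to avoid $\infty - \infty$ ambiguities).

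To verify subadditivity, I would write each $\omega \in \Sigma_{n+m}$ uniquely as $\omega = \omega'\omega''$ with $\omega' \in \Sigma_n$ and $\omega'' \in \Sigma_m$, and split the summand in $a_{n+m}$ as
\[
  \mu([\omega'\omega''])\log\varphi(\omega'\omega'') - \mu([\omega'\omega''])\log\mu([\omega'\omega'']).
\]
Sub-multiplicativity of $\varphi$ gives $\log\varphi(\omega'\omega'') \le \log\varphi(\omega') + \log\varphi(\omega'')$. Summing over $\omega''$ first and using $\sum_{\omega'' \in \Sigma_m} \mu([\omega'\omega'']) = \mu([\omega'])$ yields $\sum_{\omega',\omega''} \mu([\omega'\omega''])\log\varphi(\omega') = \sum_{\omega'} \mu([\omega'])\log\varphi(\omega')$, and summing over $\omega'$ first together with $\sigma$-invariance of $\mu$ (so that $\sum_{\omega'} \mu([\omega'\omega'']) = \mu(\sigma^{-n}[\omega'']) = \mu([\omega''])$) yields $\sum_{\omega',\omega''} \mu([\omega'\omega''])\log\varphi(\omega'') = \sum_{\omega''} \mu([\omega''])\log\varphi(\omega'')$.

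For the entropy part, let $\xi_k$ denote the partition of $\Sigma$ by cylinders of length $k$. Then $\xi_{n+m} = \xi_n \vee \sigma^{-n}\xi_m$, so the standard inequality for partition entropy together with $\sigma$-invariance gives
\[
  -\sum_{\omega \in \Sigma_{n+m}} \mu([\omega])\log\mu([\omega]) = H_\mu(\xi_{n+m}) \le H_\mu(\xi_n) + H_\mu(\sigma^{-n}\xi_m) = H_\mu(\xi_n) + H_\mu(\xi_m).
\]
Combining these two bounds gives $a_{n+m} \le a_n + a_m$, as desired.

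The main subtlety is not the algebra, which is routine, but rather making sure all sums are interpreted correctly in the non-compact setting where $\Sigma = \N^\N$: cylinder counts and $\varphi$-values can be unbounded, so the $\varphi$-contribution $\sum_\omega \mu([\omega])\log\varphi(\omega)$ may be $-\infty$, while the entropy part is always in $[0,\infty]$. One must therefore justify the rearrangements of sums above (which is fine since the entropy terms are nonnegative and the $\log\varphi$ terms have a common sign pattern under truncation, allowing monotone/dominated convergence), and also allow $a_n \in [-\infty,\infty]$ in Fekete's lemma; the standard proof of Fekete extends without change provided some $a_{n_0} < \infty$, and the trivial case in which all $a_n = \infty$ gives both sides equal to $\infty$.
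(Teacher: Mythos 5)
Your proposal is correct and follows essentially the same route as the paper, whose (one-line) proof likewise reduces the claim to Fekete's lemma via subadditivity of $n \mapsto \sum_{\omega \in \Sigma_n}\mu([\omega])\log\bigl(\varphi(\omega)/\mu([\omega])\bigr)$, obtained from the sub-multiplicativity of $\varphi$, the invariance of $\mu$, and the concavity of $H(x)=-x\log x$ (which is exactly what underlies the partition-entropy inequality $H_\mu(\xi_{n+m}) \le H_\mu(\xi_n)+H_\mu(\xi_m)$ you invoke). Your explicit attention to the rearrangement of infinite sums and to the extended-real-valued version of Fekete's lemma supplies details the paper omits.
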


\begin{proof}
  The proof follows from the standard theory of sub-additive sequences
  by the sub-multi\-plicativity of $\varphi$, the concavity of the function $H(x)=-x\log x$, and the invariance of $\mu$.
\end{proof}

Furthermore, we define the \emph{Lyapunov exponent} for $\varphi$ and the \emph{entropy} of $\mu$ by setting
\begin{equation} \label{eq:def_entropy_lyapunov}
\begin{split}
  \Lambda_\mu(\varphi) &= \lim_{n \rightarrow \infty} \tfrac{1}{n}\sum_{\omega \in \Sigma_n} \mu([\omega]) \log \varphi(\omega) = \inf_{n \in \N} \tfrac{1}{n}\sum_{\omega \in \Sigma_n} \mu([\omega]) \log \varphi(\omega) \le \log\|\varphi\|, \\
  h_\mu &= \lim_{n \rightarrow \infty} \tfrac{1}{n} \sum_{\omega \in \Sigma_n} - \mu([\omega]) \log \mu([\omega]) = \inf_{n \in \N} \tfrac{1}{n} \sum_{\omega \in \Sigma_n} - \mu([\omega]) \log \mu([\omega]) \ge 0,
\end{split}
\end{equation}
respectively. Similarly as in the proof of Lemma \ref{measure_theoretical_pressure}, we see that the Lyapunov exponent and the entropy are well-defined by the sub-multiplicativity of $\varphi$ and the invariance of $\mu$.

\begin{lemma}\label{variational principal upper bound}
If $\varphi$ is a sub-multiplicative potential, then
\begin{equation*}
  P(\varphi) \ge P_\mu(\varphi)
\end{equation*}
for all $\mu \in \M_{\sigma}(\Sigma)$. Furthermore, if $h_\mu < \infty$ or $\Lambda_\mu(\varphi)$ is finite, then $P_\mu(\varphi) = h_\mu + \Lambda_\mu(\varphi)$.
\end{lemma}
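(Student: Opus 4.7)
My approach splits into proving the inequality and the identity separately.

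The inequality $P(\varphi) \ge P_\mu(\varphi)$ is an application of Gibbs' inequality, that is, Jensen's inequality for $\log$. Fix $n \in \N$; if $Z_n(\varphi) = \infty$ for every $n$ then $P(\varphi) = \infty$ and the bound is trivial, so I may assume $Z_n(\varphi) < \infty$. Then $\{\varphi(\omega)/Z_n(\varphi)\}_{\omega \in \Sigma_n}$ is a probability vector on $\Sigma_n$. Applying Jensen's inequality to the concave function $\log$ with weights $\mu([\omega])$ (restricting to $\{\omega : \mu([\omega]) > 0\}$, on which the weights sum to $1$) yields
\begin{equation*}
\sum_{\omega \in \Sigma_n} \mu([\omega]) \log\frac{\varphi(\omega)}{\mu([\omega])Z_n(\varphi)} \le \log \sum_{\omega \in \Sigma_n}\frac{\varphi(\omega)}{Z_n(\varphi)} = 0,
\end{equation*}
where I invoke the convention $0\log(x/0) = 0$ to absorb the zero-weight terms. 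Rearranging and dividing by $n$ gives
\begin{equation*}
\tfrac{1}{n}\sum_{\omega \in \Sigma_n}\mu([\omega])\log\frac{\varphi(\omega)}{\mu([\omega])} \le \tfrac{1}{n}\log Z_n(\varphi),
\end{equation*}
and taking the infimum in $n$ produces $P_\mu(\varphi) \le P(\varphi)$ directly from the definitions.

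For the identity $P_\mu(\varphi) = h_\mu + \Lambda_\mu(\varphi)$, the starting point is the termwise decomposition
\begin{equation*}
\mu([\omega])\log\frac{\varphi(\omega)}{\mu([\omega])} = \mu([\omega])\log\varphi(\omega) + \bigl(-\mu([\omega])\log\mu([\omega])\bigr),
\end{equation*}
which holds for every $\omega$ under the stated convention. Summing over $\omega \in \Sigma_n$, dividing by $n$, and sending $n \to \infty$, Lemma~\ref{measure_theoretical_pressure} identifies the left-hand limit as $P_\mu(\varphi)$, while \eqref{eq:def_entropy_lyapunov} identifies the two right-hand limits as $\Lambda_\mu(\varphi)$ and $h_\mu$ respectively.

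The only delicate step is the additive splitting of this limit in the extended real line: one has $\lim(a_n+b_n) = \lim a_n + \lim b_n$ provided the right-hand side is not of the indeterminate form $-\infty + \infty$. Since $h_\mu \ge 0$ by \eqref{eq:def_entropy_lyapunov}, the splitting can fail only if $\Lambda_\mu(\varphi) = -\infty$ and $h_\mu = +\infty$ hold simultaneously, and the hypothesis ``$h_\mu < \infty$ or $\Lambda_\mu(\varphi)$ finite'' is precisely what rules this out. I expect this extended-real bookkeeping to be the only subtle point: the Gibbs inequality is classical, and the convergence of each of the three sequences is already provided by (the proof of) Lemma~\ref{measure_theoretical_pressure} via the standard theory of sub-additive sequences.
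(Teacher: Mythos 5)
Your proof is correct and follows essentially the same route as the paper's: the first inequality is the same Jensen/Gibbs argument (the paper phrases it via concavity of $H(x)=-x\log x$ applied as in \eqref{eq:jensen_eq_calc}, you via concavity of $\log$, which is the identical estimate), and the second claim is the same termwise decomposition, with your explicit ``no $\infty-\infty$'' observation replacing the paper's slightly more roundabout case analysis ($h_\mu=\infty$ forces $P_\mu(\varphi)=\infty$ when $\Lambda_\mu(\varphi)$ is finite, etc.). No gaps beyond the series-rearrangement conventions that the paper itself also leaves implicit.
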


\begin{proof}
  To show the first claim, we may assume that $P_\mu(\varphi)>-\infty$ and $P(\varphi)<\infty$. Thus $\sum_{\omega \in \Sigma_n} \mu([\omega]) \log\varphi(\omega)/\mu([\omega]) > -\infty$ for all $n \in \N$ and there is $n_0 \in \N$ so that $Z_n(\varphi) < \infty$ for all $n \ge n_0$. For each $n \ge n_0$ and $C_n \subset \Sigma_n$ we use the concavity of the function $H(x)=-x\log x$ to obtain
  \begin{equation} \label{eq:jensen_eq_calc}
  \begin{split}
    \sum_{\omega \in C_n} \mu([\omega]) \biggl( \log \frac{\varphi(\omega)}{\mu([\omega])} - \log \sum_{\omega \in C_n} \varphi(\omega) \biggr)
    &= \sum_{\omega \in C_n} \beta(\omega) H\bigl( \mu([\omega])/\beta(\omega) \bigr) \\ &\le H\biggl( \sum_{\omega \in C_n} \beta(\omega) \mu([\omega])/\beta(\omega) \biggr) \in [0,\tfrac{1}{e}],
  \end{split}
  \end{equation}
  where $\beta(\omega) = \varphi(\omega)/\sum_{\omega \in C_n}\varphi(\omega)$. Dividing by $n$ before letting $n \rightarrow \infty$ proves the first claim.

  To show the second claim, we first assume that $\Lambda_\mu(\varphi)$ is finite. Notice first that if $h_\mu < \infty$, then also $P_\mu(\varphi) = h_\mu + \Lambda_\mu(\varphi)$ is finite. On the other hand, if $P_\mu(\varphi) < \infty$, then there is $n_0 \in \N$ so that
  \begin{equation*}
    -\infty < \Lambda_\mu(\varphi) \le \tfrac{1}{n}\sum_{\omega \in \Sigma_n} \mu([\omega]) \log\varphi(\omega) \text{ and } \tfrac{1}{n}\sum_{\omega \in \Sigma_n} \mu([\omega]) \log\frac{\varphi(\omega)}{\mu([\omega])} \le P_\mu(\varphi) + 1 < \infty
  \end{equation*}
  for all $n \ge n_0$. Thus
  \begin{equation*}
    \tfrac{1}{n}\sum_{\omega \in \Sigma_n} -\mu([\omega]) \log\mu([\omega]) \le P_\mu(\varphi) -\Lambda_\mu(\varphi) + 1
  \end{equation*}
  for all $n \ge n_0$ and $h_\mu < \infty$. Therefore, if $h_\mu = \infty$, then $P_\mu(\varphi) = \infty$ and the desired equality holds.

  Finally, we notice that the proof of the second claim in the case $h_\mu < \infty$ is similar.
\end{proof}

Our first main result is the following variational principle. The proof of the result can be found in the end of \S \ref{sec:variational}.

\begin{maintheorem} \label{thm:main_gibbs}
  If $\varphi$ is a quasi-multiplicative potential, then
  \begin{equation*}
    P(\varphi) = \sup\{ P_\mu(\varphi) : \mu \in \MM_\sigma(\Sigma) \}.
  \end{equation*}
  Moreover, if $P(\varphi) < \infty$, then there exists a unique invariant measure $\mu$ for which $P(\varphi) = P_\mu(\varphi)$.
\end{maintheorem}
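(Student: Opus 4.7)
The inequality $P(\varphi)\ge\sup_\mu P_\mu(\varphi)$ is already Lemma \ref{variational principal upper bound}, so the work lies in the reverse inequality and in producing the equilibrium measure. My strategy is to reduce to finitely generated sub-shifts, where compactness makes standard thermodynamic formalism available, and to use quasi-multiplicativity to transfer the conclusions back to $\Sigma$.

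First I would prove a sub-alphabet approximation: for every $\eps>0$ there is a finite $I\subset\N$ such that the pressure $P^I(\varphi)$ of the sub-shift $\Sigma(I)=I^{\N}$ satisfies $P^I(\varphi)\ge P(\varphi)-\eps$ (with the right-hand side replaced by an arbitrarily large constant if $P(\varphi)=\infty$). One chooses $n$ with $\tfrac{1}{n}\log Z_n(\varphi)$ close to $P(\varphi)$, a finite $I_0\subset\N$ carrying almost all of $Z_n(\varphi)$, and enlarges $I_0$ to a set $I$ containing the letters of every $\kappa\in\Gamma$. Iterating \eqref{eq:quasi_multi} to glue $k$ words from $I_0^n$ by $k-1$ connectors, and grouping by total connector length (each bounded by $K-1$), one arrives at an inequality of the form
\begin{equation*}
\Bigl(\sum_{\omega\in I_0^n}\varphi(\omega)\Bigr)^k\le(c\,\#\Gamma)^{k-1}\sum_{j=0}^{(k-1)(K-1)}Z_{kn+j}^I(\varphi),
\end{equation*}
and taking logarithms, dividing by $kn$, and letting $k\to\infty$ yields the claim.

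Next, since the restriction of $\varphi$ to the compact sub-shift $\Sigma(I)$ remains quasi-multiplicative with the same constants, one may apply the compact-case variational principle for quasi-multiplicative potentials (proved by the classical Cesaro-averaging construction of Gibbs measures) to produce a unique invariant equilibrium measure $\mu_I$ with $P_{\mu_I}(\varphi)=P^I(\varphi)$ that enjoys the two-sided Gibbs bound $C^{-1}\le\mu_I([\omega])\varphi(\omega)^{-1}e^{|\omega|P^I(\varphi)}\le C$ for every $\omega\in I^*$, with a constant $C$ depending only on $c$ and $K$, crucially \emph{not} on $I$. Combined with the previous step this already establishes $P(\varphi)=\sup_\mu P_\mu(\varphi)$.

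For existence of an equilibrium measure on $\Sigma$ when $P(\varphi)<\infty$, take $I_k\nearrow\N$ and use the uniform Gibbs upper bound to pass to a subsequential limit: since the cylinders of $\Sigma$ form a countable family, a diagonal argument produces $\mu([\omega])=\lim_k\mu_{I_k}([\omega])$ for every cylinder. Finite additivity is immediate, $\sigma$-additivity follows from a standard tightness argument based on the uniform Gibbs upper bound together with summability of $\varphi$ at the finite levels at which $Z_n(\varphi)<\infty$, and Carath\'eodory extension produces the required Borel probability $\mu$ on $\Sigma$. Shift-invariance and $P_\mu(\varphi)=P(\varphi)$ are then read off from the limiting Gibbs bound and Lemma \ref{measure_theoretical_pressure}, while uniqueness is immediate because any equilibrium measure must itself obey the two-sided Gibbs bound, forcing two such measures to have uniformly comparable cylinder values and to coincide by ergodicity. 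The principal obstacle, which dictates the whole strategy, is that $\MM_\sigma(\Sigma)$ is not compact and entropy is not upper semicontinuous, so the finite-alphabet measures $\mu_{I_k}$ may leak mass to infinity in a naive weak-$*$ limit; the uniform Gibbs bound supplied by quasi-multiplicativity is precisely what controls this escape and identifies the limit as a genuine equilibrium measure on the full shift.
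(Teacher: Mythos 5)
Your overall strategy --- finite sub-alphabet approximation of the pressure via quasi-multiplicative gluing, Gibbs measures on the compact sub-shifts with a constant uniform in $I$, and a tightness/diagonal argument to pass to the full shift --- is essentially the route the paper takes (Proposition \ref{finite approximation property}, Proposition \ref{finite Gibbs}, Theorem \ref{thm:gibbs_exists}), and those parts of your argument are sound. (Minor quibble: the uniform Gibbs constant depends on $c$, $K$ \emph{and} $P(\varphi)$, not only on $c$ and $K$; what matters, and what you correctly emphasise, is independence of $I$.)

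The genuine gap is in the uniqueness step. You assert that ``any equilibrium measure must itself obey the two-sided Gibbs bound'' and conclude uniqueness from there. That assertion is exactly what needs to be proved, and it is not automatic: for general sub-multiplicative potentials an equilibrium state can exist while no Gibbs measure exists at all (see \cite[Example 6.4]{KaenmakiVilppolainen2010}), so ``equilibrium $\Rightarrow$ Gibbs'' cannot be taken for granted; for quasi-multiplicative potentials it is true, but only as a corollary of the uniqueness theorem you are in the middle of proving, so the argument as stated is circular. The paper closes this gap with a Bowen-type absolute continuity lemma (Lemma \ref{thm:eq_abs_cont}): if $\nu \in \MM_\sigma(\Sigma)$ satisfies $P_\nu(\varphi) \ge P(\varphi)$ and $B$ is Borel with $\mu(B)=0$ but $\nu(B)>0$, one approximates $B$ by unions $B_n$ of $n$-cylinders, applies Jensen's inequality separately to the cylinders inside and outside $B_n$, and uses the Gibbs upper bound for $\mu$ to convert $\log\sum_{\omega\in B_n'}\varphi(\omega)$ into $\log\mu(B_n)+nP(\varphi)+O(1)$; since $\mu(B_n)\to 0$ while $\nu(B_n)\to\nu(B)>0$, the resulting inequality fails for large $n$. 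Hence $\nu\ll\mu$, and since $\mu$ is ergodic and $\nu$ is invariant, $\nu=\mu$. Note also that the ergodicity you invoke must itself be established from the quasi-mixing estimate derived from \eqref{aggregation property} (Theorem \ref{Gibbs implies ergodicity}); it is not part of the finite-alphabet package you import. You need these two ingredients, or equivalent substitutes, before the uniqueness claim stands.
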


If the shift space is finitely generated, then we always have $P_\mu(\varphi) = h_\mu + \Lambda_\mu(\varphi)$. Moreover, the variational principle holds for all sub-multiplicative potentials; see K\"aenm\"aki \cite[Theorem 2.6]{Kaenmaki2004} and Cao, Feng, and Huang \cite[Theorem 1.1]{CaoFengHuang2008}. Quasi-multiplicativity has been a crucial property in the study of Lyapunov exponents for products of matrices; see e.g.\ Feng and Lau \cite{FengLau2002}, Feng \cite{Feng2009}, and Feng and K\"aenm\"aki \cite{FengKaenmaki2011}. It has also been used in connection with finitely generated self-affine sets; see Feng \cite{Feng2011} and Falconer and Sloan \cite{FalconerSloan2009}. Finally, we remark that in the infinitely generated setting, Iommi and Yayama \cite[Theorem 3.1]{IommiYayama2012} have recently verified the variational principle for almost-multiplicative potentials.

\subsection{Infinitely generated self-affine sets} \label{sec:preli_s-a}

Let $(T_i)_{i \in \N} \in \GL^\N$ be such that $\sup_{i \in \N} \| T_i \| < 1$. Define $\mathbf{A} = ([0,1]^d)^\N$ and note that by the Kolmogorov extension theorem $\mathbf{A}$ supports a natural probability measure $\LL_\mathbf{A} = (\LL^d|_{[0,1]^d})^\N$.
To each sequence $\mathbf{a} = (a_i)_{i \in \N} \in \mathbf{A}$ we associate a \emph{projection}
$\pi_{\mathbf{a}} \colon \Sigma \rightarrow \R^d$ defined by
\[
  \pi_{\mathbf{a}}(\omega) = \sum_{j=1}^\infty T_{\omega|_{j-1}} a_j.
\]
Here $T_\omega = T_{\omega_1} \cdots T_{\omega_n}$ for all $\omega = \omega_1 \cdots \omega_n \in \Sigma_n$ and $n \in \N$.
The set $F=F_{\mathbf{a}}=\pi_{\mathbf{a}}\left(\Sigma\right)$ is termed \emph{self-affine}.

The dimension theory of self-affine sets of this form was first investigated in the finitely generated setting by Falconer \cite{Falconer1988}. A central tool in Falconer's analysis was the singular value function $\varphi^s$. Given a matrix $T \in \GL$ we let $1 > \gamma_1(T) \geq \cdots \geq \gamma_d(T) > 0$ denote the singular values of $T$ (the square roots of the eigenvalues of $T^*T$), in non-increasing order of magnitude. Thus $\gamma_1(T) = \| T \|$ and $\gamma_d(T) = \| T^{-1} \|^{-1}$. If $0 \le s = m+\delta \le d$ with $m \in \Z$ and $0 < \delta \le 1$, then we define the \emph{singular value function} to be
\begin{equation*}
  \varphi^s(T) = \gamma_1(T) \cdots \gamma_m(T) \gamma_{m+1}(T)^\delta.
\end{equation*}
When $s \ge d$, we set $\varphi^s(t) = |\det(T)|^{s/d}$ for completeness. Given $(T_i)_{i \in \N} \in \GL^\N$ the singular value function introduces a potential by setting
\begin{equation} \label{eq:singular_value_function}
  \varphi^s(\omega) = \varphi^s(T_\omega)
\end{equation}
for all $\omega \in \Sigma_*$. Note that $\|\varphi^s\|\le 1$ for all $0\le s\le d$. Singular values $\gamma_i$ introduce potentials in a similar way. For example, if $s \ge 0$, then $\gamma_1^s$ is the sub-multiplicative potential $\omega \mapsto \| T_\omega \|^s$.

Falconer \cite[Lemma 2.1]{Falconer1988} showed that the singular value function is $\varphi^s$ is sub-multiplicative. It follows that the corresponding sub-multiplicative pressure $P(\varphi^s)$ is well-defined. Following the proof of \cite[Lemma 2.1]{KaenmakiVilppolainen2010}, we see that the function $s \mapsto P(\varphi^s)$ is strictly decreasing and thus finite on an interval $I$ of $[0,\infty)$. Furthermore, it is convex on connected components of $I \setminus \{ 1,\ldots,d \}$. Note that also the functions $s \mapsto P_\mu(\varphi^s)$ and $s \mapsto \Lambda_\mu(\varphi^s)$ are strictly decreasing and continuous for all $\mu \in \MM_\sigma(\Sigma)$.

Falconer \cite[Theorem 5.3]{Falconer1988} proved that given finitely many affine contractions with contraction ratios at most $\tfrac13$ the Hausdorff dimension of the corresponding self-affine set $F_{\mathbf{a}}$ is given by the unique zero of $s \mapsto P(\varphi^s)$ for almost every translation vector $\mathbf{a}$. Later Solomyak extended Falconer's proof to self-affine sets with the contraction ratios  up to $\tfrac12$; see \cite[Proposition 3.1(i)]{Solomyak1998}. See K\"aenm\"aki \cite[Theorem 4.5]{Kaenmaki2004} and Jordan, Pollicott and Simon \cite[Theorem 1.7]{JordanPollicottSimon2007} for corresponding results for measures. In order to extend Falconer's result to infinitely generated self-affine sets we have to assume that the singular value function is quasi-multiplicative. Let us next analyse the generality of this assumption.

\begin{proposition} \label{thm:sing_quasi}
  The singular value function $\varphi^s$ is quasi-multiplicative for all $0 \le s \le d$ if $(T_i)_{i \in \N} \in \GL^\N$ satisfies one of the following conditions:

  (1) Suppose that $d=2$ and for every line $\ell \in \R^2$ there is $i \in \N$ with $T_i(\ell) \ne \ell$.

  (2) Suppose that $d=2$ and the matrices $T_i$ have strictly positive entries so that the ratio of the smallest and largest entry of $T_i$ is uniformly bounded away from zero for all $i \in \N$.

  (3) Suppose that $d \in \N$ and $T_i = \diag(t^i_1,\ldots,t^i_d)$, where $1 > |t^i_1| > \cdots > |t^i_d| > 0$ for all $i \in \N$.
\end{proposition}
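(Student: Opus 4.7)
The plan is to reduce, in each case, to proving quasi-multiplicativity of the exterior-power norms $\|\Lambda^k T_\omega\|=\gamma_1(T_\omega)\cdots\gamma_k(T_\omega)$ for $k=1,\ldots,d-1$. Writing $s=m+\delta$ with $m\in\{0,\ldots,d-1\}$ and $\delta\in(0,1]$, we have $\varphi^s(T)=\|\Lambda^m T\|^{1-\delta}\|\Lambda^{m+1}T\|^{\delta}$, and $\|\Lambda^d T\|=|\det T|$ is exactly multiplicative. Since a product of a quasi-multiplicative potential with a multiplicative one, each raised to a non-negative power, is again quasi-multiplicative, the task reduces to $\|\Lambda^k T_{\cdot}\|$ for $k<d$. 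In cases (1) and (2), where $d=2$, this means only the operator norm $\|T_\omega\|=\gamma_1(T_\omega)$.

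Case (3) is essentially immediate. Since the $T_i$ are diagonal they commute, and $T_\omega$ is diagonal with entries $\prod_{j=1}^{|\omega|}t_k^{\omega_j}$. The strict inequalities $|t_1^i|>\cdots>|t_d^i|$ transfer to products, so these entries are already in decreasing order of modulus; consequently $\gamma_k(T_\omega)=\prod_j|t_k^{\omega_j}|$ and $\gamma_k(T_\omega T_\tau)=\gamma_k(T_\omega)\gamma_k(T_\tau)$. Thus $\varphi^s$ is \emph{multiplicative} and quasi-multiplicativity is trivial with any singleton $\Gamma=\{i_0\}\subset\Sigma_*$.

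For case (1), write $u_1(T)$ and $v_1(T)$ for the leading left and right singular unit vectors of $T$; then the standard estimates $\|T_\omega v\|\ge \|T_\omega\|\,|\langle v_1(T_\omega),v/\|v\|\rangle|\,\|v\|$ and $\|T_\kappa u\|\ge\gamma_2(T_\kappa)$ for unit $u$, applied with $v=T_\kappa T_\tau v_1(T_\tau)=\|T_\tau\|\,T_\kappa u_1(T_\tau)$, give
\[
  \|T_\omega T_\kappa T_\tau\|\ge \gamma_2(T_\kappa)\,\|T_\omega\|\,\|T_\tau\|\,\bigl|\sin\angle(T_\kappa\ell_1,\ell_2)\bigr|,
\]
where $\ell_1\in\mathbb{RP}^1$ is the line through $u_1(T_\tau)$ and $\ell_2$ is the line orthogonal to $v_1(T_\omega)$. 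It therefore suffices to produce a finite $\Gamma\subset\Sigma_*$ and $\delta>0$ such that, for every pair $(\ell_1,\ell_2)\in\mathbb{RP}^1\times\mathbb{RP}^1$, some $\kappa\in\Gamma$ satisfies $|\sin\angle(T_\kappa\ell_1,\ell_2)|\ge\delta$. If no $\kappa\in\Sigma_*$ were suitable for some pair $(\ell_1,\ell_2)$, then $T_\kappa\ell_1=\ell_2$ for every $\kappa$; comparing words of length one and two gives $T_j\ell_2=T_jT_i\ell_1=\ell_2$ for every $i,j\in\N$, contradicting condition (1). Joint continuity of $(\ell_1,\ell_2)\mapsto|\sin\angle(T_\kappa\ell_1,\ell_2)|$ together with compactness of $\mathbb{RP}^1\times\mathbb{RP}^1$ then yields finitely many $\kappa$'s and a uniform lower bound $\delta>0$.

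For case (2), the Perron--Frobenius theorem applied to the strictly positive symmetric matrices $T_\omega^T T_\omega$ and $T_\tau T_\tau^T$ places $v_1(T_\omega)$ and $u_1(T_\tau)$ in $\R_+^2$. Fix any $i_0\in\N$: the image cone $T_{i_0}(\R_+^2)$ is spanned by the two columns of $T_{i_0}$, each a positive vector making angle at least $\arctan\rho$ with both coordinate axes, where $\rho>0$ is the uniform ratio bound. Hence the angle between the positive vector $T_{i_0}u_1(T_\tau)$ and $v_1(T_\omega)\in\R_+^2$ is at most $\pi/2-\arctan\rho$ regardless of $\omega,\tau$, and the same chain of estimates as in case (1) yields $\|T_\omega T_{i_0}T_\tau\|\ge c\,\|T_\omega\|\,\|T_\tau\|$ with $c>0$ depending only on $\rho$ and $\gamma_2(T_{i_0})$; so $\Gamma=\{i_0\}$ works. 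The main obstacle is case (1): since the matrix family is not compact in the infinitely generated setting, the uniform lower bound on $|\sin\angle|$ cannot be drawn from compactness of the alphabet and must instead be extracted from compactness of $\mathbb{RP}^1\times\mathbb{RP}^1$ via a continuity/cover argument, the pointwise input being supplied precisely by the no-common-invariant-line hypothesis.
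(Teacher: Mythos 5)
Your proof is correct, and at the top level it follows the same strategy as the paper: for $d=2$ write $\varphi^s(T_\omega)=\|T_\omega\|^{2-s}|\det T_\omega|^{s-1}$ for $1\le s\le 2$ (resp.\ $\|T_\omega\|^{s}$ for $0\le s\le 1$), reduce everything to quasi-multiplicativity of the norm potential $\gamma_1$, and dispatch case (3) by exact multiplicativity of the ordered diagonal entries. The difference is that the paper stops there and cites Feng (Proposition 2.8) for case (1) and Iommi--Yayama (Lemma 7.1) for case (2), merely remarking that Feng's proof goes through verbatim for an infinite alphabet, whereas you reconstruct the arguments. Your case (1) argument is essentially Feng's: the bound $\|T_\omega T_\kappa T_\tau\|\ge\gamma_2(T_\kappa)\,\|T_\omega\|\,\|T_\tau\|\,|\sin\angle(T_\kappa\ell_1,\ell_2)|$ combined with a finite cover of the compact space $\mathbb{RP}^1\times\mathbb{RP}^1$, the pointwise non-degeneracy being extracted from the no-common-invariant-line hypothesis via your length-one/length-two comparison; you also correctly flag that in the infinitely generated setting the uniformity must come from compactness of projective space rather than of the alphabet, which is exactly the point behind the paper's ``applies verbatim'' remark. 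Your case (2) argument is genuinely lighter than the cited one: Iommi--Yayama work in the almost-multiplicative setting (no inserted word, uniform ratio bound used for every $i$), whereas your Perron--Frobenius cone argument inserts a single fixed letter $i_0$ and uses the ratio bound only for that one matrix, so you obtain the (sufficient) quasi-multiplicative conclusion under a weaker hypothesis. Two small points are worth writing out explicitly: the ``quasi-multiplicative times multiplicative'' reduction needs finiteness of $\Gamma$ to bound $|\det T_\kappa|^{-(s-1)}$ uniformly, and in case (3) the definition of quasi-multiplicativity does not admit the empty word, so the inserted letter contributes the harmless factor $\varphi^s(T_{i_0})^{-1}\ge 1$; both are immediate and you have implicitly handled them.
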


\begin{proof}
  Assuming (1), \cite[Proposition 2.8]{Feng2009} shows that the potential $\gamma_1$ is quasi-multiplicative. Observe that the proof of \cite[Proposition 2.8]{Feng2009} applies verbatim in the infinite case. Similarly, assuming (2), \cite[Lemma 7.1]{IommiYayama2012} shows that $\gamma_1$ is quasi-multiplicative. The claim in both of these cases follows now by recalling that the determinant is the product of singular values. Finally, assuming (3), the quasi-multiplicativity of the singular value function is immediate.
\end{proof}

\begin{remark}
  (1) The assumption (1) in Proposition \ref{thm:sing_quasi} is equivalent to the property that the matrices do not have a common eigenvector. Thus, if the $2 \times 2$ matrices cannot simultaneously be presented (in some coordinate system) as upper triangular matrices, then the singular value function $\varphi^s$ is quasi-multiplicative for all $0 \le s \le d$.

  (2) The set of $(T_i)_{i \in \N} \in {\it GL_2}(\R)^\N$ satisfying the assumption (1) in Proposition \ref{thm:sing_quasi} is open and dense set under the product topology. Indeed the set of pairs $(T_1,T_2) \in {\it GL_2}(\R)^2$ for which there is no common eigenvector is easily seen to be an open and dense set of full Lebesgue measure.

  (3) Falconer and Sloan \cite{FalconerSloan2009} have introduced a certain condition under which the singular value function is quasi-multiplicative also in higher dimensions; see \cite[Corollary 2.3]{FalconerSloan2009}.
\end{remark}

\begin{example}
  Two strictly positive $2 \times 2$ matrices having a common eigenvector show that strict positivity does not imply irreducibility. Furthermore, if
  \begin{equation*}
    T_1 = \begin{pmatrix}
            10 & 0 \\
            0 & 1
          \end{pmatrix}, \quad
    T_2 = \begin{pmatrix}
            0 & -1 \\
            10 & 11
          \end{pmatrix},
  \end{equation*}
  then $(T_1,T_2)$ is irreducible, but it is easy to see that there is no coordinate system in which the matrices are simultaneously strictly positive.
\end{example}

In our second main theorem, we generalise Falconer's dimension result to infinitely generated self-affine sets. The proof of the result can be found in \S \ref{sec:dim}.

\begin{maintheorem} \label{thm:dim_result}
If $(T_i)_{i \in \N} \in \GL^\N$ satisfies $\sup_{i \in \N} \| T_i \| < \tfrac12$ and the singular value function $\varphi^s$ is quasi-multiplicative for all $0 \le s \le d$, then
\[
  \dimh(F_{\mathbf{a}}) = \min\{d, \inf\{ s: P(\varphi^s) \leq 0 \} \} = \sup\{ \dimh(\pi_\mathbf{a}(I^\N)) : I \subset \N \text{ is finite} \}
\]
for $\LL_\mathbf{A}$-almost all $\mathbf{a} \in \mathbf{A}$.
\end{maintheorem}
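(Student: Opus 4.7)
The plan is to squeeze $\dimh(F_\mathbf{a})$ between two quantities via the chain
\[
  \sup_I \dimh(\pi_\mathbf{a}(I^\N)) \;\le\; \dimh(F_\mathbf{a}) \;\le\; \min\{d,s^*\} \;\le\; \sup_I \dimh(\pi_\mathbf{a}(I^\N)),
\]
where $s^* = \inf\{s : P(\varphi^s) \le 0\}$ and the supremum is over finite $I \subset \N$. The first inequality is immediate from $\pi_\mathbf{a}(I^\N) \subset F_\mathbf{a}$. The second is deterministic and uses only sub-multiplicativity: for $s > s^*$ one has $P(\varphi^s) < 0$ and hence $Z_n(\varphi^s) \to 0$; covering each $\pi_\mathbf{a}([\omega]) \subset T_\omega K + b_\omega$ (with $K$ bounded since $\sup_i \|T_i\| < 1$) by the standard Falconer parallelotope covering of approximately $\gamma_1(T_\omega) \cdots \gamma_m(T_\omega) \gamma_{m+1}(T_\omega)^{-m}$ balls of radius $\gamma_{m+1}(T_\omega)$, with $s = m+\delta$, yields $\mathcal{H}^s(F_\mathbf{a}) \le C Z_n(\varphi^s) \to 0$ and hence $\dimh(F_\mathbf{a}) \le \min\{d, s^*\}$.

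For the third inequality I would first apply Solomyak's strengthening of Falconer's theorem: since $\sup_{i \in I}\|T_i\| < \tfrac12$ is inherited by each finite $I \subset \N$, \cite[Proposition 3.1(i)]{Solomyak1998} gives $\dimh(\pi_\mathbf{a}(I^\N)) = \min\{d, s_I\}$ for $\LL_\mathbf{A}$-a.e.\ $\mathbf{a}$, where $s_I = \inf\{s : P^I(\varphi^s) \le 0\}$ and $P^I$ denotes the pressure for the restricted shift $I^\N$. A countable union over finite $I$ leaves a full-measure set on which these equalities hold simultaneously. Since $\min\{d,\cdot\}$ commutes with suprema, the third inequality reduces to the thermodynamic identity $\sup_I s_I = s^*$.

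The inequality $\sup_I s_I \le s^*$ follows from $P^I(\varphi^s) \le P(\varphi^s)$. For the reverse, fix $s < s^*$ so $P(\varphi^s) > 0$; choose $n$ large enough that $Z_n(\varphi^s) > 2c|\Gamma|$ and pick a finite $A \subset \Sigma_n$ with $\sum_{\omega \in A} \varphi^s(\omega) \ge \tfrac12 Z_n(\varphi^s)$. Let $I \subset \N$ be the finite alphabet of all symbols appearing in $A \cup \Gamma$. By quasi-multiplicativity, each pair $\omega,\tau \in A$ admits $\kappa = \kappa(\omega,\tau) \in \Gamma$ with $\varphi^s(\omega)\varphi^s(\tau) \le c\,\varphi^s(\omega\kappa\tau)$ and $\omega\kappa\tau \in I^{2n+|\kappa|}$. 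Splitting the double sum by $\kappa$ and using the injectivity of $(\omega,\tau) \mapsto \omega\kappa\tau$ for fixed $\kappa$ yields
\[
  \bigl(\tfrac12 Z_n(\varphi^s)\bigr)^2 \;\le\; c|\Gamma| \max_{2n < k < 2n + K} Z_k^I(\varphi^s).
\]
Iterating this doubling recursion (replacing $A$ at step $j{+}1$ by $I^{k_j}$, whose symbols still lie in $I$) produces lengths $k_j < 2^j(n+K)$ with $Z_{k_j}^I(\varphi^s) \ge (Z_n(\varphi^s)/2)^{2^j}/(c|\Gamma|)^{2^j - 1}$. Dividing logarithms by $k_j$ and sending $j \to \infty$ gives $P^I(\varphi^s) \ge (n+K)^{-1}\log\bigl(Z_n(\varphi^s)/(2c|\Gamma|)\bigr) > 0$, and hence $s_I \ge s$; letting $s \uparrow s^*$ completes $\sup_I s_I = s^*$.

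The hardest step is the last one, where quasi-multiplicativity must bootstrap a one-scale partial-sum approximation $Z_n^I(\varphi^s) \approx Z_n(\varphi^s)$ into a positive lower bound on the pressure of the finite sub-system; without quasi-multiplicativity the finite sub-system pressures could in principle stay strictly below $P(\varphi^s)$. The remainder of the proof is either a deterministic covering argument or a countable-union application of the finitely generated Falconer--Solomyak theorem, so the whole argument rests on converting the hypothesis on $\varphi^s$ into this pressure approximation.
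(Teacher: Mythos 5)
Your proposal is correct and follows essentially the same route as the paper: the covering argument for the upper bound, Falconer--Solomyak applied to each finite subsystem (with a countable union over finite $I$) for the lower bound, and the reduction to the thermodynamic identity $\sup_I s_I = s^*$, which the paper obtains from its finite approximation property for the pressure (Proposition 3.2, via Lemma 3.1). Your doubling recursion is just an inline re-derivation of that approximation property --- it is the $m=2$ iterated version of the paper's $m$-fold concatenation estimate --- and the only cosmetic point to fix is that when $Z_n(\varphi^s)=\infty$ one should pick $A$ with $\sum_{\omega\in A}\varphi^s(\omega)\ge M$ for an arbitrarily large finite $M$ rather than $\tfrac12 Z_n(\varphi^s)$.
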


\subsection{Multifractal analysis of Birkhoff averages} \label{sec:preli_multi}

We shall consider Birkhoff averages of functions $\Phi \colon \Sigma \rightarrow \R^{\N}$. The vector space $\R^{\N}$ is endowed with the product topology, so a sequence $(\alpha(n))_{n \in \N}$ with $\alpha(n)=(\alpha_i(n))_{i \in \N} \in \R^{\N}$ converges to $\alpha =(\alpha_i)_{i \in \N}$ if $\lim_{n \rightarrow \infty}\alpha_i(n)=\alpha_i$ for each $i \in \N$. Given a function $\phi \colon \Sigma \rightarrow \R$ we define the \emph{variation} $\var_n\phi$ for each $n \in \N$ by
\[
  \var_n\phi = \sup \{ |\phi(\omega) -\phi(\tau)|: [\omega|_n] = [\tau|_n] \}.
\]
A function $\phi \colon \Sigma \rightarrow \R$ is said to have \emph{summable variations} if $\sum_{n = 1}^\infty \var_n\phi < \infty$.

We take a sequence $\Phi = (\phi_i)_{i \in \N}$ of functions $\phi_i \colon \Sigma \rightarrow \R$, each with summable variations, which we think of as a function from $\Sigma$ to $\R^{\N}$. In this case, we just say that $\Phi \colon \Sigma \to \R^\N$ has \emph{summable variations}. Moreover, if each $\phi_i$ is bounded, then we say that $\Phi$ is \emph{bounded}. We define the \emph{Birkhoff sum} for each $n \in \N$ by
\begin{equation*}
S_n\Phi = \sum_{j=0}^{n-1} \Phi \circ \sigma^j
\end{equation*}
and the \emph{Birkhoff average} by $A_n\Phi = n^{-1}S_n\Phi$. We define $S_n\phi$ and $A_n\phi$ similarly when $\phi \colon \Sigma \to \R^k$ for some $k \in \N$. We let the \emph{symbolic level set} to be
\begin{equation*}
  E_\Phi(\alpha) = \{ \omega \in \Sigma : \lim_{n \to \infty} A_n\Phi(\omega) = \alpha \}
\end{equation*}
for all $\alpha = (\alpha_i)_{i \in \N} \in \overline{\R}^\N$, where $\overline{\R} = \R \cup \{-\infty,+\infty\}$.

Suppose we have a self-affine set $F_{\mathbf{a}}$, that is, $(T_i)_{i \in \N} \in \GL^\N$ is such that $\sup_{i \in \N} \| T_i \| < \tfrac12$, $\mathbf{a} = (a_i)_{i \in \N} \in A$, and $F_{\mathbf{a}} = \pi_{\mathbf{a}}(\Sigma)$ is the projection of the shift space. Let us denote the affine maps $x \mapsto T_ix + a_i$ by $f_i$. If the sequence $\mathbf{a}$ is such that there is a compact set $X \subset \R^d$ so that $f_i(X) \subset X$ for all $i \in \N$ and $f_i(X) \cap f_j(X) = \emptyset$ for $i \ne j$, then the projection $\pi_{\mathbf{a}}$ gives a conjugacy between the left shift $\sigma \colon \Sigma \to \Sigma$ and the well-defined map $g \colon \bigcup_{i \in \N} f_i(X) \to X$ for which $g(x) = f_i^{-1}(x) = T_i^{-1}x - a_i$ for $x \in f_i(X)$. Thus, in view of \eqref{eq:central_birkhoff}, this leads us to consider the projections of symbolic level sets,
\begin{equation*}
  J_\Phi^{\mathbf{a}}(\alpha) = \pi_{\mathbf{a}}(E_\Phi(\alpha)),
\end{equation*}
for as many $\mathbf{a} \in A$ as possible.

Next we state our main results concerning multifractal formalism in this paper. For each $k \in \N$ we let $\M_{\sigma^k}(\Sigma)$ denote the set of all $\sigma^k$-invariant Borel probability measures and define $\M^*_{\sigma^k}(\Sigma)$ to be the collection of all measures $\mu \in \MM_{\sigma^k}(\Sigma)$ which are compactly supported. If $k\in \N$ and $\mu \in \M^*_{\sigma^k}(\Sigma)$, then we let $D_k(\mu)$ to be the unique $s \geq 0$ satisfying
\[
\sum_{\omega \in \Sigma_k}\mu([\omega])\log\frac{\varphi^s(\omega)}{\mu([\omega])} = 0.
\]
The potential $\varphi^s$ here is the singular value function defined in \eqref{eq:singular_value_function}. We also set
\[
  D(\mu) = \inf\{ s: P_\mu(\varphi^s) \leq 0 \text{ and } \Lambda_\mu(\varphi^s) > -\infty \}
\]
for all $\mu \in \M_{\sigma}(\Sigma)$ and call it a \emph{Lyapunov dimension} of $\mu$. Given $\alpha \in \overline{\R}$ we define an indexed family of neighbourhoods by
\begin{equation*}
B_n(\alpha) = \begin{cases} \left(-\infty,-n\right), &\text{if } \alpha = - \infty, \\
\left( \alpha-\frac{1}{n},\alpha+\frac{1}{n}\right), &\text{if } \alpha \in \R, \\
\left(-\infty,-n\right), &\text{if } \alpha = \infty. \\
\end{cases}
\end{equation*}

We have two main results concerning multifractal analysis of Birkhoff averages. In the first one, we consider general potentials and in the second one, we restrict our analysis to bounded potentials.

\begin{maintheorem}\label{General theorem}
If $(T_i)_{i \in \N} \in \GL^\N$ is such that $\sup_{i \in \N} \| T_i \| < \tfrac12$, the singular value function $\varphi^s$ is quasi-multiplicative for all $0 \le s \le d$, $\Phi \colon \Sigma \to \R^{\N}$ has summable variations, and $\alpha \in \overline{\R}^\N$, then
\begin{align*}
  \dimh(J^{\mathbf{a}}_{\Phi}(\alpha)) = \min\bigl\{ d,\lim_{n \rightarrow \infty} \lim_{k \rightarrow \infty} \sup \bigl\{ D_k(\mu) : \;&\mu \in \M^*_{\sigma^k}(\Sigma) \text{ so that } \\ &\int A_k(\phi_i)d\mu \in B_n(\alpha_i) \text{ for all } i \in \{ 1,\ldots,n \}\bigr\} \bigr\}
\end{align*}
for $\LL_\mathbf{A}$-almost all $\mathbf{a} \in \mathbf{A}$.
\end{maintheorem}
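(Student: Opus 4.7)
The plan is to prove matching upper and lower bounds for $\dimh(J_\Phi^{\mathbf{a}}(\alpha))$; denote by $\Delta$ the double limit appearing on the right-hand side.

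\textbf{Upper bound, valid for every $\mathbf{a}$.} Fix $n,k \in \N$ and a value $s$ strictly larger than $D_k(\mu)$ for some $\mu \in \M^*_{\sigma^k}(\Sigma)$ with $\int A_k(\phi_i)\,d\mu \in B_n(\alpha_i)$ for $i \le n$. The summable variations of each $\phi_i$ combined with convergence of $A_N\Phi$ imply that every $\omega \in E_\Phi(\alpha)$ eventually has $[\omega|_{mk}]$ contained in the constraint set $C^k_n(m) = \{\tau \in \Sigma : A_{mk}(\phi_i)(\tau) \in B_n(\alpha_i) \text{ for all } i \le n\}$. Consequently $E_\Phi(\alpha) \subset \bigcup_{M \ge 1}\bigcap_{m \ge M} \{\omega : [\omega|_{mk}] \subset C^k_n(m)\}$, and projecting via $\pi_\mathbf{a}$, each cylinder $\pi_\mathbf{a}([\omega|_{mk}])$ sits inside a translate of $T_{\omega|_{mk}}([0,1]^d)$, an ellipsoid whose axis lengths are the singular values of $T_{\omega|_{mk}}$. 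A standard Falconer-type covering estimate with exponent $s$, using the defining equation of $D_k(\mu)$ to balance the logarithms of $\varphi^s(T_{\omega|_{mk}})$ against $\mu([\omega|_{mk}])$, yields $\dimh(J_\Phi^{\mathbf{a}}(\alpha)) \le \min\{d,s\}$. Taking the infimum over admissible $s$, supremum over $\mu$, and the double limit in $n$ and $k$ gives $\dimh(J_\Phi^{\mathbf{a}}(\alpha)) \le \min\{d,\Delta\}$.

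\textbf{Lower bound.} Fix $n,k \in \N$, $\eps > 0$, and $\mu \in \M^*_{\sigma^k}(\Sigma)$ with $\int A_k(\phi_i)\,d\mu \in B_n(\alpha_i)$ for $i \le n$. Compact support combined with $\sigma^k$-invariance forces $\mu$ to be carried by a subshift over some finite alphabet $I \subset \Sigma_k$. Passing to a $\sigma^k$-ergodic component of $\mu$ whose integrals remain in slightly enlarged $B_n(\alpha_i)$, Birkhoff's theorem together with the summable variations of $\phi_i$ yield a set of full $\mu$-measure in $I^\N$ lying entirely in $E_\Phi(\alpha)$. The finite subsystem $\{T_\omega : \omega \in I\}$ inherits the bound $\sup\|T_\omega\| < (\tfrac12)^k < \tfrac12$, so the Jordan--Pollicott--Simon local dimension result for typical translations of finite self-affine systems shows that $(\pi_\mathbf{a})_*\mu$ has $\LL_\mathbf{A}$-a.e.\ local dimension at least the Lyapunov dimension of $\mu$ on the finite subsystem, which by Theorem \ref{thm:main_gibbs} and the definition of $D_k$ is at least $\min\{d,D_k(\mu)-\eps\}$. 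The mass distribution principle then gives $\dimh(\pi_\mathbf{a}(E_\Phi(\alpha))) \ge \min\{d,D_k(\mu)-\eps\}$ for $\LL_\mathbf{A}$-a.e.\ $\mathbf{a}$. To make the exceptional set independent of $(\mu,n,k,\eps)$, restrict the supremum to a countable dense family of rational Markov measures supported on finite alphabets $I$ and intersect the corresponding full-measure sets; continuity of $D_k$ under weak approximation by such Markov measures on finite subsystems recovers the full supremum, and the countable double limit in $n,k$ is then taken.

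\textbf{Main obstacle.} The central difficulty is the non-compactness of $\Sigma = \N^\N$, which forces the restriction to $\M^*_{\sigma^k}(\Sigma)$ in the statement: only for compactly supported measures can we reduce to a finite self-affine system and invoke Theorem \ref{thm:dim_result} together with Theorem \ref{thm:main_gibbs}. The delicate counterpart on the upper bound side is preventing mass escape to infinity while constraining only the finitely many Birkhoff averages indexed by $i \le n$; this is precisely what the nested limit structure $\lim_n \lim_k$ and the approximation $D_k(\mu)$ (rather than the limiting Lyapunov dimension $D(\mu)$) are designed to accommodate, and is also why every inner ingredient of the argument can be settled by the thermodynamic formalism on finite alphabets.
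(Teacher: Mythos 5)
The lower bound contains a genuine gap that invalidates the argument. You take a single $\mu \in \M^*_{\sigma^k}(\Sigma)$ satisfying $\int A_k(\phi_i)\,d\mu \in B_n(\alpha_i)$ for $i \le n$ and claim that Birkhoff's theorem produces a full-$\mu$-measure set ``lying entirely in $E_\Phi(\alpha)$''. This is false: such a measure only controls the first $n$ Birkhoff averages, and only up to an error $1/n$. The $\mu$-generic points have averages converging to $\int \phi_i\,d\mu_{\mathrm{erg}}$, which need not equal $\alpha_i$ even for $i \le n$, and for $i > n$ there is no control at all; generically $E_\Phi(\alpha)$ is $\mu$-null. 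So your argument only bounds $\dimh(J^{\mathbf{a}}_\Phi(\beta))$ for $\beta = \int\Phi\,d\mu_{\mathrm{erg}}$, and no amount of taking suprema over $\mu$ and limits in $n,k$ repairs this, since lower semicontinuity of $\alpha \mapsto \dimh(J^{\mathbf{a}}_\Phi(\alpha))$ is not available (it is essentially what the theorem asserts). This is precisely why the paper's proof does not reduce to a single finite subsystem: Proposition \ref{prop symbolic tree structure Mar30} builds a Moran-type set $S \subset E_\Phi(\alpha)$ by concatenating longer and longer blocks drawn from a \emph{sequence} of measures $\nu_n$ (each constraining more potentials to higher accuracy), glued by words from $\Gamma$ via quasi-multiplicativity, together with a measure satisfying the Frostman bound $\mu([\omega]) \le C\varphi^s(\omega)$; only then does the transversality/energy argument of Theorem \ref{thm:dim_result_for_gibbs} give $\dimh(\pi_{\mathbf{a}}(S)) \ge s$ for almost every $\mathbf{a}$ (this also disposes of your countability worry, since one fixed measure per value of $s$ suffices). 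Secondary issues in the same part: passing to an ergodic component can decrease $D_k$ while you need it not to, and the Jordan--Pollicott--Simon result applies to the pushforward of an invariant measure, not to the projection of the level set, so it cannot be invoked without first placing a suitable measure on $E_\Phi(\alpha)$ itself.

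The upper bound is closer to the paper but is stated with the quantifiers reversed: fixing $s > D_k(\mu)$ for \emph{some} admissible $\mu$ cannot yield $\dimh \le s$. The correct direction (which the paper takes in Lemma \ref{initial covering lemma Mar15} and Proposition \ref{General upper bound prop Mar15}) is: if $s < \dimh(J^{\mathbf{a}}_\Phi(\alpha))$, the covering estimate forces $\sum_{\omega \in A_\Phi(\alpha,n,k)}\varphi^s(\omega) > 1$ for large $k$, and one then \emph{constructs} a $k$-th level Bernoulli measure with weights proportional to $\varphi^s(\omega)$ on a finite subfamily; this measure witnesses $D_k(\mu) \ge s$ and satisfies the integral constraints, so $s \le \Delta$. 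Your sketch has the covering ingredient but omits the construction of this witnessing measure, which is the step that actually connects the covering sum to the quantity $D_k(\mu)$ in the statement.
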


The proof of Theorem \ref{General theorem} is given in \S \ref{sec:multifractal}. Theorem \ref{main theorem for bounded potentials}, our second result on multifractal formalism, generalises the theorem of Fan, Jordan, Liao and Rams \cite[Theorem 1.2]{FanJordanLiaoRams2011} to the self-affine setting. Define $s_{\infty} = \inf\left\lbrace s: P(\varphi^s) < \infty \right\rbrace$ and $\PP(\Phi) = \{ \int \Phi d\mu: \mu \in \M_{\sigma}(\Sigma) \}$, and let $\overline{\PP(\Phi)}$ be the closure of $\PP(\Phi)$ with respect to the pointwise topology.

\begin{maintheorem}\label{main theorem for bounded potentials}
If $(T_i)_{i \in \N} \in \GL^\N$ is such that $\sup_{i \in \N} \| T_i \| < \tfrac12$, the singular value function $\varphi^s$ is quasi-multiplicative for all $0 \le s \le d$, $\Phi \colon \Sigma \to \R^{\N}$ is bounded with summable variations, and $\alpha \in \overline{\PP(\Phi)}$, then
\begin{equation*}
  \dimh(J^\mathbf{a}_\Phi(\alpha)) = \min\bigl\{ d,\max\bigl\{ s_\infty, \sup\{ D(\mu) : \mu \in \MM_\sigma(\Sigma) \text{ so that } \int \Phi d\mu = \alpha \} \bigr\} \bigr\}
\end{equation*}
for $\LL_\mathbf{A}$-almost all $\mathbf{a} \in \mathbf{A}$. Furthermore, if $\alpha \notin \overline{\PP(\Phi)}$, then $J^\mathbf{a}_\Phi(\alpha)=\emptyset$ for all $\mathbf{a} \in \mathbf{A}$.
\end{maintheorem}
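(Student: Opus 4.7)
The plan is to start from Theorem \ref{General theorem} and, under the boundedness assumption on $\Phi$, reinterpret its double-limit supremum as $\max\{s_\infty,\sup\{D(\mu):\int\Phi\,d\mu=\alpha\}\}$. First I dispatch the emptiness claim. Suppose $\omega\in E_\Phi(\alpha)$ and let $\hat{\mu}_n\in\MM_\sigma(\Sigma)$ be the measure equidistributed on the $\sigma$-orbit of the periodic point $(\omega|_n)^\infty$. Summability of variations of each $\phi_i$ gives $|\int\phi_i\,d\hat{\mu}_n-A_n\phi_i(\omega)|\le\sum_{m\ge n}\var_m\phi_i\to 0$, so $\int\Phi\,d\hat{\mu}_n\to\alpha$ in the pointwise topology on $\R^\N$, which forces $\alpha\in\overline{\PP(\Phi)}$.

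Let $D^*$ denote the double-limit supremum appearing in Theorem \ref{General theorem}. For the lower bound $D^*\ge\sup\{D(\mu):\int\Phi\,d\mu=\alpha\}$, given such a $\mu$, I would condition on growing finite-alphabet cylinders $I_m^\N\subset\Sigma$ with $\mu(I_m^\N)\to 1$, renormalise and symmetrise to obtain compactly supported $\sigma^k$-invariant approximants $\mu_{m,k}$; boundedness of each $\phi_i$ gives $\int A_k\phi_i\,d\mu_{m,k}\to\alpha_i$, and convergence of the $k$-th pressure approximations to $P_\mu(\varphi^s)$ (via Lemma \ref{measure_theoretical_pressure}) gives $D_k(\mu_{m,k})\to D(\mu)$ along a suitable diagonal in $(m,k)$. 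For the lower bound $D^*\ge s_\infty$, I would exploit the identity $\dimh(F_\mathbf{a})=\sup\{\dimh(\pi_\mathbf{a}(I^\N)):I\subset\N\text{ finite}\}$ from Theorem \ref{thm:dim_result} together with the variational principle (Theorem \ref{thm:main_gibbs}) on finite alphabets to select finite subshifts whose Falconer exponents approach $s_\infty$; on each such subshift I would produce invariant measures whose integrals approximate $\alpha$ (possible since $\alpha\in\overline{\PP(\Phi)}$), yielding admissible $\mu\in\MM^*_{\sigma^k}(\Sigma)$ with $D_k(\mu)\to s_\infty$.

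For the reverse inequality $D^*\le\max\{s_\infty,\sup_\mu D(\mu)\}$, select $\mu_{n,k}\in\MM^*_{\sigma^k}(\Sigma)$ with $D_k(\mu_{n,k})$ approaching the inner supremum, and symmetrise to $\nu_{n,k}=\tfrac{1}{k}\sum_{j=0}^{k-1}\sigma^j_*\mu_{n,k}\in\MM_\sigma(\Sigma)$. A diagonal choice $k=k_n$ yields $\nu_n$ with $D(\nu_n)\to D^*$ and $\int\phi_i\,d\nu_n\in B_n(\alpha_i)$ for $i\le n$. If the $\nu_n$ are tight on finite-alphabet cylinder sets, pass to a weak-$*$ subsequential limit $\nu\in\MM_\sigma(\Sigma)$; boundedness of $\Phi$ and the pointwise topology give $\int\Phi\,d\nu=\alpha$, while upper semicontinuity of the defining relation for $D$ yields $D(\nu)\ge D^*$. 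Otherwise mass escapes: $\nu_n(\{\omega:\omega_1\ge N_n\})\ge\delta>0$ along some $N_n\to\infty$; splitting the equation $P_{\nu_n}(\varphi^s)=0$ at $s=D(\nu_n)$ into contributions from tight and escaping parts and invoking the divergence of $\sum_{i\ge N_n}\varphi^s(i)$ for $s<s_\infty$ together with the entropy mass $\delta$ forces $D(\nu_n)\ge s_\infty-o(1)$, giving $D^*\le s_\infty$ in this case.

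The principal obstacle is the escape-of-mass analysis just described: a precise splitting of the measure-theoretic pressure and careful control of the entropy-versus-Lyapunov balance on the escaping part is needed, complicated by the fact that $\varphi^s$ can vanish along infinite-alphabet tails so that standard estimates degenerate. A secondary technical point is the $D^*\ge s_\infty$ step, where one must construct compactly supported invariant measures on finite-alphabet subshifts whose Birkhoff integrals approximate $\alpha$ even when $\alpha\in\overline{\PP(\Phi)}\setminus\PP(\Phi)$.
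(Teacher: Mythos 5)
Your reduction of the theorem to Theorem \ref{General theorem} founders on the upper bound, which is where the real content lies. You propose to take measures $\mu_{n,k}\in\MM^*_{\sigma^k}(\Sigma)$ with $D_k(\mu_{n,k})$ near the double-limit supremum $D^*$, symmetrise them to $\sigma$-invariant measures $\nu_n$, and claim $D(\nu_n)\to D^*$. This step fails: $D_k(\mu)$ depends only on the $k$-block marginal and controls only the $k$-th term of the infimum defining $P_{\nu}(\varphi^s)$, whereas $D(\nu)$ requires $P_\nu(\varphi^s)=\inf_m \frac1m\sum_{\omega\in\Sigma_m}\nu([\omega])\log\frac{\varphi^s(\omega)}{\nu([\omega])}\geq 0$. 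Since $\varphi^s$ is only sub-multiplicative (quasi-multiplicativity gives a reverse inequality only after inserting a connecting word), the Lyapunov exponent of the symmetrised Bernoulli-type measure can be strictly smaller than the $k$-block average $\frac1k\sum_{\Sigma_k}\mu\log\varphi^s$, so $D(\nu)$ can be strictly below $D_k(\mu)$, and the inequality $\sup\{D(\mu):\int\Phi\,d\mu=\alpha\}\geq D^*$ does not follow. The paper avoids this trap entirely: it never converts the $D_k$-optimisers into invariant measures. Instead a covering argument (Lemma \ref{relate dimension and pressure lemma}) shows that $s<\dimh(J^{\mathbf a}_\Phi(\alpha))$ forces $P(\varphi^s\cdot e_{\langle q,\Phi-\alpha\rangle})\geq 0$ for every $q\in\R^N$; convexity and differentiability of $q\mapsto P(\varphi^s\cdot e_{\langle q,\Phi-\alpha\rangle})$ (Lemmas \ref{convexity of the pressure function} and \ref{differentiation of pressure}, using the Gibbs measures of the tilted potentials) produce a minimising $q(\alpha)$ whose Gibbs measure $\mu_{q(\alpha)}$ satisfies $\int\Phi\,d\mu_{q(\alpha)}=\alpha$ and $D(\mu_{q(\alpha)})\geq s$, first for $\alpha$ interior to $\PP(\Phi)$ and finitely many potentials, then extended to the boundary and to $\R^\N$-valued $\Phi$ via the perturbation Lemma \ref{finitely many potentials lemma} and the upper-semicontinuity Proposition \ref{Upper-semicont prop Mar15} (which also shows that your feared escape of mass cannot occur once $\limsup_n D(\mu_n)>s_\infty$).

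There is a second, smaller gap in your lower bound $D^*\geq s_\infty$. Finite subshifts do not have Falconer exponents approaching $s_\infty$ but rather $s_0=\inf\{s:P(\varphi^s)\leq 0\}$, and the measures on such subshifts whose Birkhoff integrals approximate $\alpha$ will generically have dimension far below the subshift's exponent; you cannot have both constraints satisfied by the equilibrium state. The correct mechanism is the divergence $Z_k(\varphi^s)=\infty$ for $s<s_\infty$: one puts a fixed weight $\rho$ close to $1$ on a single cylinder $[\tau|_k]$ whose Birkhoff average is near $\alpha$ (with $\tau$ a generic point of an ergodic measure approximating $\alpha$), and spreads the remaining mass $1-\rho$ over a finite set $C(k)$ with $\sum_{\kappa\in C(k)}\varphi^s(\kappa)>(\varphi^s(\tau|_k))^{-\rho/(1-\rho)}$, which is possible precisely because the series diverges; this forces $D_k(\mu)>s$ while keeping $\int A_k\phi_i\,d\mu\in B_n(\alpha_i)$. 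Your treatment of the emptiness claim and of $D^*\geq\sup\{D(\mu):\int\Phi\,d\mu=\alpha\}$ is essentially correct and matches the paper.
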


The proof of Theorem \ref{main theorem for bounded potentials} is presented in \S \ref{sec:conditional}.

\section{Thermodynamic formalism for quasi-multiplicative potentials} \label{sec:gibbs}

\subsection{Existence of Gibbs measures}
Suppose we have a sub-multiplicative potential $\varphi$ along with a subset $I \subset \N$. We define the \emph{pressure} $P(\varphi,I)$ by
\begin{equation*}
  P(\varphi,I) = \lim_{n \rightarrow \infty} \tfrac{1}{n} \log Z_n(\varphi, I) = \inf_{n \in \N} \tfrac{1}{n} \log Z_n(\varphi, I),
\end{equation*}
where $Z_n(\varphi, I) = \sum_{\omega \in I^n}\varphi(\omega)$ for all $n \in \N$. Thus $Z_n(\varphi, \N) = Z_n(\varphi)$ and $P(\varphi, \N) = P(\varphi)$. Observe that $Z_n(\varphi,I) \le Z_n(\varphi,I)$ and hence also $P(\varphi,J) \le P(\varphi,I)$ for all $J \subset I \subset \N$. If $C\ge 1$, then an invariant probability measure $\mu \in \M_{\sigma}(\Sigma)$ is said to be a \emph{$C$-Gibbs measure for the potential $\varphi$ on $I$} if it is supported on $I^\N$, the pressure $P(\varphi,I)$ is finite, and
\begin{equation*}
C^{-1}  \leq \frac{\mu([\omega]) }{\varphi(\omega)\exp(-n P(\varphi,I))} \leq C
\end{equation*}
for all $\omega \in I^n$ and $n \in \N$.
An invariant measure $\mu \in \M_{\sigma}(\Sigma)$ is said to be a \emph{Gibbs measure for the potential $\varphi$ on $I$} if there exists some $C\ge 1$ such that $\mu$ is a $C$-Gibbs measure for the potential $\varphi$ on $I$. Finally, $\mu \in \M_{\sigma}(\Sigma)$ is said to be a \emph{Gibbs measure for the potential $\varphi$} if $\mu$ is a Gibbs measure for the potential $\varphi$ on $\N$.

In this section, our main goal is to show that if $\varphi$ is a quasi-multiplicative potential with finite pressure, then $\varphi$ has a Gibbs measure. We remark that this is not the case for all sub-multiplicative potentials; see \cite[Example 6.4]{KaenmakiVilppolainen2010} for a counter-example in a finitely generated self-affine set. For a given quasi-multiplicative potential, throughout the section, we let $\Gamma \subset \Sigma_*$, $K \in \N$, and $c \ge 1$ be as in the definition of the quasi-multiplicative potential; see \eqref{eq:quasi_multi}.

\begin{lemma}\label{positive recurrence}
If $\varphi$ is a quasi-multiplicative potential and $I \subset \N$ is so that $\Gamma \subset \bigcup_{k = 1}^K I^k$, then
\begin{equation*}
  e^{n P(\varphi,I)} \leq Z_n(\varphi, I) \leq cK \max\{ 1 , e^{K P(\varphi)} \}  e^{n P(\varphi,I)}.
\end{equation*}
for all $n \in \N$. In particular, $P(\varphi,I) > -\infty$.
\end{lemma}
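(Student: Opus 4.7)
The lower bound $e^{nP(\varphi,I)}\le Z_n(\varphi,I)$ is automatic from the identity $P(\varphi,I)=\inf_n\tfrac{1}{n}\log Z_n(\varphi,I)$, which holds by Fekete's lemma applied to the subadditive sequence $\log Z_n(\varphi,I)$ (subadditivity being the log form of the sub-multiplicativity of $\varphi$). The substance of the lemma is the upper bound, and the strategy is to convert the one-sided quasi-multiplicativity \eqref{eq:quasi_multi} into a partial super-multiplicativity for the partition sums $Z_n(\varphi,I)$, and then iterate it.

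To set up the partial super-multiplicativity, fix $m,n\in\N$ and, for each pair $(\omega,\tau)\in I^m\times I^n$, select the connecting word $\kappa(\omega,\tau)\in\Gamma$ provided by \eqref{eq:quasi_multi}. The hypothesis $\Gamma\subset\bigcup_{k=1}^K I^k$ forces $\kappa(\omega,\tau)\in I^{|\kappa|}$ with $|\kappa|\in\{1,\dots,K\}$, so the concatenation $\omega\kappa(\omega,\tau)\tau$ belongs to $I^{m+|\kappa|+n}$; moreover, for each fixed $k$ the map $(\omega,\tau)\mapsto\omega\kappa(\omega,\tau)\tau$ is injective on the set of pairs with $|\kappa|=k$, because the three factors are recovered by reading off the positions $[1,m]$, $[m+1,m+k]$, $[m+k+1,m+k+n]$ in the output word. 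Summing $\varphi(\omega)\varphi(\tau)\le c\varphi(\omega\kappa\tau)$ over all pairs and then pigeonholing over $k$ produces, for each $m,n$, some $k=k(m,n)\in\{1,\dots,K\}$ with
\[
Z_m(\varphi,I)\,Z_n(\varphi,I)\ \le\ cK\,Z_{m+k+n}(\varphi,I).
\]

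Now iterate this bound with the first slot fixed at $n$: one inductively produces $k_1,\dots,k_{p-1}\in\{1,\dots,K\}$ and, setting $N_p=pn+k_1+\cdots+k_{p-1}$, arrives at $pn\le N_p\le p(n+K)$ and
\[
Z_{N_p}(\varphi,I)\ \ge\ (cK)^{-(p-1)}\,Z_n(\varphi,I)^p.
\]
Taking logarithms, dividing by $N_p$, and letting $p\to\infty$ (so that $\log Z_{N_p}(\varphi,I)/N_p\to P(\varphi,I)$ by Fekete), one extracts a lower bound on $P(\varphi,I)$ in terms of $\log Z_n(\varphi,I)$. The sign of the numerator $\log Z_n(\varphi,I)-\log(cK)$ dictates which endpoint of $[pn,\,p(n+K)]$ controls the denominator in the limit, and the two cases combine to give
\[
Z_n(\varphi,I)\ \le\ cK\max\bigl\{1,\,e^{KP(\varphi,I)}\bigr\}\,e^{nP(\varphi,I)}.
\]
The monotonicity $P(\varphi,I)\le P(\varphi)$ then replaces $e^{KP(\varphi,I)}$ by $e^{KP(\varphi)}$ inside the maximum (trivially: if $P(\varphi,I)\ge 0$ then $e^{KP(\varphi,I)}\le e^{KP(\varphi)}$, while if $P(\varphi,I)<0$ the factor is already absorbed into the $1$), yielding the stated bound.

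The ``in particular'' clause $P(\varphi,I)>-\infty$ follows at once from the same limiting estimate applied with $n=1$: since $I$ is nonempty, $Z_1(\varphi,I)=\sum_{i\in I}\varphi(i)>0$, so $\log Z_1(\varphi,I)>-\infty$, and either case-bound then produces a finite lower bound on $P(\varphi,I)$. The main difficulty in the argument is really a bookkeeping one: the shift $k=k(m,n)$ supplied by quasi-multiplicativity depends on the pair $(m,n)$, so the iterated length $N_p$ is only pinned to the window $[pn,\,p(n+K)]$, and one must carry out the passage to the limit case-wise in the sign of $\log Z_n(\varphi,I)-\log(cK)$ in order to bring out the explicit constant $cK\max\{1,e^{KP(\varphi)}\}$ displayed in the statement.
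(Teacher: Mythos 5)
Your proof is correct and follows essentially the same approach as the paper: glue blocks via quasi-multiplicativity (using $\Gamma\subset\bigcup_{k=1}^K I^k$ to stay inside $I^*$), trap the resulting lengths in the window $[pn,p(n+K)]$, and pass to the limit with a case analysis on the sign to extract the constant $cK\max\{1,e^{KP(\varphi)}\}$. The only difference is that you iterate a pairwise super-multiplicativity bound where the paper glues all $m$ blocks at once and pigeonholes over the total inserted length; this is a cosmetic variation.
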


\begin{proof}
Since the left-hand side inequality follows immediately from the definition of the pressure, it suffices to show the right-hand side inequality. Fix $n,m \in \N$ and $\omega_i \in I^n$ for all $i \in \{ 1,\ldots,m \}$. By the quasi-multiplicativity, there are $\kappa_1,\ldots,\kappa_{m-1}$ so that
\begin{equation*}
  \varphi(\omega_1) \cdots \varphi(\omega_m) \leq c^{m-1} \varphi(\omega_1 \kappa_1 \omega_2 \kappa_2 \cdots \omega_{m-1} \kappa_{m-1} \omega_m).
\end{equation*}
Denoting $\xi_{n,m}(\omega_1\cdots \omega_m) = \omega_1 \kappa_1 \omega_2 \kappa_2 \cdots \omega_{m-1} \kappa_{m-1} \omega_m$ for all $\omega = \omega_1 \cdots \omega_m \in (I^{n})^m$ defines a mapping $\xi_{n,m} \colon (I^n)^m \rightarrow \bigcup_{\ell=1}^{K(m-1)} I^{nm+l}$ which is at most $K^{m-1}$ to one. Hence
\begin{align*}
  Z_n(\varphi, I)^m &= \biggl(\sum_{\omega \in I^n} \varphi(\omega)\biggr)^m
  = \sum_{\omega \in (I^{n})^m} \prod_{i=1}^m \varphi(\omega_i)
  \leq c^{m-1} \sum_{\omega \in (I^n)^m} \varphi(\xi_{n,m}(\omega)) \\
  &\leq (cK)^{m-1} \sum_{\ell=1}^{K(m-1)} \sum_{\omega \in I^{nm+l}} \varphi(\omega)
  = (cK)^{m-1}\sum_{\ell=1}^{K(m-1)} Z_{nm+l}(\varphi, I).
\end{align*}
Consequently, for each $m \in \N$ there is $\ell_m \in \N$ with $nm \leq \ell_m \leq (n+K)m$ satisfying $Z_n(\varphi, I)^m \leq m(cK)^{m} Z_{\ell_m}(\varphi, I)$.
Hence
\begin{equation*}
  Z_n(\varphi, I) \leq m^{1/m}cK \bigl (Z_{\ell_m}(\varphi, I)^{1/\ell_m} \bigr)^{\ell_m/m} \to
  \begin{cases}
    cK e^{(n+K) P(\varphi,I)}, &\text{if } P(\varphi,I) > 0, \\
    cK e^{n P(\varphi,I)}, &\text{if } P(\varphi,I) \leq 0,
  \end{cases}
\end{equation*}
by letting $m \rightarrow \infty$. The proof follows since $P(\varphi,I) \le P(\varphi)$.
\end{proof}

The following proposition is a finite approximation property for the pressure. It is a crucial property in our analysis since it makes it possible to construct a Gibbs measure on an infinitely generated shift space via its finitely generated sub-spaces.

\begin{prop}\label{finite approximation property}
If $(I_\ell)_{\ell \in \N}$ is a sequence of non-empty finite sets $I_\ell \subset \N$ with $I_\ell \subset I_{\ell+1}$ for all $\ell \in \N$ so that $\N = \bigcup_{\ell \in \N} I_\ell$, then
\begin{equation*}
  P(\varphi) = \lim_{\ell \to \infty} P(\varphi,I_\ell)
\end{equation*}
for all quasi-multiplicative potentials $\varphi$. In particular, $P(\varphi) = \sup \{ P(\varphi,I) : I \subset \N \text{ is finite} \}$.
\end{prop}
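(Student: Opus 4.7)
My plan is to establish both inequalities, the easy one being $\lim_\ell P(\varphi, I_\ell) \leq P(\varphi)$. Since $I_\ell \subset I_{\ell+1}$ forces $Z_n(\varphi, I_\ell) \leq Z_n(\varphi, I_{\ell+1}) \leq Z_n(\varphi)$, the sequence $(P(\varphi, I_\ell))_\ell$ is non-decreasing with each term bounded above by $P(\varphi)$, so $L := \lim_\ell P(\varphi, I_\ell) \in (-\infty, \infty]$ exists and automatically satisfies $L \leq P(\varphi)$. All the work goes into proving $L \geq P(\varphi)$, and the finiteness of the set $\Gamma$ supplied by quasi-multiplicativity is the key input.

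The idea is to apply Lemma \ref{positive recurrence} uniformly to the $I_\ell$. Since $\Gamma$ is a finite subset of $\Sigma_*$, only finitely many integers appear as symbols in words of $\Gamma$, and since $\N = \bigcup_\ell I_\ell$ with the $I_\ell$ increasing, there is $\ell_0$ such that $\Gamma \subset \bigcup_{k=1}^K I_\ell^k$ for every $\ell \geq \ell_0$. Lemma \ref{positive recurrence} then applies to each such $I_\ell$, and reading the final computation in its proof one gets the sharper bound
\[
  Z_n(\varphi, I_\ell) \leq cK \max\{1, e^{KP(\varphi, I_\ell)}\} e^{nP(\varphi, I_\ell)},
\]
in which the constant depends on $P(\varphi, I_\ell)$ rather than $P(\varphi)$. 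Assuming $L < \infty$ (if $L = \infty$ there is nothing to prove), we have $P(\varphi, I_\ell) \leq L$ for all $\ell \geq \ell_0$, and since $t \mapsto \max\{1, e^{Kt}\} e^{nt}$ is increasing in $t$, this delivers the uniform bound $Z_n(\varphi, I_\ell) \leq cK \max\{1, e^{KL}\} e^{nL}$ for every $n \in \N$ and every $\ell \geq \ell_0$.

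To finish I would pass to the limit $\ell \to \infty$. Since $\varphi$ is positive and $I_\ell^n \uparrow \N^n = \Sigma_n$, monotone convergence yields $Z_n(\varphi, I_\ell) \to Z_n(\varphi)$, hence $Z_n(\varphi) \leq cK \max\{1, e^{KL}\} e^{nL}$ for every $n$. Taking $\tfrac{1}{n} \log$ and letting $n \to \infty$ gives $P(\varphi) \leq L$, so $L = P(\varphi)$. The ``in particular'' assertion then follows by specialising to $I_\ell = \{1, \ldots, \ell\}$ together with the trivial bound $P(\varphi, I) \leq P(\varphi)$ valid for every finite $I \subset \N$.

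The main obstacle I anticipate is precisely the case $P(\varphi) = \infty$: as stated, Lemma \ref{positive recurrence} involves the constant $\max\{1, e^{KP(\varphi)}\}$, which is vacuous in that case, so a literal quotation of the lemma is insufficient. One must either extract from its proof the sharper form depending only on $P(\varphi, I_\ell)$ --- which arises naturally from the ``$P(\varphi,I) > 0$'' branch of the computation there --- or argue by contradiction, assuming $L < P(\varphi)$ and deriving a finite upper bound on $Z_n(\varphi)$ via the monotone limit that forces $P(\varphi) \leq L$, a contradiction.
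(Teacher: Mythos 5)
Your proof is correct and follows essentially the same route as the paper: both directions rest on the uniform bound of Lemma \ref{positive recurrence} applied to the sets $I_\ell$ once they absorb $\Gamma$, combined with the monotone convergence $Z_n(\varphi,I_\ell)\uparrow Z_n(\varphi)$ (the paper phrases this by fixing $\varrho<P(\varphi)$ and choosing one large $\ell$ per $n$, but the substance is identical). Your explicit extraction of the sharper estimate $Z_n(\varphi,I_\ell)\le cK\max\{1,e^{KP(\varphi,I_\ell)}\}e^{nP(\varphi,I_\ell)}$ from the proof of Lemma \ref{positive recurrence} to cover the case $P(\varphi)=\infty$ is a legitimate and welcome refinement, since the paper's literal citation of the lemma (with the constant $\max\{1,e^{KP(\varphi)}\}$ and the term $|P(\varphi)|$) is vacuous in that case.
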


\begin{proof}
Recall that $P(\varphi,I_\ell) \le P(\varphi,I_{\ell+1}) \le P(\varphi)$ for all $\ell \in \N$. Fix $\varrho < P(\varphi)$, $n \in \N$, and let $P = \lim_{\ell \to \infty} P(\varphi,I_\ell)$. Since $\varrho < \tfrac{1}{n} \log Z_n(\varphi)$, we may choose $\ell \in \N$ so that $\Gamma \subset \bigcup_{k=1}^K I_\ell^k$ and $\varrho < \tfrac{1}{n} \log Z_n(\varphi, I_\ell)$. By Lemma \ref{positive recurrence}, we have $Z_n(\varphi, I_\ell) \leq cK \max\{ 1 , e^{K P(\varphi)} \}  e^{n P}$ and thus $\varrho < \tfrac{1}{n}\bigl( \log cK + K|P(\varphi)| \bigr) + P$. The proof is finished by letting $n \rightarrow \infty$.
\end{proof}

\begin{lemma}\label{key QM lemma}
If $\varphi$ is a quasi-multiplicative potential with $P(\varphi)<\infty$, then there exists a constant $C \ge 1$ such that for each $I \subset \N$ with $\Gamma \subset \bigcup_{k=1}^K I^k$ we have
\begin{equation*}
  C^{-1} e^{(n+m) P(\varphi,I)} \varphi(\omega) \leq \sum_{\kappa \in I^n} \sum_{\tau \in I^m} \varphi(\kappa \omega \tau) \leq C e^{(n+m) P(\varphi,I)} \varphi(\omega)
\end{equation*}
for all $m,n \in \N$ and $\omega \in \bigcup_{n \in \N} I^n$.
\end{lemma}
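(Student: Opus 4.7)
The plan is to prove the two inequalities separately, with the upper bound being straightforward sub-multiplicativity and the lower bound requiring a ``connector insertion'' argument that exploits quasi-multiplicativity on both sides of $\omega$.

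For the upper bound, I would simply apply sub-multiplicativity twice to obtain $\varphi(\kappa\omega\tau) \le \varphi(\kappa)\varphi(\omega)\varphi(\tau)$, sum over $\kappa \in I^n$ and $\tau \in I^m$ to get
\[
  \sum_{\kappa \in I^n}\sum_{\tau \in I^m}\varphi(\kappa\omega\tau) \le \varphi(\omega)\, Z_n(\varphi,I)\, Z_m(\varphi,I),
\]
and then invoke Lemma \ref{positive recurrence} to bound each $Z_n(\varphi,I) \le cK\max\{1,e^{KP(\varphi)}\}e^{nP(\varphi,I)}$.

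For the lower bound, write $A(p,q,\omega) := \sum_{\kappa \in I^p, \tau \in I^q}\varphi(\kappa\omega\tau)$. Apply quasi-multiplicativity twice to find, for each pair $(\xi,\eta) \in I^n \times I^m$, connectors $\kappa_1=\kappa_1(\xi,\eta),\,\kappa_2=\kappa_2(\xi,\eta) \in \Gamma$ (which lie in $I^*$ by the hypothesis $\Gamma \subset \bigcup_{k=1}^K I^k$) such that
\[
  \varphi(\xi)\varphi(\omega)\varphi(\eta) \le c^2\,\varphi(\xi \kappa_1 \omega \kappa_2 \eta).
\]
Partition the pairs $(\xi,\eta)$ according to $\ell_i := |\kappa_i| \in \{1,\ldots,K-1\}$. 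For each fixed pair $(\ell_1,\ell_2)$, the map $(\xi,\eta) \mapsto \xi\kappa_1\omega\kappa_2\eta$ is injective, since $\xi$ occupies the first $n$ and $\eta$ the last $m$ symbols of the output, and its image lies in $\{\alpha\omega\beta : \alpha \in I^{n+\ell_1},\,\beta \in I^{\ell_2+m}\}$. Summing thus yields
\[
  c^{-2}\varphi(\omega)\, Z_n(\varphi,I)\, Z_m(\varphi,I) \le \sum_{\ell_1,\ell_2=1}^{K-1} A(n+\ell_1,\, m+\ell_2,\, \omega),
\]
and the lower bound $Z_n(\varphi,I) \ge e^{nP(\varphi,I)}$ from Lemma \ref{positive recurrence} converts the left-hand side into $c^{-2}\varphi(\omega) e^{(n+m)P(\varphi,I)}$.

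The last step is to compare $A(n+\ell_1,m+\ell_2,\omega)$ back to $A(n,m,\omega)$. Splitting each word in $I^{n+\ell_1}$ as $\kappa_0 \kappa$ with $|\kappa_0|=\ell_1$, and analogously on the right, sub-multiplicativity gives
\[
  A(n+\ell_1,\, m+\ell_2,\, \omega) \le Z_{\ell_1}(\varphi,I)\, Z_{\ell_2}(\varphi,I)\, A(n,m,\omega),
\]
and since $\ell_i \le K-1$, Lemma \ref{positive recurrence} uniformly bounds $Z_{\ell_i}(\varphi,I) \le cK\max\{1,e^{KP(\varphi)}\}\max\{1,e^{(K-1)P(\varphi)}\} =: C_0$, a constant depending only on $c,K,P(\varphi)$. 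Combining the last two displays produces the desired constant $C = c^2 K^2 C_0^2$. The main obstacle is the bookkeeping in the injectivity/multiplicity counting after connector insertion, together with confirming that all constants can be chosen independently of the set $I$ (which is automatic once one observes $P(\varphi,I) \le P(\varphi)$).
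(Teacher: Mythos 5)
Your proof is correct and follows essentially the same strategy as the paper: the upper bound via sub-multiplicativity together with Lemma \ref{positive recurrence}, and the lower bound by inserting connecting words from $\Gamma$ on both sides of $\omega$ and then absorbing the extra connector lengths through the uniform bound on $Z_k(\varphi,I)$ for $k\le K$ (which is where the hypothesis $\Gamma\subset\bigcup_{k=1}^K I^k$ and the inequality $P(\varphi,I)\le P(\varphi)$ enter, exactly as you note). The only difference is cosmetic bookkeeping: the paper replaces your injectivity-plus-partition count by summing over \emph{all} candidate connectors at once via the aggregation inequality \eqref{aggregation property}, but both devices yield the same $I$-independent constant.
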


\begin{proof}
The right-hand side inequality follows immediately since
\begin{align*}
\sum_{\kappa \in I^n} \sum_{\tau \in I^m} \varphi(\kappa \omega \tau) &\leq \sum_{\kappa \in I^n} \sum_{\tau \in I^m} \varphi(\kappa)\varphi(\omega) \varphi(\tau) = \varphi(\omega) Z_n(\varphi,I) Z_m(\varphi,I) \\
&\leq \bigl( cK \max\{ 1 , e^{K P(\varphi)} \} \bigr)^2 e^{(n+m) P(\varphi,I)}  \varphi(\omega)
\end{align*}
by Lemma \ref{positive recurrence}.

To show the left-hand side inequality, we first notice that the quasi-multiplicativity implies
\begin{equation}\label{aggregation property}
\varphi(\omega) \varphi(\kappa) \leq c \sum_{k=1}^K \sum_{\alpha \in I^k} \varphi(\omega \alpha \kappa)
\end{equation}
for all $\omega, \kappa \in \Sigma_*$. Applying Lemma \ref{positive recurrence}, along with (\ref{aggregation property}), we obtain
\begin{align*}
e^{(n+m) P(\varphi,I)} \varphi(\omega) &\leq Z_n(\varphi,I) \sum_{\kappa \in I^m} \varphi(\omega) \varphi(\kappa)
\leq c Z_n(\varphi,I) \sum_{\tau \in I^m} \sum_{k=1}^K \sum_{\alpha \in I^k}  \varphi(\omega \alpha \tau)\\
&= c Z_n(\varphi,I) \sum_{k=1}^K \sum_{\alpha \in I^k} \sum_{\tau \in I^{m}} \varphi(\omega \tau \alpha)
\leq c Z_n(\varphi,I) \sum_{k=1}^K Z_k(\varphi,I) \sum_{\tau \in I^{m}} \varphi(\omega \tau)\\
&\leq c^2 \sum_{k=1}^K Z_k(\varphi,I) \sum_{k=1}^K \sum_{\alpha \in I^k} \sum_{\kappa \in I^n} \sum_{\tau \in I^{m}} \varphi(\kappa \alpha \omega \tau)\\
&\leq c^2 \biggl( \sum_{k=1}^K Z_k(\varphi,I)\biggr)^2 \sum_{\kappa \in I^n} \sum_{\tau \in I^{m}} \varphi(\kappa \omega \tau).
\end{align*}
The proof is now finished since
\begin{equation*}
\sum_{k=1}^K Z_k(\varphi,I) \leq cK \max\{ 1 , e^{K P(\varphi)} \}  \sum_{k=1}^K e^{k P(\varphi)} < \infty
\end{equation*}
by Lemma \ref{positive recurrence}.
\end{proof}

We are now ready to show that every finite sub-space carries a Gibbs measure. Observe that, to be able to extend the result into infinitely generated shift space, it is crucial to find a uniform constant.

\begin{prop}\label{finite Gibbs}
If $\varphi$ is a quasi-multiplicative potential with $P(\varphi)<\infty$, then there is $C\ge 1$ so that $\varphi$ has a $C$-Gibbs measure for $\varphi$ on $I$ for all finite subsets $I \subset \N$ with $\Gamma \subset \bigcup_{k=1}^K I^k$.
\end{prop}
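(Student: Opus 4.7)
The approach is the standard Krylov--Bogolyubov construction on the compact sub-shift $I^{\N}$, with quasi-multiplicativity (packaged in Lemmas \ref{positive recurrence} and \ref{key QM lemma}) ensuring that the Gibbs estimate survives both averaging and the weak-$*$ limit, and that the constant obtained is uniform in $I$. Fix a finite $I \subset \N$ with $\Gamma \subset \bigcup_{k=1}^K I^k$. For each $n \in \N$ I would introduce a Borel probability measure $\nu_n$ on $I^{\N}$ by prescribing
\[
  \nu_n([\omega]) = \frac{\varphi(\omega)}{Z_n(\varphi, I)}, \qquad \omega \in I^n,
\]
and extending to longer cylinders by any consistent choice, e.g.\ the product with a fixed reference measure on $I$ past coordinate $n$. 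Then set the Ces\`aro average $\mu_n = \tfrac{1}{n}\sum_{j=0}^{n-1} \sigma^j_* \nu_n$, which is again a probability measure on $I^{\N}$.

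Next I would estimate cylinder masses of $\mu_n$. For $\omega \in I^m$ and $0 \le j \le n-m$, the standard partition gives
\[
  \nu_n(\sigma^{-j}[\omega]) = \sum_{\kappa \in I^j} \sum_{\tau \in I^{n-j-m}} \frac{\varphi(\kappa \omega \tau)}{Z_n(\varphi, I)}.
\]
Lemma \ref{key QM lemma} sandwiches the numerator between $C_0^{\pm 1} e^{(n-m) P(\varphi, I)} \varphi(\omega)$, and Lemma \ref{positive recurrence} sandwiches the denominator between $e^{n P(\varphi, I)}$ and $cK \max\{1, e^{K P(\varphi)}\} e^{n P(\varphi, I)}$. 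Crucially, the constants appearing in both lemmas depend only on $c$, $K$ and $P(\varphi)$, \emph{not} on $I$. Combining yields a uniform $C_1 \ge 1$ with
\[
  C_1^{-1} \varphi(\omega) e^{-m P(\varphi, I)} \le \nu_n(\sigma^{-j}[\omega]) \le C_1 \varphi(\omega) e^{-m P(\varphi, I)}
\]
for every $0 \le j \le n-m$. The remaining at most $m$ indices $j \in \{n-m+1, \dots, n-1\}$ contribute only $O(m/n)$, so averaging over $j$ and taking $n$ large enough produces the same two-sided bound (with a slightly larger uniform constant) for $\mu_n([\omega])$.

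Finally, since $I^{\N}$ is compact in the product topology, $(\mu_n)$ has a weak-$*$ convergent subsequence; call the limit $\mu$. A standard argument shows that $\mu$ is $\sigma$-invariant and supported on $I^{\N}$, so $\mu \in \MM_\sigma(\Sigma)$. Because each cylinder $[\omega]$ is clopen, $\mu([\omega]) = \lim_j \mu_{n_j}([\omega])$, and the two-sided Gibbs inequality with the uniform constant $C_1$ (independent of $I$) passes to the limit. This gives the desired $C$-Gibbs measure for $\varphi$ on $I$.

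The main obstacle I expect is the uniformity of the constant $C$ over all admissible finite $I$. All of the work goes into verifying that the constants appearing in Lemmas \ref{positive recurrence} and \ref{key QM lemma} depend only on the global data $c$, $K$ and $P(\varphi)$; once that bookkeeping is done, the Krylov--Bogolyubov passage is routine. The handling of the ``boundary'' indices $j$ near $n$ in the Ces\`aro average and the choice of a consistent extension of $\nu_n$ to a Borel measure on $I^{\N}$ are minor technical points that do not affect the final bound.
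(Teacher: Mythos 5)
Your proposal is correct and follows essentially the same route as the paper: a Krylov--Bogolyubov averaging of measures weighted by $\varphi$ on $I^n$, with Lemmas \ref{positive recurrence} and \ref{key QM lemma} supplying the $I$-independent Gibbs sandwich that survives the weak-$*$ limit. The only (cosmetic) difference is that the paper builds $\nu_n$ from words of length $3n$ and averages the shifts $\ell\in\{1,\ldots,n\}$ so that the sandwich of Lemma \ref{key QM lemma} applies to \emph{every} index, whereas you work with length $n$ and dispose of the $O(m/n)$ boundary indices in the limit.
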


\begin{proof}
Let $I \subset \N$ be a finite subset with $\Gamma \subset \bigcup_{k=1}^K I^k$. Given a finite word $\omega \in \bigcup_{n \in \N} I^n$ we choose $\tilde{\omega} \in  [\omega]\cap I^{\N}$ and let $\delta_{\omega}$ denote the point mass concentrated at $\tilde{\omega}$. For each $n\in\N$ we define a probability measure $\nu_n$ on $\Sigma$ by
\begin{equation*}
\nu_n = Z_{3n}(\varphi,I)^{-1} \sum_{\omega \in I^{3n}} \varphi(\omega) \delta_{\omega}.
\end{equation*}
Note that $\nu_n$ is supported on $I^{\N}$.
If $m,\ell \in \{ 1,\ldots,n \}$ and $\omega \in I^m$, then
\begin{equation*}
\nu_n \circ \sigma^{-\ell}([\omega]) = \sum_{\kappa \in I^\ell}\sum_{\tau \in I^{3n-\ell-m}} \nu_n([\kappa \omega \tau])
= Z_{3n}(\varphi,I)^{-1} \sum_{\kappa \in I^\ell}\sum_{\tau \in I^{3n-\ell-m}} \varphi(\kappa \omega \tau).
\end{equation*}
According to Lemmas \ref{positive recurrence} and \ref{key QM lemma} there exists a constant $C \ge 1$ so that
\begin{equation}\label{mu Gibbs}
C^{-1}e^{-m P(\varphi,I)}\varphi([\omega]) \leq \nu_n \circ \sigma^{-\ell}([\omega]) \leq C e^{-m P(\varphi,I)}\varphi(\omega)
\end{equation}
for all finite subsets $I \subset \N$ with $\Gamma \subset \bigcup_{k=1}^K I^k$. Observe that the above estimate remains true if we replace $\nu_n \circ \sigma^{-\ell}$ by the probability measure
\begin{eqnarray}\label{almost invariance}
\mu_n = \tfrac{1}{n} \sum_{\ell=1}^{n}\nu_n \circ \sigma^{-\ell}.
\end{eqnarray}
Since $I$ is finite and each $\mu_n$ is supported on the compact set $I^{\N}$, there is a convergent subsequence $(\mu_{n_k})_{k \in \N}$ converging to some limit $\mu$ in the weak$^*$ topology. It follows from \eqref{almost invariance} that $\mu$ is a $\sigma$-invariant probability measure. Moreover, by \eqref{mu Gibbs}, $\mu$ is a $C$-Gibbs measure for $\varphi$ on $I$.
\end{proof}

\begin{theorem} \label{thm:gibbs_exists}
If $\varphi$ is a quasi-multiplicative potential with $P(\varphi)<\infty$, then $\varphi$ has a Gibbs measure $\mu$. Moreover, there is $C \ge 1$ so that for each $\ell \in \N$ there are a finite set $I_\ell \subset \N$ and a $C$-Gibbs measure $\mu_\ell$ for $\varphi$ on $I_\ell$ such that $P(\varphi,I_\ell) \to P(\varphi)$ and $\mu_\ell \to \mu$ in the weak$^*$ topology.
\end{theorem}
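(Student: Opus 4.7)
The plan is to construct the Gibbs measure $\mu$ as a weak$^*$ subsequential limit of the finite-alphabet Gibbs measures produced by Proposition \ref{finite Gibbs}. Every step is essentially a routine passage to the limit; the genuine obstacle is that $\Sigma=\N^\N$ is not compact, so we must rule out escape of mass by establishing tightness of the approximating sequence.

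First I would fix any exhausting sequence $(I_\ell)_{\ell \in \N}$ of finite subsets of $\N$ with $I_\ell \subset I_{\ell+1}$, $\N = \bigcup_\ell I_\ell$, and $\Gamma \subset \bigcup_{k=1}^K I_\ell^k$ for every $\ell$; this is possible because $\Gamma$ is finite. Proposition \ref{finite approximation property} then yields $P(\varphi,I_\ell) \to P(\varphi)$, while Proposition \ref{finite Gibbs} delivers a single constant $C \ge 1$ and, for each $\ell$, a $C$-Gibbs measure $\mu_\ell \in \MM_\sigma(\Sigma)$ for $\varphi$ on $I_\ell$, supported on $I_\ell^\N$.

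The hard part is to show that $(\mu_\ell)$ is tight for the product topology on $\N^\N$. Since $P(\varphi) < \infty$, we may pick $n_0 \in \N$ with $Z_{n_0}(\varphi) < \infty$, and by Lemma \ref{positive recurrence} combined with the monotonicity of $\ell \mapsto P(\varphi,I_\ell)$ the quantity $e^{-n_0 P(\varphi,I_\ell)}$ is bounded uniformly in $\ell$. Using $\sigma$-invariance of $\mu_\ell$ and the $C$-Gibbs bound on cylinders of length $n_0$, for every finite $J \subset \N$ and every $k \in \N$ we get
\[
\mu_\ell\bigl(\{\omega : \omega_k \notin J\}\bigr) = \sum_{i \in \N \setminus J} \mu_\ell([i]) \le C e^{-n_0 P(\varphi,I_\ell)} \sum_{\substack{\tau \in \Sigma_{n_0} \\ \tau_1 \notin J}} \varphi(\tau),
\]
and the right-hand side is a tail of the convergent sum $Z_{n_0}(\varphi)$, so it can be made arbitrarily small uniformly in $\ell$ by enlarging $J$. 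A standard $\eps 2^{-k}$ splitting over coordinates then produces, for each $\eps > 0$, a product-form compact set $K_\eps \subset \Sigma$ with $\mu_\ell(K_\eps) \ge 1-\eps$ for all $\ell$. Prokhorov's theorem extracts a weak$^*$ subsequential limit $\mu$, which is automatically a Borel probability measure on $\Sigma$.

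Finally, $\mu$ is $\sigma$-invariant because $\sigma$ is continuous on $\N^\N$. For the Gibbs inequalities, each cylinder $[\omega]$ is clopen in the product topology and hence a continuity set for $\mu$, so $\mu_{\ell_j}([\omega]) \to \mu([\omega])$ along the chosen subsequence. For a fixed $\omega \in \Sigma_m$ we have $\omega \in I_{\ell_j}^m$ for all large $j$, so
\[
C^{-1} \varphi(\omega) e^{-m P(\varphi,I_{\ell_j})} \le \mu_{\ell_j}([\omega]) \le C \varphi(\omega) e^{-m P(\varphi,I_{\ell_j})}.
\]
Passing to the limit with $P(\varphi,I_{\ell_j}) \to P(\varphi)$ yields the Gibbs inequalities for $\mu$ on $\N$ with the same constant $C$, and relabelling the converging subsequence gives the asserted approximating pairs $(I_\ell,\mu_\ell)$.
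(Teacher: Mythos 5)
Your proposal is correct and follows essentially the same route as the paper: an exhausting sequence of finite alphabets satisfying $\Gamma \subset \bigcup_{k=1}^K I_\ell^k$, the uniform constant from Proposition \ref{finite Gibbs}, tightness via the Gibbs bound on short cylinders together with the convergence of $Z_{n}(\varphi)$ and the uniform lower bound $P(\varphi,I_\ell)\ge P(\varphi,I_1)>-\infty$, and then passage to the limit in the Gibbs inequalities using that cylinders are clopen. The only (inessential) difference is that the paper works directly with length-one cylinders, since Lemma \ref{positive recurrence} already gives $Z_1(\varphi)<\infty$, whereas you allow a general $n_0$.
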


\begin{proof}
Let $(I_\ell)_{\ell \in \N}$ be a sequence of non-empty finite sets $I_\ell \subset \N$ with $I_\ell \subset I_{\ell+1}$ and $\Gamma \subset \bigcup_{k=1}^K I_\ell^k$ for all $\ell \in \N$ such that $\N = \bigcup_{\ell \in \N} I_\ell$. Recalling Proposition \ref{finite approximation property}, we have $\lim_{\ell \rightarrow \infty} P(\varphi,I_\ell) = P(\varphi)$. By Proposition \ref{finite Gibbs}, there exist a constant $C \ge 1$ and for each $\ell \in \N$ a $\sigma$-invariant probability measure $\mu_\ell \in \M_{\sigma}(\Sigma)$ so that
\begin{equation}\label{l Gibbs}
C^{-1} \leq \frac{\mu_\ell([\omega]) }{\varphi(\omega)\exp(-n P(\varphi,I_\ell))} \leq C
\end{equation}
for all $\omega \in I_\ell^n$ and $\ell \in \N$.
It suffices to show that the sequence $(\mu_\ell)_{\ell \in \N}$ is tight, that is, for each $\varepsilon>0$ there exists a compact set $K \subset \Sigma$ for which $\mu_\ell(K) > 1 - \varepsilon$ for all $\ell \in \N$. Then the sequence $(\mu_\ell)_{\ell \in \N}$ has a converging subsequence and it follows from \eqref{l Gibbs} that the limit measure of that subsequence is a Gibbs measure for $\varphi$.

Fix $\varepsilon>0$ and notice that $\sum_{i \in \N}\varphi(i) = Z_1(\varphi) \leq C e^{P(\varphi)}< \infty$ by Lemma \ref{positive recurrence}.
Thus, for each $k \in \N$ there is a finite subset $I_k \subset \N$ so that
\begin{equation*}
\sum_{i \in \N \setminus I_k}\varphi(i) < \varepsilon 2^{-k} C^{-1} e^{P(\varphi, I_1)} \le \varepsilon 2^{-k} C^{-1} e^{P(\varphi, I_\ell)}
\end{equation*}
for all $\ell \in \N$.
We define $K = \left\lbrace \omega \in \Sigma : \omega_k \in I_k \text{ for all } k \in \N \right\rbrace$. It follows from \eqref{l Gibbs} that
\begin{align*}
\mu_\ell(K) &= \mu_\ell\biggl(\Sigma \setminus \bigcup_{k \in \N} \{ \omega \in \Sigma : \omega_k \notin I_k \} \biggr)
= 1- \sum_{k \in \N} \mu_\ell(\{ \omega \in \Sigma : \omega_k \notin I_k \})\\
&= 1- \sum_{k \in \N} \sum_{i \in \N \setminus I_k} \mu_\ell\bigl(\sigma^{-k}([i])\bigr)
= 1- \sum_{k \in \N} \sum_{i \in \N \setminus I_k} \mu_\ell([i])\\
&\geq 1- \sum_{k \in \N} \sum_{i \in \N \setminus I_k} Ce^{-P(\varphi, I_\ell)}\varphi(i)
> 1- \sum_{k \in \N} 2^{-k}\varepsilon = 1-\eps.
\end{align*}
for all $\ell \in \N$.
\end{proof}

\subsection{Variational principle} \label{sec:variational}
We shall study the properties of the Gibbs measure found in Theorem \ref{thm:gibbs_exists}. At the end of this section, we prove Theorem \ref{thm:main_gibbs}.

\begin{theorem}\label{Gibbs implies ergodicity}
If $\varphi$ is a quasi-multiplicative potential with $P(\varphi)<\infty$ and $\mu$ is a Gibbs measure for $\varphi$, then $\mu$ is ergodic. In particular, $\mu$ is the only Gibbs measure for $\varphi$.
\end{theorem}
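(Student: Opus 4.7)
The plan is to extract two-sided correlation estimates on cylinders from the Gibbs property and quasi-multiplicativity, upgrade them to arbitrary $\sigma$-invariant Borel sets, and then conclude ergodicity by a martingale density argument; uniqueness will then fall out by averaging two Gibbs measures.

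First I would prove a one-sided upper correlation bound. Given cylinders $[\omega]$, $[\tau]$ and any integer $n\ge|\omega|$, decompose
\[
  [\omega]\cap\sigma^{-n}([\tau])=\bigsqcup_{\eta\in\Sigma_{n-|\omega|}}[\omega\eta\tau]
\]
and combine submultiplicativity of $\varphi$ with the two-sided Gibbs estimates applied to each of $\mu([\omega\eta\tau])$, $\mu([\omega])$, $\mu([\eta])$, $\mu([\tau])$ to deduce $\mu([\omega\eta\tau])\le C^{4}\mu([\omega])\mu([\eta])\mu([\tau])$. Summing over $\eta$ and using $\sum_{\eta\in\Sigma_{n-|\omega|}}\mu([\eta])=1$ gives $\mu([\omega]\cap\sigma^{-n}([\tau]))\le C^{4}\mu([\omega])\mu([\tau])$, and approximating arbitrary Borel sets by countable disjoint unions of cylinders extends this to
\[
  \mu([\omega]\cap\sigma^{-n}(B))\le C^{4}\mu([\omega])\mu(B)
\]
for every Borel $B\subset\Sigma$ and every $n\ge|\omega|$.

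For the matching lower bound I would use quasi-multiplicativity: for each pair $(\omega,\tau)\in\Sigma_{*}\times\Sigma_{*}$ it supplies some $\kappa\in\Gamma$ with $\varphi(\omega)\varphi(\tau)\le c\varphi(\omega\kappa\tau)$, and combined with the Gibbs property and $|\kappa|<K$ this yields
\[
  \mu([\omega]\cap\sigma^{-(|\omega|+|\kappa|)}([\tau]))\ge \mu([\omega\kappa\tau])\ge c_{0}\,\mu([\omega])\mu([\tau])
\]
for a constant $c_{0}>0$ depending only on $C$, $c$, $K$, and $P(\varphi)$. Now let $B$ be $\sigma$-invariant with $\mu(B)=\delta>0$; by regularity, for every $\eta>0$ one can pick a finite disjoint union of cylinders $V=\bigsqcup_{i}[\tau_{i}]$ with $\mu(B\triangle V)<\eta$. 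For each $i$ the last display supplies some $n_{i}\in\{|\omega|,\ldots,|\omega|+K-1\}$ at which the correlation is at least $c_{0}\mu([\omega])\mu([\tau_{i}])$; summing over $i$ and pigeonholing over the $K$ possible shifts, some $n^{*}$ in that range satisfies $\mu([\omega]\cap\sigma^{-n^{*}}(V))\ge (c_{0}/K)\mu([\omega])\mu(V)$. Since $\sigma$-invariance of $B$ forces $\mu(\sigma^{-n^{*}}(V)\triangle B)=\mu(V\triangle B)<\eta$, letting $\eta\to 0$ produces the uniform estimate $\mu([\omega]\cap B)\ge (c_{0}/K)\,\mu([\omega])\mu(B)$ valid for every cylinder $[\omega]$.

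Ergodicity now follows by martingale convergence: $\mu(B\mid[x|_{n}])\to\mathbf{1}_{B}(x)$ for $\mu$-almost every $x$, so if $\mu(\Sigma\setminus B)>0$ then there is $x\in\Sigma\setminus B$ for which $\mu(B\mid[x|_{n}])\to 0$, contradicting the uniform bound $\mu(B\mid[x|_{n}])\ge (c_{0}/K)\delta>0$; hence $\mu(B)=1$. For uniqueness, if $\mu'$ is another Gibbs measure for $\varphi$ then $\nu=\tfrac{1}{2}(\mu+\mu')$ is $\sigma$-invariant and inherits two-sided Gibbs estimates, hence is itself a Gibbs measure, so ergodicity of $\nu$ forces $\mu=\mu'=\nu$ because ergodic measures are the extreme points of $\MM_{\sigma}(\Sigma)$. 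The main obstacle is the second step: quasi-multiplicativity only produces a useful lower bound at a single shift inside a window of $K$ consecutive values rather than the clean mixing statement $\mu([\omega]\cap\sigma^{-n}([\tau]))\to\mu([\omega])\mu([\tau])$, so one cannot simply pass to the limit in $n$; the pigeonhole over the $K$-window together with the invariance of $B$ is the device that turns the pair-by-pair estimate into the uniform invariant-set estimate consumed by the density argument.
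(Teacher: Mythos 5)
Your proof is correct and follows essentially the same route as the paper: both extract a lower correlation bound on cylinders from quasi-multiplicativity plus the Gibbs property (the paper sums over a $2K$-window of shifts for every $n\ge|\omega|$, you take a single pair-dependent shift and pigeonhole over the $K$-window, which amounts to the same device) and then run the standard approximation/martingale-density argument that the paper delegates to Feng--Lau. The only differences are cosmetic: your upper correlation bound is never actually used, and your averaging argument for uniqueness is a valid variant of the usual ``equivalent ergodic measures coincide'' reasoning.
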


\begin{proof}
  By \eqref{aggregation property}, it is straightforward to see that a $C$-Gibbs measure $\mu$ satisfies
  \[
    \sum_{p,q = 1}^K \mu\bigl( [\omega] \cap \sigma^{-(n+p+q)}([\tau]) \bigr) \ge c^{-2}C^{-1}e^{-2K|P(\varphi)|} \mu([\omega]) \mu([\tau])
  \]
  for all $\omega,\tau \in \Sigma$ and $n \ge |\omega|$.
  The proof follows now by standard arguments; see e.g.\ \cite[Theorem 3.2]{FengLau2002}.
\end{proof}

\begin{lemma} \label{thm:gibbs_equilibrium}
If $\varphi$ is a quasi-multiplicative potential with $P(\varphi)<\infty$ and $\mu$ is the Gibbs measure for $\varphi$ on a set $I \subset \N$, then $P(\varphi,I) = P_\mu(\varphi)$.
\end{lemma}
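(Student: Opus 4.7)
The proof proposal is essentially a direct calculation from the Gibbs inequality. Let me outline it.

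The plan is to start from the two-sided Gibbs bound
\[
C^{-1} \leq \frac{\mu([\omega])}{\varphi(\omega)\exp(-nP(\varphi,I))} \leq C
\]
which holds for every $\omega \in I^n$ by hypothesis. Taking logarithms and rearranging,
\[
\log\frac{\varphi(\omega)}{\mu([\omega])} = nP(\varphi,I) + r_n(\omega),
\]
where $|r_n(\omega)| \le \log C$ is a bounded error term depending on $\omega$.

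Next I would multiply this identity by $\mu([\omega])$ and sum over $\omega \in \Sigma_n$. Because $\mu$ is a Gibbs measure for $\varphi$ on $I$ it is supported on $I^{\N}$, so $\mu([\omega]) = 0$ for every $\omega \in \Sigma_n \setminus I^n$; under the convention $0\log(x/0)=0$, the sum collapses to the sum over $I^n$. Since $\sum_{\omega \in I^n}\mu([\omega]) = 1$, this gives
\[
\sum_{\omega \in \Sigma_n}\mu([\omega])\log\frac{\varphi(\omega)}{\mu([\omega])} = nP(\varphi,I) + R_n,
\]
with $|R_n| \le \log C$. Dividing by $n$ and invoking Lemma \ref{measure_theoretical_pressure} yields $P_\mu(\varphi) = \lim_{n\to\infty}\tfrac{1}{n}(nP(\varphi,I)+R_n) = P(\varphi,I)$, which is exactly the claim.

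There is no serious obstacle here: once one notices that the Gibbs bound reduces $\log(\varphi(\omega)/\mu([\omega]))$ to $nP(\varphi,I)$ up to a uniformly bounded remainder, the result is just an averaging argument, and the only subtlety to mention is the support of $\mu$ (so that the sum in the definition of $P_\mu$ sees only $\omega \in I^n$, where the Gibbs estimate applies). The existence of $\mu$ and the well-definedness of $P_\mu(\varphi)$ have already been supplied by Theorem \ref{thm:gibbs_exists} and Lemma \ref{measure_theoretical_pressure}, so no separate convergence argument is required.
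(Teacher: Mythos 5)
Your proof is correct and is essentially the same as the paper's: both rewrite $\log\bigl(\varphi(\omega)/\mu([\omega])\bigr)$ as $nP(\varphi,I)$ plus a remainder bounded by $\log C$ via the Gibbs inequality, sum against $\mu$ over $I^n$ (using that $\mu$ is supported on $I^{\N}$), divide by $n$, and let $n\to\infty$. No issues.
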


\begin{proof}
By the definition of a Gibbs measure, we get
\begin{equation*}
  P_\mu(\varphi) = \lim_{n \to \infty} \tfrac{1}{n} \sum_{\omega \in I^n} \mu([\omega])\log\frac{\varphi(\omega)}{\mu([\omega])} = \lim_{n \to \infty} \tfrac{1}{n} \sum_{\omega \in I^n} \mu([\omega])\log e^{nP(\varphi,I)} = P(\varphi,I)
\end{equation*}
as desired.
\end{proof}

\begin{lemma} \label{thm:eq_abs_cont}
If $\varphi$ is a quasi-multiplicative potential with $P(\varphi)<\infty$ and $\mu$ is the Gibbs measure for $\varphi$, then any measure $\nu \in \M_{\sigma}(\Sigma)$ with $P(\varphi) \le P_\nu(\varphi)$ is absolutely continuous with respect to $\mu$.
\end{lemma}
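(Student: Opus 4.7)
The strategy is to use the Gibbs bounds on $\mu$ to control the relative entropies $D_n(\nu\,\|\,\mu) = \sum_{\omega \in \Sigma_n} \nu([\omega]) \log\frac{\nu([\omega])}{\mu([\omega])}$ uniformly in $n$, and then to extract an absolutely continuous limit via a standard martingale argument.

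First, combine the hypothesis $P(\varphi)\le P_\nu(\varphi)$ with the variational inequality from Lemma~\ref{variational principal upper bound} to conclude $P_\nu(\varphi)=P(\varphi)$, and in particular that this common value is finite. Since $\mu$ is a $C$-Gibbs measure, we have $-\log\mu([\omega]) \le \log C-\log\varphi(\omega)+nP(\varphi)$ for every $\omega \in \Sigma_n$. Combining this with the infimum characterisation \eqref{eq:def_measure_pressure}, which gives $\tfrac{1}{n}\bigl(H_n(\nu) + \sum_{\omega \in \Sigma_n}\nu([\omega])\log\varphi(\omega)\bigr)\ge P_\nu(\varphi)=P(\varphi)$ for every $n$, a direct computation yields
\begin{equation*}
  D_n(\nu\,\|\,\mu) \le -H_n(\nu) - \sum_{\omega \in \Sigma_n}\nu([\omega])\log\varphi(\omega) + nP(\varphi) + \log C \le \log C
\end{equation*}
for all $n \in \N$, where $H_n(\nu) = -\sum_{\omega \in \Sigma_n}\nu([\omega])\log\nu([\omega])$.

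Next, let $\mathcal{F}_n$ be the $\sigma$-algebra generated by the cylinders $[\omega]$ with $\omega \in \Sigma_n$, and note that $\bigcup_n \mathcal{F}_n$ generates the Borel $\sigma$-algebra on $\Sigma$. Since the Gibbs property forces $\mu([\omega])>0$ for every $\omega \in \Sigma_*$, the function $f_n(\omega) = \nu([\omega|_n])/\mu([\omega|_n])$ is well-defined, non-negative, and satisfies $\int f_n\,d\mu = 1$. A direct calculation shows that $(f_n)_{n \in \N}$ is a martingale with respect to $\mu$ and the filtration $(\mathcal{F}_n)$. The bound $\int f_n\log f_n\,d\mu = D_n(\nu\,\|\,\mu) \le \log C$ uniformly in $n$ gives, via the de la Vall\'ee-Poussin criterion, that $(f_n)$ is uniformly $\mu$-integrable.

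By the martingale convergence theorem, $f_n \to f$ both $\mu$-a.e.\ and in $L^1(\mu)$ for some $f \ge 0$ with $\int f\,d\mu = 1$. For any cylinder $[\omega]$ with $\omega \in \Sigma_n$ and any $m \ge n$ we have $\int_{[\omega]} f_m\,d\mu = \nu([\omega])$, and the $L^1$ convergence forces $\nu([\omega]) = \int_{[\omega]} f\,d\mu$. Since the cylinders form a generating semi-algebra for the Borel $\sigma$-algebra, $d\nu = f\,d\mu$ and hence $\nu \ll \mu$. The main obstacle is the uniform relative-entropy bound in the first paragraph; once that is in place the rest is a routine martingale/uniform integrability argument.
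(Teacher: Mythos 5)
Your proof is correct, but it takes a genuinely different route from the paper's. The paper argues by contradiction: given a Borel set $B$ with $\mu(B)=0<\nu(B)$, it approximates $B$ by unions of $n$-cylinders and applies the Jensen estimate \eqref{eq:jensen_eq_calc} separately to the cylinders inside and outside the approximating set; combined with the Gibbs upper bound this forces $0\le \nu(B_n)\log\mu(B_n)+\nu(\Sigma\setminus B_n)\log\mu(\Sigma\setminus B_n)+O(1)\to-\infty$. You instead extract from the same two ingredients (the Gibbs lower bound on $\mu([\omega])$ and $P_\nu(\varphi)\ge P(\varphi)$) a uniform bound $D_n(\nu\,\|\,\mu)\le\log C$ on the cylinder-level relative entropies, and then run the standard uniformly integrable martingale argument to produce the density $f=d\nu/d\mu$ explicitly. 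Your approach buys more — it identifies the Radon--Nikodym derivative as the $L^1(\mu)$ limit of the ratios $\nu([\omega|_n])/\mu([\omega|_n])$ — at the cost of invoking martingale convergence and the de la Vall\'ee-Poussin criterion, whereas the paper's argument is self-contained modulo the approximation of Borel sets by cylinder unions. One small point of care: in your displayed inequality the quantities $H_n(\nu)$ and $\sum_{\omega\in\Sigma_n}\nu([\omega])\log\varphi(\omega)$ can individually be $+\infty$ and $-\infty$, so you should work with the combined sum $\sum_{\omega\in\Sigma_n}\nu([\omega])\log\bigl(\varphi(\omega)/\nu([\omega])\bigr)$, which is finite (it is bounded below by $nP_\nu(\varphi)$ and above by $\log Z_n(\varphi)+\tfrac1e$ via \eqref{eq:jensen_eq_calc} and Lemma~\ref{positive recurrence}); the estimate $D_n(\nu\,\|\,\mu)\le -\sum_{\omega\in\Sigma_n}\nu([\omega])\log\bigl(\varphi(\omega)/\nu([\omega])\bigr)+nP(\varphi)+\log C\le\log C$ then goes through without any $\infty-\infty$ ambiguity.
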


\begin{proof}
To prove the claim, we follow the ideas of \cite{Bowen1975} and \cite[Theorem 3.6]{KaenmakiVilppolainen2010}. Let $\mu$ be a $C$-Gibbs measure. Assume to the contrary that there exist a measure $\nu \in \M_{\sigma}(\Sigma)$ with $P(\varphi) \le P_\mu(\varphi)$ and a Borel set $B \subset \Sigma$ so that $\mu(B)=0$ and $\nu(B)>0$. Since the semi-algebra of cylinder sets generates the Borel $\sigma$-algebra we may choose a sequence of sets $(B_n)_{n \in \N}$ such that each $B_n$ is a union of cylinders of length $n$ with $(\mu+\nu)(B_n \triangle B)\rightarrow 0$ as $n \rightarrow \infty$. Let $B_n' = \{ \omega \in \Sigma_n : [\omega] \subset B_n \}$. Hence, by \eqref{eq:def_measure_pressure} and \eqref{eq:jensen_eq_calc}, we have
\begin{equation} \label{eq:boring_calculation}
\begin{split}
  0 &\le \sum_{\omega \in B_n'} \nu([\omega]) \log\frac{\varphi(\omega)}{\nu([\omega])} + \sum_{\omega \in \Sigma \setminus B_n'} \nu([\omega]) \log\frac{\varphi(\omega)}{\nu([\omega])} - nP(\varphi) \\
  &\le \nu(B_n) \log\sum_{\omega \in \B_n'} \varphi(\omega) + \nu(\Sigma \setminus B_n) \log\sum_{\omega \in \Sigma \setminus \B_n'} \varphi(\omega) - nP(\varphi) + \tfrac{2}{e} \\
  &\le \nu(B_n)\log\mu(B_n) + \nu(\Sigma \setminus B_n)\log\mu(\Sigma \setminus B_n) + \log C + \tfrac{2}{e}
\end{split}
\end{equation}
for all $n$ large enough. Since $\nu(B_n) \to \nu(B)$ and $\mu(B_n) \to 0$ the right-hand side of \eqref{eq:boring_calculation} tends to $-\infty$ as $n \to \infty$. This contradiction finishes the proof.
\end{proof}

We are now ready to prove Theorem \ref{thm:main_gibbs}.

\begin{proof}[Proof of Theorem \ref{thm:main_gibbs}]
  Let us first assume that $P(\varphi)=\infty$. Let $(I_\ell)_{\ell \in \N}$ be a sequence of non-empty finite sets with $I_\ell \subset \N$ and $\Gamma \subset \bigcup_{k=1}^K I_\ell^k$ for all $\ell \in \N$ such that $\N = \bigcup_{\ell \in \N} I_\ell$. Recalling Proposition \ref{finite Gibbs}, let $\mu_\ell$ be a Gibbs measure for $\varphi$ on $I_\ell$ for all $\ell \in \N$. Now
  \begin{equation*}
    P(\varphi) = \sup\{ P(\varphi,I_\ell) : \ell \in \N \} = \sup\{ P_{\mu_\ell}(\varphi) : \ell \in \N \}
    \le \sup\{ P_{\mu}(\varphi) : \mu \in \MM_\sigma(\Sigma) \}
  \end{equation*}
  by Proposition \ref{finite approximation property} and Lemma \ref{thm:gibbs_equilibrium}.

  If $P(\varphi)<\infty$, then it suffices to prove that a Gibbs measure $\mu$ is the only invariant measure for which $P(\varphi) = P_\mu(\varphi)$. Theorem \ref{Gibbs implies ergodicity} shows that $\mu$ is ergodic and Lemma \ref{thm:gibbs_equilibrium} shows that it satisfies $P(\varphi) = P_\mu(\varphi)$. If $\nu \in \MM_\sigma(\Sigma)$ is an invariant measure satisfying $P(\varphi) = P_\mu(\varphi)$, then $\nu$ is absolutely continuous with respect to $\mu$ by Lemma \ref{thm:eq_abs_cont}. It follows from the proof of \cite[Theorem 6.10(iii)]{Walters1982} that $\nu = \mu$.
\end{proof}

\subsection{Differentiation of pressure}

Given a pair of potentials $\varphi_1, \varphi_2 \colon \Sigma_* \rightarrow [0,\infty)$ we let $\varphi_1 \cdot \varphi_2$ denote the potential defined by $\omega \mapsto \varphi_1(\omega) \varphi_2(\omega)$ for all $\omega \in \Sigma_*$. Given a function $\phi \colon \Sigma \rightarrow \R$ we define an associated potential $e_\phi \colon \Sigma_* \rightarrow [0,\infty)$ by setting
\begin{equation*}
  e_\phi(\omega) = \exp\bigl(\sup\{ S_n\phi(\tau): \tau \in [\omega] \} \bigr)
\end{equation*}
for all $\omega \in \Sigma_n$ and $n \in \N$. Recall that $S_n\phi(\tau) = \sum_{j=0}^{n-1}\phi(\sigma^j(\tau))$ for all $\tau \in \Sigma$.

\begin{lemma}\label{compound quasi-multiplicative}
If $\varphi$ is a quasi-multiplicative potential and $\phi \colon \Sigma \to \R$ has summable variations, then the potential $\varphi \cdot e_\phi$ is quasi-multiplicative.
\end{lemma}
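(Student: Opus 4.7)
The plan is to show the stronger statement that $e_\phi$ is itself sub-multiplicative and satisfies a quasi-multiplicative inequality with the same set $\Gamma$ that witnesses quasi-multiplicativity for $\varphi$; the conclusion then follows by multiplying the two inequalities term by term. Throughout I will write $V = \sum_{n=1}^\infty \var_n\phi$, which is finite by the summable variations hypothesis, and let $c$, $\Gamma$, $K$ be as in the definition of quasi-multiplicativity for $\varphi$.

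Sub-multiplicativity of $e_\phi$ is immediate: for $\omega \in \Sigma_n$, $\tau \in \Sigma_m$, and any $\xi \in [\omega\tau]$ the decomposition $S_{n+m}\phi(\xi) = S_n\phi(\xi) + S_m\phi(\sigma^n \xi)$, together with $\xi \in [\omega]$ and $\sigma^n \xi \in [\tau]$, gives $S_{n+m}\phi(\xi) \le \log e_\phi(\omega) + \log e_\phi(\tau)$; taking the sup over $\xi$ yields $e_\phi(\omega\tau) \le e_\phi(\omega)e_\phi(\tau)$, which combined with sub-multiplicativity of $\varphi$ proves sub-multiplicativity of $\varphi \cdot e_\phi$. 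For the quasi-multiplicative inequality, given $\omega \in \Sigma_n$ and $\tau \in \Sigma_m$, I would first produce $\kappa \in \Gamma$ (with $k := |\kappa| \le K-1$) such that $\varphi(\omega)\varphi(\tau) \le c\, \varphi(\omega\kappa\tau)$. Given $\epsilon > 0$ I would then choose $\eta_\tau \in [\tau]$ with $S_m\phi(\eta_\tau) > \log e_\phi(\tau) - \epsilon$, take as test point $\xi = \omega\kappa\eta_\tau \in [\omega\kappa\tau]$, and split $\log e_\phi(\omega\kappa\tau) \ge S_n\phi(\xi) + S_k\phi(\sigma^n \xi) + S_m\phi(\sigma^{n+k}\xi)$. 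The standard variations estimate, which bounds the oscillation of $S_n\phi$ on any cylinder of length $n$ by $V$, yields $S_n\phi(\xi) \ge \log e_\phi(\omega) - V$, while the third term equals $S_m\phi(\eta_\tau) > \log e_\phi(\tau) - \epsilon$ by construction.

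The main obstacle is the middle term $S_k\phi(\kappa\eta_\tau)$, since $\phi$ need not be bounded and no uniform a priori lower bound is available. I would resolve this by fixing once and for all a reference point $\zeta_0 \in \Sigma$ and observing, via the same variations estimate applied to $\kappa\eta_\tau$ and $\kappa\zeta_0$ (which agree on the first $k$ coordinates), that $S_k\phi(\kappa\eta_\tau) \ge S_k\phi(\kappa\zeta_0) - V$. Crucially, $\kappa$ ranges over the finite set $\Gamma$, so $M := \min_{\kappa \in \Gamma} S_{|\kappa|}\phi(\kappa\zeta_0)$ is a finite constant independent of $\omega$ and $\tau$, giving $S_k\phi(\kappa\eta_\tau) \ge M - V$. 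Collecting the three estimates and letting $\epsilon \to 0$ gives $e_\phi(\omega)e_\phi(\tau) \le e^{2V - M}\, e_\phi(\omega\kappa\tau)$; multiplying by the quasi-multiplicative inequality for $\varphi$ then yields $(\varphi \cdot e_\phi)(\omega)(\varphi \cdot e_\phi)(\tau) \le c\, e^{2V - M}\, (\varphi \cdot e_\phi)(\omega\kappa\tau)$, so $\varphi \cdot e_\phi$ is quasi-multiplicative with the same $\Gamma$ and constant $c\, e^{2V - M}$.
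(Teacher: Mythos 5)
Your proof is correct and follows essentially the same route as the paper: the paper notes that summable variations makes $e_\phi$ almost-multiplicative, $c^{-1}e_\phi(\omega)e_\phi(\kappa) \le e_\phi(\omega\kappa) \le e_\phi(\omega)e_\phi(\kappa)$, and then combines this with the quasi-multiplicativity of $\varphi$, absorbing the factor $e_\phi(\kappa)$ for $\kappa$ in the finite set $\Gamma$ into the constant. You have simply made explicit the details (the choice of a near-optimal point in $[\tau]$ and the uniform bound $M$ over $\Gamma$) that the paper leaves to the reader.
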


\begin{proof}
If $c=\exp(\sum_{n=1}^\infty \var_n(\phi))$, then
\begin{equation*}
  c^{-1}e_\phi(\omega) e_\phi(\kappa) \le e_\phi(\omega \kappa) \le e_\phi(\omega) e_\phi(\kappa)
\end{equation*}
for all $\omega,\kappa \in \Sigma_*$. The claim follows from the quasi-multiplicativity of $\varphi$.
\end{proof}

\begin{lemma}\label{convexity of the pressure function}
If $\varphi$ is a sub-multiplicative potential with $P(\varphi)<\infty$ and $\phi \colon \Sigma \to \R$ is bounded with summable variations, then the function $q \mapsto P\left(\varphi \cdot e(q\phi)\right)$ is convex.
\end{lemma}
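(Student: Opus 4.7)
The plan is to prove the Hölder-type inequality
\[
  Z_n\bigl(\varphi\cdot e_{q_\lambda\phi}\bigr) \le C(q_1,q_2)\cdot Z_n\bigl(\varphi\cdot e_{q_1\phi}\bigr)^\lambda\, Z_n\bigl(\varphi\cdot e_{q_2\phi}\bigr)^{1-\lambda}
\]
for all $n \in \N$, $q_1,q_2\in\R$, $\lambda\in[0,1]$, and $q_\lambda := \lambda q_1+(1-\lambda)q_2$, with a constant $C(q_1,q_2)$ independent of $n$. Taking $\tfrac{1}{n}\log$ and letting $n \to \infty$ then yields convexity of $q \mapsto P(\varphi\cdot e_{q\phi})$. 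Boundedness of $\phi$ is used only to ensure $P(\varphi\cdot e_{q\phi}) \le P(\varphi) + |q|\|\phi\|_\infty<\infty$ so all the quantities in the limit are finite; the convexity itself will be a direct Hölder estimate.

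The preparatory step is to replace $e_{q\phi}(\omega)$ by $\exp(qS_n\phi(\tau_\omega))$ for a convenient choice of $\tau_\omega\in[\omega]$. Writing $V := \sum_{k=1}^\infty\var_k\phi<\infty$ and denoting by $M_n(\omega)$ and $m_n(\omega)$ the supremum and infimum of $S_n\phi$ over $[\omega]$, summable variations give $M_n(\omega)-m_n(\omega)\le V$, so for every $\tau_\omega\in[\omega]$,
\[
  e^{-|q|V}\exp\bigl(qS_n\phi(\tau_\omega)\bigr) \le e_{q\phi}(\omega) \le e^{|q|V}\exp\bigl(qS_n\phi(\tau_\omega)\bigr),
\]
regardless of the sign of $q$ (the distinction between sup and inf when $q<0$ is absorbed into the factor $e^{|q|V}$). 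In particular, up to a multiplicative constant depending only on $q_1,q_2,V,\lambda$, the quantity $e_{q_\lambda\phi}(\omega)$ is bounded above by $\exp(q_\lambda S_n\phi(\tau_\omega))$ for any $\tau_\omega\in[\omega]$.

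Now fix such a $\tau_\omega$ and factor
\[
  \varphi(\omega)\exp\bigl(q_\lambda S_n\phi(\tau_\omega)\bigr)
  = \bigl[\varphi(\omega)\exp(q_1 S_n\phi(\tau_\omega))\bigr]^{\lambda}\,\bigl[\varphi(\omega)\exp(q_2 S_n\phi(\tau_\omega))\bigr]^{1-\lambda}.
\]
Summing over $\omega\in\Sigma_n$ and applying Hölder's inequality with conjugate exponents $1/\lambda$ and $1/(1-\lambda)$ (the boundary cases $\lambda\in\{0,1\}$ being trivial), then undoing the replacement via the bound $\exp(q_iS_n\phi(\tau_\omega)) \le e^{|q_i|V} e_{q_i\phi}(\omega)$, produces the desired inequality on partition functions. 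Taking $\tfrac{1}{n}\log$, the constant $C(q_1,q_2)$ contributes $0$ in the limit and the convexity
\[
  P\bigl(\varphi\cdot e_{q_\lambda\phi}\bigr) \le \lambda P\bigl(\varphi\cdot e_{q_1\phi}\bigr) + (1-\lambda) P\bigl(\varphi\cdot e_{q_2\phi}\bigr)
\]
follows. There is no genuine obstacle here; the only subtlety is the dependence of $e_{q\phi}$ on the sign of $q$ through its definition as a supremum, which is precisely what summable variations are designed to neutralise.
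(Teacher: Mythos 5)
Your proof is correct and follows essentially the same route as the paper: a pointwise log-convexity estimate followed by H\"older's inequality on the partition functions $Z_n$, with the resulting constant vanishing after taking $\tfrac1n\log$. The only difference is that your detour through a base point $\tau_\omega$ and the summable-variations constant $e^{|q|V}$ is unnecessary: the inequality $e_{q_\lambda\phi}(\omega)\le e_{q_1\phi}(\omega)^{\lambda}e_{q_2\phi}(\omega)^{1-\lambda}$ already holds exactly for the sup-based definition, since the supremum of a convex combination is at most the convex combination of the suprema, which is how the paper argues.
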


\begin{proof}
If $q, p \in \R$ and $0 \le \lambda \le 1$, then
\begin{equation*}
 \varphi(\omega) e_{(\lambda q+(1-\lambda)p)\phi}(\omega) \le \bigl( \varphi(\omega) e_{q\phi}(\omega) \bigr)^\lambda \bigl( \varphi(\omega) e_{p\phi}(\omega) \bigr)^{1-\lambda}
\end{equation*}
for all $\omega \in \Sigma_*$.
Thus, by H\"{o}lder's inequality, we have
\begin{equation*}
\sum_{\omega \in \Sigma_n} \varphi(\omega) e_{(\lambda q+(1-\lambda)p)\phi}(\omega) \le \biggl( \sum_{\omega \in \Sigma_n} \varphi(\omega) e_{q\phi}(\omega) \biggr)^{\lambda} \biggl( \sum_{\omega \in \Sigma_n} \varphi(\omega) e_{p\phi}(\omega) \biggr)^{1-\lambda}.
\end{equation*}
Taking logarithms, dividing by $n$, and letting $n \rightarrow \infty$ gives the claim.
\end{proof}

\begin{lemma}\label{differentiation of pressure}
If $\varphi$ is a quasi-multiplicative potential with $P(\varphi)<\infty$, $\mu$ is the Gibbs measure for $\varphi$, and $\phi \colon \Sigma \to \R$ is bounded with summable variations, then the function $q \mapsto P\left(\varphi \cdot e_{q\phi}\right)$ is differentiable at zero with derivative
\begin{equation*}
\frac{\partial P\left(\varphi \cdot e_{q\phi}\right)}{\partial q}\bigg|_{q=0} = \int \phi d\mu.
\end{equation*}
\end{lemma}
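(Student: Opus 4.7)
Write $g(q) = P(\varphi \cdot e_{q\phi})$ and $A = \int \phi \, d\mu$. By Lemma \ref{convexity of the pressure function}, $g$ is convex, so the one-sided derivatives $g'_\pm(0)$ exist and satisfy $g'_-(0) \leq g'_+(0)$; the plan is to show $g'_-(0) = A = g'_+(0)$. First I would establish that for any invariant measure $\nu$ to which Lemma \ref{variational principal upper bound} applies, $P_\nu(\varphi \cdot e_{q\phi}) = P_\nu(\varphi) + q \int \phi \, d\nu$. The point is that $\Lambda_\nu(e_{q\phi}) = q \int \phi \, d\nu$, which follows from the summable variations: for $\tau, \tau' \in [\omega]$ with $\omega \in \Sigma_n$, $|S_n\phi(\tau) - S_n\phi(\tau')| \leq V := \sum_{k \geq 1} \var_k \phi < \infty$, hence
\[
  \tfrac{1}{n} \sum_{\omega \in \Sigma_n} \nu([\omega]) \sup_{\tau \in [\omega]} S_n\phi(\tau) = \tfrac{1}{n} \int S_n\phi \, d\nu + O(V/n) \to \int \phi \, d\nu
\]
by invariance. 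For $\nu = \mu$ the finiteness hypotheses hold because Lemma \ref{thm:gibbs_equilibrium} combined with Lemma \ref{variational principal upper bound} forces both $h_\mu$ and $\Lambda_\mu(\varphi)$ to be finite. Applying the variational principle (Theorem \ref{thm:main_gibbs}) to $\varphi \cdot e_{q\phi}$ now yields $g(q) \geq P_\mu(\varphi \cdot e_{q\phi}) = g(0) + qA$, from which $g'_-(0) \leq A \leq g'_+(0)$ follows by dividing by $q$ of either sign.

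For the matching inequalities I would use the Gibbs/equilibrium measure of the perturbed potential. Since $\phi$ is bounded with $M := \|\phi\|_\infty$, one has $|g(q) - g(0)| \leq |q|M$, so $g(q) < \infty$ for every $q$. By Lemma \ref{compound quasi-multiplicative} and Theorem \ref{thm:main_gibbs} there is a unique $\mu_q \in \MM_\sigma(\Sigma)$ with $g(q) = P_{\mu_q}(\varphi \cdot e_{q\phi}) = P_{\mu_q}(\varphi) + q \int \phi \, d\mu_q$, and $\mu_q$ is Gibbs for $\varphi \cdot e_{q\phi}$. Using $P_{\mu_q}(\varphi) \leq P(\varphi) = g(0)$ from the variational principle for $\varphi$, this gives
\[
  g(q) \leq g(0) + q \int \phi \, d\mu_q,
\]
so $g'_+(0) \leq \liminf_{q \to 0^+} \int \phi \, d\mu_q$ and $g'_-(0) \geq \limsup_{q \to 0^-} \int \phi \, d\mu_q$. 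What remains is to prove that $\int \phi \, d\mu_q \to A$ as $q \to 0$.

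This convergence is the main obstacle, because $\Sigma$ is non-compact. I would first verify that the Gibbs constants of the $\mu_q$ can be chosen uniformly in $|q| \leq 1$: the quasi-multiplicativity constant of $\varphi \cdot e_{q\phi}$ is bounded by $c e^V$ (Lemma \ref{compound quasi-multiplicative}), $\Gamma$ and $K$ are unchanged, and $|g(q)| \leq |P(\varphi)| + M$, so retracing Lemmas \ref{positive recurrence} and \ref{key QM lemma} together with Proposition \ref{finite Gibbs} yields a common constant $C$. Consequently $\mu_q([i]) \leq C' \varphi(i)$ uniformly in $q$, and since $\sum_i \varphi(i) = Z_1(\varphi) < \infty$, the tightness argument of Theorem \ref{thm:gibbs_exists} shows that $\{\mu_q : |q| \leq 1\}$ is tight. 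Along any $q_n \to 0$, Prokhorov's theorem yields a weak-$*$ convergent subsequence $\mu_{q_n} \to \mu_*$; invariance passes to the limit. Cylinders are clopen, so $\mu_*([\omega]) = \lim_n \mu_{q_n}([\omega])$ for every $\omega \in \Sigma_*$; using $e_{q_n \phi}(\omega) \to 1$ and $g(q_n) \to g(0)$, the uniform Gibbs bounds for $\mu_{q_n}$ pass to the limit and show that $\mu_*$ is a $C$-Gibbs measure for $\varphi$. Theorem \ref{Gibbs implies ergodicity} then forces $\mu_* = \mu$. Since $\phi$ is bounded and continuous ($\var_n \phi \to 0$ gives continuity on $\Sigma$), weak-$*$ convergence yields $\int \phi \, d\mu_{q_n} \to A$; as this holds along every subsequence, $\int \phi \, d\mu_q \to A$ as $q \to 0$. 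Combined with the previous steps this gives $g'_-(0) = g'_+(0) = A$, as required.
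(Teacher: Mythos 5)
Your argument is correct in outline but follows a genuinely different route from the paper. You prove $g'_\pm(0)=\int\phi\,d\mu$ by introducing the perturbed Gibbs measures $\mu_q$ for $\varphi\cdot e_{q\phi}$, showing $g(q)\le g(0)+q\int\phi\,d\mu_q$, and then establishing weak$^*$ continuity $\mu_q\to\mu$ as $q\to 0$ via uniform Gibbs constants, tightness, and uniqueness of the Gibbs measure for $\varphi$. The paper instead never touches the perturbed equilibrium states: for the inequality $P'_+(0)\le\beta$ (for any $\beta>\int\phi\,d\mu$) it works directly with the Gibbs measure $\mu$ of the \emph{unperturbed} potential, using Birkhoff's ergodic theorem and Egorov to show that the cylinders on which $S_n\phi>n\beta$ carry small $\mu$-mass, and then splits the partition sum for $e^{nP(\gamma/n)}$ accordingly before letting $n\to\infty$ and $\gamma\to\infty$. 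The paper's route is shorter because it sidesteps exactly the step that is the main burden of yours, namely verifying that the Gibbs constants of Lemmas \ref{positive recurrence}, \ref{key QM lemma} and Proposition \ref{finite Gibbs} can be taken uniformly over $|q|\le 1$; you only sketch this, and while it is plausible and standard (all the relevant quantities depend continuously on the quasi-multiplicativity constant $ce^{|q|V}$ and on $P(\varphi\cdot e_{q\phi})\in[P(\varphi)-\|\phi\|,P(\varphi)+\|\phi\|]$), it is real work that the paper avoids. The lower-bound half of your argument ($g(q)\ge g(0)+q\int\phi\,d\mu$ via the variational inequality) coincides with the paper's.

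One small inaccuracy: you justify $P_\nu(\varphi\cdot e_{q\phi})=P_\nu(\varphi)+q\int\phi\,d\nu$ for $\nu=\mu$ by claiming that $P_\mu(\varphi)=P(\varphi)<\infty$ forces both $h_\mu<\infty$ and $\Lambda_\mu(\varphi)>-\infty$. Finiteness of the combined quantity $P_\mu(\varphi)$ does not by itself give finiteness of the two summands (the paper even devotes Lemma \ref{thm:lyapunov_finite_for_gibbs} to sufficient conditions for $\Lambda_\mu(\varphi^{s_0})>-\infty$). This does not damage your proof, because the identity you actually need follows without any splitting into $h_\nu+\Lambda_\nu$: in the defining sums for $P_\nu(\varphi\cdot e_{q\phi})$ the term $\tfrac1n\sum_\omega\nu([\omega])\log e_{q\phi}(\omega)$ converges to the finite limit $q\int\phi\,d\nu$ (as your display shows), so it can be separated off from the $\varphi$-part regardless of whether $h_\nu$ or $\Lambda_\nu(\varphi)$ is finite. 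You should phrase it that way rather than through Lemma \ref{variational principal upper bound}.
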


\begin{proof}
To prove the claim, we use some of the ideas used in the proof of \cite[Thorem 4.4]{KaenmakiVilppolainen2010}. It suffices to show that the right derivative exists at zero and equals to $\int \phi d\mu$ since applying this result with $-\phi$ in place of $\phi$ gives
\begin{equation*}
\lim_{q \uparrow 0}\tfrac{1}{q} \bigl(P(\varphi \cdot e_{q\phi}) - P(\varphi)\bigr) =
-\lim_{q \downarrow 0}\tfrac{1}{q} \bigl(P(\varphi \cdot e_{q(-\phi)}) - P(\varphi)\bigr) = \int \phi d\mu.
\end{equation*}
Throughout the proof of the lemma, to simplify notation, we write $P(q)$ in place of $P\left(\varphi \cdot e_{q\phi}\right)$. By Lemma \ref{convexity of the pressure function}, the function $q \mapsto P(q)$ is convex and hence there is a well-defined right derivative at zero. We shall denote it by $P'_+(0)$.

To prove that $P'_+(0) \leq \int \phi d\mu$, take $\beta > \int \phi d\mu$. Define
\[
  C_n' = \{ \omega \in \Sigma_n : S_n\phi(\tau) > n\beta \text{ for some } \tau \in [\omega] \}
\]
for all $n \in \N$ and let $C_n = \bigcup_{\omega \in C_n'}[\omega]$. Since $\mu$ is a Gibbs measure for $\varphi$ there is $C \ge 1$ so that
\begin{equation} \label{eq:gibbs_at_zero}
  \varphi(\omega) \le Ce^{nP(0)} \mu([\omega])
\end{equation}
for all $\omega \in \Sigma_n$ and $n \in \N$. By Theorem \ref{Gibbs implies ergodicity}, $\mu$ is ergodic and thus, we may apply Birkhoff's ergodic theorem, Egorov's theorem, and the fact that $\phi$ has summable variations, to obtain $\lim_{n \rightarrow \infty} \mu(C_n) = 0$.

Fix $\gamma >0$. Since $P(q)$ is convex we have $P(\gamma/n) \geq P(0)+\gamma/n P_+'(0)$. Using the sub-multiplicativity of $\varphi \cdot e_{\gamma/n \phi}$ and \eqref{eq:gibbs_at_zero}, we have
\begin{align*}
e^{nP(0)+ \gamma P_+'(0)} &\leq e^{nP(\gamma/n)} \leq \sum_{\omega \in \Sigma_n} \varphi(\omega) \exp(\gamma/n \| S_n(\phi)|_{[\omega]}\|) \\
&\leq \sum_{\omega \in \Sigma_n \setminus C_n'} \varphi(\omega) e^{\gamma \beta} + \sum_{\omega \in C_n'} \varphi(\omega) e^{\gamma\|\phi\|}\\
&\leq  Ce^{\gamma \beta}e^{nP(0)}(1-\mu(C_n)) +  Ce^{\gamma\|\phi\|}e^{nP(0)}\mu(C_n).
\end{align*}
Dividing by $e^{nP(0)}$, letting $n \rightarrow \infty$, and then $\gamma \rightarrow \infty$ gives $P_+'(0) \leq \beta$ as desired.

To show that $P_+'(0) \geq \int \phi d\mu$, we use Lemma \ref{variational principal upper bound} for the sub-multiplicative potential $\varphi \cdot e_{q\phi}$ and Lemma \ref{thm:gibbs_equilibrium} for the quasi-multiplicative potential $\varphi$ to obtain
\begin{equation*}
P(q) \geq P_\mu(\varphi \cdot e_{q\phi}) \geq P_\mu(\varphi) + q \int \phi d\mu = P(0) + q \int \phi d\mu
\end{equation*}
for all $q \ge 0$. The proof follows.
\end{proof}

\section{Dimension of infinitely generated self-affine sets} \label{sec:dim}

In this section, we prove Theorem \ref{thm:dim_result}, that is, we show that the dimension of a typical infinitely generated self-affine set is a supremum of dimensions of its finitely generated subsets. We also examine when the projection of the Gibbs measure is a measure of maximal dimension. The reader is prompted to recall notation from \S \ref{sec:preli_s-a}.

\begin{proof}[Proof of Theorem \ref{thm:dim_result}]
  Define $s_0 = \inf\{ s : P(\varphi^s) \le 0 \}$ and let $(I_\ell)_{\ell \in \N}$ be a sequence of non-empty finite sets $I_\ell \subset \N$ with $I_\ell \subset I_{\ell+1}$ and $\Gamma \subset \bigcup_{k=1}^K I_\ell^k$ for all $\ell \in \N$ such that $\N = \bigcup_{\ell \in \N} I_\ell$. Fix $\ell \in \N$ and let $0 < s_\ell \le s_0$ be such that $P(\varphi^{s_\ell},I_\ell)=0$. To show that $s_0 \le \sup_{\ell \in \N} s_\ell$, take $s<s_0$. Since $P(\varphi^s)>0$ and $P(\varphi^s,I_\ell) \to P(\varphi^s)$ by Proposition \ref{finite approximation property}, we may choose $\ell_0 \in \N$ so that $P(\varphi^s,I_{\ell_0})>0$. Therefore $s_{\ell_0} > s$, and, consequently, $s_0 = \sup_{\ell \in \N} s_\ell$.

  Since $\dimh(\pi_{\mathbf{a}}(I_\ell^\N)) = \min\{ d,s_\ell \}$ for $\LL_\mathbf{A}$-almost all $\mathbf{a} \in \mathbf{A}$ by \cite[Theorem 5.3]{Falconer1988} and $\bigcup_{\ell \in \N} \pi_{\mathbf{a}}(I_\ell^\N) \subset F_\mathbf{a}$, we have $\min\{ d,s_0 \} \le \dimh(F_\mathbf{a})$. To show that $\dimh(F_\mathbf{a}) \le s_0$, take $s < \dimh(F_\mathbf{a})$. Choose $m \in \Z$ and $0<\delta\le 1$ so that $s = m+\delta$ and let $\Delta$ be a closed ball such that $f_i(\Delta) \subset \Delta$ for all $i \in \N$. It follows from the definition of singular values that for each $\omega \in \Sigma_*$ we may cover $f_\omega(\Delta)$ with at most a constant times
  \begin{equation*}
    \frac{\gamma_1(\omega)}{\gamma_{m+1}(\omega)} \frac{\gamma_2(\omega)}{\gamma_{m+1}(\omega)} \cdots \frac{\gamma_m(\omega)}{\gamma_{m+1}(\omega)}
  \end{equation*}
  balls of radius $\gamma_{m+1}(\omega)$. Thus there exists $c \ge 1$ so that
  \begin{equation*}
    \HH_{2^{-k}}^s(F_{\mathbf{a}}) \le \sum_{\omega \in \Sigma_k} \HH_{2^{-k}}^s(f_\omega(\Delta)) \le c\sum_{\omega \in \Sigma_k} \varphi^s(\omega)
  \end{equation*}
  for all $k \in \N$. It follows that $\sum_{\omega \in \Sigma_k} \varphi^s(\omega) \ge 1$ for all $k \in \N$ large enough. Thus $P(\varphi^s) \ge 0$ and $s \ge s_0$ which finishes the proof.
\end{proof}

Considering the projection $\pi_\mathbf{a}$, we denote the pushforward measure of $\mu \in \MM_\sigma(\Sigma)$ by $\pi_\mathbf{a}\mu$.

\begin{theorem} \label{thm:dim_result_for_gibbs}
  If $(T_i)_{i \in \N} \in \GL^\N$ is such that $\sup_{i \in \N} \| T_i \| < \tfrac12$, the singular value function $\varphi^s$ is quasi-multiplicative for all $0 \le s \le d$, there exists $0 \le s_0 \le d$ so that $P(\varphi^{s_0}) = 0$, $\mu$ is the Gibbs measure for $\varphi^{s_0}$ so that $\Lambda_\mu(\varphi^{s_0}) > -\infty$, and $F_\mathbf{a}' \subset F_{\mathbf{a}}$ with $\pi_\mathbf{a}\mu(F_\mathbf{a}')>0$, then
  \begin{equation*}
    \dimh(F_\mathbf{a}') = \dimh(F_\mathbf{a})
  \end{equation*}
  for $\LL_\mathbf{A}$-almost all $\mathbf{a} \in \mathbf{A}$.
\end{theorem}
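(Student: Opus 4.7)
The plan is to show that the pushforward $\pi_{\mathbf{a}}\mu$ has Hausdorff dimension exactly $\dimh(F_\mathbf{a})$ for $\LL_\mathbf{A}$-a.e.\ $\mathbf{a}$. Once this is established, $\pi_\mathbf{a}\mu(F_\mathbf{a}')>0$ forces $\dimh(F_\mathbf{a}')\geq \dimh(\pi_\mathbf{a}\mu)=\dimh(F_\mathbf{a})$, while the reverse inequality is immediate from $F_\mathbf{a}'\subset F_\mathbf{a}$.

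First I identify the target value. By the strict monotonicity of $s\mapsto P(\varphi^s)$, we have $\inf\{s:P(\varphi^s)\leq 0\}=s_0$, so Theorem \ref{thm:dim_result} gives $\dimh(F_\mathbf{a})=\min\{d,s_0\}$ for $\LL_\mathbf{A}$-a.e.\ $\mathbf{a}$. The upper bound $\dimh(\pi_\mathbf{a}\mu)\leq \dimh(F_\mathbf{a})$ is automatic since $\pi_\mathbf{a}\mu$ is supported on $F_\mathbf{a}$, so the entire content is the lower bound $\dimh(\pi_\mathbf{a}\mu)\geq s$ for every $s<\min\{d,s_0\}$ and $\LL_\mathbf{A}$-a.e.\ $\mathbf{a}$. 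I would obtain this by the standard potential-theoretic energy method. Fix such an $s$. Falconer's transversality estimate for self-affine projections, valid in the regime $\sup_i\|T_i\|<\tfrac12$ and $s<d$, supplies
\[
 \int_{\mathbf{A}} |\pi_\mathbf{a}(\omega)-\pi_\mathbf{a}(\tau)|^{-s}\,d\LL_\mathbf{A}(\mathbf{a})\leq C_s\,\varphi^s(T_{\omega\wedge\tau})^{-1}
\]
for all distinct $\omega,\tau\in\Sigma$. Integrating the $s$-energy of $\pi_\mathbf{a}\mu$ against $\LL_\mathbf{A}$, applying Fubini, stratifying by the common prefix $\kappa=\omega\wedge\tau$, and using the Gibbs upper bound $\mu([\kappa])\leq C\varphi^{s_0}(\kappa)$ (available because $P(\varphi^{s_0})=0$), the problem reduces to the finiteness of
\[
 S := \sum_{n=1}^{\infty}\sum_{\kappa\in\Sigma_n}\frac{\varphi^{s_0}(\kappa)^2}{\varphi^s(\kappa)}.
\]

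The finiteness of $S$ follows from a direct singular-value comparison. Writing both $\varphi^{s_0}$ and $\varphi^s$ explicitly as products of singular values, a case analysis (handling whether $s$ and $s_0$ lie in the same integer interval or not, including the regime $s_0\geq d$) shows that $\varphi^{s_0}(\kappa)/\varphi^s(\kappa)$ factors as a product of singular values of $T_\kappa$ raised to non-negative exponents summing to $s_0-s$; using $\gamma_i(T_\kappa)\leq\gamma_1(T_\kappa)=\|T_\kappa\|$ in each factor yields
\[
 \frac{\varphi^{s_0}(\kappa)}{\varphi^s(\kappa)}\leq \|T_\kappa\|^{s_0-s}\leq 2^{-n(s_0-s)}
\]
for all $\kappa\in\Sigma_n$. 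Since $\varphi^{s_0}$ is quasi-multiplicative and $P(\varphi^{s_0})=0$, Lemma \ref{positive recurrence} (applied with $I=\N$) gives $Z_n(\varphi^{s_0})\leq C'$ uniformly in $n$. Consequently
\[
 S \leq C'\sum_{n=1}^{\infty} 2^{-n(s_0-s)}<\infty,
\]
and the standard energy-to-dimension principle yields $\dimh(\pi_\mathbf{a}\mu)\geq s$ for $\LL_\mathbf{A}$-a.e.\ $\mathbf{a}$. Letting $s$ approach $\min\{d,s_0\}$ along a countable sequence completes the proof.

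The main obstacle I anticipate is the rigorous verification of Falconer's transversality inequality in the infinitely generated setting. In the finite case it is classical, and its proof extends to a countable alphabet provided one can control the Jacobian of the map $\mathbf{a}\mapsto \pi_\mathbf{a}(\omega)-\pi_\mathbf{a}(\tau)$ uniformly in $\omega,\tau$; the hypothesis $\sup_i\|T_i\|<\tfrac12$ is designed precisely to ensure absolute convergence of the projection series and the requisite non-degeneracy under translations of $\mathbf{a}$. The assumption $\Lambda_\mu(\varphi^{s_0})>-\infty$ plays the auxiliary role of forcing $h_\mu<\infty$ (since by Theorem \ref{thm:main_gibbs} and Lemma \ref{variational principal upper bound}, $h_\mu+\Lambda_\mu(\varphi^{s_0})=P_\mu(\varphi^{s_0})=P(\varphi^{s_0})=0$), which in turn keeps the projected measure non-degenerate enough for the energy-dimension principle to transfer the above bound into the claimed inequality on $\dimh(F_\mathbf{a}')$.
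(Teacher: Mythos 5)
Your proposal is correct and follows the same overall strategy as the paper — the Falconer--Solomyak transversality estimate followed by an $s$-energy computation and the energy-to-dimension principle from \cite[Proposition 2.3]{Falconer1997} — but the key step of bounding the energy sum is done by a genuinely different mechanism. The paper does \emph{not} use the Gibbs upper bound in the sum; instead it fixes $s<t<s_0$, invokes ergodicity, Shannon--McMillan and Kingman's theorem to get $\lim_n \log\mu([\omega|_n])/\log\varphi^t(\omega|_n)>1$ for $\mu$-a.e.\ $\omega$, and then Egorov's theorem to produce a set $C$ with $\mu(C)>1-\eps$ on which $\mu([\omega|_n])\le\varphi^t(\omega|_n)$ eventually; the sum then closes via $\sum_\omega\varphi^s(\omega)^{-1}\varphi^t(\omega)\mu([\omega])\le 2^{-(t-s)n}$. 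Your route replaces all of this with the Gibbs inequality $\mu([\kappa])\le C\varphi^{s_0}(\kappa)$ (valid since $P(\varphi^{s_0})=0$), the elementary comparison $\varphi^{s_0}(\kappa)/\varphi^s(\kappa)\le\|T_\kappa\|^{s_0-s}\le 2^{-n(s_0-s)}$, and the uniform bound $Z_n(\varphi^{s_0})\le cK$ from Lemma \ref{positive recurrence}; this is more elementary (no ergodic theorems, no Egorov exhaustion) and equally rigorous. One consequence worth noting: your argument never uses the hypothesis $\Lambda_\mu(\varphi^{s_0})>-\infty$, so your closing remark about its ``auxiliary role'' in the energy-dimension principle is off the mark — that hypothesis is needed precisely for the paper's route (without it $h_\mu=\infty$ and the SMB/Kingman ratio argument collapses), whereas your proof shows the conclusion holds without it. The only point you flag as an obstacle, the transversality inequality for a countable alphabet, is handled in the paper exactly as you anticipate, by observing that \cite[Proposition 3.1(i)]{Solomyak1998} extends verbatim to the infinite case under $\sup_i\|T_i\|<\tfrac12$.
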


\begin{proof}
  Let $s<t<s_0$ and recall that by Lemma \ref{thm:gibbs_equilibrium}, Theorem \ref{thm:main_gibbs}, and Lemma \ref{variational principal upper bound}, the measure $\mu$ is ergodic and satisfies $h_\mu + \Lambda_\mu(\varphi^{s_0}) = 0$. Hence, by Shannon-McMillan Theorem and Kingman's sub-additive ergodic theorem, we have
  \begin{equation*}
    \lim_{n \to \infty} \frac{\log\mu([\omega|_n])}{\log\varphi^t(\omega|_n)} > 1
  \end{equation*}
  for $\mu$-almost all $\omega \in \Sigma$. Applying Egorov's theorem, we find for each $\eps>0$ a compact set $C \subset \Sigma$ and $n_0 \in \N$ so that $\mu(C)>1-\eps$ and $\mu([\omega|_n]) \le \varphi^t(\omega|_n)$ for all $\omega \in C$ and $n \ge n_0$. Now, according to \cite[Proposition 3.1(i)]{Solomyak1998}, we have
  \begin{align*}
    \int_\mathbf{A}\int_C\int_\Sigma \frac{d\mu(\omega)d\mu(\tau)d\mathbf{a}}{|\pi_\mathbf{a}(\omega) - \pi_\mathbf{a}(\tau)|^s} &\le c'\int_C\int_\Sigma \varphi^s(\omega \wedge \tau)^{-1} d\mu(\omega)d\mu(\tau) \\
    &= c'\sum_{n=0}^\infty \sum_{\omega \in \Sigma_n} \varphi^s(\omega)^{-1}\mu([\omega]) \mu(C \cap [\iii]) \\
    &\le c\sum_{n=0}^\infty \sum_{\omega \in \Sigma_n} \varphi^s(\omega)^{-1} \varphi^t(\omega) \mu([\omega]) \\
    &\le c\sum_{n=0}^\infty 2^{-(t-s)n} < \infty
  \end{align*}
  for some constants $c,c'>0$. Observe that \cite[Proposition 3.1(i)]{Solomyak1998} is a refinement of \cite[Lemma 3.1]{Falconer1988} and it generalises immediately to the infinite case. It follows that
  \begin{equation*}
    \liminf_{r \downarrow 0} \frac{\log\pi_\mathbf{a}\mu(B(\pi_\mathbf{a}(\tau),r))}{\log r} = \sup\bigl\{ t \ge 0 : \int_\Sigma \frac{d\mu(\omega)}{|\pi_\mathbf{a}(\omega) - \pi_\mathbf{a}(\tau)|^t} < \infty \bigr\} \ge s
  \end{equation*}
  for $\mu$-almost all $\tau \in C$ and for $\LL_\mathbf{A}$-almost all $\mathbf{a} \in \mathbf{A}$. The proof is finished by recalling \cite[Proposition 2.3(a)]{Falconer1997} and Theorem \ref{thm:dim_result}.
\end{proof}

To finish this section, we provide the reader with a sufficient condition to guarantee the finiteness of the Lyapunov exponent in Theorem \ref{thm:dim_result_for_gibbs}. Recall that $s_\infty = \inf\{ s : P(\varphi^s) < \infty \}$.

\begin{lemma} \label{thm:lyapunov_finite_for_gibbs}
  If $(T_i)_{i \in \N} \in \GL^\N$ is such that $\sup_{i \in \N}\| T_i \| < 1$, the singular value function $\varphi^s$ is quasi-multiplicative for all $0 \le s \le d$, $s_0 > s_\infty$, and $\mu$ is the Gibbs measure for $\varphi^{s_0}$, then $\Lambda_\mu(\varphi^{s_0}) > -\infty$.
\end{lemma}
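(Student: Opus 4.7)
The plan is to reduce to showing $h_\mu < \infty$. Combined with the Gibbs equilibrium identity $P_\mu(\varphi^{s_0}) = P(\varphi^{s_0})$ from Lemma \ref{thm:gibbs_equilibrium} and Lemma \ref{variational principal upper bound}, this gives $\Lambda_\mu(\varphi^{s_0}) = P(\varphi^{s_0}) - h_\mu \in \R$, where $P(\varphi^{s_0})$ is finite by the assumption $s_0 > s_\infty$ and bounded below by Lemma \ref{positive recurrence} applied with $I = \N$.

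Fix $t \in (s_\infty, s_0)$, so that $P(\varphi^t) < \infty$ and hence $Z_1(\varphi^t) = \sum_{i \in \N}\varphi^t(T_i) < \infty$ by Lemma \ref{positive recurrence}. The key pointwise estimate I would establish is
\begin{equation*}
\varphi^{s_0}(T_i)|\log\varphi^{s_0}(T_i)| \le \frac{s_0}{(s_0-t)e}\varphi^t(T_i) \quad (i \in \N).
\end{equation*}
It combines two ingredients. First, the elementary inequality $x|\log x| \le \frac{s_0}{(s_0-t)e}x^{t/s_0}$ for $x \in (0, 1]$, obtained from $\max_{y \in (0,1]} y^{(s_0-t)/s_0}|\log y| = \frac{s_0}{(s_0-t)e}$ and applicable because $\sup_i \|T_i\| < 1$ forces every singular value of every $T_i$, and hence $\varphi^{s_0}(T_i)$, into $(0, 1)$. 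Second, the bound $\varphi^{s_0}(T_i)^{t/s_0} \le \varphi^t(T_i)$, which follows from the log-concavity of $s \mapsto \log\varphi^s(T_i)$: this function is piecewise linear on $[0, d]$ with slope $\log\gamma_{m+1}(T_i)$ on $(m, m+1]$, the slopes decrease in $m$ by the ordering of the singular values, and $\log\varphi^0(T_i) = 0$, so $s \mapsto s^{-1}\log\varphi^s(T_i)$ is non-increasing on $(0, d]$.

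Summing the pointwise bound over $i$ gives $\sum_{i \in \N}\varphi^{s_0}(T_i)|\log\varphi^{s_0}(T_i)| < \infty$. The Gibbs property of $\mu$ then supplies a constant $C \ge 1$ with $-\log\mu([i]) \le \log C + |\log\varphi^{s_0}(T_i)| + P(\varphi^{s_0})$ and $\mu([i]) \le C\varphi^{s_0}(T_i)e^{-P(\varphi^{s_0})}$ for every $i \in \N$, from which
\begin{equation*}
h_\mu^{(1)} = -\sum_{i \in \N}\mu([i])\log\mu([i]) \le \log C + P(\varphi^{s_0}) + Ce^{-P(\varphi^{s_0})}\sum_{i \in \N}\varphi^{s_0}(T_i)|\log\varphi^{s_0}(T_i)|
\end{equation*}
is finite. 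Standard sub-additivity of the sequence $(h_\mu^{(n)})_{n \in \N}$ under $\sigma$-invariance then yields $h_\mu \le h_\mu^{(1)} < \infty$, which closes the argument. The main obstacle is finding the right pointwise majorant: the hypothesis $s_0 > s_\infty$ is precisely what makes a summable $\varphi^t$ available, and the log-concavity of $s \mapsto \log\varphi^s$ is exactly what allows one to dominate $\varphi^{s_0}|\log\varphi^{s_0}|$ by a lower power of $\varphi^{s_0}$ and hence by $\varphi^t$.
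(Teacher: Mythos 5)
Your proof is correct, and it takes a genuinely different route from the paper's. The paper bounds $\Lambda_\mu(\varphi^{s_0})$ directly: from the upper Gibbs bound it gets $\log(\varphi^{s_0}(\omega)/\mu([\omega])) \ge nP(\varphi^{s_0}) - \log C$, from Jensen's inequality it gets $\sum_\omega \mu([\omega])\log(\varphi^{t}(\omega)/\mu([\omega])) \le \log Z_n(\varphi^t)$ for $\max\{s_\infty,m\}<t<s_0$, and subtracting these via the identity $\varphi^{s_0}(\omega)=\gamma_{m+1}(\omega)^{s_0-t}\varphi^t(\omega)$ yields $\Lambda_\mu(\varphi^{s_0}) \ge (m+1)\bigl(P(\varphi^{s_0})-P(\varphi^t)\bigr)/(s_0-t)$, with no mention of entropy. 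You instead prove the (strictly stronger, and independently useful) statement $h_\mu<\infty$ by dominating the one-cylinder entropy: the calculus bound $x|\log x|\le \tfrac{s_0}{(s_0-t)e}x^{t/s_0}$ together with the concavity of $s\mapsto\log\varphi^s(T_i)$ (which gives $\varphi^{s_0}(T_i)^{t/s_0}\le\varphi^t(T_i)$) produces the summable majorant $\varphi^t(T_i)$, with $Z_1(\varphi^t)<\infty$ coming from Lemma \ref{positive recurrence} exactly as in the paper; then $\Lambda_\mu(\varphi^{s_0})=P(\varphi^{s_0})-h_\mu$ via Lemmas \ref{thm:gibbs_equilibrium} and \ref{variational principal upper bound}. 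Your route uses both sides of the Gibbs inequality but only on level-one cylinders plus subadditivity of $(h_\mu^{(n)})_n$, whereas the paper uses only the upper Gibbs bound but works at every level $n$; your route also requires the log-concavity of the singular value function in $s$, which the paper sidesteps by the cruder inequality $\varphi^{s_0}(\omega)\ge\gamma_{m+1}(\omega)^{m+1}$. Both arguments hinge on the same essential input, namely that $s_0>s_\infty$ supplies a $t$ with finite partition function for $\varphi^t$.
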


\begin{proof}
  Observe that since $P(\varphi^{s_0}) < \infty$ the Gibbs measure $\mu$ for $\varphi^{s_0}$ exists by Theorems \ref{thm:gibbs_exists} and \ref{Gibbs implies ergodicity}. To prove the claim, let $m \in \Z$ be so that $m < s_0 \le m+1$. By the Gibbs property there is a constant $C \ge 1$ so that
  \begin{equation*}
    \mu([\omega]) \le C\varphi^{s_0}(\omega) e^{-nP(\varphi^{s_0})}
  \end{equation*}
  for all $\omega \in \Sigma_n$ and $n \in \N$. Thus
  \begin{equation} \label{eq:log_lasku}
    \log\frac{\varphi^{s_0}(\omega)}{\mu([\omega])} \ge nP(\varphi^{s_0}) - \log C
  \end{equation}
  for all $\omega \in \Sigma_n$ and $n \in \N$.

  If $\max\{ s_\infty,m \} < t < s_0$, then $P(\varphi^t) < \infty$ and $Z_n(\varphi^t) < \infty$ for all $n \in \N$ by Lemma \ref{positive recurrence}. As in \eqref{eq:jensen_eq_calc}, Jensen's inequality gives
  \begin{align} \label{eq:log_lasku2}
    \sum_{\omega \in \Sigma_n} \mu([\omega]) \log\frac{\varphi^t(\omega)}{\mu([\omega])} \le \log \biggl(\sum_{\omega \in \Sigma_n} \varphi^t(\omega)\biggr)= \log Z_n(\varphi^t).
  \end{align}
  Since $\varphi^{s_0}(\omega) = \gamma_{m+1}(\omega)^{s_0-t} \varphi^t(\omega)$ for all $\omega \in \Sigma_*$ we have, by \eqref{eq:log_lasku} and \eqref{eq:log_lasku2}, that
  \begin{align*}
    \bigl( nP(\varphi^{s_0}) - \log C \bigr) &\le \sum_{\omega \in \Sigma_n} \mu([\omega]) \biggl( \log\gamma_{m+1}(\omega)^{s_0-t} + \log\frac{\varphi^t(\omega)}{\mu([\omega])} \biggr) \\
    &\le \sum_{\omega \in \Sigma_n} \mu([\omega])\log\gamma_{m+1}(\omega)^{s_0-t} + \log Z_n(\varphi^t).
  \end{align*}
  Hence,
  \begin{align*}
    \tfrac{1}{n}\sum_{\omega \in J^n} \mu([\omega])\log\varphi^{s_0}(\omega) &\ge \tfrac{1}{n}\sum_{\omega \in I^n} \mu([\omega])\log\gamma_{m+1}(\omega)^{m+1} \\
    &\ge \frac{(m+1)\bigl(nP(\varphi^{s_0}) - \log C - \log Z_n(\varphi^t) \bigr)}{n(s_0-t)}.
  \end{align*}
 Letting $n \to \infty$ we have
  \begin{align*}
    \Lambda_{\mu}(\varphi^{s_0}) \geq \frac{(m+1)\bigl(P(\varphi^{s_0}) - P(\varphi^{t})\bigr)}{s_0-t}>-\infty.
  \end{align*}
\end{proof}

\section{Multifractal analysis of Birkhoff averages} \label{sec:multifractal}

The aim of this section is to prove Theorem \ref{General theorem}. The upper bound is proved in Proposition \ref{General upper bound prop Mar15} and the lower bound in Theorem \ref{LB in main Thm Mar30}. It is worth mentioning that the upper bound in Theorem \ref{General theorem} holds for all $\mathbf{a} \in \mathbf{A}$.

\subsection{Proof of the upper bound in Theorem \ref{General theorem}} \label{sec:multifractal_upper}
In this subsection we shall prove the upper bound in Theorem \ref{General theorem}. The reader is prompted to recall notation from \S \ref{sec:preli_s-a} and \S \ref{sec:preli_multi}.
We begin with a lemma relating the dimension of $J_{\Phi}(\alpha)$ to the singular value function. Define
\begin{equation*}
  A_{\Phi}(\alpha,n,k) = \bigl\{ \omega \in \Sigma_k : A_k\phi_i(\tau) \in B_{n}(\alpha_i) \text{ for all } \tau \in [\omega] \text{ and } i \in \{ 1,\ldots,n \} \bigr\}
\end{equation*}
for all $n,k \in \N$.

\begin{lemma}\label{initial covering lemma Mar15}
  If $(T_i)_{i \in \N} \in \GL^\N$ is such that $\sup_{i \in \N}\| T_i \| < 1$, $\Phi \colon \Sigma \to \R^\N$ has summable variations, $\alpha \in \overline{\R}^\N$, $\mathbf{a} \in \mathbf{A}$, $s< \dimh(J^{\mathbf{a}}_{\Phi}(\alpha))$, and $n \in \N$, then there is $k_0 \in \N$ such that
\begin{equation*}
  \sum_{\omega \in A_{\Phi}(\alpha,n,k)} \varphi^s(\omega) > 1
\end{equation*}
for all $k \geq k_0$.
\end{lemma}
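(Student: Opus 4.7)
The plan is to cover $\pi_{\mathbf{a}}(E_{\Phi}(\alpha))$ by the affine images $\pi_{\mathbf{a}}([\omega])$ with $\omega \in A_{\Phi}(\alpha,n,k)$, estimate the resulting $s$-Hausdorff sum by $\sum_{\omega \in A_{\Phi}(\alpha,n,k)} \varphi^s(\omega)$, and then invoke $\dimh(J^{\mathbf{a}}_\Phi(\alpha)) > s$ to force the sum to blow up as $k \to \infty$.

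First I would decompose the symbolic level set along the threshold after which cylinders of $\omega \in E_{\Phi}(\alpha)$ sit inside $A_{\Phi}(\alpha,n,k)$. Fix $n \in \N$ and, for each $N \in \N$, set
\[
  E_N = \{ \omega \in E_{\Phi}(\alpha) : \omega|_k \in A_{\Phi}(\alpha,n,k) \text{ for all } k \geq N \}.
\]
The summable variations hypothesis gives
\[
  |A_k\phi_i(\tau) - A_k\phi_i(\omega)| \le \tfrac{1}{k}\sum_{j=1}^{\infty} \var_j(\phi_i)
\]
for every $\tau \in [\omega|_k]$ and $i \in \{1,\ldots,n\}$, which tends to $0$ uniformly as $k \to \infty$. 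Combined with $A_k\phi_i(\omega) \to \alpha_i$ for $\omega \in E_{\Phi}(\alpha)$ and the openness of $B_n(\alpha_i)$, this yields $E_{\Phi}(\alpha) = \bigcup_{N \in \N} E_N$. Countable stability of the Hausdorff dimension together with $\dimh(J^{\mathbf{a}}_{\Phi}(\alpha)) > s$ then produces some $N$ with $\Hau^s(\pi_{\mathbf{a}}(E_N)) = \infty$.

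For each $k \geq N$ the family $\{[\omega|_k] : \omega \in E_N\}$ covers $E_N$ and all the relevant $\omega|_k$ lie in $A_{\Phi}(\alpha,n,k)$. Writing $s = m + \delta$ with $m \in \Z$ and $0 < \delta \leq 1$, the image $\pi_{\mathbf{a}}([\omega])$ lies in a translate of $T_\omega(\Delta)$ for some fixed bounded set $\Delta$ containing $\pi_{\mathbf{a}}(\Sigma)$, which can be covered by at most a constant multiple of $\gamma_1(\omega)\cdots\gamma_m(\omega)/\gamma_{m+1}(\omega)^m$ balls of radius $\gamma_{m+1}(\omega) \leq \|T_\omega\| \leq \rho^k$, where $\rho = \sup_i \|T_i\| < 1$. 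Each such cover contributes at most a constant multiple of $\varphi^s(\omega)$ to the $s$-content, so summing over $\omega \in A_{\Phi}(\alpha,n,k)$ gives
\[
  \Hau^s_{2\rho^k}(\pi_{\mathbf{a}}(E_N)) \le C \sum_{\omega \in A_{\Phi}(\alpha,n,k)} \varphi^s(\omega).
\]
Since $\rho^k \to 0$ and $\Hau^s(\pi_{\mathbf{a}}(E_N)) = \infty$, the left-hand side diverges as $k \to \infty$ and the sum on the right exceeds $1$ for all $k \geq k_0$.

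The step I expect to be most delicate is the uniform passage from the pointwise convergence $A_k\phi_i(\omega) \to \alpha_i$ on $E_{\Phi}(\alpha)$ to the $k$-cylinder version built into $A_{\Phi}(\alpha,n,k)$; the decomposition $E_{\Phi}(\alpha) = \bigcup_N E_N$ rests entirely on the summable variations assumption, and one must treat the cases $\alpha_i \in \{-\infty,+\infty\}$, where $B_n(\alpha_i)$ is a half-line, with slightly more care than the case $\alpha_i \in \R$. Everything else is a standard affine covering estimate combined with countable stability of the Hausdorff dimension, neither of which requires any genuinely new ingredient.
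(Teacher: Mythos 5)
Your argument is correct and follows essentially the same route as the paper: summable variations force the $k$-prefixes of points of $E_\Phi(\alpha)$ into $A_\Phi(\alpha,n,k)$ for all large $k$ (the paper does this via an intermediate set $D_\Phi(\alpha,n,k)$ defined with $B_{2n}$, which is the same ``shrink the target by half'' device you need at the step you flag as delicate), countable stability of Hausdorff dimension isolates one tail piece of dimension exceeding $s$, and the standard singular-value covering estimate bounds $\HH^s_{2\rho^k}$ by $C\sum_{\omega}\varphi^s(\omega)$. No gaps; the decomposition into the sets $E_N$ versus the paper's $\bigcup_l\bigcap_{k\ge l}$ formulation is only a cosmetic difference.
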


\begin{proof}
Let $\delta = \sup_{i \in \N}\| T_i \|$ and set
\begin{equation*}
D_{\Phi}(\alpha,n,k) = \bigl\{ \omega \in \Sigma_k : \text{there is } \tau \in [\omega] \text{ such that } A_k\phi_i(\tau) \in B_{2n}(\alpha_i) \text{ for all } i \in \{ 1,\ldots,n \} \bigr\}
\end{equation*}
for all $n,k \in \N$.
Fix $n \in \N$. Since $\lim_{k \rightarrow \infty} \var_k\left(A_k\phi_i\right) =0$ we may choose $k_1$ so that $\var_k\left(A_k\phi_i\right)< (2n)^{-1}$ for all $i \in \{ 1,\ldots,n \}$ and all $k \ge k_1$. Thus we have $D_{\Phi}(\alpha,n,k) \subset A_{\Phi}(\alpha,n,k)$ for all $k \geq k_1$.
Since
\begin{equation*}
J_{\Phi}(\alpha) \subset \bigcup_{l \in \N}\bigcap_{k=l}^\infty\bigcup_{\omega \in D_{\Phi}(\alpha,n,k)}\pi_{\mathbf{a}}\left([\omega]\right)
\end{equation*}
and $\dimh(J^{\mathbf{a}}_{\Phi}(\alpha)) > s$ there is $l \in \N$ with
\begin{equation*}
\dimh \biggl(\bigcap_{k=l}^\infty \bigcup_{\omega \in D_{\Phi}(\alpha,n,k)} \pi_{\mathbf{a}}\left([\omega]\right)\biggr) >s.
\end{equation*}
Hence, continuing as in the proof of Theorem \ref{thm:dim_result}, we find a constant $c \ge 1$ so that
\begin{equation*}
\HH_{\delta^k}^s \biggl(\bigcup_{\omega \in D_{\Phi}(\alpha,n,k)} \pi_{\mathbf{a}}\left([\omega]\right)\biggr) \le c\sum_{\omega \in D_{\Phi}(\alpha,n,k)} \varphi^s(\omega)
\end{equation*}
for all $k \in \N$. The claim follows.
\end{proof}

\begin{prop}\label{General upper bound prop Mar15}
If $(T_i)_{i \in \N} \in \GL^\N$ is such that $\sup_{i \in \N}\| T_i \| < 1$, $\Phi \colon \Sigma \to \R^\N$ has summable variations, and $\alpha \in \overline{\R}^\N$, then
\begin{align*}
  \dimh(J^{\mathbf{a}}_{\Phi}(\alpha)) \leq \lim_{n \rightarrow \infty} \lim_{k \rightarrow \infty} \sup \bigl\{ D_k(\mu) : \;&\mu \in \M^*_{\sigma^k}(\Sigma) \text{ so that } \\ &\int A_i\phi_i d\mu \in B_n(\alpha_i) \text{ for all } i \in \{ 1,\ldots,n \}\bigr\}
\end{align*}
for all $\mathbf{a} \in \mathbf{A}$.
\end{prop}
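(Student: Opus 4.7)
The plan is to feed Lemma \ref{initial covering lemma Mar15} into an explicit Bernoulli construction, producing for each large $k$ a compactly supported $\sigma^k$-invariant measure whose Lyapunov dimension $D_k$ witnesses the desired bound. Fix $s<\dimh(J^\mathbf{a}_\Phi(\alpha))$ and $n\in\N$. I would apply Lemma \ref{initial covering lemma Mar15} with $n+1$ in place of $n$ to secure $k_0$ such that
\[
Z_k:=\sum_{\omega\in A_\Phi(\alpha,n+1,k)}\varphi^s(\omega)>1
\]
for every $k\geq k_0$. The extra $+1$ buys the room needed to turn closed containments into the open-interval constraint $B_n(\alpha_i)$ demanded by the statement, since $\overline{B_{n+1}(\alpha_i)}\subset B_n(\alpha_i)$ for every $\alpha_i\in\overline{\R}$ and $n\geq 1$.

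Writing $A=A_\Phi(\alpha,n+1,k)$, I would then set $p(\omega)=\varphi^s(\omega)/Z_k$ on the finite set $A\subset\Sigma_k$ and let $\mu_k$ be the push-forward to $\Sigma$ of the Bernoulli product with marginal $p$, identifying $A^\N$ with a subset of $\Sigma$ via concatenation of length-$k$ blocks. Since $A$ is finite, $\mu_k$ is $\sigma^k$-invariant with compact support, so $\mu_k\in\M^*_{\sigma^k}(\Sigma)$, with $\mu_k([\omega])=p(\omega)$ for $\omega\in A$ and $\mu_k([\omega])=0$ on $\Sigma_k\setminus A$. Every $\tau$ in the support of $\mu_k$ begins with some $\omega\in A$, so the defining property of $A_\Phi$ forces $\int A_k\phi_i\,d\mu_k\in\overline{B_{n+1}(\alpha_i)}\subset B_n(\alpha_i)$ for every $i\in\{1,\dots,n\}$, making $\mu_k$ admissible in the supremum.

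The payoff is the identity
\[
\sum_{\omega\in\Sigma_k}\mu_k([\omega])\log\frac{\varphi^s(\omega)}{\mu_k([\omega])}=\sum_{\omega\in A}\frac{\varphi^s(\omega)}{Z_k}\log Z_k=\log Z_k>0,
\]
together with the fact that $t\mapsto\varphi^t(\omega)$ is continuous and strictly decreasing on $[0,\infty)$ for each $\omega\in A$ (every singular value of $T_\omega$ is strictly less than $1$ because $\sup_{i\in\N}\|T_i\|<1$). Hence $t\mapsto\sum_{\omega\in\Sigma_k}\mu_k([\omega])\log\varphi^t(\omega)/\mu_k([\omega])$ is continuous and strictly decreasing in $t$, positive at $t=s$ and tending to $-\infty$ as $t\to\infty$, so its unique zero $D_k(\mu_k)$ satisfies $D_k(\mu_k)>s$. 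Taking $\lim_{k\to\infty}$, then $\lim_{n\to\infty}$, and finally $s\uparrow\dimh(J^\mathbf{a}_\Phi(\alpha))$ closes the argument. I do not foresee a serious obstacle beyond the open-versus-closed interval bookkeeping handled by the $n+1$-to-$n$ trick; the rest is the natural Gibbs-weighted Bernoulli construction on the admissible finite alphabet, combined with monotonicity of the Lyapunov-dimension equation in the exponent.
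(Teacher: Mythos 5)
Your overall strategy is the paper's own: feed Lemma \ref{initial covering lemma Mar15} into a $k$-th level Bernoulli measure with weights proportional to $\varphi^s$, read off $D_k(\mu)\ge s$ from $\log Z_k\ge 0$, and check admissibility of the integrals. However, there is one genuine gap: you assert that $A=A_\Phi(\alpha,n+1,k)$ is a \emph{finite} subset of $\Sigma_k$ and lean on this twice — to conclude that the Bernoulli measure is compactly supported (hence lies in $\M^*_{\sigma^k}(\Sigma)$, which is required for $D_k$ to even be defined) and, implicitly, to know that $Z_k<\infty$ so the normalisation $p(\omega)=\varphi^s(\omega)/Z_k$ makes sense. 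Neither is justified: $\Sigma_k=\N^k$ is infinite, and $A_\Phi(\alpha,n,k)$ can be all of $\Sigma_k$ (take $\Phi$ constant equal to $\alpha$), in which case $Z_k$ may be $+\infty$ and the support $A^\N$ is not compact. The fix is exactly the step the paper inserts: since the (possibly infinite) sum over $A_\Phi(\alpha,n,k)$ exceeds $1$, choose a \emph{finite} sub-collection $F\subset A_\Phi(\alpha,n,k)$ with $\sum_{\omega\in F}\varphi^s(\omega)\ge 1$ and build the Bernoulli measure on $F$ instead. With that modification your argument goes through verbatim.

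Two minor remarks. First, the $n+1$-to-$n$ device is unnecessary: if $A_k\phi_i(\tau)\in B_n(\alpha_i)$ for every $\tau$ in the support of the probability measure $\mu$, then $\int A_k\phi_i\,d\mu$ already lies in the \emph{open} interval $B_n(\alpha_i)$ (strict inequalities survive integration against a probability measure), so no closure is ever taken; the trick is harmless but buys nothing. Second, your monotonicity argument for locating $D_k(\mu)$ above $s$ is fine and matches the definition of $D_k$ via the unique root of $t\mapsto\sum_\omega\mu([\omega])\log\bigl(\varphi^t(\omega)/\mu([\omega])\bigr)$.
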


\begin{proof}
Fix $\mathbf{a} \in \mathbf{A}$, $s < \dimh(J^{\mathbf{a}}_{\Phi}(\alpha))$, and $n \in \N$. According to Lemma \ref{initial covering lemma Mar15}, there is $k_0 \in \N$ such that
\begin{equation*}
\sum_{\omega \in A_{\Phi}(\alpha,n,k)}\varphi^s(\omega) > 1
\end{equation*}
for all $k \geq k_0$.
Let $k \geq k_0$ and choose a finite subset $F_{\Phi}(\alpha,n,k) \subset A_{\Phi}(\alpha,n,k)$ with $F(k) = \sum_{\omega \in F_{\Phi}(\alpha,n,k)}\varphi^s(\omega) \ge 1$. Define a compactly supported $k$-th level Bernoulli measure $\mu \in \M_{\sigma^k}(\Sigma)$ by setting
\begin{equation*}
  \mu([\omega]) =
  \begin{cases}
    \varphi^s(\omega)/F(k), &\text{if } \omega \in F_{\Phi}(\alpha,n,k), \\
    0, &\text{if } \omega \in \Sigma_k \setminus F_{\Phi}(\alpha,n,k)
  \end{cases}
\end{equation*}
for all $\omega \in \Sigma_k$. It follows immediately that
\begin{equation*}
\sum_{\omega \in \Sigma_k} \mu([\omega])\log \frac{\varphi^s(\omega)}{\mu([\omega])} = \log F(k) \ge 0
\end{equation*}
yielding $s \le D_k(\mu)$.
Since $\mu$ is supported on $\bigcup_{\omega \in F_{\Phi}(\alpha,n,k)}[\omega]$ and $A_k\phi_i(\tau) \in B_{n}(\alpha_i)$ for all $\omega \in F_{\Phi}(\alpha,n,k)$, $\tau \in [\omega]$, and $i \in \{1,\ldots,n \}$ we also have
\begin{equation*}
\int A_k\phi_i d\mu  \in B_{n}(\alpha_i)
\end{equation*}
for all $i \in \{ 1,\ldots,n \}$. These observations imply the proof.
\end{proof}

\subsection{Symbolic tree structure in level sets} \label{sec:multifractal_tree_structure}
The following proposition contains the essence of the proof of the lower bound in Theorem \ref{General theorem}.

\begin{prop}\label{prop symbolic tree structure Mar30}
If $(T_i)_{i \in \N} \in \GL^\N$ is such that $\sup_{i \in \N} \| T_i \| < \tfrac12$, the singular value function $\varphi^s$ is quasi-multiplicative for all $0 \le s \le d$, $\Phi \colon \Sigma \to \R^\N$ has summable variations, $\alpha \in \overline{\R}^\N$, and
\begin{align*}
  s < \lim_{n \rightarrow \infty} \lim_{k \rightarrow \infty} \sup \bigl\{ D_k(\mu) : \;&\mu \in \M^*_{\sigma^k}(\Sigma) \text{ so that } \\ &\int A_i\phi_i d\mu \in B_n(\alpha_i) \text{ for all } i \in \{ 1,\ldots,n \}\bigr\},
\end{align*}
then there exists a set $S \subset E_{\Phi}(\alpha)$, a Borel probability measure $\mu$ supported on $S$, and a constant $C \ge 1$, such that $\mu([\omega]) \leq C\varphi^s(\omega)$ for all $\omega \in \Sigma_*$.
\end{prop}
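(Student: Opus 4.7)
The strategy is a Bernoulli cascade construction in the spirit of the multifractal lower bounds in \cite{FanJordanLiaoRams2011} and similar papers, built from the finite-level ingredients that the hypothesis on $s$ delivers. Write $s^{\ast}$ for the right-hand side of the inequality assumed on $s$, and fix $s<s'<s^{\ast}$. By unfolding the two nested limits defining $s^{\ast}$, I extract increasing sequences $(n_j)_{j\in\N}$, $(k_j)_{j\in\N}$, and compactly supported measures $\mu_j\in\M^{*}_{\sigma^{k_j}}(\Sigma)$ with $D_{k_j}(\mu_j)>s'$ and $\int A_{k_j}\phi_i\,d\mu_j\in B_{n_j}(\alpha_i)$ for every $i\leq n_j$. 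Replacing each $\mu_j$ by its Bernoulli extension at level $k_j$ preserves both conditions, so I may assume $\mu_j$ is Bernoulli on a finite alphabet $\mathcal{W}_j\subset\Sigma_{k_j}$. The key positivity is $\delta_j:=\sum_{\omega\in\mathcal{W}_j}\mu_j([\omega])\log(\varphi^{s}(\omega)/\mu_j([\omega]))>0$, which follows from $s<s'<D_{k_j}(\mu_j)$ together with $\varphi^{s}\geq\varphi^{s'}$; by passage to a sub-sequence one arranges $\delta_j$ to exceed $\log c$ by a fixed positive margin, where $c\geq 1$ and $\Gamma\subset\Sigma_{\ast}$ are as in the quasi-multiplicativity of $\varphi^{s}$.

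For each $j$ choose $N_j\in\N$ large. By the law of large numbers applied to the i.i.d.\ sequence of $\mathcal{W}_j$-valued blocks drawn under $\mu_j^{\otimes N_j}$, there is a good set $G_j\subset\mathcal{W}_j^{N_j}$ with $\mu_j^{\otimes N_j}(G_j)\geq \tfrac{1}{2}$ on which the block averages of $-\log\mu_j([\,\cdot\,])$, $\log\varphi^{s}(\cdot)$, and every $A_{k_j}\phi_i$ with $i\leq n_j$ all lie within $\epsilon_j\downarrow 0$ of their expectations; I shall also require typicality of partial prefixes shorter than $k_j$ inside a single block, which is a further law-of-large-numbers condition and hence does not spoil the $\tfrac{1}{2}$-bound for $N_j$ large. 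The cascade is recursive: with level-$j$ prefix $\eta_j$ in hand, sample $(\omega^{(1)},\dots,\omega^{(N_{j+1})})$ from $\mu_{j+1}^{\otimes N_{j+1}}$ restricted to $G_{j+1}$ and renormalised, choose internal gap words $\kappa^{i}\in\Gamma$ by iterating quasi-multiplicativity so that $\prod_i\varphi^{s}(\omega^{(i)})\leq c^{N_{j+1}-1}\varphi^{s}(\widetilde\omega_{j+1})$ for $\widetilde\omega_{j+1}=\omega^{(1)}\kappa^{1}\cdots\omega^{(N_{j+1})}$, and pick $\kappa_j\in\Gamma$ so that $\varphi^{s}(\eta_j)\varphi^{s}(\widetilde\omega_{j+1})\leq c\varphi^{s}(\eta_{j+1})$, with $\eta_{j+1}=\eta_j\kappa_j\widetilde\omega_{j+1}$. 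Cylindric consistency then produces a Borel probability measure $\mu$ on $\Sigma$ supported on the set $S$ of all infinite concatenations of this form.

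Two properties then have to be verified. Convergence of Birkhoff averages on $S$ follows from the typicality built into each $G_j$: along the level indices $|\eta_j|$, one has $A_{|\eta_j|}\phi_i\to\alpha_i$ for $\mu$-a.e.\ point, and because $|\eta_{j+1}|$ exceeds $|\eta_j|$ by a factor tending to infinity (guaranteed by the choice of $N_j$), the same convergence holds at every integer index, so $S\subset E_{\Phi}(\alpha)$. For the Gibbs bound at a level cylinder $\eta_j$, iterate the per-level estimate: $G_m$-typicality yields $\prod_i\mu_m([\omega^{(i)}])\leq \exp(-N_m\delta_m+o(N_m))\prod_i\varphi^{s}(\omega^{(i)})$, internal and external quasi-multiplicativity absorb penalties of $c^{N_m-1}$ and $c$ respectively, and the $G_m$-renormalisation contributes a factor $\leq 2$. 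With $\delta_m-\log c$ bounded below uniformly, the single-level contribution $2c\exp(N_m(\log c-\delta_m)+o(N_m))$ can be made as small as $2^{-m}$ by taking $N_m$ large, so the cumulative product is bounded by a universal constant $C$ and $\mu([\eta_j])\leq C\varphi^{s}(\eta_j)$.

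The main obstacle is the extension of this bound to arbitrary cylinders $\omega\in\Sigma_{\ast}$ of length strictly between two levels, $|\eta_j|<|\omega|<|\eta_{j+1}|$; the naive estimate $\mu([\omega])\leq\mu([\eta_j])\leq C\varphi^{s}(\eta_j)$ is useless, since $\varphi^{s}(\eta_j)$ can exceed $\varphi^{s}(\omega)$ by the very large factor $\varphi^{s}(\rho)^{-1}$, where $\omega=\eta_j\rho$. The remedy is to look inside the $(j+1)$-st block: writing $\omega=\eta_j\kappa_j\sigma$ with $\sigma$ a prefix of $\widetilde\omega_{j+1}$, the conditional mass $\mu([\omega])$ factorises as $\mu([\eta_j])$ times the restricted probability that the sampled block begins with $\sigma$, which in turn is controlled by the partial-block typicality in $G_{j+1}$ combined with a further quasi-multiplicativity application across the $\kappa_j$-gap. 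Arranging this partial-block typicality uniformly in $j$ and in the prefix length $|\sigma|$, so that the Gibbs constant $C$ does not degrade as $j\to\infty$, is the most delicate step and is where the bookkeeping of $\delta_j-\log c$ against the $o(N_j)$ error really has to be done.
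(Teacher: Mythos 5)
Your overall strategy coincides with the paper's: concatenate LLN-typical blocks drawn from near-optimal compactly supported measures $\mu_j$, join them with gap words supplied by quasi-multiplicativity, and trade the per-level entropy surplus $D_{k_j}(\mu_j)-s>0$ against the joining constants. The block extraction, the Bernoulli extension, the Egorov/LLN good sets of measure $>\tfrac12$, and the per-level bookkeeping are all as in the paper.

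There is, however, a genuine gap in your Gibbs estimate: you treat the concatenation map from block sequences to words in $\Sigma_*$ as essentially injective, defining $\mu$ by ``cylindric consistency'' and bounding $\mu([\eta_j])$ by the mass of a single branch. It is not injective. The gap words $\kappa\in\Gamma$ are dictated by quasi-multiplicativity, have varying lengths $\le K-1$, and cannot be chosen freely, so distinct branches of the cascade (different block choices \emph{and} different gap words) can produce words lying in the same cylinder; $\mu([\omega])$ is therefore a \emph{sum} over all such branches, and the multiplicity compounds like $(2K)^{l}$ over $l$ joins. The paper spends a large part of the proof on exactly this: it keeps the abstract tree $\T_n^l$ separate from its image under $\Psi_n^l$, pads images to near-synchronized lengths $\gamma_n^l-K\le|\Psi_n^l(\omega)|\le\gamma_n^l$, proves the overlap bound $\#\Z_n^l(\omega)\le q_{n-1}(2K)^{l+q_{n-1}}$, and then absorbs this multiplicity --- together with the quasi-multiplicativity constants $C_0^{q_{n-1}+l}$ and the count of extensions of an intermediate-length $\tau$ --- into the exponential gain $2^{-|\tau|(t-s)}$ coming from working with $\varphi^t$, $s<t$, throughout and converting to $\varphi^s$ only at the end. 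Your bookkeeping spends the entropy margin $\delta_m-\log c$ entirely on the joining constants and the renormalisation factor $2$, leaving nothing to pay for the branch multiplicity, so the estimate $\mu([\eta_j])\le C\varphi^s(\eta_j)$ does not follow as written. Relatedly, for the intermediate-length cylinders you correctly identify the difficulty but propose partial-block typicality as the remedy; the paper instead handles it by tracking the measure at every sub-block index $l$ via the intermediary trees and crudely counting the (boundedly many, over a finite alphabet) extensions of $\tau$ to a full $\Psi_n^l(\omega)$ --- and here again the extension count must be absorbed by the $(t-s)$-gain, not by the per-level margin. Repairing your argument essentially forces you into the paper's two-parameter ($s<t$) scheme with an explicit overlap count.
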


In fact, with this proposition, the lower bound in Theorem \ref{General theorem} follows almost immediately.

\begin{theorem} \label{LB in main Thm Mar30}
  If $(T_i)_{i \in \N} \in \GL^\N$ is such that $\sup_{i \in \N} \| T_i \| < \tfrac12$, the singular value function $\varphi^s$ is quasi-multiplicative for all $0 \le s \le d$, $\Phi \colon \Sigma \to \R^\N$ has summable variations, and $\alpha \in \overline{\R}^\N$, then
  \begin{align*}
    \dimh(J^{\mathbf{a}}_{\Phi}(\alpha)) \ge \min\bigl\{ d,\lim_{n \rightarrow \infty} \lim_{k \rightarrow \infty} \sup \bigl\{ D_k(\mu) : \;&\mu \in \M^*_{\sigma^k}(\Sigma) \text{ so that } \\ &\int A_i\phi_i d\mu \in B_n(\alpha_i) \text{ for all } i \in \{ 1,\ldots,n \}\bigr\} \bigr\}
  \end{align*}
  for $\LL_\mathbf{A}$-almost all $\mathbf{a} \in \mathbf{A}$.
\end{theorem}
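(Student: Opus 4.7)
The plan is to deploy Proposition \ref{prop symbolic tree structure Mar30} as a black box producing a symbolic Frostman measure on $E_{\Phi}(\alpha)$, then transfer this mass distribution to $J^{\mathbf{a}}_{\Phi}(\alpha)$ via the projection $\pi_{\mathbf{a}}$, and finally run the Falconer--Solomyak $L^{2}$-transversality argument used in the proof of Theorem \ref{thm:dim_result_for_gibbs}. Denote the right-hand side of the claim by $D^{\ast}$. If $D^{\ast} = 0$ there is nothing to prove; otherwise I fix rationals $s < s' < D^{\ast}$ chosen sufficiently close that both lie in a common interval of the form $(m, m+1]$. Applying Proposition \ref{prop symbolic tree structure Mar30} at parameter $s'$ yields a set $S \subset E_{\Phi}(\alpha)$, a Borel probability measure $\mu$ supported on $S$, and a constant $C \ge 1$ satisfying $\mu([\omega]) \le C\varphi^{s'}(\omega)$ for all $\omega \in \Sigma_{*}$.

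For each $\mathbf{a} \in \mathbf{A}$ the push-forward $\pi_{\mathbf{a}}\mu$ is a Borel probability measure supported in $\pi_{\mathbf{a}}(S) \subset J^{\mathbf{a}}_{\Phi}(\alpha)$, so it suffices to show that its $s$-energy is finite for $\LL_{\mathbf{A}}$-almost every $\mathbf{a}$. Combining Fubini with the countable-alphabet version of \cite[Proposition 3.1(i)]{Solomyak1998} (whose validity under $\sup_{i}\|T_{i}\| < \tfrac{1}{2}$ is noted inside the proof of Theorem \ref{thm:dim_result_for_gibbs}) gives
\begin{equation*}
\int_{\mathbf{A}}\int_{\Sigma}\int_{\Sigma}\frac{d\mu(\omega)\,d\mu(\tau)\,d\LL_{\mathbf{A}}(\mathbf{a})}{|\pi_{\mathbf{a}}(\omega)-\pi_{\mathbf{a}}(\tau)|^{s}}\le c\int_{\Sigma}\int_{\Sigma}\varphi^{s}(\omega\wedge\tau)^{-1}\,d\mu(\omega)\,d\mu(\tau).
\end{equation*}
Partitioning the right-hand side according to $|\omega\wedge\tau|=n$ and using $\mu\{(\omega,\tau):\omega\wedge\tau=v\}\le\mu([v])^{2}\le C\varphi^{s'}(v)\mu([v])$, together with the bound $\varphi^{s}(v)^{-1}\varphi^{s'}(v) = \gamma_{m+1}(v)^{s'-s} \le \|T_{v}\|^{s'-s}\le 2^{-n(s'-s)}$ (valid precisely because $s,s'$ share an integer interval and $\sup_{i}\|T_{i}\| < \tfrac{1}{2}$), collapses the right-hand side to a finite geometric series $C\sum_{n\ge 0}2^{-n(s'-s)}$.

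Frostman's lemma then yields $\dimh(J^{\mathbf{a}}_{\Phi}(\alpha)) \ge \dimh(\pi_{\mathbf{a}}(S)) \ge s$ on a full-$\LL_{\mathbf{A}}$-measure set. Taking a sequence of rational pairs $(s_{k},s'_{k})$ with $s_{k} \uparrow D^{\ast}$ and intersecting the resulting full-measure sets concludes the argument, since a countable union of $\LL_{\mathbf{A}}$-null sets is null. Given Proposition \ref{prop symbolic tree structure Mar30}, the only technical wrinkle is controlling the singular-value ratio $\varphi^{s}(v)^{-1}\varphi^{s'}(v)$, which is handled cleanly by restricting $s$ and $s'$ to a common integer interval; apart from this, the argument is essentially the Gibbs-measure computation already carried out for Theorem \ref{thm:dim_result_for_gibbs}, so I expect no serious obstacle beyond organising the Fubini and countable-union bookkeeping at the end.
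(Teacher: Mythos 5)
Your proposal is correct and follows essentially the same route as the paper: the paper's proof likewise takes the Frostman-type measure $\mu$ from Proposition \ref{prop symbolic tree structure Mar30} and feeds it into the potential-theoretic computation of Theorem \ref{thm:dim_result_for_gibbs}, with the uniform bound $\mu([\omega])\le C\varphi^{s'}(\omega)$ replacing the Egorov step used there. Your explicit handling of the ratio $\varphi^{s}(v)^{-1}\varphi^{s'}(v)$ via a common integer interval and the final countable intersection over rational $s_k\uparrow D^{*}$ are exactly the details the paper leaves implicit.
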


\begin{proof}
  Let $s>0$ be as in Proposition \ref{prop symbolic tree structure Mar30}. Applying the measure given by Proposition \ref{prop symbolic tree structure Mar30} in the proof of Theorem \ref{thm:dim_result_for_gibbs}, we get $\dimh(\pi_\mathbf{a}(S)) \ge s$ for $\LL_\mathbf{A}$-almost all $\mathbf{a} \in \mathbf{A}$, where $S \subset E_\Phi(\alpha)$ is as in Proposition \ref{prop symbolic tree structure Mar30}. Thus $\dimh(J_\Phi^\mathbf{a}(\alpha)) \ge s$ for $\LL_\mathbf{A}$-almost all $\mathbf{a} \in \mathbf{A}$.
\end{proof}

In the course of the proof of Proposition \ref{prop symbolic tree structure Mar30}, we shall rely on the concept of $\M$-trees. This approach is inspired by a similar notion discussed by Furstenberg in \cite{Furstenberg2008}. We shall now define all the required concepts.

If $\omega, \tau \in \Sigma_* \cup \Sigma$ so that $\omega \wedge \tau = \omega$, then we write $\omega \preccurlyeq \tau$. This defines a partial order on $\Sigma_*$. Let $\mathbb{X} \subset \Sigma_*$ be an antichain with respect to $\preccurlyeq$. If there is a function $\MM_\mathbb{X} \colon \mathbb{X} \to [0,1]$ so that
\begin{equation*}
  \sum_{\omega \in \mathbb{X}} \MM_\mathbb{X}(\omega) = 1,
\end{equation*}
then the ordered pair $(\mathbb{X},\MM_\mathbb{X})$ is called an \emph{$\MM$-tree}. An $\MM$-tree $(\mathbb{X},\MM_\mathbb{X})$ is said to be \emph{finite} if $\mathbb{X}$ is a finite set. If $(\mathbb{X},\MM_\mathbb{X})$ and $(\mathbb{Y},\MM_\mathbb{Y})$ are $\MM$-trees so that
\begin{equation*}
  \mathcal{M}_{\mathbb{X}}(\omega)=\sum_{\tau \in \{ \kappa \in \mathbb{Y} : \omega \preccurlyeq \kappa \}}\mathcal{M}_{\mathbb{Y}}(\tau)
\end{equation*}
for all $\omega \in \mathbb{X}$, then we write $(\mathbb{X},\MM_\mathbb{X}) \preccurlyeq (\mathbb{Y},\MM_\mathbb{Y})$. This defines a partial order on the collection of all $\MM$-trees.

Next we shall define a limit for certain $\MM$-tree sequences. If $((\mathbb{X}_n,\MM_{\mathbb{X}_n}))_{n \in \N}$ is a chain of finite $\MM$-trees so that $\lim_{n \to \infty} \min\{ |\omega| : \omega \in \mathbb{X}_n \} = \infty$, then the limit of that sequence is defined to be
\begin{equation*}
  \lim_{n \to \infty} (\mathbb{X}_n,\MM_{\mathbb{X}_n}) = (\mathbb{X}_\infty,\MM_\infty),
\end{equation*}
where
\begin{equation*}
  \mathbb{X}_{\infty} = \{ \tau \in \Sigma : \text{for each } n \in \N \text{ there is } \omega \in \mathbb{X}_n \text{ so that } \omega \preccurlyeq \tau \}
\end{equation*}
and $\mathcal{M}_{\infty}$ is a Borel probability measure supported on $\mathbb{X}_{\infty}$ defined as follows. Observe first that since each $\mathbb{X}_n$ is a finite antichain, it is readily checked that the collection
\begin{equation*}
  \mathcal{A}(\mathbb{X}_\infty) = \{\emptyset, X_{\infty}\} \cup \{ [\omega]\cap \mathbb{X}_{\infty}: \omega \in \bigcup_{n \in \N}\mathbb{X}_n \}
\end{equation*}
is a semi-algebra of subsets of $\mathbb{X}_\infty$.
Moreover, since $\lim_{n \rightarrow \infty} \min \left\lbrace  |\omega|: \omega \in \mathbb{X}_n\right\rbrace =\infty$, it is clear that this semi-algebra generates the Borel $\sigma$-algebra restricted to $\mathbb{X}_{\infty}$.
We define $\mathcal{M}_{\infty}$ on $\mathcal{A}(\mathbb{X}_\infty)$ by setting $\mathcal{M}_{\infty}(\emptyset)=0$, $\mathcal{M}_{\infty}(\mathbb{X}_{\infty})=1$, and
\[
  \mathcal{M}_{\infty}\left( [\omega]\cap \mathbb{X}_{\infty} \right)=\mathcal{M}_{\mathbb{X}_n}(\omega).
\]
for all $\omega \in \mathbb{X}_n$ and $n \in \N$.
It follows from the fact that $\left(\mathbb{X}_n,\mathcal{M}_{\mathbb{X}_n}\right) \preccurlyeq \left(\mathbb{X}_{n+1},\mathcal{M}_{\mathbb{X}_{n+1}}\right)$ for each $n \in \N$, that this set function is well-defined and countably additive. Thus $\MM_\infty$ extends to a measure on $\Sigma$.

Finally, given a subset $\Omega \subset \Sigma_* \cup \Sigma$ let $\mathbb{D}(\Omega) \subset \N$ be the collection of all digits contained within words from $\Omega$, that is, $\mathbb{D}(\Omega) = \{ l \in \N : \text{there is } \omega \in \Omega \text{ with } \omega_i = l \text{ for some } i \}$.

\begin{proof}[Proof of Proposition \ref{prop symbolic tree structure Mar30}]
We begin by noting that, without loss of generality, we may assume that if $\phi$ is in the sequence $\Phi$, then also $-\phi$ is in $\Phi$. Indeed, if $\Phi = (\phi_i)_{i \in \N}$, $\alpha = (\alpha_i)_{i \in \N}$, and the right-hand side of the inequality in the formulation of Proposition \ref{prop symbolic tree structure Mar30} is denoted by $D(\Phi,\alpha)$, then we clearly have $D(\Phi,\alpha) = D(\Phi',\alpha')$, where $\Phi' = (\phi_i')_{i \in \N}$ and $\alpha' = (\alpha_i')_{i \in \N}$ are defined so that $\phi_{2i}' = \phi_i$, $\phi_{2i-1}' = -\phi_i$, $\alpha_{2i}' = \alpha_i$, $\alpha_{2i-1}' = -\alpha_i$ for all $i \in \N$.

Choose $s<t<D(\Phi,\alpha)$ and define
\begin{equation} \label{eq:A_i def}
  A_i = \sup\{ |\phi(\omega)| : \omega \in [\tau] \text{ and } \tau \in \mathbb{D}(\Gamma \cup \{ 1 \}) \}
\end{equation}
for all $i \in \N$. For each $n \in \N$ we choose $k=k(n) \geq 4K n \left(\max_{i \leq n}A_i+1\right)$ so that $\var_{k(n)}A_{k(n)}\phi_i < \frac{1}{2n}$ for all $i \in \{ 1,\ldots,n \}$ and there exists $\nu_n \in \M^*_{\sigma^k}(\Sigma)$ with $D_k(\nu_n)>t$ and
\begin{equation} \label{interval 10th Jan 12}
\int A_k\phi_i d \nu_n \in B_{2n}(\alpha_i)
\end{equation}
for all $i \in \{ 1,\ldots,n \}$. Let $\rho_n \in \MM_{\sigma^k}^*(\Sigma)$ be the compactly supported $k(n)$-th level Bernoulli measure given by
\begin{equation*}
\rho_n([\omega_1 \cdots \omega_{k(n)q}]) = \prod_{j=0}^{q-1} \nu_n\left([\omega_{jk(n)+1} \cdots \omega_{(j+1)k(n)}]\right).
\end{equation*}
For each potential $\phi_i$ we define a $k$-th level locally constant potential $\underline{A}_k\phi_i$ by
\begin{equation*}
\underline{A}_k\phi_i(\omega) = \inf\bigl\{ \frac{1}{k}\sum_{l=0}^{k-1} \phi_i(\sigma^l\tau): \tau_j = \omega_j \text{ for } j \in \{ 1,\ldots,k \} \bigr\}.
\end{equation*}
Note that $A_k\phi_i(\omega)-\var_{k}A_{k}\phi_i \leq \underline{A}_k\phi_i(\omega) \leq A_k\phi_i(\omega)$ for all $\omega \in \Sigma$ and $i \in \N$. Since $\var_{k}A_{k}(\phi_i)<\frac{1}{2n}$ it follows from (\ref{interval 10th Jan 12}) that
\begin{equation*}
\int \underline{A}_k\phi_i d \rho_n \in B_n(\alpha_i)
\end{equation*}
for all $i \in \{ 1,\ldots,n \}$. Moreover, it is immediate from $D_k(\nu_n)>t$ that
\begin{equation*}
\sum_{\omega \in \Sigma_{k(n)}} \rho_n(\omega) \log \frac{\varphi^t(\omega)}{\rho_n(\omega)} >0.
\end{equation*}
We let $\D(n) = \left\lbrace \omega \in \Sigma_{k(n)}:\rho_n(\omega)>0 \right\rbrace$. Since $\rho_n \in \MM_{\sigma^k}^*(\Sigma)$ the number of words in $\D(n)$ is finite. Hence, for each $n$ there is a finite set of digits $\mathbb{D}^*(n) \subset \N$ defined by
\[
  \mathbb{D}^*(n) = \mathbb{D}\biggl( \bigcup_{l=1}^{n+2}\D(l)\cup\{1\}\cup \Gamma\biggr).
\]
Since we also have $\var_{1}\phi_i<\infty$ the quantities
\begin{equation}\label{ A and B }
\begin{split}
\A(n) &= \sup \bigl\{ |\phi_i(\tau)| : \tau \in [\omega]\text{ for some }\omega \in \mathbb{D}^*(n) \text{ and } i \in \{ 1,\ldots,n \} \bigr\}, \\
\B(n) &= \sup\{ \rho_n(\omega)^{-1} : \omega \in \D(n) \}
\end{split}
\end{equation}
are both finite. By Kolmogorov's strong law of large numbers, we have
\begin{equation*}
\lim_{N \rightarrow \infty} \frac{1}{N}\sum_{j=0}^{N-1}\log \frac{\varphi^t(\omega_{jk(n)+1} \cdots \omega_{(j+1)k(n)})}{\rho_n([\omega_{jk(n)+1} \cdots \omega_{(j+1)k(n)}])} = \sum_{\tau \in \Sigma_{k(n)}} \rho_n(\tau) \log \frac{\varphi^t(\tau)}{\rho_n([\tau])} > 0
\end{equation*}
for $\rho_n$-almost all $\omega \in \Sigma$, and
\begin{equation*}
\lim_{N \rightarrow \infty} \frac{1}{N}\sum_{j=0}^{N-1}\underline{A}_{k(n)}\phi_i(\sigma^{jk(n)}\omega) \in B_n(\alpha_i)
\end{equation*}
and for all $i \in \{ 1,\ldots,n \}$.
By Egorov's theorem we find $S_n \subset \supp(\rho_n)$ with $\rho_n(S_n)>1/2$ so that each of the above convergences are uniform upon $S_n$. Hence there is $L(n) \in \N$ such that
\begin{equation}\label{ERGODIC LIMITS}
\begin{split}
\prod_{j=0}^{N-1}\varphi^t(\omega_{jk(n)+1} \cdots \omega_{(j+1)k(n)}) &> \prod_{j=0}^{N-1}\rho_n([\omega_{jk(n)+1} \cdots \omega_{(j+1)k(n)}]), \\
\frac{1}{N}\sum_{j=0}^{N-1}\underline{A}_{k(n)}\phi_i(\sigma^{jk(n)}\omega) &\in B_n(\alpha_i)\text{ for } i \in \{ 1,\ldots,n \}.
\end{split}
\end{equation}
for all $\omega \in S_n$ and all $N \geq L(n)$. We also let $M(n) = \max \left\lbrace \varphi^t(\tau)^{-1}: [\tau] \cap \supp(\rho_n) \neq \emptyset \right\rbrace.$

For every $\alpha, \beta \in \Sigma_*$, according to the quasi-multiplicativity of $\varphi^t$, there exists $\omega \in \Gamma$ such that
\begin{equation*}
\varphi^t\left(\alpha\omega\beta \right) \geq c(t) \varphi^t(\alpha) \varphi^t(\beta),
\end{equation*}
where $c(t)>0$ is a constant depending only on $t$. We let $\alpha \star \beta$ denote the word $\alpha\omega\beta$, so $\varphi^t\left(\alpha \star \beta\right) \geq c(t)\varphi^t(\alpha)\varphi^t(\beta)$. Note that for any given $\alpha,\beta \in \Sigma_*$ there are at most $K = \max\left\lbrace |\omega|: \omega \in \Gamma \right\rbrace$ finite words $\beta' \in \Sigma_*$ with $\alpha\star\beta' = \alpha\star\beta$ (including $\beta$ itself). We also write $\alpha\star\beta\star\omega = (\alpha\star\beta)\star\omega$.

Our aim is to construct a sequence of $\M$-trees $((\T_n,\M_n))_{n \in \N\cup\{0\}}$ with $(\T_{n-1},\M_{n-1})  \preccurlyeq (\T_{n},\M_{n})$ for all $n \in \N$, along with functions $\left(\Psi_n\right)_{n \in \N}$ of the form $\Psi_n\colon \T_n \rightarrow \Sigma_*$, together with a sequence $(\gamma_n)_{n \in \N}$ with the property that every $\tau \in \T_{n}$ satisfies $\gamma_n-K \leq |\Psi_n(\tau)| \leq \gamma_n$.

We begin by letting $\T_0=\{\emptyset\}$, $\M_0(\emptyset)=1$, $\Psi_0 =\{\emptyset \mapsto \emptyset\}$, and $\gamma_0=0$. Suppose we have defined $(\M_{n-1},\T_{n-1})$, $\Psi_{n-1}\colon \T_{n-1} \rightarrow \Sigma_*$, and $\gamma_{n-1}$ with the required properties. For each $\omega \in \T_{n-1}$ we let
\begin{equation*}
\Z_{n-1}(\omega) = \left\lbrace \tau \in \T_{n-1}: \Psi_{n-1}(\tau) \preccurlyeq \Psi_{n-1}(\omega)\text{ or }\Psi_{n-1}(\omega) \preccurlyeq \Psi_{n-1}(\tau)\right\rbrace.
\end{equation*}
We shall construct $(\M_n,\T_n)$, $\Psi_n \colon \T_n \rightarrow \Sigma_*$, and $\gamma_n$ as follows. First take $q_n\in \N$ so that
\begin{equation}\label{q def}
\begin{split}
q_n &>  \frac{4 M(n)^{L(n)} M(n+1)^{L(n+1)}}{\min\{ \varphi^t(\Psi_{n-1}(\omega)): \omega \in \T_{n-1}\}}
+ \max \{ \#\Psi_{n-1}^{-1}(\omega): \omega \in \T_{n-1} \} \\
&\quad+ n L(n+1)(\A(n)+1) (\B(n)+1)(\gamma_{n-1}+4K+k(n+1)+k(n+2)+1)\\
&\quad+ \#\D(n) \#\D(n+1) \max\left\lbrace \#\Z_{n-1}(\omega): \omega \in \T_{n-1} \right\rbrace +q_{n-1}\\
&\quad+ \#\D(n+1) \#\bigl(\{1\}\cup \Gamma \cup \mathbb{D}(\D(n)) \cup \mathbb{D}(\D(n+1))\bigr)^{5K+k(n+1)+k(n)+2},
\end{split}
\end{equation}
where $\A(n)$ and $\B(n)$ are as in (\ref{ A and B }).

Let $\F_n = \{ \tau \in \Sigma_{k(n)q_n} : [\tau] \cap S_n \neq \emptyset \}$ and $\eta(n) = \sum_{\tau \in \F_n} \rho_n ([\tau]) \geq \rho_n(S_n) >1/2$. Define
\begin{equation*}
\F_n^l = \{  \tau \in \Sigma_{k(n)l} : \text{there is } \beta \in \Sigma_{k(n)(q_n-l)} \text{ with }\tau \beta \in \F_n \}
\end{equation*}
for all $l \in \{ 1,\ldots,q_n \}$. In the process of constructing $(\T_n,\M_n)$, $\Psi_n$, and $\gamma_n$, we shall construct a sequence of intermediary $\M$-trees $((\T^l_n,\M^l_n))_{l=0}^{q_n}$ so that $(\T^l_n,\M^l_n) \preccurlyeq (\T^{l+1}_n,\M^{l+1}_n)$ and
\begin{equation} \label{eq:tree_goal1}
  (\T_{n-1},\M_{n-1}) \preccurlyeq (\T^l_n, \M^l_n) \preccurlyeq (\T_n,\M_n)
\end{equation}
for all $l \in \{1, \ldots, q_n\}$. In addition, we construct intermediary maps $\Psi^l_n\colon \T^l_n \rightarrow \Sigma_*$ and $\left(\gamma_n^l\right)_{l=0}^{q_n}$ so that $\gamma_n^l-K \leq |\Psi_n^l(\tau)| \leq \gamma_n^l$ for all $\tau \in \T^l_n$ and $\Psi_n^l(\omega^l) \preccurlyeq \Psi_n^{l+1}(\omega^{l+1})$ for all $\omega^l \in \T_n^l$, $\omega^{l+1} \in \T_n^{l+1}$, and $\omega^l \preccurlyeq \omega^{l+1}$.

First take $(\T^0_n,\M^0_n) = (\T_{n-1},\M_{n-1})$, $\Psi_n^0 = \Psi_{n-1}$, and $\gamma_n^0=\gamma_{n-1}$. Clearly $(\T^0_n,\M^0_n)$, $\Psi_n^0$, and $\gamma_n^0$ satisfy the required properties. For each $l \in \{1, \ldots, q_n\}$ we let
\begin{equation*}
\T_n^l = \{ \kappa\tau : \kappa \in \T_{n-1} \text{ and } \tau \in \F_n^l \}.
\end{equation*}
For each $\omega \in \T_n^l$ we take the (unique) pair $\kappa \in \T_{n-1}$ and $\tau \in \F_n^l$ with $\omega = \kappa\tau$ and let
\begin{equation*}
\M_n^l(\omega) = \frac{\M_{n-1}(\kappa)}{\eta(n)} \sum_{\beta \in \{ \alpha : \tau\alpha \in \F_n\}} \rho_n([\tau \beta]).
\end{equation*}
It is clear that $(\T^l_n,\M^l_n) \preccurlyeq (\T^{l+1}_n,\M^{l+1}_n)$ and if we let $(\T_n,\M_n) = (\T^{q_n}_n,\M^{q_n}_n)$, we have shown \eqref{eq:tree_goal1}. We shall construct the functions $(\Psi_n^l)_{l=1}^{q_n}$ and numbers $(\gamma_n^{l})_{l=1}^{q_n}$ recursively.

Suppose $l\in \{1, \ldots, q_n\}$ and we have constructed $\Psi_n^{l-1}$ and $\gamma_n^{l-1}$ satisfying the required properties. Define $\gamma_n^l = \gamma_n^{l-1}+2K+k(n)$ and let $\omega = \kappa\tau \in \T_n^l$ so that $\kappa \in \T_{n-1}$ and $\tau \in \F_n^l$. Choose $\tau' \in \F_n^{l-1}$ so that $\tau' \preccurlyeq \tau$ and set $\omega' = \kappa\tau' \in \T_n^{l-1}$. Thus there exists $\tau_l \in \Sigma_{k(n)}$ so that $\omega = \omega'\tau_l$. The function $\Psi_n^l$ is defined by setting
\begin{equation*}
  \Psi_n^l(\omega) = \Psi_n^{l-1}(\omega')\star\tau_l\star 1\cdots 1,
\end{equation*}
where the length of $1 \cdots 1$ is $\gamma_n^l - K - |\Psi_n^{l-1}(\omega') \star \tau_l|$. Since $\gamma_n^{l-1}-K \leq |\Psi_n^{l-1}(\omega')| \leq \gamma_n^{l-1}$ we have $\gamma_n^{l-1}-K+k(n) \leq |\Psi_n^l(\omega')\star\tau_l| \leq \gamma_n^{l-1}+k(n)+K$. Thus $\Psi_n^l(\omega)$ is well-defined, the length of $1 \cdots 1$ is at most $2K$, and $\gamma_n^l-K \leq |\Psi_n^l(\omega)| \leq \gamma_n^l$. Moreover, if we let $c_0 = c(t)^{2}\varphi^t(1\cdots 1)$, where the length of $1\cdots 1$ is $2K$, then a simple induction gives
\begin{equation}\label{singular value lower bound at l stage}
  \varphi^t(\Psi_{n-1}(\kappa)) \prod_{j=1}^l\varphi^t(\tau_j) \leq c_0^{-l}\varphi^t(\Psi_n^l(\kappa\tau)),
\end{equation}
where each $\tau_j$ has length $k(n)$.
We emphasise that $c_0$ is independent of $n$ and $l$. Recalling that $\T_n=\T_n^{q_n}$, we set $\Psi_n = \Psi_n^{q_n}$.

To finish the construction of $\MM$-trees $((\T_n,\MM_n))_{n \in \N \cup \{0\}}$, functions $(\Psi_n)_{n \in \N}$, and the sequence $(\gamma_n)_{n \in \N}$, we shall show that
\begin{equation} \label{count of phi overlaps}
  \max \{ \#\Z_n^l(\omega) : \omega \in \T_n^l \} \leq q_{n-1}(2K)^{l+q_{n-1}}
\end{equation}
for all $l \in \{ 1,\ldots,q_n \}$, and $n \in \N$, where
\begin{equation*}
  \Z_n^l(\omega) = \{ \omega' \in \T_n^l : \Psi_n^l(\omega) \preccurlyeq \Psi_n^l(\omega') \text{ or } \Psi_n^l(\omega') \preccurlyeq \Psi_n^l(\omega) \}
\end{equation*}
for all $\omega \in \T_n^l$, $l \in \{ 1,\ldots,q_n \}$, and $n \in \N$.

Fix $\omega \in \T_n^l$ and choose $\kappa \in \T_{n-1}$ and $\tau \in \F_n^l$ so that $\omega = \kappa\tau$. Write $\tau=\tau_1\cdots\tau_l$, where each $\tau_j \in \Sigma_{k(n)}$. Now suppose $\omega' \in \Z_n^l(\omega)$ and similarly take $\kappa' \in \T_{n-1}$ and $\tau' = \tau'_1\cdots\tau'_l \in \F_n^l$ so that $\omega' = \kappa'\tau'$ and $\tau'_j \in \Sigma_{k(n)}$. It is clear that either $\Psi_{n-1}(\kappa) \preccurlyeq \Psi_{n-1}(\kappa')$ or $\Psi_{n-1}(\kappa') \preccurlyeq \Psi_{n-1}(\kappa)$. Thus we have $\kappa' \in \Z_{n-1}(\kappa)$. Now since either $\Psi_n^l(\omega)\preccurlyeq \Psi_n^l(\omega')$ or $\Psi_n^l(\omega')\preccurlyeq  \Psi_n^l(\omega)$ and $|\Psi_n^{l-1}(\omega'')|\leq \gamma_n^{l-1}< \gamma_n^l-K \leq |\Psi_n^{l}(\omega)|$, we have $\Psi_n^{l-1}(\omega'')\preccurlyeq  \Psi_n^l(\omega)$, where $\omega'' = \kappa' \tau'_1 \cdots \tau_{l-1}' \in \T_n^{l-1}$. Thus, for $j \leq l-1$ there is a subword of $\Psi_n^l(\omega)$ which starts between positions $\gamma^{j-1}_n-K$ and $
\gamma^{j-1}_n+K$ (or between positions $\gamma_{n-1}-K$ and $
\gamma_{n-1}+K$ if $j=1$). Also, $\tau'_l \in \D(n)$. This shows that
\begin{equation*}
  \#\Z_n^l(\omega) \leq \#\Z_{n-1}(\kappa) (2K)^{l-1} \#\D(n).
\end{equation*}
Hence
\begin{align*}
\max \{ \#\Z_n^l(\omega): \omega \in \T_n^l \} &\leq \max \{ \#\Z_{n-1}(\kappa): \kappa \in \T_{n-1} \} (2K)^{l-1} \#\D(n)\\
&= \max \{ \#\Z_{n-1}^{q_{n-1}}(\kappa): \omega \in \T_{n-1}^{q_{n-1}} \}  (2K)^{l-1} \#\D(n).
\end{align*}
Iterating this inequality and applying the definition of $q_{n-1}$ we obtain
\begin{align*}
\max \{ \#\Z_n^l(\omega): \omega \in \T_n^l \} &\leq \max \{ \#\Z_{n-2}(\kappa): \kappa \in \T_{n-2} \} (2K)^{q_{n-1}+l-2} \#\D(n-1)\#\D(n) \\
&\leq q_{n-1} (2K)^{q_{n-1}+l}.
\end{align*}

Let $\left(T, \nu \right) = \lim_{n \rightarrow \infty}\left(\T_n, \M_n\right)$. Then $T$ consists of all $\omega \in \Sigma$ such that there is a sequence $\left(\omega_n\right)_{n \in \N} \in \prod_{n \in \N}\T_n$ such that $\omega_{n}\preccurlyeq\omega_{n+1}\preccurlyeq\omega$ for all $n \in \N$. It follows from the construction of $\left(\Psi_n\right)_{n \in \N}$ that $\Psi_n(\omega_{n})\preccurlyeq\Psi_{n+1}(\omega_{n+1})$ and $|\Psi_n(\omega_{n})|< |\Psi_{n+1}(\omega_{n+1})|$. Thus, there is a unique infinite word $\Psi(\omega) \in \Sigma$ with $\Psi_n(\omega_{n})\preccurlyeq\Psi(\omega)$ for all $n \in \N$. This defines a map $\Psi \colon T \rightarrow \Sigma$. We let $S = \Psi(T)$ and $\mu = \nu \circ \Psi^{-1}$ and let $S^* = \{ \omega \in \Sigma_* : [\omega] \cap S \neq \emptyset \}$.

In Lemmas \ref{ first nu estimate }--\ref{S subset of E} we shall verify that $S$ and $\mu$ defined above have the required properties. The proof of Proposition \ref{prop symbolic tree structure Mar30} thus follows.
\end{proof}

\begin{lemma}\label{ first nu estimate }
In the setting of the proof of Proposition \ref{prop symbolic tree structure Mar30}, let $n \in \N$, $l \in \{ 1,\ldots,q_n \}$, and $\omega=\kappa\tau \in \T_n^l$ so that $\kappa \in \T_{n-1}$ and $\tau \in \F_n^l$. Then
\begin{equation*}
\nu([\omega]) \leq 2\nu([\kappa]) M(n)^{L(n)}C_0^l \biggl(\frac{\varphi^t\left(\Psi_n^l(\omega) \right)}{\varphi^t(\Psi_{n-1}(\kappa))}\biggr).
\end{equation*}
\end{lemma}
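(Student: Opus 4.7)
My plan is to directly unwind the definitions from the proof of Proposition \ref{prop symbolic tree structure Mar30} and combine them with the product structure of the Bernoulli measure $\rho_n$. The first step is to observe that the limit measure $\nu$ satisfies $\nu([\omega])=\M_n^l(\omega)$ for every $\omega \in \T_n^l$ and $\nu([\kappa])=\M_{n-1}(\kappa)$ for every $\kappa \in \T_{n-1}$; this follows from the consistency relation $(\T_n^l,\M_n^l)\preccurlyeq(\T_n,\M_n)$ and the defining property of the limit of a chain of $\MM$-trees, by summing the identity $\MM_\infty([\beta] \cap \T_\infty)=\M_n(\beta)$ over all descendants $\beta \in \T_n$ of $\omega$.

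Next, writing $\omega=\kappa\tau$ with $\tau = \tau_1\cdots\tau_l$ and each $\tau_j \in \Sigma_{k(n)}$, the explicit formula for $\M_n^l$, together with $\sum_{\beta:\tau\beta \in \F_n}\rho_n([\tau\beta]) \le \rho_n([\tau])$ and the lower bound $\eta(n) > 1/2$, yields $\nu([\omega]) \le 2\nu([\kappa])\rho_n([\tau])$. The $k(n)$-th level Bernoulli property of $\rho_n$ then factorises $\rho_n([\tau]) = \prod_{j=1}^{l}\nu_n([\tau_j])$.

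The core step is to control this product by $\prod_{j}\varphi^t(\tau_j)$. Because $\tau \in \F_n^l$, some extension $\tau\beta$ lies in $\F_n$, so there is $\omega^\star \in [\tau\beta]\cap S_n$. When $l \ge L(n)$, applying the uniform ergodic estimate \eqref{ERGODIC LIMITS} at $\omega^\star$ with $N=l$ gives $\rho_n([\tau]) < \prod_{j}\varphi^t(\tau_j)$. The subtlety, and in my view the main obstacle, is the regime $l < L(n)$, where the ergodic estimate is not yet in force; here I would use the crude bounds $\rho_n([\tau]) \le 1$ and $\varphi^t(\tau_j)^{-1} \le M(n)$ (legitimate since each $[\tau_j]$ meets $\supp(\rho_n)$) to obtain $\rho_n([\tau]) \le M(n)^{l}\prod_{j}\varphi^t(\tau_j)$. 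Since $M(n) \ge 1$ and $l < L(n)$ in this range, the two regimes merge into the uniform bound $\rho_n([\tau]) \le M(n)^{L(n)}\prod_{j}\varphi^t(\tau_j)$.

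Finally, the inequality \eqref{singular value lower bound at l stage} converts $\prod_{j=1}^{l}\varphi^t(\tau_j)$ into at most $c_0^{-l}\varphi^t(\Psi_n^l(\omega))/\varphi^t(\Psi_{n-1}(\kappa))$, so chaining the three estimates and setting $C_0 := c_0^{-1}$ delivers exactly the claimed inequality. The argument is just a careful bookkeeping of definitions, the only genuinely non-formal input being the Egorov-based uniformity of \eqref{ERGODIC LIMITS} on $S_n$ together with the short-block correction factor $M(n)^{L(n)}$.
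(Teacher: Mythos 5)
Your proposal is correct and follows essentially the same route as the paper's proof: reduce to $\nu([\omega])\le 2\nu([\kappa])\rho_n([\tau])$ via the explicit formula for $\M_n^l$ and $\eta(n)>1/2$, bound $\rho_n([\tau])\le M(n)^{L(n)}\prod_j\varphi^t(\tau_j)$ by splitting into the cases $l\ge L(n)$ (Egorov/ergodic estimate \eqref{ERGODIC LIMITS}) and $l<L(n)$ (crude bound via $M(n)$), and finish with \eqref{singular value lower bound at l stage}. You supply slightly more detail than the paper on why $\nu([\omega])=\M_n^l(\omega)$ and where exactly the uniform ergodic estimate is applied, but the argument is the same.
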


\begin{proof}
Since $\omega \in \T_n^l$ and $\kappa \in \T_{n-1}$ we have $\nu([\kappa])=\M_{n-1}(\kappa)$ and
\begin{align*}
\nu([\omega]) &= \M_n^l(\omega) = \frac{\M_{n-1}(\kappa)}{\eta(n)} \sum_{\beta \in \{ \alpha : \tau\alpha \in \F_n \} }\rho_n([\tau \beta]) \\
&= \frac{\nu([\kappa])}{\eta(n)} \sum_{\beta \in \{ \alpha : \tau\alpha \in \F_n \} }\rho_n([\tau])\rho_n([\beta])
\leq 2 \nu([\kappa]) \rho_n([\tau]).
\end{align*}
So it suffices to show that
\begin{equation*}
\rho_n([\tau]) =\prod_{j=1}^l \rho_n([\tau_j]) \leq M(n)^{L(n)}C_0^l \biggl(\frac{\varphi^t\left(\Psi_n^l(\kappa\tau) \right)}{\varphi^t(\Psi_{n-1}(\kappa))}\biggr).
\end{equation*}
Thus, by (\ref{singular value lower bound at l stage}), it suffices to show that
\begin{equation} \label{eq:maali}
\prod_{j=1}^l \rho_n([\tau_j]) \leq M(n)^{L(n)} \prod_{j=1}^l \varphi^t([\tau_j]).
\end{equation}
Now either $l\geq L(n)$, in which case it follows from $\tau \in \F_n^l$ and (\ref{ERGODIC LIMITS}) that
\begin{equation*}
\prod_{j=1}^l \rho_n([\tau_j]) \leq \prod_{j=1}^l \varphi^t([\tau_j]),
\end{equation*}
or $l<L(n)$, in which case we have
\begin{equation*}
\prod_{j=1}^l \varphi^t([\tau_j])^{-1} \leq M(n)^l \leq M(n)^{L(n)}.
\end{equation*}
This shows \eqref{eq:maali} and thus completes the proof of the lemma.
\end{proof}

\begin{lemma}\label{second nu est}
In the setting of the proof of Proposition \ref{prop symbolic tree structure Mar30}, let $n \in \N$, $l \in \{ 1,\ldots,q_n \}$, and $\omega \in \T_n^l$. Then
\begin{equation*}
\nu([\omega]) \leq q_{n-1}C_0^{q_{n-1}+l} \varphi^t\bigl(\Psi_n^l(\omega) \bigr).
\end{equation*}
\end{lemma}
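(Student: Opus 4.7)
The plan is to establish the lemma by induction on $n$, with Lemma~\ref{ first nu estimate } providing the one-step inequality at each stage. For the base case $n = 1$, any $\omega \in \T_1^l$ has the form $\omega = \tau$ for some $\tau \in \F_1^l$, since $\T_0 = \{\emptyset\}$ and $\nu([\emptyset]) = 1$. A direct application of Lemma~\ref{ first nu estimate }, with the convention $\varphi^t(\emptyset) = 1$, will yield $\nu([\omega]) \leq 2 M(1)^{L(1)} C_0^l \varphi^t(\Psi_1^l(\omega))$, and provided the initial constant $q_0$ is chosen large enough at the start of the recursion, this is dominated by $q_0 C_0^{q_0 + l} \varphi^t(\Psi_1^l(\omega))$.

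For the inductive step, I would fix $n \geq 2$ and assume the bound at round $n - 1$. Any $\omega \in \T_n^l$ decomposes as $\omega = \kappa \tau$ with $\kappa \in \T_{n-1} = \T_{n-1}^{q_{n-1}}$ and $\tau \in \F_n^l$. Applying the inductive hypothesis at sub-level $q_{n-1}$ to bound $\nu([\kappa]) \leq q_{n-2} C_0^{q_{n-2}+q_{n-1}} \varphi^t(\Psi_{n-1}(\kappa))$, and substituting this into the one-step inequality from Lemma~\ref{ first nu estimate }, will give
\[
  \nu([\omega]) \leq 2 q_{n-2} M(n)^{L(n)} C_0^{q_{n-2} + q_{n-1} + l} \varphi^t\bigl(\Psi_n^l(\omega)\bigr).
\]
The induction then reduces to the scalar inequality $2 q_{n-2} M(n)^{L(n)} C_0^{q_{n-2}} \leq q_{n-1}$.

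The main obstacle, and the primary reason for the rather elaborate form of the choice (\ref{q def}), is verifying precisely this inequality. On the one hand, (\ref{q def}) demands $q_{n-1} > \tfrac{4 M(n-1)^{L(n-1)} M(n)^{L(n)}}{\min\{\varphi^t(\Psi_{n-2}(\omega)) : \omega \in \T_{n-2}\}}$, and since $\varphi^t \leq 1$ and $M(\cdot) \geq 1$ this already forces $q_{n-1} \geq 4 M(n)^{L(n)}$. On the other hand, because $C_0 = c(t)^2 \varphi^t(1 \cdots 1) < 1$ strictly (using that $\|T_i\| < 1$ and $t > 0$), the quantity $q_{n-2} C_0^{q_{n-2}}$ tends to zero as $q_{n-2} \to \infty$; the explicit lower bounds on $q_{n-2}$ imposed by (\ref{q def}) at the preceding stage are more than sufficient to guarantee $q_{n-2} C_0^{q_{n-2}} \leq 2$, from which the required inequality and the induction follow.
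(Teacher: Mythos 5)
Your strategy differs from the paper's: the paper does not induct on $n$ at all. It applies Lemma \ref{ first nu estimate } exactly twice --- once to $\omega=\kappa\tau$ and once to $\kappa=\kappa_-\tau_-$ with $\kappa_-\in\T_{n-2}$ --- then stops the recursion by using the trivial bound $\nu([\kappa_-])\le 1$, obtaining $\nu([\omega]) \le \tfrac{4M(n-1)^{L(n-1)}M(n)^{L(n)}}{\varphi^t(\Psi_{n-2}(\kappa_-))}C_0^{q_{n-1}+l}\varphi^t(\Psi_n^l(\omega))$; the first term of \eqref{q def} is tailored precisely so that $q_{n-1}$ dominates this constant. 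No telescoping of constants across all levels is ever needed.

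Your induction, by contrast, does not close, and the reason is the step where you claim $C_0=c(t)^2\varphi^t(1\cdots 1)<1$ so that $q_{n-2}C_0^{q_{n-2}}\to 0$. The quantity $c(t)^2\varphi^t(1\cdots 1)$ is what the paper calls $c_0$, and indeed $c_0\le 1$; but the constant $C_0$ appearing in Lemma \ref{ first nu estimate } must be its reciprocal $c_0^{-1}\ge 1$. This is forced by \eqref{singular value lower bound at l stage}, which gives $\prod_j\varphi^t(\tau_j)\le c_0^{-l}\varphi^t(\Psi_n^l(\kappa\tau))/\varphi^t(\Psi_{n-1}(\kappa))$, so the proof of Lemma \ref{ first nu estimate } only yields the factor $c_0^{-l}=C_0^l$ with $C_0\ge 1$ (the same reading is needed later, where one must beat the growth of $(2KC_0)^{|\tau|/(n-1)}$ against $2^{-|\tau|(t-s)}$). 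With $C_0\ge 1$ your inductive step requires $2q_{n-2}M(n)^{L(n)}C_0^{q_{n-2}}\le q_{n-1}$, and since $C_0^{q_{n-2}}$ is exponentially large in $q_{n-2}$ while \eqref{q def} imposes no lower bound on $q_{n-1}$ of that size, the scalar inequality fails. (If instead you insist on $C_0=c_0<1$, then you are invoking a strengthened form of Lemma \ref{ first nu estimate } that its proof does not establish.) There is also a smaller loose end: the base case leans on choosing ``$q_0$ large enough,'' which is not a free parameter you control once the construction has begun. The fix is to abandon the induction and follow the two-step unrolling with the cutoff $\nu([\kappa_-])\le 1$.
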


\begin{proof}
Since $\omega \in \T_n^l$ we may take $\kappa \in \T_{n-1}$ and $\tau \in \F_n^l$ so that $\omega= \kappa\tau$, so by Lemma \ref{ first nu estimate }, we have
\begin{equation*}
\nu([\omega]) \leq 2\nu([\kappa]) M(n)^{L(n)}C_0^l \biggl(\frac{\varphi^t\left(\Psi_n^l(\omega) \right)}{\varphi^t(\Psi_{n-1}(\kappa))}\biggr).
\end{equation*}
Moreover, since $\kappa \in \T_{n-1}=\T_{n-2}^{q_{n-1}}$ there exists $\kappa_- \in \T_{n-2}$ and $\tau_- \in \F_{n-1}^{q_{n-1}}$ so that $\kappa=\kappa_-\tau_-$. Applying Lemma \ref{ first nu estimate } once more, we obtain
\begin{equation*}
\nu([\kappa]) \leq 2\nu([\kappa_-]) M(n-1)^{L(n-1)}C_0^{q_{n-1}} \biggl(\frac{\varphi^t\left(\Psi_{n-1}^{q_{n-1}}(\kappa) \right)}{\varphi^t(\Psi_{n-2}(\kappa_-))}\biggr).
\end{equation*}
Combining these two estimates we get
\begin{equation*}
\nu([\omega]) \leq  \frac{4 M(n-1)^{L(n-1)} M(n)^{L(n)}}{\varphi^t(\Psi_{n-2}(\kappa_-))} C_0^{q_{n-1}+l} \varphi^t\bigl(\Psi_n^l(\omega) \bigr).
\end{equation*}
Noting that the definition of $q_{n-1}$ implies
\begin{equation*}
q_{n-1} \geq  \frac{4 M(n-1)^{L(n-1)} M(n)^{L(n)}}{\min\left\lbrace \varphi^t(\Psi_{n-2}(\kappa')): \kappa' \in \T_{n-2}\right\rbrace}
\end{equation*}
completes the proof.
\end{proof}

\begin{lemma}\label{first mu est}
In the setting of the proof of Proposition \ref{prop symbolic tree structure Mar30}, let $n \in \N$, $l \in \{ 1,\ldots,q_n \}$, and $\omega \in \T_n^l$. Then,
\begin{equation*}
\mu([\Psi_n^l(\omega)]) \leq q_{n-1}^2\#\D(n)(2KC_0)^{q_{n-1}+l} \varphi^t\bigl(\Psi_n^l(\omega) \bigr).
\end{equation*}
\end{lemma}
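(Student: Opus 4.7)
The strategy is to transfer the measure through $\Psi$, reduce to a sum over $\Z_n^l(\omega)$, apply Lemma \ref{second nu est} to each term, and use sub-multiplicativity of $\varphi^t$ to bound the singular-value contributions against $\varphi^t(\Psi_n^l(\omega))$. Since $\mu = \nu \circ \Psi^{-1}$, I first establish $\Psi^{-1}([\Psi_n^l(\omega)]) \subset \bigcup_{\omega' \in \Z_n^l(\omega)} ([\omega'] \cap T)$: any $\tilde\omega \in T$ in the preimage comes from a defining chain $(\omega_m)_m \in \prod_m \T_m$ with $\omega_m \preccurlyeq \tilde\omega$; writing $\omega_n = \kappa\tau_1 \cdots \tau_{q_n}$, the truncation $\omega' = \kappa\tau_1 \cdots \tau_l \in \T_n^l$ satisfies $\Psi_n^l(\omega') \preccurlyeq \Psi(\tilde\omega)$, which together with $\Psi_n^l(\omega) \preccurlyeq \Psi(\tilde\omega)$ forces $\omega' \in \Z_n^l(\omega)$. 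This gives
\[
\mu([\Psi_n^l(\omega)]) \leq \sum_{\omega' \in \Z_n^l(\omega)} \nu([\omega']).
\]

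I then split $\Z_n^l(\omega) = A \cup B$ with $A = \{\omega' : \Psi_n^l(\omega) \preccurlyeq \Psi_n^l(\omega')\}$ and $B$ its complement. On $A$, sub-multiplicativity and $\|\varphi^t\| \leq 1$ give $\varphi^t(\Psi_n^l(\omega')) \leq \varphi^t(\Psi_n^l(\omega))$, so Lemma \ref{second nu est} immediately produces $\nu([\omega']) \leq q_{n-1}C_0^{q_{n-1}+l}\varphi^t(\Psi_n^l(\omega))$.

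The hard part will be case $B$, where $\Psi_n^l(\omega') \precneqq \Psi_n^l(\omega)$ and sub-multiplicativity points the wrong way. To bypass this, I refine through the tree inclusion $(\T_n^l,\MM_n^l) \preccurlyeq (\T_n^{l+1},\MM_n^{l+1})$ (or $(\T_{n+1}^1,\MM_{n+1}^1)$ when $l = q_n$), expanding $\nu([\omega']) = \sum_{\omega''} \nu([\omega''])$ with $\omega''$ ranging over the at most $\#\D(n)$ extensions of $\omega'$ at the next level. The recursion $\gamma_n^{l+1} = \gamma_n^l + 2K + k(n)$ ensures $|\Psi_n^{l+1}(\omega'')| \geq \gamma_n^{l+1} - K > \gamma_n^l \geq |\Psi_n^l(\omega)|$, so any $\tilde\omega \in [\omega''] \cap \Psi^{-1}([\Psi_n^l(\omega)])$ forces $\Psi_n^l(\omega) \preccurlyeq \Psi_n^{l+1}(\omega'')$, while extensions failing this condition contribute nothing. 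Now sub-multiplicativity goes the right way: $\varphi^t(\Psi_n^{l+1}(\omega'')) \leq \varphi^t(\Psi_n^l(\omega))$, and Lemma \ref{second nu est} at level $l+1$ gives $\nu([\omega'']) \leq q_{n-1}C_0^{q_{n-1}+l+1}\varphi^t(\Psi_n^l(\omega))$.

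Summing the two cases over $\omega' \in \Z_n^l(\omega)$, invoking $\#\Z_n^l(\omega) \leq q_{n-1}(2K)^{q_{n-1}+l}$ from \eqref{count of phi overlaps}, and absorbing a harmless $C_0$ factor into $(2KC_0)^{q_{n-1}+l}$, yields
\[
\mu([\Psi_n^l(\omega)]) \leq q_{n-1}^2\#\D(n)(2KC_0)^{q_{n-1}+l}\varphi^t(\Psi_n^l(\omega)),
\]
as required. The main obstacle, the wrong direction of sub-multiplicativity in case $B$, is resolved by the $\MM$-tree refinement exploiting the built-in length gap $\gamma_n^{l+1} - K > \gamma_n^l$.
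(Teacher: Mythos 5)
Your argument is correct and is essentially the paper's: the paper covers $\Psi^{-1}([\Psi_n^l(\omega)])$ directly by the cylinders $[\eta]$ with $\eta\in\T_n^{l+1}$ and $\Psi_n^{l}(\omega)\preccurlyeq\Psi_n^{l+1}(\eta)$ (so that sub-multiplicativity always points the right way), bounds each $\nu([\eta])$ by Lemma \ref{second nu est}, and counts such $\eta$ by truncating to $\Z_n^l(\omega)$ and multiplying by $\#\D(n)$ --- the same ingredients you use, in a slightly different order. One bookkeeping point: when you discard the level-$(l+1)$ extensions $\omega''$ with $\Psi_n^{l}(\omega)\not\preccurlyeq\Psi_n^{l+1}(\omega'')$, you must be bounding $\nu([\omega']\cap\Psi^{-1}([\Psi_n^l(\omega)]))$ rather than $\nu([\omega'])$ (dropping terms from $\sum_{\omega''}\nu([\omega''])$ is not legitimate for an upper bound on the latter), so the intersection with the preimage should be carried through from your first display.
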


\begin{proof}
Note that $\mu([\Psi_n^l(\omega)]) =\nu \circ \Psi^{-1}\left([\Psi_n^l(\omega)]\right)$. Moreover,
$\Psi^{-1}\left([\Psi_n^l(\omega)]\right) \subset \bigcup[\eta]$,
where the union is taken over all $\eta \in \T_n^{l+1}$ satisfying $\Psi_n^{l}(\omega) \preccurlyeq \Psi_n^{l+1}(\eta)$. This follows from the fact that every $\eta \in \T_n^{l+1}$ maps to a string $\Psi_n^{l+1}(\eta)$ of length
$|\Psi_n^{l+1}(\eta)| \geq \gamma_n^{l+1}-K \geq \gamma_n^l \geq |\Psi_n^{l}(\omega)|$.
Since
\begin{equation*}
\nu([\eta]) \leq q_{n-1}C_0^{q_{n-1}+l+1} \varphi^t\bigl(\Psi_n^{l+1}(\eta) \bigr)
\leq q_{n-1}C_0^{q_{n-1}+l+1} \varphi^t\bigl(\Psi_n^{l}(\omega) \bigr)
\end{equation*}
for all $\eta \in \T_n^{l+1}$ satisfying $\Psi_n^{l}(\omega) \preccurlyeq \Psi_n^{l+1}(\eta)$, it suffices to show that
\begin{equation*}
\#\{ \eta  \in \T_n^{l+1}:\Psi_n^{l}(\omega) \preccurlyeq \Psi_n^{l+1}(\eta)\} \leq \#\D(n) q_{n-1}(2K)^{q_{n-1}+l}.
\end{equation*}
Now if $\eta  \in \T_n^{l+1}$ satisfies $\Psi_n^{l}(\omega) \preccurlyeq \Psi_n^{l+1}(\eta)$, then there exists some $\eta_- \in \T_n^l$ with $\eta_-\preccurlyeq \eta$ and $\Psi_n^{l}(\omega) \preccurlyeq \Psi_n^{l}(\eta_-)$ or $\Psi_n^{l}(\eta_-) \preccurlyeq \Psi_n^{l}(\omega)$. By Lemma \ref{count of phi overlaps}, there are at most $q_{n-1}(2K)^{q_{n-1}+l}$ such $\eta_-$. Moreover, each such $\eta_-$ is continued by at most $\#\D(n)$ strings in $\T_n^{l+1}$. This finishes the proof.
\end{proof}

\begin{lemma} \label{thm:previous_lemma}
In the setting of the proof of Proposition \ref{prop symbolic tree structure Mar30}, let $\tau \in \Sigma_*$. If $n=n(\tau)$ is minimal so that $|\tau|\leq \gamma^{l}_n-K$ for some $l \in \{ 1,\ldots,q_n \}$ and let $l=l(\tau)$ be the least such $l$. Then $|\tau|> \gamma^{l}_n/2$ and
\begin{equation*}
\mu([\tau]) \leq q_{n-1}^3(2KC_0)^{q_{n-1}+l} \varphi^t\left(\tau \right).
\end{equation*}
\end{lemma}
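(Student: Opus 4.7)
The plan is to bound $\mu([\tau])$ by summing $\nu$ over those $\omega \in \T_n^l$ whose image $\Psi_n^l(\omega)$ extends $\tau$, applying the per-cylinder bound of Lemma~\ref{second nu est} to each summand, and controlling the number of such $\omega$ using the overlap count \eqref{count of phi overlaps}. The length bound and the mass bound will then be handled separately.

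For the length estimate $|\tau| > \gamma_n^l/2$, I would use the minimality of $(n,l)$: with the convention $\gamma_n^0 := \gamma_{n-1}$, this gives $|\tau| > \gamma_n^{l-1} - K$ in every case (if $l \geq 2$ take the pair $(n, l-1)$, and if $l = 1$ take $(n-1, q_{n-1})$ while noting $\gamma_{n-1}^{q_{n-1}} = \gamma_{n-1} = \gamma_n^0$). Combined with $\gamma_n^l = \gamma_n^{l-1} + 2K + k(n)$ this yields $|\tau| > \gamma_n^l - (3K+k(n))$. The hypothesis $k(n) \geq 4Kn(\max_{i\leq n}A_i+1)$ together with the recursion $\gamma_n^l = \gamma_{n-1} + l(2K + k(n))$ and the enormous lower bounds on the $q_m$ prescribed by \eqref{q def} then force $\gamma_n^l \geq 6K + 2k(n)$, from which $|\tau| > \gamma_n^l/2$ follows.

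For the mass bound, every $\omega \in \T_n^l$ satisfies $|\Psi_n^l(\omega)| \geq \gamma_n^l - K \geq |\tau|$, so any $\kappa \in T$ with $\Psi(\kappa) \in [\tau]$ has its unique $\T_n^l$-ancestor $\omega$ lying in $\WW := \{\omega \in \T_n^l : \tau \preccurlyeq \Psi_n^l(\omega)\}$, giving
\begin{equation*}
\mu([\tau]) = \nu(\Psi^{-1}([\tau])) \leq \sum_{\omega \in \WW} \nu([\omega]) \leq \#\WW \cdot q_{n-1} C_0^{q_{n-1}+l} \varphi^t(\tau),
\end{equation*}
via Lemma~\ref{second nu est}, the sub-multiplicativity of $\varphi^t$, and $\|\varphi^t\|\leq 1$ (applied to $\tau \preccurlyeq \Psi_n^l(\omega)$). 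I would then bound $\#\WW$ using \eqref{count of phi overlaps}: any two images $\Psi_n^l(\omega), \Psi_n^l(\omega')$ for $\omega, \omega' \in \WW$ share the prefix $\tau$ of length exceeding $\gamma_n^l/2$ while both have total length at most $\gamma_n^l$, so they differ on at most $K$ positions and in particular $\omega' \in \Z_n^l(\omega_0)$ for any fixed $\omega_0 \in \WW$ up to the choice of a short tail and the $K$-fold multiplicity of the $\star$-operation. This yields $\#\WW \leq q_{n-1}^2(2K)^{q_{n-1}+l}$, and combining the two estimates produces the claimed $\mu([\tau]) \leq q_{n-1}^3(2KC_0)^{q_{n-1}+l}\varphi^t(\tau)$.

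The main obstacle will be the counting of $\WW$: two finite words with a long common prefix need not be totally comparable, so the reduction to the overlap class $\Z_n^l$ relies crucially on the length estimate $|\tau| > \gamma_n^l/2$ just established (so that only a short tail can differ) and on the specific $\star$-structure of $\Psi_n^l$ together with its padding of length at most $2K$.
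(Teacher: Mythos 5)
Your proposal is correct and follows essentially the same route as the paper: minimality of $(n,l)$ gives $|\tau|>\gamma_n^{l-1}-K$ and hence the length bound; $\Psi^{-1}([\tau])$ is covered by the cylinders $[\omega]$ with $\omega\in\T_n^l$ and $\tau\preccurlyeq\Psi_n^l(\omega)$; each term is bounded via the earlier mass estimates and $\varphi^t(\Psi_n^l(\omega))\le\varphi^t(\tau)$; and the count of such $\omega$ is absorbed into powers of $q_{n-1}$ via \eqref{q def} and \eqref{count of phi overlaps}. The only organizational difference is that the paper applies Lemma \ref{first mu est} to the image cylinders $[\Psi_n^l(\omega)]$ (which already has the overlap count of \eqref{count of phi overlaps} built in) and then only needs to count distinct image strings, whereas you apply Lemma \ref{second nu est} to the tree cylinders $[\omega]$ and invoke \eqref{count of phi overlaps} explicitly; both distribute the three factors of $q_{n-1}$ identically. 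One imprecision to fix: two words in $\WW$ do not "differ on at most $K$ positions" --- the uncovered tail beyond $\tau$ has length up to $3K+k(n)$ (it can contain an entire free block from $\D(n)$), so the number of possible tails is of order $\#\bigl(\{1\}\cup\mathbb{D}(\Gamma)\cup\mathbb{D}(\D(n-1))\cup\mathbb{D}(\D(n))\bigr)^{3K+k(n)}$ rather than anything controlled by $K$ alone; this is still dominated by $q_{n-1}$ by the last line of \eqref{q def}, which is exactly what your count of $\#\WW$ needs, so the conclusion stands.
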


\begin{proof}
Since $|\tau| \leq \gamma^{l}_n-K$ every $\omega \in \T_n^l$ satisfies $|\tau| \leq |\Psi_n^{l}(\omega)|$. Hence
$\mu([\tau]) \leq \sum \mu\left(\left[\Psi_n^{l}(\omega)\right]\right)$,
where the sum is taken over all $\omega \in \T_n^l$ with $\tau  \preccurlyeq \Psi_n^{l}(\omega)$. By Lemma \ref{first mu est}, for each $\omega \in \T_n^l$ with $\tau  \preccurlyeq \Psi_n^{l}(\omega)$, we have
\begin{equation*}
\mu([\Psi_n^l(\omega)]) \leq q_{n-1}^2\#\D(n)(2KC_0)^{q_{n-1}+l} \varphi^t\bigl(\Psi_n^l(\omega) \bigr)
\leq q_{n-1}^2\#\D(n)(2KC_0)^{q_{n-1}+l} \varphi^t\left(\tau \right).
\end{equation*}
As such, we must estimate the number of $\omega \in \T_n^l$ with $\tau  \preccurlyeq \Psi_n^{l}(\omega)$. Either $l>1$, in which case $|\tau|>\gamma^{l-1}_n-K$, or $l=1$, in which case $|\tau|>\gamma^{q_{n-1}-1}_{n-1}-K$. In either case $|\tau| >\gamma_n^l- (5K+k(n)+k(n-1)+2)\geq \gamma_n^l/2$.
Now each $\omega \in \T_n^l$ with $\tau  \preccurlyeq \Psi_n^{l}(\omega)$ is of length no more than $\gamma_n^l$ and each of the final $|\Psi_n^{l}(\omega)|-|\tau|$ digits is chosen from, $\{1\}\cup \Gamma \cup \mathbb{D}(\D(n-1)) \cup \mathbb{D}(\D(n))$. Thus, there are at most
\begin{equation*}
\#\left(\{1\}\cup \Gamma \cup \mathbb{D}(\D(n-1)) \cup \mathbb{D}(\D(n))\right)^{5K+k(n)+k(n-1)+2}
\end{equation*}
words $\omega \in \T_n^l$ with $\tau  \preccurlyeq \Psi_n^{l}(\omega)$.

By the choice of $q_{n-1}$, we have
\begin{equation*}
q_{n-1}>\#\D(n)\#\left(\{1\}\cup \Gamma \cup \mathbb{D}(\D(n-1)) \cup \mathbb{D}(\D(n))\right)^{5K+k(n)+k(n-1)+2}
\end{equation*}
and the claim follows.
\end{proof}

\begin{lemma}
In the setting of the proof of Proposition \ref{prop symbolic tree structure Mar30}, there exists a constant $C \ge 1$ with $\mu([\tau]) \leq C \varphi^s(\tau)$ for all $\tau \in \Sigma_*$.
\end{lemma}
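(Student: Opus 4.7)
The plan is to combine Lemma \ref{thm:previous_lemma} with an elementary comparison of $\varphi^t$ and $\varphi^s$, exploiting the fact that $\varphi^t(\tau)$ decays much faster than $\varphi^s(\tau)$ when $|\tau|$ is large, in order to absorb the super-exponential constants appearing in that lemma.

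First I would note that since every singular value of $T_\tau$ is bounded by $\gamma_1(T_\tau) \le \delta^{|\tau|}$ with $\delta = \sup_{i \in \N}\|T_i\| < \tfrac12$, a direct case analysis on the integer parts of $s$ and $t$ in the definition of the singular value function yields the comparison
\begin{equation*}
  \varphi^t(\tau) \le \gamma_1(T_\tau)^{t-s}\varphi^s(\tau) \le \delta^{|\tau|(t-s)}\varphi^s(\tau).
\end{equation*}
Combining this with Lemma \ref{thm:previous_lemma} and with the lower bound $|\tau| > \gamma_n^l/2 = (\gamma_{n-1}+l(2K+k(n)))/2$, I obtain
\begin{equation*}
  \mu([\tau]) \le F(n,l)\varphi^s(\tau), \quad F(n,l) = q_{n-1}^3(2KC_0)^{q_{n-1}+l}\delta^{(\gamma_{n-1}+l(2K+k(n)))(t-s)/2},
\end{equation*}
where $n=n(\tau)$ and $l=l(\tau)$. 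It remains to verify that $F(n,l)$ is bounded uniformly in $n$ and $l$.

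To do so I would split $\log F(n,l)$ into the $l$-coefficient $\log(2KC_0) - (2K+k(n))(t-s)|\log\delta|/2$ and the $l$-independent remainder $3\log q_{n-1} + q_{n-1}\log(2KC_0) - \gamma_{n-1}(t-s)|\log\delta|/2$. Since $k(n) \ge 4Kn \to \infty$, the $l$-coefficient is negative for every $n$ beyond some threshold $n_0$. For the $l$-independent term, the recursion $\gamma_{n-1} = \gamma_{n-2} + q_{n-1}(2K+k(n-1)) \ge q_{n-1}k(n-1)$ gives the upper bound $3\log q_{n-1} + q_{n-1}(\log(2KC_0) - k(n-1)(t-s)|\log\delta|/2)$, which tends to $-\infty$ as $n \to \infty$. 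Hence $F(n,l)$ is bounded on $\{n \ge n_0\}$. For the finitely many cases $n < n_0$, the crude inequality $\varphi^t(\tau) \le \varphi^s(\tau)$ (from $\gamma_1 \le 1$) together with the boundedness of $q_{n-1}$ and of $l \le q_n$ on this range yields a finite bound for $F(n,l)$. Taking $C$ to be the maximum of these two bounds completes the argument.

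The main obstacle is the balancing act just described: one must verify that the exponential decay proportional to $\gamma_n^l$ really does beat the combinatorial blow-up $q_{n-1}^3(2KC_0)^{q_{n-1}+l}$. The decisive ingredient is the lower bound $k(n) \ge 4Kn$ built into the choice of $k(n)$ in the construction, which forces $k(n) \to \infty$; without this, the slope of the linear growth of $\gamma_n^l$ in $l$ could be overwhelmed by the exponent $\log(2KC_0)$ appearing in the combinatorial factor.
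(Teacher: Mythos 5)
Your proof is correct and follows essentially the same route as the paper's: both rest on Lemma \ref{thm:previous_lemma}, the comparison $\varphi^t(\tau) \le \delta^{|\tau|(t-s)}\varphi^s(\tau)$ coming from $\sup_i\|T_i\| < \tfrac12$, and the observation that $\gamma_n^l$ grows in $(n,l)$ fast enough (because $k(n) \ge 4Kn \to \infty$) to absorb the factor $q_{n-1}^3(2KC_0)^{q_{n-1}+l}$. The only difference is bookkeeping: the paper converts the exponent via $q_{n-1}+l \le 2|\tau|/(n-1)$ and argues per unit length of $\tau$, whereas you compare the exponents directly as functions of $(n,l)$; both yield the same conclusion.
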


\begin{proof}
Clearly we may assume $\tau \in S^*$ since otherwise $\mu([\tau])=0$. Since each $\T_n$ consists of finitely many elements all of length at least $\gamma_n$, the set $S^* \cap \Sigma_m$ is finite for all $m \in \N$. As such, it suffices to show that there is $N\in \N$ so that $\mu([\tau]) \leq \varphi^s(\tau)$ for all $\tau \in S^*$ with $|\tau|> N$. Choose $N$ so that $(2KC_0)^{2/(N-1)}< (3/2)^{(t-s)}$ and $N^3 (3/4)^{N(t-s)}<1$ and let $M = \max\{N, \gamma_N\}$.

Given $\tau \in S^*$ with $|\tau|>N$ we let $n=n(\tau)$ to be minimal so that $|\tau|\leq \gamma^{l}_n-K$ for some $l \in \{ 1,\ldots,q_n \}$ and take $l=l(\tau)$ to be the least such $l$. Then, by Lemma \ref{thm:previous_lemma}, we have $|\tau|> \gamma^{l}_n/2$ and
\begin{equation*}
\mu([\tau]) \leq q_{n-1}^3(2KC_0)^{q_{n-1}+l} \varphi^t\left(\tau \right).
\end{equation*}
Hence $2|\tau|\geq \gamma_n^l \geq lk(n)+q_{n-1}k(n+1) \geq (l+q_{n-1})(n-1)$ and
\begin{equation*}
\mu([\tau]) \leq |\tau|^3(2KC_0)^{|\tau|/(n-1)} \varphi^t\left(\tau \right).
\end{equation*}
Since $\sup_{i \in \N}\| T_i \| < \tfrac12$ we have $\varphi^t\left(\tau \right) \leq 2^{-|\tau|(t-s)}\varphi^s\left(\tau \right)$ and so
\begin{equation*}
\mu([\tau]) \leq |\tau|^3\bigl((2KC_0)^{2/(n-1)}2^{-(t-s)}\bigr)^{|\tau|} \varphi^s\left(\tau \right).
\end{equation*}
Since $\gamma_n \geq \gamma_l^n \geq |\tau| >M \geq \gamma_N$ we have $n> N$, so $(2KC_0)^{2/(n-1)}(1/2)^{(t-s)}< (3/4)^{(t-s)}$. Moreover, since $|\tau|>N$ we have $|\tau|^3(3/4)^{|\tau|(t-s)}<1$ and
\begin{equation*}
\mu([\tau]) \leq |\tau|^3\bigl((2KC_0)^{2/(n-1)}(1/2)^{(t-s)}\bigr)^{|\tau|} \varphi^s\left(\tau \right)
\leq |\tau|^3(3/4)^{|\tau|(t-s)} \varphi^s\left(\tau \right) \leq \varphi^s\left(\tau \right)
\end{equation*}
finishing the proof.
\end{proof}

\begin{lemma}\label{S subset of E}
In the setting of the proof of Proposition \ref{prop symbolic tree structure Mar30}, $S \subset E_{\Phi}(\alpha)$.
\end{lemma}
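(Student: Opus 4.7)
The plan is to fix $\omega \in S$ and $i \in \N$, and to show that $\lim_{N \to \infty} A_N \phi_i(\omega) = \alpha_i$. First I would unpack the structure of $\omega = \Psi(\tau)$ for a preimage $\tau \in T$: by construction, $\omega$ is an infinite concatenation of elementary chunks, one chunk for each pair $(n, l)$ with $n \in \N$ and $l \in \{1, \ldots, q_n\}$. The $(n,l)$-chunk consists of a glue word (built from $\Gamma$ and a string of ones, of total length at most $2K$) followed by a block $\tau_l^{(n)} \in \Sigma_{k(n)}$ lying in the $\rho_n$-support subset $\F_n$. Digits inside glue words lie in $\mathbb{D}(\Gamma \cup \{1\})$, so by \eqref{eq:A_i def} the values of $\phi_i$ on cylinders rooted at such digits are bounded by $A_i$.

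Next I would fix $\epsilon > 0$ and choose $n_0 > i$ with $1/n_0 < \epsilon/6$. For each $n \ge n_0$, since $q_n \ge L(n)$ and $\tau_l^{(n)} \in \F_n$, the uniform ergodic conclusion \eqref{ERGODIC LIMITS} yields
\[
\frac{1}{q_n} \sum_{l=1}^{q_n} \underline{A}_{k(n)}\phi_i(\sigma^{\cdot} \omega) \in B_n(\alpha_i),
\]
once we identify the appropriate shifted positions inside $\omega$. Combined with $\var_{k(n)} A_{k(n)}\phi_i < 1/(2n)$ this says that the Birkhoff sum of $\phi_i$ across the block portion of stage $n$ differs from $q_n k(n) \alpha_i$ by at most $\tfrac{3}{2n} q_n k(n)$. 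The contribution of the stage-$n$ glue words has aggregate length at most $2K q_n$ and each term is bounded by $A_i$, so the choice $k(n) \ge 4 K n(A_i + 1)$ makes the glue contribution to the stage average at most $1/n$ in absolute value.

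The main step is to control Birkhoff averages along prefixes of $\omega$ whose length $N$ falls inside stage $n$. Writing $N = M_{n-1} + r$ where $M_{n-1}$ is the total length accumulated through stages $1, \ldots, n-1$, I would split $r = (\text{complete chunks of stage } n) + (\text{partial trailing chunk})$. Once $n(N) \ge n_0$ and $N$ is sufficiently large, the head term $|S_{M_{n-1}} \phi_i(\omega)|/N$ is $O(1/N)$ since $M_{n-1}$ is a fixed constant, the trailing partial chunk contributes $O((k(n) + 2K)/N)$, and the full stage-$n$ block part contributes an error of at most $\tfrac{3}{2n} + \tfrac{1}{n}$ to the average, by the previous paragraph.

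The hard part will be the transient regime at the start of a new stage $n$, where the number $l$ of completed stage-$n$ chunks is still smaller than $L(n)$ and the ergodic bound is not yet active. I would resolve this by noting that this transient has length at most $L(n)(k(n) + 2K)$, which is a constant depending only on $n$; any prefix $N$ of $\omega$ lying in this transient can be paired with the tail of stage $n-1$, where the ergodic bound was active with $l = q_{n-1}$, and the choice of $q_{n-1}$ in \eqref{q def} absorbs the extra length $L(n)(k(n) + 2K)$ into a negligible fraction of the stage-$(n-1)$ block length $q_{n-1} k(n-1)$. Combining these bounds gives $|A_N \phi_i(\omega) - \alpha_i| < \epsilon$ for all sufficiently large $N$, so $\omega \in E_\Phi(\alpha)$.
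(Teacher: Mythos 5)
Your overall strategy coincides with the paper's: decompose a long prefix of $\omega\in S$ into the completed earlier stages, the completed stage-$n$ blocks, and a partial trailing stage; apply the uniform Egorov conclusion \eqref{ERGODIC LIMITS} to the completed blocks; bound the glue words via \eqref{eq:A_i def} and the choice $k(n)\geq 4Kn(\max_{i\leq n}A_i+1)$; and absorb the sub-$L$ transient using the size of $q_{n-1}$ from \eqref{q def}. However, your treatment of the head term does not stand as justified. You discard $|S_{M_{n-1}}\phi_i(\omega)|/N$ as ``$O(1/N)$ since $M_{n-1}$ is a fixed constant,'' but $M_{n-1}=\gamma_{n-1}$ is not fixed: it grows with $N$ because $n=n(N)\to\infty$, and the pointwise bound on $\phi_i$ over the head digits is $\A(n)$, which is also unbounded in $n$. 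The correct control is the one recorded in \eqref{error terms at start and end}: by \eqref{q def} one has $N\geq\gamma_n\geq q_n>n\A(n)(\gamma_{n-1}+\cdots)$, so the head contributes at most $\gamma_{n-1}\A(n)/N\leq 1/n$ to the average. You already invoke exactly this mechanism for the transient regime, so the repair is available to you, but as written this step would fail.

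A second omission: the proposition allows $\alpha\in\overline{\R}^{\N}$, so $\alpha_i$ may equal $\pm\infty$, and a two-sided estimate $|A_N\phi_i(\omega)-\alpha_i|<\eps$ is meaningless there. The paper sidesteps this by first symmetrizing $\Phi$ (including $-\phi_i$ alongside each $\phi_i$), which reduces the lemma to the one-sided bound $\liminf_{m\to\infty}A_m\phi_i(\omega)\geq\alpha_i$; the infinite cases are then handled through the case distinctions $\alpha_i\leq 1/n$, $\alpha_i>1/n$, $\alpha_i=\infty$ in each displayed estimate. You should either adopt this reduction or treat the infinite values of $\alpha_i$ separately.
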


\begin{proof}
Recall that we previously made the assumption that if $\phi_i$ is in the sequence $\Phi$, then also $-\phi_i$ is in $\Phi$. As such it suffices to fix $\phi_i$ and show that for each $\tau \in S$ we have
\begin{equation} \label{eq:S_maali}
\liminf_{m \rightarrow \infty}\frac{1}{m}\sum_{j=0}^{m-1}\phi_i(\sigma^j(\tau)) \geq \alpha_i.
\end{equation}
Given $m \geq \gamma_{i}$ we choose $n=n(m)$ to be maximal so that $\gamma_{n}\leq m$ and choose $l=l(m) \leq q_{n+1}$ to be maximal so that $\gamma_{n+1}^{l}\leq m$. Since $\tau \in S =\Psi(T)$ there is $\omega \in \T^l_{n+1}$ with $\Psi^l_{n+1}(\omega) \preccurlyeq \tau$. It follows from the construction of $\T^l_{n+1}$ that $\omega=\kappa\omega^1\omega^2$, where $\kappa \in \T_{n-1}$, $\omega^1 \in \T^{q_n}_n$ and $\omega^2 \in \T^l_n$. We deal with these three segments seperately.

Since $l$ is maximal we have $m<\gamma_{n+1}^{l+1}$, or $m<\gamma_{n+2}^{1}$ if $l=q_{n+1}$. This implies that $m-|\Psi_{n+1}^l(\omega)|\leq 2K+k(n+1)+k(n+2)+1$. It also follows that $\tau|_m$, the initial segment of $\tau$ of length $m$, consists entirely of digits from $\mathbb{D}^*(n)=\mathbb{D}\bigl( \bigcup_{l=1}^{n+2} \D(l)\cup\{1\}\cup \Gamma\bigr)$. Hence, for all $j \in \{ 1,\ldots,m \}$ we have
\begin{equation}\label{all j less m}
-\phi_i(\sigma^j(\omega)) \leq \A(n),
\end{equation}
where $\A(n)$ is as in \eqref{ A and B }.
Since $m \geq \gamma_n \geq q_n > n \A(n)(\gamma_{n-1}+2K+k(n+1)+k(n+2)+1)$, we thus get
\begin{align}
\sum_{j=0}^{|\Psi_{n-1}(\kappa)|-1}\phi_i(\sigma^j(\tau))+ \sum_{j=|\Psi_{n+1}^l(\omega)|}^{m-1}\phi_i(\sigma^j(\tau)) &\geq -\gamma_{n-1}\A(n) - (2K+k(n+1)+k(n+2)+1) \A(n) \notag \\ &\geq -\frac{2m}{n}. \label{error terms at start and end}
\end{align}

Observe that $q_n>L(n)$ and $\omega^1 \in \T^{q_n}_n$ imply
\begin{equation*}
\frac{1}{q_n}\sum_{j=0}^{q_n-1}\underline{A}_{k(n)}\phi_i(\sigma^{jk(n)}(\tilde{\omega}^1)) \in B_n(\alpha_i)
\end{equation*}
for all $\tilde{\omega}^1 \in [\omega^1]$. Here we have used the fact that $\underline{A}_{k(n)}\phi_i$ is constant on cylinders of length $k(n)$. Write $\omega^1$ in the form $\omega^1=\omega^1_1\cdots\omega^1_{q_n}$ where each $\omega^1_\nu \in \D(n) \subset \Sigma_{k(n)}$. It follows from the construction of $\Psi_n$, along with the fact that $\Psi_{n}(\kappa\omega^1) \preccurlyeq \tau$, that some set $\tilde{\mathbb{A}} \subset [\gamma_{n-1},\gamma_{n}]\cap \N$ of cardinality $q_n$ has the property that for each $j \in \tilde{\mathbb{A}}$ there is $\nu \in \{1, \ldots, q_n\}$ such that $\sigma^j\tau \in [\omega_\nu^1]$. Let $\mathbb{A}=\tilde{\mathbb{A}}+\{0,1,\ldots,k(n)-1\}$. We may choose $\tilde{\mathbb{A}}$ so that $\tau_{\nu} \in \{1\}\cup \mathbb{D}\left( \Gamma\right)$ for all integers $\nu \in [\gamma_{n-1}+1,\gamma_{n}] \setminus (\mathbb{A}+1)$. Since $\sigma^j(\tau) \in [\omega_{\nu}^1]$ for each $j \in \tilde{\mathbb{A}}$, we have
\begin{equation*}
\frac{1}{q_n k(n)} \sum_{j \in \mathbb{A}}\phi_i(\sigma^{j}(\tau)) \geq \frac{1}{q_n}\sum_{j\in \mathbb{A}}^{q_n-1}A_{k(n)}\phi_i(\sigma^{j}(\tau)) \\
\geq \frac{1}{q_n}\sum_{\nu=0}^{q_n-1}\underline{A}_{k(n)}\phi_i(\sigma^{\nu k(n)}(\tilde{\omega}^1)) \in B_n(\alpha_i).
\end{equation*}
By the construction of $\Psi_n$, the cardinality of $[\gamma_{n-1},\gamma_{n}-1] \setminus \mathbb{A}$ is at most $4Kq_n$, $k(n) \geq 4Kn$, and $m \geq \gamma_n \geq k(n)q_n \geq 4Kn q_n$. Thus $\#\mathbb{A} \geq \gamma_n -\gamma_{n-1}- 4Kq_n \geq \gamma_n - \frac{2m}{n}$ and
\begin{equation*}
\sum_{j\in \mathbb{A}}\phi_i(\sigma^j(\tau)) >
\begin{cases}
  \gamma_n  \left(\alpha_i-\frac{1}{n}\right), &\text{if }\alpha_i \leq \frac{1}{n},\\
  \left(\gamma_n - \frac{2m}{n}\right) \left(\alpha_i-\frac{1}{n}\right), &\text{if }\alpha_i > \frac{1}{n},\\
  \left(\gamma_n - \frac{2m}{n}\right) n, &\text{if } \alpha_i=\infty.
\end{cases}
\end{equation*}
As noted, for each $j \in [\gamma_{n-1},\gamma_{n}-1] \setminus \mathbb{A}$ there is $\eta \in \{1\}\cup \mathbb{D}\left( \Gamma\right)$ so that $\sigma^j\tau \in [\eta]$. Thus, for all such $j$, we have
$\phi_i(\sigma^j(\tau)) \geq -A_i$, where $A_i$ is as in \eqref{eq:A_i def}.
Moreover, since $k(n) \geq 4K n A_i$ for $i \in \{ 1,\ldots,n \}$ and $m \geq \gamma_n \geq k(n)q_n$, we get
\begin{equation*}
\sum_{j \in [\gamma_{n-1},\gamma_{n}-1] \setminus \mathbb{A}}\phi_i(\sigma^j(\tau)) \geq -4K A_i q_n \geq -\frac{m}{n}.
\end{equation*}
Putting these inequalities together we have
\begin{equation}\label{ main gamma n chunk }
\sum_{j=\gamma_{n-1}}^{\gamma_n-1}\phi_i(\sigma^j(\tau)) >
\begin{cases}
  \gamma_n  \left(\alpha_i-\frac{1}{n}\right)-\frac{m}{n}, &\text{if }\alpha_i \leq \frac{1}{n},\\
  \left(\gamma_n - \frac{2m}{n}\right) \left(\alpha_i-\frac{1}{n}\right)-\frac{m}{n}, &\text{if }\alpha_i > \frac{1}{n},\\
  \left(\gamma_n - \frac{2m}{n}\right) n-\frac{m}{n}, &\text{if } \alpha_i=\infty.
\end{cases}
\end{equation}

For the sum $\sum_{j=\gamma_n}^{|\Psi_{n+1}^l(\omega)|-1} \phi_i(\sigma^j(\tau))$, there are two cases. Either $l \geq L(n+1)$, in which case there exists $\tilde{\omega}^2 \in [\omega^2]$ with
\begin{equation}
\frac{1}{l}\sum_{j=0}^{l-1}\underline{A}_{k(n+1)}\phi_i(\sigma^{jk(n+1)}\tilde{\omega}^2) \in B_n(\alpha_i),
\end{equation}
so we may proceed as in the previous case to deduce
\begin{equation*}
\sum_{j=\gamma_n}^{|\Psi_{n+1}^l(\omega)|-1}\phi_i(\sigma^j(\tau)) >
\begin{cases}
  \left(|\Psi_{n+1}^l(\omega)|-\gamma_n \right) \left(\alpha_i-\frac{1}{n}\right)-\frac{m}{n}, &\text{if }\alpha_i \leq \frac{1}{n},\\
  \left(|\Psi_{n+1}^l(\omega)|-\gamma_n  - \frac{2m}{n}\right) \left(\alpha_i-\frac{1}{n}\right)-\frac{m}{n}, &\text{if }\alpha_i > \frac{1}{n},\\
  \left(|\Psi_{n+1}^l(\omega)|-\gamma_n  - \frac{2m}{n}\right) n-\frac{m}{n}, &\text{if } \alpha_i=\infty.
  \end{cases}
 \end{equation*}
Recall that $m-|\Psi_{n+1}^l(\omega)|\leq 2K+k(n+1)+k(n+2)+1 \leq \gamma_{n}/n \leq m/n$, so we may combine the above inequalities to obtain
\begin{equation}\label{ l large term }
\sum_{j=\gamma_n}^{|\Psi_{n+1}^l(\omega)|-1}\phi_i(\sigma^j(\tau)) >
\begin{cases}
  \left(m-\gamma_n \right) \left(\alpha_i-\frac{1}{n}\right)-\frac{m}{n}, &\text{if }\alpha_i \leq \frac{1}{n},\\
  \left(m-\gamma_n  - \frac{3m}{n}\right) \left(\alpha_i-\frac{1}{n}\right)-\frac{m}{n}, &\text{if }\alpha_i > \frac{1}{n},\\
  \left(m-\gamma_n  - \frac{3m}{n}\right) n-\frac{m}{n}, &\text{if } \alpha_i=\infty.
  \end{cases}
\end{equation}
Combining (\ref{ l large term }) with (\ref{error terms at start and end}) and (\ref{ main gamma n chunk }) we conclude that whenever $l \geq L(n+1)$, we have
\begin{equation}\label{first case}
\frac{1}{m}\sum_{j=0}^{m-1}\phi_i(\sigma^j(\tau)) >
\begin{cases}
  \left(\alpha_i-\frac{1}{n}\right)-\frac{3}{n}, &\text{if }\alpha_i \leq \frac{1}{n},\\
  \left(1  - \frac{3}{n}\right) \left(\alpha_i-\frac{1}{n}\right)-\frac{3}{n}, &\text{if }\alpha_i > \frac{1}{n},\\
  \left(1  - \frac{3}{n}\right) n-\frac{3}{n}, &\text{if } \alpha_i=\infty.
\end{cases}
\end{equation}
If $l \leq L(n+1)$, then we apply (\ref{all j less m}) once more to obtain
\begin{equation} \label{l small term}
\sum_{j=\gamma_n}^{|\Psi_{n+1}^l(\omega)|-1}\phi_i(\sigma^j(\tau)) \geq - k(n+1)L(n+1)\mathcal{A}(n) \geq -\frac{\gamma_n}{n}\geq - \frac{m}{n}.
\end{equation}
Notice that if $l \leq L(n+1)$, then $|\Psi_{n+1}^l(\omega)|-\gamma_n \leq (4K+k(n+1))L(n+1) \leq m/n$. We also have $m-|\Psi_{n+1}^l(\omega)|\leq m/n$, so $m-\gamma_n \leq 2m/n$ which combined with (\ref{ main gamma n chunk }) gives
\begin{equation}\label{second main gamma n chunk }
\sum_{j=\gamma_{n-1}}^{\gamma_n-1}\phi_i(\sigma^j(\tau)) >
\begin{cases}
  m \left(\alpha_i-\frac{1}{n}\right)-\frac{m}{n}, &\text{if }\alpha_i \leq \frac{1}{n},\\
  \left(m - \frac{4m}{n}\right) \left(\alpha_i-\frac{1}{n}\right)-\frac{m}{n}, &\text{if }\alpha_i > \frac{1}{n},\\
  \left(m - \frac{4m}{n}\right) n-\frac{m}{n}, &\text{if } \alpha_i=\infty.
\end{cases}
\end{equation}
Combining (\ref{second main gamma n chunk }) with (\ref{l small term}) and (\ref{error terms at start and end}) gives
\begin{equation}\label{second case}
\frac{1}{m}\sum_{j=0}^{m-1}\phi_i(\sigma^j(\tau)) >
\begin{cases}
  \left(\alpha_i-\frac{1}{n}\right)-\frac{3}{n}, &\text{if }\alpha_i \leq \frac{1}{n},\\
  \left(1 - \frac{4}{n}\right) \left(\alpha_i-\frac{1}{n}\right)-\frac{3}{n}, &\text{if }\alpha_i > \frac{1}{n},\\
  \left(1 - \frac{4}{n}\right) n-\frac{3}{n}, &\text{if } \alpha_i=\infty.
\end{cases}
\end{equation}
Since either (\ref{first case}) or (\ref{second case}) holds for all $m \ge \gamma_i$ and $n(m) \to \infty$ as $m \to \infty$ we have shown \eqref{eq:S_maali} and thus finished the proof.
\end{proof}

\section{Conditional variational principle for bounded potentials} \label{sec:conditional}

In this section we shall prove Theorem \ref{main theorem for bounded potentials}. The progression from Theorem \ref{General theorem} to Theorem \ref{main theorem for bounded potentials} relies upon the thermodynamic formalism developed in \S \ref{sec:gibbs}. The main challenge is to prove the upper bound which is given in \S \ref{sec:conditional_upper}--\ref{sec:conditional_completeupperbound}.

We begin by proving some elementary lemmas in \S \ref{sec: space of integrals}. The upper bound for the interior points of the spectrum for finitely many potentials is given in \S \ref{sec:conditional_upper}. In \S \ref{sec: uppersemicont lemma}, we prove an upper-semicontinuity lemma which is a crucial technical tool in forthcoming sections. In Section \S \ref{sec:conditional_finitely}, we prove a lemma which allows us to extend the upper bound to the boundary of the spectrum. The proof of the upper bound, for all points of the spectrum and for a countable infinity of potentials, is given in \S \ref{sec:conditional_completeupperbound}. The lower bound in Theorem \ref{main theorem for bounded potentials} follows reasonably straightforwardly from Theorem \ref{General theorem} and it is proved in \S \ref{sec:conditional_lower}.

\subsection{Space of integrals with respect to invariant measures}\label{sec: space of integrals}
We restrict our attention to potentials $\Phi \colon \Sigma \rightarrow \R^N$ taking values in some finite dimensional vector space. We begin by recalling an elementary lemma concerning convex sets of $\R^N$. If $\kappa \in \{-1,1\}^N$, then we define the open \emph{$\kappa$-orthant} $O(\kappa)$ to be the set
\begin{equation*}
  O(\kappa) = \{ (x_i)_{i=1}^N : \kappa_i \cdot x_i > 0 \text{ for each } i \}.
\end{equation*}

\begin{lemma}
\label{Convexity Lemma B}
  If $C$ is a convex set, then $a \in \R^N$ lies in the interior of $C$ if and only if $C \cap \left(a+O(\kappa)\right) \neq \emptyset$ for all $\kappa \in \{-1,1\}^N$.
\end{lemma}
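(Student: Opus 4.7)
The plan is to prove each direction separately. The forward direction is immediate: if $a \in \inter(C)$, choose $\epsilon > 0$ with $\{x : \|x-a\| < \epsilon\} \subset C$; then for every $\kappa \in \{-1,1\}^N$ the point $a + \tfrac{\epsilon}{2\sqrt{N}}(\kappa_1,\ldots,\kappa_N)$ lies both in this ball (hence in $C$) and in $a + O(\kappa)$.

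For the backward direction, I will assume that $C \cap (a + O(\kappa)) \ne \emptyset$ for every $\kappa \in \{-1,1\}^N$, select $x_\kappa \in C \cap (a+O(\kappa))$, and set $y_\kappa = x_\kappa - a$, so that each coordinate $(y_\kappa)_i$ carries the prescribed sign $\kappa_i$. Convexity of $C$ gives $P := \conv\{y_\kappa : \kappa \in \{-1,1\}^N\} \subset C - a$, so it suffices to verify that $0 \in \inter(P)$. For this, I will invoke the standard consequence of the supporting hyperplane theorem: for a convex polytope $P \subset \R^N$ one has $0 \in \inter(P)$ if and only if, for every $v \in \R^N \setminus \{0\}$, some vertex $p$ of $P$ satisfies $\langle v, p \rangle > 0$.

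Given such a $v$, I will construct the required vertex by defining $\kappa \in \{-1,1\}^N$ via $\kappa_i = \sgn(v_i)$ whenever $v_i \ne 0$ and $\kappa_i = 1$ otherwise. Then
\[
  \langle v, y_\kappa \rangle = \sum_{i=1}^{N} v_i (y_\kappa)_i,
\]
where each summand with $v_i = 0$ vanishes, while each summand with $v_i \ne 0$ equals $|v_i|\cdot|(y_\kappa)_i| > 0$, since $\sgn((y_\kappa)_i) = \kappa_i = \sgn(v_i)$. As $v \ne 0$ at least one coordinate contributes strictly positively, yielding $\langle v, y_\kappa \rangle > 0$. Hence $0 \in \inter(P) \subset \inter(C-a)$ and therefore $a \in \inter(C)$.

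The argument is not technically deep; the only subtlety is the treatment of the coordinates where $v_i = 0$, which is neutralised by the arbitrary choice $\kappa_i = 1$ in that case.
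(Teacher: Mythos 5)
The paper records Lemma \ref{Convexity Lemma B} as ``an elementary lemma'' and gives no proof at all, so there is nothing to compare your argument against; on its own merits your proof is correct and complete. Both directions are sound: the forward direction by placing $a+\tfrac{\eps}{2\sqrt N}\kappa$ inside a ball contained in $C$, and the backward direction by reducing to $0 \in \inter\bigl(\conv\{y_\kappa\}\bigr)$ via the separation theorem (if $0$ were not interior, some $v \neq 0$ would satisfy $\langle v, p\rangle \le 0$ on the whole polytope, contradicting your choice of $\kappa$ adapted to the signs of $v$), with the coordinates where $v_i = 0$ correctly neutralised.
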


Suppose $\Phi = (\phi_i)_{i=1}^N$ is bounded with summable variations. If $\nu \in \MM(\Sigma)$, then we write $\int \Phi d\nu = (\int \phi_1 d\nu,\ldots,\int \phi_N d\nu)$. The space of integrals with respect to invariant measures is $\mathcal{P}(\Phi) = \{ \int \Phi d\nu : \nu \in \M_{\sigma}(\Sigma) \} \subset \R^N$.

\begin{lemma}\label{first lemma in The space of integrals with respect to invariant measures}
  The set $\mathcal{P}(\Phi)$ is bounded and convex. Moreover, either $\mathcal{P}(\Phi)$ is contained within some $(N-1)$-dimensional hyperplane or $\mathcal{P}(\Phi) \subset \overline{\inter \left(\mathcal{P}(\Phi)\right)}$.
\end{lemma}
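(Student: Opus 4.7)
The plan is to break the statement into three pieces (boundedness, convexity, and the alternative), each of which follows from standard reasoning once the right formulation is identified.

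For boundedness, I would simply observe that since each $\phi_i$ is bounded with $\|\phi_i\|_\infty < \infty$, for any $\nu \in \MM_\sigma(\Sigma)$ we have $|\int \phi_i\,d\nu| \le \|\phi_i\|_\infty$, so $\mathcal{P}(\Phi)$ sits inside the box $\prod_{i=1}^N [-\|\phi_i\|_\infty, \|\phi_i\|_\infty]$. For convexity, given $\nu_1,\nu_2 \in \MM_\sigma(\Sigma)$ and $t\in[0,1]$, the combination $t\nu_1+(1-t)\nu_2$ lies in $\MM_\sigma(\Sigma)$ (invariance is preserved under convex combinations, as is being a probability measure), and linearity of the integral gives $\int\Phi\,d(t\nu_1+(1-t)\nu_2)=t\int\Phi\,d\nu_1+(1-t)\int\Phi\,d\nu_2$. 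So the image $\mathcal{P}(\Phi)$ is convex in $\R^N$.

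For the dichotomy, I would invoke two standard facts from convex analysis. First, a convex subset $C$ of $\R^N$ fails to be contained in any $(N{-}1)$-dimensional affine hyperplane if and only if its affine hull equals $\R^N$, which in turn is equivalent to $\inter(C)\ne\emptyset$ (since a non-empty convex set always has non-empty relative interior in its affine hull). Second, if $C$ is convex and $\inter(C)\ne\emptyset$, then $C\subset\overline{\inter(C)}$: given $x\in C$ and $y\in\inter(C)$ with $B(y,r)\subset C$, for each $t\in(0,1]$ the ball $B((1-t)x+ty,\,tr)$ is contained in $C$ (any point in it can be written as $(1-t)x+ty'$ with $y'\in B(y,r)$), so $(1-t)x+ty\in\inter(C)$ for $t\in(0,1]$, and letting $t\downarrow 0$ shows $x\in\overline{\inter(C)}$.

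Combining these: if $\mathcal{P}(\Phi)$ is contained in some $(N{-}1)$-dimensional hyperplane, the first alternative holds; otherwise $\inter(\mathcal{P}(\Phi))\ne\emptyset$ and the second fact gives $\mathcal{P}(\Phi)\subset\overline{\inter(\mathcal{P}(\Phi))}$. There is no real obstacle here beyond citing (or briefly verifying) the two convex-analysis facts; the lemma is essentially a packaging statement that will be used later via Lemma \ref{Convexity Lemma B} to locate integrals of invariant measures near the interior of the spectrum.
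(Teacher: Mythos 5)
Your proposal is correct and takes essentially the same approach as the paper: the paper's own proof is a two-sentence appeal to the boundedness and affinity of $\nu \mapsto \int \Phi\, d\nu$ on the convex set $\MM_\sigma(\Sigma)$ together with ``elementary properties of convex sets in Euclidean spaces,'' and you have simply written out those details (the affine-hull/interior dichotomy and the fact that a convex set with non-empty interior lies in the closure of its interior) explicitly and correctly.
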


\begin{proof}
The first statement follows immediately from the fact that the mapping $\nu \mapsto \int \Phi d\nu$ defined on $\M_{\sigma}(\Sigma)$ is bounded and affine and $\M_{\sigma}(\Sigma)$ is convex. The second statement follows from elementary properties of convex sets in Euclidean spaces.
\end{proof}

If $I \subset \N$ is finite, then we define $\mathcal{P}(\Phi,I) \subset \mathcal{P}(\Phi)$ by
\begin{align*}
\mathcal{P}(\Phi, I) &= \{ \Phi(\nu) : \nu \in \M_{\sigma}(\Sigma)\text{ and }\nu(I^{\N}) = 1  \}, \\
\mathcal{P}_e(\Phi, I) &= \{ \Phi(\nu) : \nu \in \M_{\sigma}(\Sigma) \text{ is ergodic and } \nu(I^{\N}) = 1 \}.
\end{align*}

\begin{lemma}\label{second lemma in The space of integrals with respect to invariant measures}
It holds that
\begin{align*}
  \mathcal{P}(\Phi) &\subset \overline{\bigcup_{I} \mathcal{P}_e(\Phi,I)}, \\
  \inter \left(\mathcal{P}(\Phi)\right) &\subset \bigcup_{I}\inter \left( \mathcal{P}(\Phi,I)\right),
\end{align*}
where the unions are taken over all finite subsets $I \subset \N$.
\end{lemma}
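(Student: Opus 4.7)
The plan is to reduce both inclusions to a single approximation claim: for any $\nu \in \M_\sigma(\Sigma)$ and $\eps > 0$, there exist a finite $I \subset \N$ and a measure $\mu \in \M_\sigma(\Sigma)$ with $\mu(I^\N) = 1$ and $|\int \Phi d\mu - \int \Phi d\nu| < \eps$, where $\mu$ may additionally be taken to be ergodic. Taking this claim as given, the interior inclusion follows from Lemma \ref{Convexity Lemma B}: for each $\kappa \in \{-1,1\}^N$ I choose $\nu_\kappa \in \M_\sigma(\Sigma)$ with $\int \Phi d\nu_\kappa \in \alpha + O(\kappa)$, approximate each $\nu_\kappa$ by $\mu_\kappa$ supported on $I_\kappa^\N$ so closely that $\int \Phi d\mu_\kappa$ remains in the open set $\alpha + O(\kappa)$, and then take $I = \bigcup_\kappa I_\kappa$, so that all $\mu_\kappa$ are supported on $I^\N$ and the converse direction of Lemma \ref{Convexity Lemma B} applied to the convex set $\mathcal{P}(\Phi, I)$ yields $\alpha \in \inter(\mathcal{P}(\Phi, I))$. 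The first inclusion is immediate from the ergodic version of the same claim.

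To prove the approximation claim, I would first handle the case of ergodic $\nu$. By Birkhoff's ergodic theorem applied to each coordinate, for $\nu$-a.e.\ $\omega \in \Sigma$ one has $\frac{1}{n} S_n \Phi(\omega) \to \int \Phi d\nu$. For such an $\omega$ and each $n$ the finite alphabet $I_n = \{\omega_1, \ldots, \omega_n\}$ supports the periodic point $\tilde{\omega}_n = (\omega_1 \cdots \omega_n)^\infty$, and the uniform measure $\mu_n$ on its orbit is $\sigma$-invariant and ergodic on $I_n^\N$. Summable variation of each $\phi_i$ gives $|\int \phi_i d\mu_n - \frac{1}{n} S_n \phi_i(\omega)| \leq \frac{1}{n} \sum_{k=1}^n \var_k \phi_i \to 0$, so $\int \Phi d\mu_n \to \int \Phi d\nu$ simultaneously in all $N$ coordinates, and a sufficiently large $n$ produces the required approximant.

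For a general $\nu$, the ergodic decomposition $\nu = \int \nu_y \, dm(y)$ combined with discretisation of $m$ provides ergodic measures $\nu_{y_1}, \ldots, \nu_{y_k}$ and weights $\lambda_j$ with $\bigl|\sum_j \lambda_j \int \Phi d\nu_{y_j} - \int \Phi d\nu\bigr| < \eps/3$. Applying the ergodic case to each $\nu_{y_j}$ gives ergodic measures $\mu_j$ on finite subsystems $I_j^\N$ whose integrals are within $\eps/3$ of $\int \Phi d\nu_{y_j}$, and the convex combination $\sum_j \lambda_j \mu_j \in \M_\sigma((\bigcup_j I_j)^\N)$ is a (non-ergodic) approximant of $\nu$ in integral, proving the claim needed for part (b).

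For part (a) I would upgrade this convex combination to a single ergodic measure by a block-concatenation construction inside the common finite alphabet $I = \bigcup_j I_j$. Picking Birkhoff-typical $\omega^{(j)}$ for $\mu_j$ and positive integers $N_j$ with $N_j/N \to \lambda_j$ where $N = \sum_j N_j$, I concatenate the prefixes $\omega^{(j)}_{[0, N_j)}$ cyclically into a periodic word in $I^\N$ and let $\mu$ be the uniform invariant measure on its orbit. The main obstacle is to verify that summable-variation estimates control the boundary contributions between consecutive blocks uniformly in all coordinates of $\Phi$ as the $N_j$ grow, yielding $\int \Phi d\mu$ within $\eps/3$ of $\sum_j \lambda_j \int \Phi d\mu_j$; this concatenation step is the technical heart of the argument, and once it is established a triangle-inequality argument completes the proof.
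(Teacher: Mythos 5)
Your reduction of the second inclusion to the first via Lemma \ref{Convexity Lemma B} is exactly what the paper does, but your route to the approximation claim is genuinely different and considerably heavier than the paper's. The paper never invokes Birkhoff's theorem or the ergodic decomposition: given any $\nu \in \M_{\sigma}(\Sigma)$ it picks $n$ with $\var_n(A_n\phi_i)<\eps$, restricts $\nu$ to the cylinders $[\omega]$ with $\omega \in I^n$ for a finite $I$ carrying most of the level-$n$ mass, renormalises to get an $n$-th level Bernoulli measure $\mu'$, and sets $\mu = \frac{1}{n}\sum_{j=0}^{n-1}\mu'\circ\sigma^{-j}$; this $\mu$ is automatically $\sigma$-ergodic (any $\sigma$-invariant set is $\sigma^n$-invariant, hence has $\mu'$-measure $0$ or $1$), is supported on $I^{\N}$, and satisfies $\int\phi_i\,d\mu = \int A_n\phi_i\,d\mu' \in (\alpha_i-\eps,\alpha_i+\eps)$. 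So ergodicity comes for free and the ergodic and non-ergodic cases are handled in one stroke. Your approach instead funnels all the difficulty into the block-concatenation step, which you explicitly leave as a sketch. For the record, that step does go through: if $\tilde\tau$ is the periodic point obtained by cyclically concatenating the prefixes, then inside the $j$-th block the points $\sigma^{M_j+l}\tilde\tau$ and $\sigma^{l}\omega^{(j)}$ agree on their first $N_j-l$ symbols, so the per-block error in the Birkhoff sum of $\phi_i$ is at most $\sum_{k=1}^{N_j}\var_k\phi_i \le \sum_{k=1}^{\infty}\var_k\phi_i$, and dividing by $N$ kills the fixed number of boundary contributions --- the same estimate you already used in the ergodic case. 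Still, as written the proposal asserts rather than proves its hardest step, and the entire Birkhoff/decomposition/concatenation machinery can be replaced by the one-paragraph discretisation above.
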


\begin{proof}
Let $\nu \in \MM_\sigma(\Sigma)$, $\alpha = \int\Phi d\nu \in \mathcal{P}(\Phi)$ and $\varepsilon>0$. Since each $\phi_i$ has summable variations we may choose $n \in \N$ with $\var_n(A_n\phi_i)< \eps$. For each $\omega \in \N^n$ we let $\tilde{\omega} \in \Sigma$ denote the unique periodic point with $\sigma^{qn}(\tilde{\omega}) \in [\omega]$ for all $q \in \N \cup \{0\}$.

Note that since $\nu$ is $\sigma$-invariant, $\int A_n\phi_i d \nu=\int \phi_i d \nu =\alpha_i$ for each $i \in \{ 1,\ldots,N \}$. Hence, as $\var_n(A_n\phi_i)< \eps$ for each $i$, we have
\begin{equation}\label{ineq Mar19 1}
\begin{split}
  \sum_{\omega \in \Sigma_n} \nu([\omega])\inf_{\tau}\left\lbrace A_n\phi_i(\tau)\right\rbrace &> \alpha_i - \eps, \\
  \sum_{\omega \in \Sigma_n} \nu([\omega])\sup_{\tau}\left\lbrace A_n\phi_i(\tau)\right\rbrace &< \alpha_i+\eps.
\end{split}
\end{equation}
Given a finite set $I \subset \N$ we let $c(\nu,I) = \sum_{\omega \in I^n}\nu([\omega])$. Note that $I$ may be chosen so that $c(\nu,I)$ is arbitrarily close to one. Hence, (\ref{ineq Mar19 1}) implies that there exists a finite set $I \subset \N$ such that
\begin{equation}\label{ineq Mar19 2}
\begin{split}
c(\nu,I)^{-1} \sum_{\omega \in I^n} \nu([\omega])\inf_{\tau}\left\lbrace A_n\phi_i(\tau)\right\rbrace &> \alpha_i - \eps, \\
c(\nu,I)^{-1} \sum_{\omega \in I^n} \nu([\omega])\sup_{\tau}\left\lbrace A_n\phi_i(\tau)\right\rbrace &< \alpha_i+\eps.
\end{split}
\end{equation}
for all $i \in \{ 1,\ldots,N \}$.
Let $\mu'$ be the unique $n$th level Bernoulli measure which satisfies $\mu'([\omega]) = c(\nu,I)^{-1} \nu([\omega])$ for all $\omega \in I^n$. By (\ref{ineq Mar19 2}), we have
\[
  \int A_n\phi_i d \mu' \in  \left(\alpha_i-\eps, \alpha_i+\eps\right).
\]
Now let $\mu = \frac{1}{n}\sum_{j=0}^{n-1}\mu' \circ \sigma^{-j}$. Since $\mu'$ is $\sigma^n$-invariant and ergodic with respect to $\sigma^n$, the measure $\mu$ is $\sigma$-invariant and ergodic with respect to $\sigma$. It is also clear that $\mu$ is supported on $I^{\N}$. Moreover, since
\begin{equation*}
\int \phi_i d \mu = \frac{1}{n} \sum_{j=0}^{n-1} \int \phi_i d \mu' \circ \sigma^{-j} = \int A_n\phi_i d \mu' \in \left(\alpha_i-\eps, \alpha_i+\eps\right)
\end{equation*}
for all $i \in \{ 1,\ldots,N \}$, we have shown the first claim.

To prove the second claim, we apply Lemma \ref{Convexity Lemma B}. Indeed, if $\alpha \in \inter \left(\mathcal{P}(\Phi)\right)$, then $\mathcal{P}(\Phi) \cap \left(\alpha+O(\kappa)\right) \neq \emptyset$ for all $\kappa \in \{-1,1\}^N$. Since each set $\alpha+O(\kappa)$ is open it follows from the first claim that for each $\kappa \in \{-1,1\}^N$ there is a finite set $I(\kappa) \subset \N$ with  $\mathcal{P}(\Phi,I(\kappa)) \cap \left(\alpha+O(\kappa)\right) \neq \emptyset$. Letting $I = \bigcup_{\kappa \in  \{-1,1\}^N} I(\kappa)$, we obtain a finite set with
$\mathcal{P}(\Phi,I) \cap \left(\alpha+O(\kappa)\right) \neq \emptyset$ for all $\kappa \in \{-1,1\}^N$. Moreover, since $\mathcal{P}(\Phi, I)$ is convex it follows from Lemma \ref{Convexity Lemma B} that $\alpha \in \inter \left( \mathcal{P}(\Phi, I)\right)$. This completes the proof.
\end{proof}

\subsection{Upper bound for interior points of the spectrum} \label{sec:conditional_upper}

In this section we give the proof of the upper bound in Theorem \ref{main theorem for bounded potentials} for interior points of the spectrum in the special case where there are only finitely many potentials.

\begin{prop}\label{special case of main theorem for bounded potentials - finite, interior}
If $(T_i)_{i \in \N} \in \GL^\N$ is such that $\sup_{i \in \N} \| T_i \| < 1$, the singular value function $\varphi^s$ is quasi-multiplicative for all $0 \le s \le d$, $\Phi \colon \Sigma \to \R^{N}$ is bounded with summable variations, and $\alpha \in \inter(\PP(\Phi))$, then
\begin{equation*}
  \dimh(J^\mathbf{a}_\Phi(\alpha)) \leq \min\bigl\{ d,\max\bigl\{ s_\infty, \sup\{ D(\mu) : \mu \in \MM_\sigma(\Sigma) \text{ so that } \int \Phi d\mu = \alpha \} \bigr\} \bigr\}
\end{equation*}
for all $\mathbf{a} \in \mathbf{A}$.
\end{prop}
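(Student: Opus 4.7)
My plan is to derive the inequality from Theorem \ref{General theorem} by passing from near-optimising compactly supported $\sigma^k$-invariant measures to a genuine $\sigma$-invariant measure. Embedding $\Phi$ into $\R^\N$ by padding with zeros, Theorem \ref{General theorem} yields for every $\mathbf{a} \in \mathbf{A}$ the upper bound $\dimh(J_\Phi^\mathbf{a}(\alpha)) \le \min\{d,\Xi\}$, where
\begin{equation*}
\Xi = \lim_{n \to \infty}\lim_{k \to \infty} \sup\bigl\{D_k(\nu) : \nu \in \MM^*_{\sigma^k}(\Sigma),\ \int A_k\phi_i d\nu \in B_n(\alpha_i) \text{ for all } i \le N\bigr\}
\end{equation*}
(for $n \ge N$). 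It thus suffices to show $\Xi \le \max\{s_\infty,\ \sup\{D(\mu) : \mu \in \MM_\sigma(\Sigma),\ \int \Phi d\mu = \alpha\}\}$.

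If $\Xi \le s_\infty$ we are done, so I assume $\Xi > s_\infty$ and fix $\delta > 0$ with $\Xi > s_\infty + 2\delta$. Choose sequences $k_n \nearrow \infty$ and $\nu_n \in \MM^*_{\sigma^{k_n}}(\Sigma)$ with $D_{k_n}(\nu_n) \to \Xi$, $D_{k_n}(\nu_n) > s_\infty + \delta$, and $\int A_{k_n}\phi_i d\nu_n \in B_n(\alpha_i)$ for $i \le N$. Symmetrise to $\tilde\nu_n = \tfrac{1}{k_n}\sum_{j=0}^{k_n-1}\nu_n\circ\sigma^{-j} \in \MM_\sigma(\Sigma)$; then $\int \Phi d\tilde\nu_n = \int A_{k_n}\Phi d\nu_n$, which converges to $\alpha$ coordinatewise.

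The crucial step is tightness of $(\tilde\nu_n)$. The equation defining $D_{k_n}(\nu_n) = s_n$ gives
\begin{equation*}
0 = \sum_{\omega \in \Sigma_{k_n}}\nu_n([\omega])\log\frac{\varphi^{s_n}(\omega)}{\nu_n([\omega])}.
\end{equation*}
Combined with Jensen's inequality applied to the potential $\varphi^{s_\infty+\delta}$ (for which the partition function is finite by Lemma \ref{positive recurrence}), this forces a uniform quantitative estimate of exactly the type used in the tightness argument inside the proof of Theorem \ref{thm:gibbs_exists}: the marginal masses $\sum_{i \notin I}\tilde\nu_n([i])$ are controlled uniformly in $n$ by the tail $\sum_{i \notin I}\|T_i\|^{s_\infty+\delta}$, which vanishes as the finite set $I \uparrow \N$. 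Extracting a weak-$*$ subsequential limit $\mu \in \MM_\sigma(\Sigma)$ and using boundedness of $\Phi$ together with summable variations to pass the integrals, we obtain $\int \Phi d\mu = \alpha$.

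To conclude $D(\mu) \ge \Xi$ I would, for each $t < \Xi$, interpret $D_{k_n}(\nu_n) > t$ as the level-$k_n$ pressure identity at exponent $t$ being strictly positive and then pass this to the limit. Tightness confines a proportion arbitrarily close to one of the mass of $\tilde\nu_n$ to a finitely generated subshift on which entropy and the Lyapunov exponent $\Lambda(\varphi^t)$ are upper-semicontinuous under weak-$*$ convergence, yielding $P_\mu(\varphi^t) \ge 0$, while $\Lambda_\mu(\varphi^t) > -\infty$ follows for $t > s_\infty$ by the argument of Lemma \ref{thm:lyapunov_finite_for_gibbs}. Hence $D(\mu) \ge t$ for every $t < \Xi$, giving $D(\mu) \ge \Xi$. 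The main obstacle will be this semicontinuity step, since entropy is not generally upper-semicontinuous on $\MM_\sigma(\Sigma)$ for the non-compact shift $\N^\N$; the strict inequality $s_n > s_\infty + \delta$ is precisely what makes the tightness quantitative enough to reduce to a compact sub-system. The interior hypothesis $\alpha \in \inter(\PP(\Phi))$ enters through Lemma \ref{second lemma in The space of integrals with respect to invariant measures} to guarantee the approximating $\nu_n$ exist, and via small convex perturbations by ergodic measures supported near $\alpha$ to keep $\int\Phi d\tilde\nu_n$ inside the required neighbourhoods during the limiting procedure.
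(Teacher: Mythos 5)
Your route is genuinely different from the paper's, and it contains a gap at its central step. The paper does not go through Theorem \ref{General theorem} and the quantities $D_k$ at all: it combines Lemma \ref{relate dimension and pressure lemma} (the covering argument shows directly that $P(\varphi^s \cdot e_{\langle q,\Phi-\alpha\rangle}) \ge 0$ for every $q \in \R^N$ whenever $s < \dimh(J_\Phi^{\mathbf a}(\alpha))$) with Lemma \ref{thm:upper_bound_for_inter_points}, a Lagrange-multiplier argument: the convex function $F(q)=P(\varphi^s\cdot e_{\langle q,\Phi-\alpha\rangle})$ tends to $+\infty$ as $\|q\|\to\infty$ (this is exactly where $\alpha\in\inter(\PP(\Phi))$ is used), so it attains its minimum at some $q(\alpha)$; differentiating the pressure there (Lemma \ref{differentiation of pressure}) shows the Gibbs measure $\mu$ for the tilted potential satisfies $\int\Phi\,d\mu=\alpha$ and $h_\mu+\Lambda_\mu(\varphi^s)=F(q(\alpha))\ge 0$, hence $D(\mu)\ge s$.

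The gap in your argument is that $D_{k}(\nu)$ is not the Lyapunov dimension of any $\sigma$-invariant measure, and the discrepancy does not vanish in the limit by soft compactness. For $\nu\in\MM^*_{\sigma^k}(\Sigma)$, $D_k(\nu)$ is defined by the \emph{single} level-$k$ identity $\sum_{\omega\in\Sigma_k}\nu([\omega])\log(\varphi^s(\omega)/\nu([\omega]))=0$, whereas $D(\tilde\nu)$ for the symmetrisation $\tilde\nu$ is governed by $P_{\tilde\nu}(\varphi^s)=h_{\tilde\nu}+\Lambda_{\tilde\nu}(\varphi^s)$, and $\Lambda_{\tilde\nu}(\varphi^s)=\inf_n \tfrac1n\sum_{\omega\in\Sigma_n}\tilde\nu([\omega])\log\varphi^s(\omega)$ is an infimum over \emph{all} levels. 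By sub-multiplicativity this infimum can be strictly below the level-$k$ term (for genuinely non-commuting matrices it typically is), so already $D(\tilde\nu_n)$ may be bounded away from $D_{k_n}(\nu_n)$ before any weak-$*$ limit is taken; upper semicontinuity of entropy on a compact sub-system cannot recover this loss, because the quantity you are trying to preserve in the limit was never attained by the approximating invariant measures. (A secondary point: your tightness estimate via the tail $\sum_{i\notin I}\|T_i\|^{s_\infty+\delta}$ presumes a Gibbs-type pointwise bound $\nu_n([i])\lesssim\varphi^{s_\infty+\delta}(i)$ that does not follow from $D_{k_n}(\nu_n)>s_\infty+\delta$; the paper's tightness argument in Proposition \ref{Upper-semicont prop Mar15} instead uses the integrated bound on $\sum_\omega\mu([\omega])\log\gamma_{m+1}(\omega)$, and, crucially, it is applied to sequences of $\sigma$-invariant measures whose genuine Lyapunov dimensions $D(\mu_n)$ are already known to be large -- which is exactly the information your construction does not supply.) The thermodynamic detour through the tilted pressures is what converts the covering data into a statement about an honest equilibrium measure with the correct integral and the correct (multi-level) Lyapunov exponent; I do not see how to avoid it along the lines you propose.
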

The proof of Proposition \ref{special case of main theorem for bounded potentials - finite, interior} requires two lemmas. Lemma \ref{relate dimension and pressure lemma} uses Lemma \ref{initial covering lemma Mar15} to relate the dimension of $J_{\Phi}(\alpha)$ to the pressure. Then Lemma \ref{thm:upper_bound_for_inter_points} proves the upper bound by showing the existence of an appropriate maximising measure.

\begin{lemma}\label{relate dimension and pressure lemma}
  If $(T_i)_{i \in \N} \in \GL^\N$ is such that $\sup_{i \in \N}\| T_i \| < 1$, $\mathbf{a} \in \mathbf{A}$, $\Phi \colon \Sigma \to \R^N$ is bounded with summable variations, $\alpha \in \R^N$, and $s< \dimh(J^{\mathbf{a}}_{\Phi}(\alpha))$, then
  \[
    P(\varphi^s \cdot e_{\langle q, \Phi - \alpha \rangle} ) \geq 0
  \]
  for all $q \in \R^N$.
\end{lemma}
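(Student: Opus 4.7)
The plan is to apply the covering estimate from Lemma \ref{initial covering lemma Mar15} to $J_\Phi^{\mathbf{a}}(\alpha)$ and then combine it with a trivial lower bound on the exponential factor $e_{\langle q,\Phi-\alpha\rangle}$ on the relevant cylinders. Since $\Phi$ here is $\R^{N}$-valued with $N$ finite, I would first identify it with an $\R^{\N}$-valued potential by appending zero coordinates (which trivially has summable variations), so that Lemma \ref{initial covering lemma Mar15} can be invoked verbatim. Fix $s<\dimh(J_\Phi^{\mathbf{a}}(\alpha))$ and $q\in\R^{N}$; for any integer $n\ge N$ the lemma then furnishes some $k_0=k_0(n)\in\N$ with
\[
\sum_{\omega\in A_\Phi(\alpha,n,k)}\varphi^s(\omega)>1\qquad(k\ge k_0).
\]

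Next I would exploit the defining property of $A_\Phi(\alpha,n,k)$ to bound $S_k\langle q,\Phi-\alpha\rangle$ on these cylinders. For $\omega\in A_\Phi(\alpha,n,k)$ and $\tau\in[\omega]$ we have $|A_k\phi_i(\tau)-\alpha_i|<1/n$ for each $i\in\{1,\ldots,N\}$, so, setting $C_q=\sum_{i=1}^{N}|q_i|$,
\[
\bigl|S_k\langle q,\Phi-\alpha\rangle(\tau)\bigr|=k\bigl|\langle q,A_k\Phi(\tau)-\alpha\rangle\bigr|\le \frac{kC_q}{n}.
\]
Taking the supremum over $\tau\in[\omega]$ in the definition of $e_{\langle q,\Phi-\alpha\rangle}$ therefore yields the uniform lower bound $e_{\langle q,\Phi-\alpha\rangle}(\omega)\ge e^{-kC_q/n}$ for every $\omega\in A_\Phi(\alpha,n,k)$.

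Combining the two estimates, for $k\ge k_0$,
\[
Z_k\bigl(\varphi^s\cdot e_{\langle q,\Phi-\alpha\rangle}\bigr)\ge \sum_{\omega\in A_\Phi(\alpha,n,k)}\varphi^s(\omega)\,e_{\langle q,\Phi-\alpha\rangle}(\omega)\ge e^{-kC_q/n}.
\]
Taking $\tfrac{1}{k}\log$ and letting $k\to\infty$ gives $P(\varphi^s\cdot e_{\langle q,\Phi-\alpha\rangle})\ge -C_q/n$, and since $n\ge N$ was arbitrary, letting $n\to\infty$ completes the argument. I do not expect any real obstacle here; the proof is essentially a one-step computation built on Lemma \ref{initial covering lemma Mar15}, and the only minor bookkeeping point is the passage between $\R^{N}$- and $\R^{\N}$-valued potentials so that this earlier lemma applies directly.
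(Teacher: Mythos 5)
Your proposal is correct and follows essentially the same route as the paper's proof: invoke Lemma \ref{initial covering lemma Mar15} to get $\sum_{\omega\in A_\Phi(\alpha,n,k)}\varphi^s(\omega)>1$, bound $e_{\langle q,\Phi-\alpha\rangle}$ from below by $e^{-kC_q/n}$ on those cylinders, and let $k\to\infty$ then $n\to\infty$. Your constant $C_q=\sum_i|q_i|$ is in fact slightly cleaner than the paper's $N\overline{q}$ with $\overline{q}=\max_i q_i$, and the remark about padding $\Phi$ with zero coordinates is harmless bookkeeping.
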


\begin{proof}
Fix $s < \dimh(J_{\Phi}(\alpha))$ and $q \in \R^N$. By Lemma \ref{initial covering lemma Mar15}, for each $n \in \N$ there exists
$k(n) \in \N$ such that
\begin{eqnarray*}
\sum_{\omega \in A_{\Phi}(\alpha,n,k)} \varphi^s(\omega) > 1
\end{eqnarray*}
for all $k \geq k(n)$. Thus
\begin{align*}
  \sum_{\omega \in \Sigma_k}\varphi^s(\omega) \sup_{\tau \in [\omega]}\exp\left(S_n\left\langle q, \Phi - \alpha \right\rangle(\tau)\right)
  &= \sum_{\omega \in \Sigma_k} \varphi^s(\omega) \sup_{\tau \in [\omega]} \exp\biggl(\sum_{i=1}^N q_i\left(S_n\phi_i - n\alpha_i\right)\biggr) \\
  &\geq \sum_{\omega \in A_{\Phi}(\alpha,n,k)}\varphi^s(\omega) \sup_{\tau \in [\omega]} \exp\biggl(\sum_{i=1}^N q_i\left(S_n\phi_i - n\alpha_i\right)\biggr) \\
  &\geq \sum_{\omega \in A_{\Phi}(\alpha,n,k)}\varphi^s(\omega) \cdot e^{-N\overline{q}k/n}>e^{-N\overline{q}k/n},
\end{align*}
where $\overline{q} = \max_{i \in \{ 1,\ldots,N \}} q_i$.
Hence
\begin{equation*}
P(\varphi^s \cdot e_{\left\langle q, \Phi - \alpha \right\rangle} ) = \lim_{k \rightarrow \infty} \tfrac{1}{k} \log \biggl(
\sum_{\omega \in \Sigma_k}\varphi^s(\omega) \sup_{\tau \in [\omega]} \exp\left(S_n\left\langle q, \Phi - \alpha \right\rangle(\tau)\right)\biggr)
\geq -\frac{N\overline{q}}{n}.
\end{equation*}
Letting $n\rightarrow \infty$ completes the proof of the lemma.
\end{proof}

\begin{lemma} \label{thm:product_gibbs_lyapunov}
  If $(T_i)_{i \in \N} \in \GL^\N$ is such that $\sup_{i \in \N}\| T_i \| < 1$, the singular value function $\varphi^s$ is quasi-multiplicative for all $0\le s \le d$, $\Phi \colon \Sigma \to \R^N$ is bounded with summable variations, $\alpha \in \R^N$, $q \in \R^N$, and $s > s_\infty$, then the potential $\varphi^s \cdot e_{\langle q,\Phi-\alpha \rangle}$ is quasi-multiplicative and $P(\varphi^s \cdot e_{\langle q,\Phi-\alpha \rangle}) < \infty$. Moreover, if $\mu$ is the Gibbs measure for $\varphi^s \cdot e_{\langle q,\Phi-\alpha \rangle}$, then $\Lambda_\mu(\varphi^s) > -\infty$.
\end{lemma}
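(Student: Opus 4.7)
The first two claims are largely bookkeeping. For quasi-multiplicativity, observe that $\langle q, \Phi - \alpha \rangle = \sum_{i=1}^N q_i(\phi_i - \alpha_i)$ inherits summable variations from the components of $\Phi$ (the shifts $\alpha_i$ leave variations unchanged, and finite linear combinations of functions with summable variations still have summable variations). Since $\varphi^s$ is quasi-multiplicative by hypothesis, Lemma \ref{compound quasi-multiplicative} applied to the scalar function $\langle q, \Phi - \alpha \rangle$ immediately gives that $\psi := \varphi^s \cdot e_{\langle q, \Phi - \alpha \rangle}$ is quasi-multiplicative. For finiteness of the pressure, boundedness of $\Phi$ furnishes a constant $M$ with $|\langle q, \Phi - \alpha \rangle| \le M$, whence $e_{\langle q, \Phi - \alpha \rangle}(\omega) \le e^{nM}$ for $\omega \in \Sigma_n$, so that $Z_n(\psi) \le e^{nM} Z_n(\varphi^s)$ and therefore $P(\psi) \le M + P(\varphi^s) < \infty$ since $s > s_\infty$.

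The substantive claim is the finiteness of $\Lambda_\mu(\varphi^s)$, and I plan to mimic the proof of Lemma \ref{thm:lyapunov_finite_for_gibbs}, now carrying along the extra factor $e_{\langle q, \Phi - \alpha \rangle}$. Write $s = m + \delta$ with $m \in \Z$ and $0 < \delta \le 1$, and choose $t$ with $\max\{s_\infty, m\} < t < s$, so that $P(\varphi^t) < \infty$ and $Z_n(\varphi^t) < \infty$ for all $n$. The factorisation $\varphi^s(\omega) = \gamma_{m+1}(\omega)^{s-t}\varphi^t(\omega)$ combined with the Gibbs property $\mu([\omega]) \le C\,\psi(\omega)e^{-nP(\psi)}$ for $\omega \in \Sigma_n$ yields
\begin{equation*}
  \log \gamma_{m+1}(\omega)^{s-t} + \log \frac{\varphi^t(\omega)}{\mu([\omega])} + \log e_{\langle q, \Phi - \alpha \rangle}(\omega) \ge nP(\psi) - \log C.
\end{equation*}
Summing this against $\mu([\omega])$ over $\omega \in \Sigma_n$ and bounding the middle term by $\log Z_n(\varphi^t)$ via Jensen's inequality as in \eqref{eq:jensen_eq_calc}, together with the pointwise estimate $\log e_{\langle q, \Phi - \alpha \rangle}(\omega) \le nM$ for the last term, produces a lower bound on $(s-t)\sum_\omega \mu([\omega])\log\gamma_{m+1}(\omega)$. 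The elementary inequality $\log\varphi^s(\omega) \ge (m+1)\log\gamma_{m+1}(\omega)$ (valid since $\gamma_i(\omega) \ge \gamma_{m+1}(\omega)$ for $i \le m$, $m+1 \ge s$, and all singular values lie in $(0,1)$) then gives
\begin{equation*}
  \Lambda_\mu(\varphi^s) \ge \frac{m+1}{s-t}\bigl(P(\psi) - P(\varphi^t) - M\bigr) > -\infty,
\end{equation*}
upon dividing by $n$ and letting $n \to \infty$.

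\textbf{The main obstacle} is simply tracking the correct constants and ensuring every term in the final lower bound is finite; this is precisely where both distinguishing hypotheses enter. The condition $s > s_\infty$ is needed to produce an admissible auxiliary exponent $t > s_\infty$ for which Jensen's inequality controls the $\log(\varphi^t/\mu)$ term by the finite quantity $\log Z_n(\varphi^t)$, while the boundedness of $\Phi$ is what allows the twisting factor $e_{\langle q, \Phi - \alpha \rangle}$ to be absorbed into a uniform additive constant $M$ rather than an uncontrolled function of $n$. Without either hypothesis, the extra terms appearing in the Gibbs inequality would swamp the $\log \gamma_{m+1}$ contribution and the argument would collapse.
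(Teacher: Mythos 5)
Your proposal is correct and follows essentially the same route as the paper: quasi-multiplicativity via Lemma \ref{compound quasi-multiplicative}, finiteness of pressure from boundedness of $\Phi$, and then the argument of Lemma \ref{thm:lyapunov_finite_for_gibbs} with the twisting factor absorbed into the additive constant $B=\sup|\langle q,\Phi-\alpha\rangle|$ (the paper writes the Gibbs bound as $\mu([\omega])\le C\varphi^s(\omega)e^{n(B-P)}$ and then cites that lemma's proof verbatim, which is exactly the computation you carry out). The details you fill in, including the choice $\max\{s_\infty,m\}<t<s$ and the inequality $\log\varphi^s(\omega)\ge(m+1)\log\gamma_{m+1}(\omega)$, match the paper's intended argument.
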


\begin{proof}
  Observe that the quasi-multiplicativity follows immediately from Lemma \ref{compound quasi-multiplicative}. Since $\Phi$ is bounded we have $B = \sup\{ |\langle q,\Phi(\omega)-\alpha \rangle| : \omega \in \Sigma \} < \infty$. This together with $P(\varphi^s) < \infty$ gives $P(\varphi^s \cdot e_{\langle q,\Phi-\alpha \rangle}) = P < \infty$. Thus, by Theorems \ref{thm:gibbs_exists} and \ref{Gibbs implies ergodicity}, the Gibbs measure $\mu$ for $\varphi^s \cdot e_{\langle q,\Phi-\alpha \rangle}$ exists.

  To prove the last claim, let $m \in \Z$ be so that $m < s \le m+1$. By the Gibbs property of $\mu$ there is a constant $C \ge 1$ so that
  \begin{equation*}
    \mu([\omega]) \le C\varphi^s(\omega) e_{\langle q,\Phi-\alpha \rangle}(\omega) e^{-nP} \le C\varphi^s(\omega) e^{n(B-P)}
  \end{equation*}
  for all $\omega \in \Sigma_n$ and $n \in \N$. Now, following the proof of Lemma \ref{thm:lyapunov_finite_for_gibbs}, we get $\Lambda_\mu(\varphi^s) > -\infty$.
\end{proof}

\begin{lemma} \label{thm:upper_bound_for_inter_points}
  If $(T_i)_{i \in \N} \in \GL^\N$ is such that $\sup_{i \in \N}\| T_i \| < 1$, the singular value function $\varphi^s$ is quasi-multiplicative for all $0\le s \le d$, $\Phi \colon \Sigma \to \R^N$ is bounded with summable variations, $\alpha \in \inter(\PP(\Phi)) \subset \R^N$, and $s > s_\infty$ satisfies
  \begin{equation*}
    \inf_{q \in \R^N}P(\varphi^s \cdot e_{\left\langle q, \Phi - \alpha \right\rangle}) \geq 0,
  \end{equation*}
  then there exists an ergodic invariant measure $\mu \in \M_{\sigma}(\Sigma)$ with $\int \Phi d \mu=\alpha$ and $D(\mu) \geq s$.
\end{lemma}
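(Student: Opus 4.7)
The plan is to study the function $P \colon \R^N \to \R$ defined by $P(q) = P(\varphi^s \cdot e_{\langle q, \Phi-\alpha\rangle})$. Since $s > s_\infty$ and $\Phi$ is bounded, Lemmas \ref{compound quasi-multiplicative} and \ref{thm:product_gibbs_lyapunov} ensure that for every $q$ the compound potential is quasi-multiplicative with finite pressure, so Theorems \ref{thm:gibbs_exists} and \ref{Gibbs implies ergodicity} produce a unique ergodic Gibbs measure $\mu_q$, and Lemma \ref{thm:product_gibbs_lyapunov} gives $\Lambda_{\mu_q}(\varphi^s) > -\infty$. Lemma \ref{convexity of the pressure function} makes $P$ convex (hence continuous on $\R^N$), and applying Lemma \ref{differentiation of pressure} coordinatewise with base potential $\varphi^s \cdot e_{\langle q, \Phi-\alpha\rangle}$ and perturbations $\phi_i - \alpha_i$ yields
\begin{equation*}
  \nabla P(q) = \int (\Phi - \alpha) \, d\mu_q.
\end{equation*}
The strategy is to locate a minimiser $q^* \in \R^N$: the first-order condition $\nabla P(q^*) = 0$ will deliver $\int \Phi \, d\mu_{q^*} = \alpha$, and combining $P(q^*) \ge 0$ with the variational identity at $\mu_{q^*}$ will give $D(\mu_{q^*}) \ge s$.

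The hard part is showing that $P$ actually attains its infimum, and this is precisely where the hypothesis $\alpha \in \inter(\PP(\Phi))$ is used. By Lemma \ref{second lemma in The space of integrals with respect to invariant measures}, $\alpha \in \inter(\PP(\Phi, I))$ for some finite $I \subset \N$, so there is $\eta > 0$ such that for every unit vector $v \in \R^N$ one can choose $\nu_v \in \MM_\sigma(\Sigma)$ supported on $I^\N$ with $\int \Phi \, d\nu_v = \alpha + \eta v$. Since $I$ is finite, $M := \sup\{|h_\nu| + |\Lambda_\nu(\varphi^s)| : \nu \in \MM_\sigma(I^\N)\}$ is finite. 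Using summable variations of $\Phi$ one checks that $\Lambda_{\nu_v}(e_{\langle q, \Phi-\alpha\rangle}) = \eta\langle q, v\rangle$, so Lemma \ref{variational principal upper bound} gives
\begin{equation*}
  P(q) \ge P_{\nu_v}(\varphi^s \cdot e_{\langle q, \Phi-\alpha\rangle}) = h_{\nu_v} + \Lambda_{\nu_v}(\varphi^s) + \eta\langle q, v\rangle \ge -M + \eta\langle q, v\rangle.
\end{equation*}
Taking $v = q/\|q\|$ produces the coercivity estimate $P(q) \ge -M + \eta\|q\|$, and a continuous convex coercive function on $\R^N$ attains its infimum, supplying the required $q^*$.

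To finish, observe that $\int \langle q^*, \Phi - \alpha\rangle \, d\mu_{q^*} = 0$, so Lemmas \ref{thm:main_gibbs}, \ref{thm:gibbs_equilibrium}, and \ref{variational principal upper bound} together collapse the hypothesis $P(q^*) \ge 0$ to $P_{\mu_{q^*}}(\varphi^s) = h_{\mu_{q^*}} + \Lambda_{\mu_{q^*}}(\varphi^s) \ge 0$. Setting $\delta = \sup_i \|T_i\| < 1$, the piecewise linear formula for the singular value function gives $\log \varphi^t(T_\omega) - \log \varphi^s(T_\omega) \ge (s-t) n |\log \delta|$ for every $\omega \in \Sigma_n$ and $t < s$, so averaging yields $\Lambda_{\mu_{q^*}}(\varphi^t) \ge \Lambda_{\mu_{q^*}}(\varphi^s) + (s-t)|\log\delta|$. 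In particular $\Lambda_{\mu_{q^*}}(\varphi^t)$ is finite for all $t \le s$, and $P_{\mu_{q^*}}(\varphi^t) > P_{\mu_{q^*}}(\varphi^s) \ge 0$ for every $t < s$. Hence no $t < s$ lies in the defining set of the Lyapunov dimension, so $D(\mu_{q^*}) \ge s$, and together with the ergodicity supplied by Theorem \ref{Gibbs implies ergodicity} this completes the plan.
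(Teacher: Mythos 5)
Your proposal is correct and follows essentially the same route as the paper: coercivity of the convex pressure function $q \mapsto P(\varphi^s \cdot e_{\langle q,\Phi-\alpha\rangle})$ via measures supported on a finite alphabet $I$ with $\alpha \in \inter(\PP(\Phi,I))$, existence of a minimiser, the first-order condition from Lemma \ref{differentiation of pressure} giving $\int\Phi\,d\mu_{q^*}=\alpha$, and the variational identity giving $h_{\mu_{q^*}}+\Lambda_{\mu_{q^*}}(\varphi^s)\ge 0$. The only (harmless) difference is that you spell out the final step $D(\mu_{q^*})\ge s$ in more detail than the paper does.
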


\begin{proof}
We shall consider the function $F \colon \R^N \rightarrow \R$ defined by $F(q) = P(\varphi^s \cdot e_{\left\langle q, \Phi - \alpha \right\rangle})$.
Since $\alpha \in \inter\left(\mathcal{P}(\Phi)\right)$ we may apply the latter claim of Lemma \ref{second lemma in The space of integrals with respect to invariant measures} to obtain a finite subset $I \subset \N$ with $\alpha \in \inter\left(\mathcal{P}(\Phi,I)\right)$. Since $I$ is finite and all of the matrices $T_i$ are non-singular we have $c = \min\{ \gamma_d(T_i) : i \in I \} > 0$, $\varphi^s(\omega) \ge c^n$ for all $\omega \in \Sigma_n$, and $\Lambda_\mu(\varphi^s) \ge \log c > -\infty$ for all $\mu \in \MM_\sigma(\Sigma)$ with $\mu(I^\N)=1$. Since $\alpha \in \inter\left(\mathcal{P}(\Phi,I)\right)$ there exists $\eps >0$ with $\overline{B_{\eps}(\alpha)} \subset \mathcal{P}(\Phi,I)$. Hence for each $q \in \R^N \setminus \{0\}$ we have $\alpha + \eps q/\|q\| \in \mathcal{P}(\Phi,I)$ and there exists a measure $\nu_q \in \M_{\sigma}(\Sigma)$ with $\Lambda_{\nu_q}(\varphi^s) \geq \log c$ satisfying
\begin{equation*}
\int \Phi d\nu_q = \alpha + \eps q/\|q\|.
\end{equation*}
By Lemma \ref{variational principal upper bound} and the boundedness of $\Phi$, we have
\begin{equation*}
  F(q) \ge h_{\nu_q} + \bigl\langle q, \int\Phi d\nu_q - \alpha \bigr\rangle + \Lambda_{\nu_q}(\varphi^s) \ge \langle q, \eps q/\|q\| \rangle + \log c = \eps\|q\| + \log c.
\end{equation*}
Hence $F(q) \rightarrow \infty$, as $\|q\| \rightarrow \infty$. It follows from Lemma \ref{convexity of the pressure function} that $F$ attains a global minimum on a bounded set. Let $q(\alpha)$ denote a point at which this global minimum is attained.

By Lemma \ref{thm:product_gibbs_lyapunov}, the Gibbs measure $\mu_q$ for $\varphi^s \cdot e_{\langle q,\Phi-\alpha \rangle}$ satisfies $\Lambda_{\mu_q}(\varphi^s) > -\infty$. Thus, applying Lemmas \ref{thm:gibbs_equilibrium} and \ref{variational principal upper bound}, we obtain
\begin{equation*}
  F(q) = h_{\mu_q} + \bigl\langle q, \int\Phi d\mu_q - \alpha \bigr\rangle + \Lambda_{\mu_q}(\varphi^s).
\end{equation*}
Moreover, by Lemma \ref{differentiation of pressure} for each $i \in \{1, \ldots, N \}$ we have
\begin{align*}
\frac{\partial F(q)}{\partial q_i}\bigg|_{q=q_*} &= \lim_{q_i \rightarrow q_i^{*}}\frac{P\bigl(\varphi^s\cdot e_{\left\langle q_*,\Phi-\alpha\right\rangle}\cdot e_{(q_i-q_i^*)(\phi_i-\alpha_i)}\bigr)-P\left(\varphi^s\cdot e_{\left\langle q_*,\Phi-\alpha\right\rangle}\right)}{q_i-q_i^*}\\
&= \frac{ \partial P\left((\varphi^s\cdot e_{\left\langle q_*,\Phi-\alpha\right\rangle})\cdot e_{q_i(\phi_i-\alpha_i)}\right)}{\partial q_i}\bigg|_{q_i=0} = \int \phi_i d \mu_{q_*}- \alpha_i.
\end{align*}
Since $F$ attains a minimum at $q(\alpha)$ it follows that $\int \phi_i d \mu_{q(\alpha)}= \alpha_i$ for all $i \in \{ 1,\ldots, N \}$. Thus, denoting $\mu = \mu_{q(\alpha)}$, we have $\int \Phi d\mu = \alpha$,
\begin{equation*}
  h_\mu + \Lambda_\mu(\varphi^s) = h_\mu + \bigl\langle q(\alpha), \int \Phi d \mu - \alpha \bigr\rangle + \Lambda_\mu(\varphi^s) = P(\varphi^s \cdot e_{\left\langle q, \Phi - \alpha \right\rangle}) \geq 0,
\end{equation*}
and $D(\mu)\geq s$.
\end{proof}

\begin{proof}[Proof of Proposition \ref{special case of main theorem for bounded potentials - finite, interior}]
Take $\alpha \in \inter(\PP(\Phi))$. Either $\dimh \left(J_{\Phi}^{\mathbf{a}}(\alpha)\right)\leq s_{\infty}$, in which case the upper bound holds, or $\dimh \left(J_{\Phi}^{\mathbf{a}}(\alpha)\right)> s_{\infty}$. If $\dimh \left(J_{\Phi}^{\mathbf{a}}(\alpha)\right) > s_{\infty}$, then we may choose $s_\infty < s < \dimh \left(J_{\Phi}^{\mathbf{a}}(\alpha)\right)$. By Lemmas \ref{relate dimension and pressure lemma} and \ref{thm:upper_bound_for_inter_points} there exists $\mu \in \M_{\sigma}(\Sigma)$ with $D(\mu) \geq s$. This completes the proof of the proposition.
\end{proof}

\subsection{Quasi upper-semicontinuity lemma}\label{sec: uppersemicont lemma}
Recall that because of the non-compactness of the shift space, the space of invariant probability measures is non-compact and the entropy is not upper semi-continuous. Nonetheless we do have the following proposition.

\begin{prop}\label{Upper-semicont prop Mar15}
  If $(T_i)_{i \in \N} \in \GL^\N$ is such that $\sup_{i \in \N}\| T_i \| < 1$, $(\mu_n)_{n \in \N}$ is a sequence with $\mu_n \in \MM_\sigma(\Sigma)$ for all $n \in \N$ and $\limsup_{n \rightarrow \infty} D(\mu_n) > s_{\infty}$, then there exists a sub-sequence $(\mu_{n_j})_{j \in \N}$ and a measure $\mu \in \MM_\sigma(\Sigma)$ which is a weak$^*$ limit point of $\left(\mu_{n_j}\right)_{j \in\N}$ and satisfies $D(\mu) \geq \limsup_{n \rightarrow \infty}D(\mu_n)$.
\end{prop}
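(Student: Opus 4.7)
The plan is to extract a weak$^*$ convergent sub-sequence via tightness and then show that the limit measure $\mu$ satisfies $P_\mu(\varphi^t) \ge 0$ for every $t \in (s_\infty, s^*)$, where $s^* := \limsup_n D(\mu_n)$; strict monotonicity of $s \mapsto P_\mu(\varphi^s)$ will then upgrade this to $D(\mu) \ge s^*$.

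After passing to a sub-sequence I assume $D(\mu_n) \to s^*$, and (focussing on the main case $s^* < \infty$; the case $s^* = \infty$ is handled analogously) may further suppose $D(\mu_n) < \infty$ for every $n$. Fix $s_0 \in (s_\infty, s^*)$. Since $s_0 < D(\mu_n)$, inspection of the defining set $\{s : P_{\mu_n}(\varphi^s) \le 0 \text{ and } \Lambda_{\mu_n}(\varphi^s) > -\infty\}$, together with the strict monotonicity and continuity of $P_{\mu_n}(\varphi^\cdot)$ and $\Lambda_{\mu_n}(\varphi^\cdot)$, gives $\Lambda_{\mu_n}(\varphi^{s_0}) > -\infty$ and $P_{\mu_n}(\varphi^{s_0}) > 0$. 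Using $Z_1(\varphi^{s_0}) < \infty$ (by Lemma \ref{positive recurrence}, since $s_0 > s_\infty$) and \eqref{eq:def_measure_pressure} at $n = 1$,
\[
  \sum_{i \in \N} \mu_n([i]) \log \frac{\varphi^{s_0}(i)}{\mu_n([i])} \ge P_{\mu_n}(\varphi^{s_0}) > 0.
\]
Splitting this sum at a truncation $N$ and applying Jensen's inequality (as in \eqref{eq:jensen_eq_calc}) to each of the two pieces yields a bound $\mu_n(\{i > N\}) \le C/|\log \eps_N|$ with $\eps_N := \sum_{i > N}\varphi^{s_0}(i) \to 0$ and $C$ an absolute constant. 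This gives uniform tightness, and I extract a weak$^*$ convergent sub-subsequence $\mu_{n_j} \to \mu \in \MM_\sigma(\Sigma)$.

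Next, fix $t \in (s_\infty, s^*)$ and set $F_k(\nu) := \tfrac{1}{k}\sum_{\omega \in \Sigma_k}\nu([\omega])\log(\varphi^t(\omega)/\nu([\omega]))$, so that $P_\nu(\varphi^t) = \inf_k F_k(\nu)$. The key step is to show $F_k(\mu) \ge 0$ for every $k$. I decompose $\Sigma_k = \{1,\ldots,N\}^k \cup G_N$ and bound the tail using Jensen's inequality by $\mu_{n_j}(G_N^*)\log(Z_k(\varphi^t)/\mu_{n_j}(G_N^*))$, where $G_N^* = \bigcup_{\omega \in G_N}[\omega]$. Since cylinders are clopen, $\mu_{n_j}([\omega]) \to \mu([\omega])$ for each $\omega$ and $\mu_{n_j}(G_N^*) \to \mu(G_N^*)$; taking $\limsup$ in $j$ and then letting $N \to \infty$ (the tail error vanishes by tightness of $\mu$, and the positive part of the cylinder sum is bounded by $Z_k(\varphi^t)/e$ so monotone convergence applies) yields $\limsup_j F_k(\mu_{n_j}) \le F_k(\mu)$ in $\R \cup \{-\infty\}$. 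Eventual $F_k(\mu_{n_j}) \ge P_{\mu_{n_j}}(\varphi^t) \ge 0$ rules out $-\infty$ and forces $F_k(\mu) \ge 0$, hence $P_\mu(\varphi^t) = \inf_k F_k(\mu) \ge 0$.

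Finally, by strict monotonicity and continuity of $s \mapsto P_\mu(\varphi^s)$, for any $s < s^*$ I may choose $t \in (\max\{s,s_\infty\},s^*)$ to obtain $P_\mu(\varphi^s) > P_\mu(\varphi^t) \ge 0$; consequently no $s < s^*$ lies in the set $\{s : P_\mu(\varphi^s) \le 0,\ \Lambda_\mu(\varphi^s) > -\infty\}$ defining $D(\mu)$, and so $D(\mu) \ge s^*$. The main obstacle is the non-compactness of $\Sigma$, which destroys both weak$^*$ compactness of $\MM_\sigma(\Sigma)$ and the naive upper-semicontinuity of the entropy and of $P_\mu(\varphi^t)$. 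The hypothesis $s^* > s_\infty$ is exactly what converts the finiteness of $Z_1(\varphi^{s_0})$ into the uniform tail control needed to restore tightness and to push the finite-alphabet Jensen bound through on every level-$k$ partition.
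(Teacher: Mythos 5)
Your proof is correct, and it takes a genuinely different route from the paper on both delicate steps. For tightness, the paper fixes $\max\{s_\infty,m\}<t_0<t_1<\delta$ (with $m<\delta\le m+1$), uses the factorisation $\varphi^{t_1}=\varphi^{t_0}\gamma_{m+1}^{t_1-t_0}$ together with $P_{\mu_{n_j}}(\varphi^{t_1})\ge 0$ and the Jensen bound of \eqref{eq:jensen_eq_calc} to obtain a uniform lower bound on $\sum_\omega\mu_{n_j}([\omega])\log\gamma_{m+1}(\omega)$, and then exploits that only finitely many $i$ have $\gamma_{m+1}(T_i)$ above any threshold; your level-one truncation of $\sum_i\mu_n([i])\log(\varphi^{s_0}(i)/\mu_n([i]))\ge P_{\mu_n}(\varphi^{s_0})>0$ yields the same Markov-type tail bound more directly (and $\sigma$-invariance lets the first-coordinate bound control every coordinate, as you implicitly use). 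Note that $Z_1(\varphi^{s_0})<\infty$ still rests on Lemma \ref{positive recurrence} and hence on quasi-multiplicativity, which the proposition does not formally assume — but the paper's proof relies on it at exactly the same point, so you are on equal footing there, as you are with the shared inference that $D(\nu)>t$ yields both $P_\nu(\varphi^t)\ge 0$ and $\Lambda_\nu(\varphi^t)>-\infty$. For the semicontinuity step, the paper isolates Lemmas \ref{thm:P_closed} and \ref{sequences limit lemma Mar25}, which require placing all the measures in the constrained simplex $\mathbb{P}(a,C)$ and therefore require the $\gamma_{m+1}$ machinery a second time; your direct truncation argument, resting on the clopenness of cylinders, the domination $x\log(a/x)\le a/e$ of the positive parts by the summable family $(\varphi^t(\omega))_{\omega\in\Sigma_k}$, and Fatou for the negative parts, dispenses with the constraint entirely. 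A pleasant by-product is that you never need to establish $\Lambda_\mu(\varphi^s)>-\infty$ for the limit measure: showing $P_\mu(\varphi^s)>0$ for every $s<s^*$ already excludes all such $s$ from the set defining $D(\mu)$. The paper's version buys reusable standalone lemmas; yours buys a shorter, self-contained proof of this proposition.
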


Before proving the proposition, we first prove a few elementary lemmas. Let $\mathbb{P}$ be the set of all infinite probability vectors, that is, $\mathbb{P} = \left\lbrace (q_i)_{i \in \N} \in [0,1]^{\N}: \sum_{i=1}^\infty q_i=1 \right\rbrace$, and equip it with the usual product topology. If $a = (a_i)_{i \in \N}$ is a sequence of numbers in $(0,1)$ and $C>0$, then we set $\mathbb{P}(a,C) = \{ (q_i)_{i \in \N} \in \mathbb{P} : \sum_{i \in \N} q_i \log a_i \ge -C \}$.

\begin{lemma} \label{thm:P_closed}
  If $a = (a_i)_{i \in \N}$ is a sequence of numbers in $(0,1)$ and $C>0$, then $\mathbb{P}(a,C)$ is closed.
\end{lemma}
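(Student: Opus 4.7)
The plan is to prove closedness via a sequential argument. The product topology on $\mathbb{P} \subset [0,1]^{\N}$ is metrizable (countable product of a metric space), so it suffices to show that for any sequence $(q^{(n)})_{n \in \N}$ in $\mathbb{P}(a,C)$ converging in the product topology to some $q \in \mathbb{P}$, the limit $q$ again lies in $\mathbb{P}(a,C)$. Unwinding the definition of product convergence, I have $q^{(n)}_i \to q_i$ for each fixed $i \in \N$, and I need to verify that the single inequality $\sum_{i \in \N} q_i \log a_i \geq -C$ is preserved in the limit.

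The key observation is a positivity/Fatou argument. Since $a_i \in (0,1)$ we have $-\log a_i > 0$, so each term $q^{(n)}_i (-\log a_i)$ is non-negative and converges pointwise in $i$ to $q_i (-\log a_i)$. Applying Fatou's lemma to the counting measure on $\N$ then gives
\[
\sum_{i \in \N} q_i (-\log a_i) \;=\; \sum_{i \in \N} \liminf_{n \to \infty} q^{(n)}_i (-\log a_i) \;\leq\; \liminf_{n \to \infty} \sum_{i \in \N} q^{(n)}_i (-\log a_i) \;\leq\; C,
\]
where the last inequality uses the hypothesis $q^{(n)} \in \mathbb{P}(a,C)$. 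Rearranging gives $\sum_{i \in \N} q_i \log a_i \geq -C$, so $q \in \mathbb{P}(a,C)$, which is the desired closure.

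I do not anticipate any real obstacle: the entire content of the lemma is the one-line Fatou inequality, made available by the sign condition $a_i \in (0,1)$ (which turns $-\log a_i$ into the integrand of a positive series and renders dominated/monotone convergence unnecessary). The only point worth flagging is that the product topology on $\mathbb{P}$ is what makes coordinate-wise convergence the right notion to feed into Fatou; no tightness or uniform integrability argument is required because the ambient constraint $q \in \mathbb{P}$ is already built into the definition of the set whose closedness we are verifying.
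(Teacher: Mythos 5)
Your proof is correct and rests on exactly the same mechanism as the paper's: the sign condition $\log a_i<0$ lets one pass the inequality $\sum_i q_i\log a_i\ge -C$ to the coordinatewise limit. The paper carries this out by hand (if the limit violated the bound, some finite truncation would, and finite truncations converge, contradicting that adding further negative terms only decreases the sum), which is just an unpacked version of your Fatou application to the counting measure.
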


\begin{proof}
  Let $p=(p_i)_{i \in \N}$ be an accumulation point of $\mathbb{P}(a,C)$ and let $(p(n))_{n \in \N}$ be a sequence so that $p(n) = (p_i(n))_{i \in \N} \in \mathbb{P}(a,C)$ for all $n \in \N$ and $\lim_{n \to \infty} p(n) = p$. Suppose for a reductio that $\sum_{i=1}^\infty p_i \log a_i < -C$. Then there exists $k \in \N$ so that $\sum_{i=1}^k p_i \log a_i < -C$. Choosing now $n_0 \in \N$ so that $\sum_{i=1}^k p_i(n) \log a_i < -C$ for all $n \ge n_0$, we have arrived at a contradiction since $\sum_{i=1}^\infty p_i(n) \log a_i \le \sum_{i=1}^k p_i(n) \log a_i < -C$ for $n \ge n_0$.
\end{proof}

\begin{lemma}\label{sequences limit lemma Mar25}
  If $a = (a_i)_{i \in \N}$ is a non-increasing sequence of numbers in $(0,1)$ with $\sum_{i=1}^\infty a_i < \infty$ and $C>0$, the the function $F \colon \mathbb{P}(a,C) \to \overline{\R}$, defined by $F(q) = \sum_{i=1}^\infty q_i\log(a_i/q_i)$, is upper semi-continuous.
\end{lemma}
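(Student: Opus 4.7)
The plan is to use the elementary inequality $\log x \le x - 1$ (valid for $x > 0$) to obtain a uniform tail bound that does not depend on $q$. Explicitly, for every probability vector $p = (p_i)_{i \in \N}$ and every $i$,
\[
  p_i \log(a_i/p_i) \le p_i\bigl( a_i/p_i - 1 \bigr) = a_i - p_i,
\]
with both sides equal to $0$ when $p_i = 0$ under the convention $0\log(a_i/0)=0$. Summing over $i > N$ gives the uniform estimate $\sum_{i > N} p_i \log(a_i/p_i) \le \sum_{i > N} a_i$, which tends to $0$ as $N \to \infty$ by the summability of $a$.

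With this in hand I would proceed as follows. Suppose $q(n) \to q$ in $\mathbb{P}(a,C)$ in the product topology; by Lemma \ref{thm:P_closed} the limit indeed lies in $\mathbb{P}(a,C)$. For every fixed $N \in \N$, split
\[
  F(q(n)) = \sum_{i=1}^{N} q_i(n) \log(a_i/q_i(n)) + \sum_{i>N} q_i(n) \log(a_i/q_i(n)).
\]
The first sum is a finite sum of continuous functions of $(q_1(n),\ldots,q_N(n))$, so by the coordinatewise convergence $q_i(n) \to q_i$ it converges to $\sum_{i=1}^N q_i \log(a_i/q_i)$ as $n \to \infty$. The second sum is dominated by $\sum_{i>N} a_i$ via the displayed inequality. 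Taking $\limsup$ in $n$ yields
\[
  \limsup_{n \to \infty} F(q(n)) \le \sum_{i=1}^{N} q_i \log(a_i/q_i) + \sum_{i>N} a_i.
\]

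Finally I would let $N \to \infty$. Decomposing $q_i \log(a_i/q_i) = q_i \log a_i + (-q_i \log q_i)$, the partial sums $\sum_{i \le N} q_i \log a_i$ form a decreasing sequence in $[-C,0]$ (this is precisely where membership in $\mathbb{P}(a,C)$ is used: it rules out the $\infty - \infty$ ambiguity and keeps this part finite), converging to $\sum_i q_i \log a_i \in [-C,0]$, while the partial entropies $-\sum_{i \le N} q_i \log q_i$ are non-negative and increase monotonically to $H(q) \in [0,\infty]$. Therefore $\sum_{i=1}^N q_i \log(a_i/q_i) \to F(q)$, and $\sum_{i > N} a_i \to 0$, giving $\limsup_n F(q(n)) \le F(q)$, as required. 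I foresee no serious obstacle: the inequality $\log x \le x - 1$ supplies a summable dominating function for the tail uniformly in $p$, which neatly circumvents the usual failure of upper semi-continuity of entropy on a non-compact symbol space.
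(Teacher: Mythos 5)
Your proof is correct, and it takes a genuinely different route from the paper's. The paper works with the truncation maps $\xi_k$ that lump all mass from index $k$ onward into the $k$-th coordinate, compares $F(q)$ with $F(\xi_k(q))$ via Jensen's inequality, and then combines Fatou's lemma with Lemma \ref{thm:P_closed}; at one point it explicitly invokes the hypothesis that $(a_i)_{i\in\N}$ is non-increasing (to replace $\log a_k$ by $\log a_i$ in a tail sum). Your argument replaces all of this by the single pointwise bound $p_i\log(a_i/p_i)\le a_i-p_i\le a_i$ coming from $\log x\le x-1$, which yields a summable majorant for the tail that is \emph{uniform} in the probability vector; the rest is a standard head/tail splitting, with membership in $\mathbb{P}(a,C)$ used exactly where it should be, namely to make the partial sums $\sum_{i\le N}q_i\log(a_i/q_i)$ converge to $F(q)$ in $\overline{\R}$ without an $\infty-\infty$ ambiguity. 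This is shorter, avoids Fatou and the truncation bookkeeping entirely, and in fact proves a slightly more general statement since monotonicity of $(a_i)_{i\in\N}$ is never used. One cosmetic slip: when $p_i=0$ the two sides of your displayed inequality are $0$ and $a_i$ respectively, not both $0$; the inequality of course still holds, so nothing is affected.
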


\begin{proof}
Let $\mathbb{P}_k = \bigl\{ (q_i)_{i \in \N} \in \mathbb{P} : q_i=0 \text{ for all } i \in \{ k+1,k+2,\ldots \} \bigr\}$ for all $k \in \N$ and define a map $\xi_k \colon \mathbb{P} \rightarrow \mathbb{P}_k$ by setting
\begin{equation*}
  \xi_k(q)_i =
  \begin{cases}
    q_i,                   &\text{if } i \in \{ 1,\ldots,k-1 \}, \\
    \sum_{j=k}^\infty q_j, &\text{if } i=k, \\
    0,                     &\text{if } i \in \{ k+1,k+2,\ldots \},
\end{cases}
\end{equation*}
for all $q = (q_i)_{i \in \N} \in \mathbb{P}$. Take a sequence of vectors $(p(n))_{n \in \N}$ with each $p(n) = (p_i(n))_{i \in \N} \in \mathbb{P}(a,C)$ along with $p = (p_i)_{i \in \N} \in \mathbb{P}$ such that for each $i \in \N$ we have $\lim_{n \to \infty} p_i(n) = p_i$. In particular, we have $\sum_{i=1}^\infty p_i(n)\log a_i \ge -C$. Our goal is to show that $\limsup_{n \to \infty} F(p(n)) \le F(p)$.

Since $\lim_{n \rightarrow \infty}p_i(n)=p_i$ for all $i \in \{ 1,\ldots,k-1 \}$ and $\sum_{i=k}^\infty q_i= 1-\sum_{i=1}^{k-1} q_i$ for all $q \in \mathbb{P}$ we have $\lim_{n \rightarrow \infty} \xi_k(p(n)) = \xi_k(p)$ with respect to the supremum metric. Hence $\lim_{n \rightarrow \infty}F(\xi_k(p(n))) = F(\xi_k(p))$. Similarly, as in \eqref{eq:jensen_eq_calc}, we see that
\begin{equation*}
  \sum_{i=k}^\infty q_i\biggl( \log\frac{a_i}{q_i} - \log\sum_{j=k}^\infty a_j \biggr) \le -\sum_{i=k}^\infty q_i \log\sum_{j=k}^\infty q_j,
\end{equation*}
and, consequently,
\begin{equation*}
  F(q) \le \sum_{i=1}^{k-1} q_i\log\frac{a_i}{q_i} + \sum_{i=k}^\infty q_i\log\frac{\sum_{j=k}^\infty a_j}{\sum_{j=k}^\infty q_j} = F(\xi_k(q)) - \sum_{i=k}^\infty q_i\log a_k + \sum_{i=k}^\infty q_i\log\sum_{j=k}^\infty a_j
\end{equation*}
for all $q \in \mathbb{P}(a,C)$ and $k \in \N$. Choosing $k_0 \in \N$ so that $\sum_{j=k}^\infty a_j < 1$ for all $k \ge k_0$, this implies
\begin{align*}
  F(\xi_k(p)) &= \lim_{n \to \infty} F(\xi_k(p(n))) \ge \limsup_{n \to \infty}\bigl( F(p(n)) + \sum_{i=k}^\infty p_i(n)\log a_k \bigr) \\
  &\ge \limsup_{n \to \infty} F(p(n)) + \liminf_{n \to \infty} \sum_{i=k}^\infty p_i(n)\log a_k \ge \limsup_{n \to \infty} F(p(n)) + \sum_{i=k}^\infty p_i\log a_k
\end{align*}
for all $k \ge k_0$ by Fatou's lemma. Note that since the sequence $(a_i)_{i \in \N}$ is non-increasing we have $\sum_{i=k}^\infty p_i\log a_k \ge \sum_{i=k}^\infty p_i\log a_i$ for all $k \in \N$. Moreover, let $\eps>0$ and, by recalling Lemma \ref{thm:P_closed}, choose $k_\eps \in \N$ so that $\sum_{i=k}^\infty p_i\log a_i > -\eps$ and $\sum_{i=k}^\infty p_i\log\sum_{j=k}^\infty p_j > -\eps$ for all $k \ge k_\eps$. Since
\begin{equation*}
  F(p)-F(\xi_k(p)) = \sum_{i=k}^\infty p_i\log\frac{a_i}{p_i} - \sum_{i=k}^\infty p_i\log\frac{a_k}{\sum_{j=k}^\infty p_j} \ge \sum_{i=k}^\infty p_i\log a_i + \sum_{i=k}^\infty p_i\log\sum_{j=k}^\infty p_j > -2\eps
\end{equation*}
for all $k \ge k_\eps$ we have
\begin{equation*}
  \limsup_{n \to \infty} F(p(n)) \le \limsup_{k \to \infty}\bigl( F(\xi_k(p)) - \sum_{i=k}^\infty p_i\log a_k \bigr) \le \limsup_{k \to \infty} F(\xi_k(p)) + \eps \le F(p) + 3\eps.
\end{equation*}
Letting $\eps \downarrow 0$ finishes the proof.
\end{proof}

\begin{proof}[Proof of Proposition \ref{Upper-semicont prop Mar15}]
We begin by showing that $(\mu_n)_{n \in \N}$ has a convergent subsequence. Let $\delta = \limsup_{n \to \infty} D(\mu_n)$ and choose $m<\delta\le m+1$. If $\max\{ s_\infty,m \} < t_0<t_1<\delta$, then there exists a subsequence $(n_j)_{j \in \N}$ with $D(\mu_{n_j}) > t_1$ for all $j \in \N$ and $\lim_{j \to \infty} D(\mu_{n_j}) = \delta$. It follows that $\Lambda_{\mu_{n_j}}(\varphi^{t_0}) \ge \Lambda_{\mu_{n_j}}(\varphi^{t_1}) > -\infty$ and
\begin{equation*}
  0 \le P_{\mu_{n_j}}(\varphi^{t_1}) \le \tfrac{1}{k}\sum_{\omega \in \Sigma_k} \mu_{n_j}([\omega]) \log\frac{\varphi^{t_1}(\omega)}{\mu_{n_j}([\omega])}
\end{equation*}
for all $k,j \in \N$. Furthermore, recalling \eqref{eq:jensen_eq_calc} and Lemma \ref{positive recurrence}, we have
\begin{equation*}
  \sum_{\omega \in \Sigma_k} \mu_{n_j}([\omega]) \log\frac{\varphi^{t_0}(\omega)}{\mu_{n_j}([\omega])} \le \log Z_k(\varphi^{t_0}) \le k\log Z_1(\varphi^{t_0}) < \infty
\end{equation*}
for all $k,j \in \N$. Since $\varphi^{t_1}(\omega) = \varphi^{t_0}(\omega)\gamma_{m+1}(\omega)^{t_1-t_0}$ we get
\begin{equation} \label{eq:ala_arvio1}
  \sum_{\omega \in \Sigma_k} \mu_{n_j}([\omega]) \log\gamma_{m+1}(\omega) \ge -\frac{k\log Z_1(\varphi^{t_0})}{t_1-t_0}
\end{equation}
for all $k,j \in \N$. Note that for every $\eps>0$ there is $M \in \N$ so that $\sum_{i=M}^\infty \gamma_{m+1}(T_i)^{m+1} \le \sum_{i=M}^\infty \varphi^{t_0}(T_i) < \eps$. Thus for each $\eps>0$ there are only finitely many $i$'s so that $\log\gamma_{m+1}(T_i) \ge (m+1)^{-1}\log\eps$. Therefore, \eqref{eq:ala_arvio1} implies that the sequence $(\mu_{n_j})_{j \in \N}$ is tight and thus has a converging subsequence. We keep denoting the subsequence by $(\mu_{n_j})_{j \in \N}$ and let $\mu \in \MM_\sigma(\Sigma)$ be its weak$^*$ limit.

Let $\max\{ s_\infty,m \} < s < \delta$. Since $\varphi^s(\omega) \ge \gamma_{m+1}(\omega)^{m+1}$ for all $\omega \in \Sigma_k$ it follows from \eqref{eq:ala_arvio1} that
\begin{equation*}
  \sum_{\omega \in \Sigma_k} \mu_{n_j}([\omega]) \log\varphi^s(\omega) \ge -\frac{k(m+1)\log Z_1(\varphi^{t_0})}{t_1-t_0}
\end{equation*}
for all $k,j \in \N$. According to Lemma \ref{thm:P_closed}, the same estimate holds when the measure $\mu_{n_j}$ is replaced by $\mu$. Thus $\Lambda_\mu(\varphi^s) \ge -(m+1)(t_1-t_0)^{-1}\log Z_1(\varphi^{t_0}) > -\infty$. Furthermore, since $s<\delta$ there is $j_0 \in \N$ so that $D(\mu_{n_j})>s$ for all $j \ge j_0$. Therefore, Lemma \ref{sequences limit lemma Mar25} implies
\begin{equation*}
  \sum_{\omega \in \Sigma_k} \mu([\omega]) \log\frac{\varphi^s(\omega)}{\mu([\omega])} \ge \limsup_{j \to \infty} \sum_{\omega \in \Sigma_k} \mu_{n_j}([\omega]) \log\frac{\varphi^s(\omega)}{\mu_{n_j}([\omega])} \ge 0
\end{equation*}
and $D(\mu) \ge s$. The proof is finished since $\max\{ s_\infty,m \} < s < \limsup_{n \to \infty} D(\mu_n)$ was arbitrary.
\end{proof}

\subsection{Finitely many potentials lemma} \label{sec:conditional_finitely}
In this section we prove a technical lemma which, together with Proposition \ref{Upper-semicont prop Mar15}, allows us to prove the upper bound in Theorem \ref{main theorem for bounded potentials} for boundary points of the spectrum.

\begin{lemma}\label{finitely many potentials lemma}
  If $(T_i)_{i \in \N} \in \GL^\N$ is such that $\sup_{i \in \N}\| T_i \| < \tfrac12$, the singular value function $\varphi^s$ is quasi-multiplicative for all $0\le s \le d$, $\Phi \colon \Sigma \to \R^N$ is bounded with summable variations, $\PP(\Phi)$ is not contained within any $(N-1)$-dimensional hyperplane, and $\alpha \in \overline{\PP(\Phi)}$, then for each $\eps>0$ there is $\gamma \in \inter(\PP(\Phi))$ with $|\alpha - \gamma| < \eps$ and $\dimh(J_\Phi^\mathbf{a}(\gamma)) \ge \dimh(J_\Phi^\mathbf{a}(\alpha)) - \eps$ for $\LL_\mathbf{A}$-almost all $\mathbf{a} \in \mathbf{A}$.
\end{lemma}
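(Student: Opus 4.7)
The plan is to fix $\mathbf{a}$ in the full-measure set where Theorem \ref{General theorem} applies, write $d_0 := \dimh(J^{\mathbf{a}}_\Phi(\alpha))$, and exhibit $\gamma \in \inter(\PP(\Phi))$ with $|\alpha - \gamma| < \eps$ for which the \emph{deterministic} right-hand side of Theorem \ref{General theorem} evaluated at $\gamma$ is at least $d_0 - \eps/2$; reapplying Theorem \ref{General theorem} at $\gamma$ then delivers $\dimh(J^{\mathbf{a}}_\Phi(\gamma)) \ge d_0 - \eps/2 > d_0 - \eps$ for $\LL_\mathbf{A}$-almost every $\mathbf{a}$. Since $\PP(\Phi)$ is not contained in any $(N-1)$-dimensional hyperplane, Lemma \ref{first lemma in The space of integrals with respect to invariant measures} guarantees $\inter(\PP(\Phi)) \neq \emptyset$; fix any $\beta \in \inter(\PP(\Phi))$, and take $\gamma = \gamma_\lambda := (1-\lambda)\alpha + \lambda \beta$ for $\lambda \in (0,1]$ to be chosen small. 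Convexity of $\PP(\Phi)$ puts $\gamma_\lambda$ in $\inter(\PP(\Phi))$, and $|\alpha - \gamma_\lambda| = \lambda |\alpha - \beta|$ can be made arbitrarily small.

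Set $\eta = \eps/4$ and $s = d_0 - 2\eta$. By Theorem \ref{General theorem} at $\alpha$, for all sufficiently large $n, k$ there is a compactly supported $\mu_{n,k} \in \MM^*_{\sigma^k}(\Sigma)$ with $D_k(\mu_{n,k}) > s+\eta$ and $\int A_k\phi_i\,d\mu_{n,k} \in B_n(\alpha_i)$ for all $i \in \{1,\ldots,N\}$. Using Lemma \ref{second lemma in The space of integrals with respect to invariant measures}, pick a $\sigma$-invariant measure $\rho$ supported on $I^{\N}$ for some finite $I \subset \N$ with $\int\Phi\,d\rho = \beta$, and form the convex combination $\nu_{n,k,\lambda} = (1-\lambda)\mu_{n,k} + \lambda\rho$, which is $\sigma^k$-invariant with compact support. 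The integral condition is immediate: $\int A_k\phi_i\,d\nu_{n,k,\lambda} = (1-\lambda)\int A_k\phi_i\,d\mu_{n,k} + \lambda \beta_i$ lies within $(1-\lambda)/n$ of $(\gamma_\lambda)_i$, so $\int A_k\phi_i\,d\nu_{n,k,\lambda} \in B_n((\gamma_\lambda)_i)$ for every $i$ once $n$ is large.

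The key step, and the main obstacle, is to show that $D_k(\nu_{n,k,\lambda}) \ge s$ for all $n,k$ provided $\lambda$ is small enough \emph{independently} of $n,k$. Write $G_\mu(s) := \sum_{\omega \in \Sigma_k}\mu([\omega])\log(\varphi^s(\omega)/\mu([\omega]))$; this functional is concave in $\mu$ (the $-\sum \mu([\omega])\log \mu([\omega])$ piece is concave), so $G_{\nu_{n,k,\lambda}}(s) \ge (1-\lambda)G_{\mu_{n,k}}(s) + \lambda G_\rho(s)$. The hypothesis $\sup_i\|T_i\| < 1/2$ forces every singular value $\gamma_j(T_\omega) < 2^{-k}$ for $\omega \in \Sigma_k$, and a direct calculation from the definition of the singular value function then yields $\log\bigl(\varphi^s(\omega)/\varphi^t(\omega)\bigr) \ge k(t-s)\log 2$ whenever $s < t$. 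Since $G_{\mu_{n,k}}(D_k(\mu_{n,k})) = 0$ and $D_k(\mu_{n,k}) > s+\eta$, this upgrades to $G_{\mu_{n,k}}(s) \ge k\eta\log 2$, a gain of order $k$. On the other hand, $\rho$ is supported on the finite alphabet $I$, so $\delta_I := \min_{i\in I}\gamma_d(T_i) > 0$ and consequently $G_\rho(s) \ge ks\log\delta_I =: -kC_\rho$ with $C_\rho$ independent of $n$ and $k$. Hence $G_{\nu_{n,k,\lambda}}(s) \ge k\bigl((1-\lambda)\eta\log 2 - \lambda C_\rho\bigr) \ge 0$ as soon as $\lambda \le \eta\log 2 / (C_\rho + \eta\log 2)$, and therefore $D_k(\nu_{n,k,\lambda}) \ge s$. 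Taking $\lambda$ additionally small enough that $|\alpha - \gamma_\lambda| < \eps$ and invoking Theorem \ref{General theorem} at $\gamma_\lambda$ completes the argument. The delicacy is precisely that the order-$k$ loss $\lambda C_\rho k$ from mixing with $\rho$ must be absorbed by the order-$k$ gain $(1-\lambda)k\eta\log 2$; this absorption requires the \emph{strict} contraction $\sup_i\|T_i\| < 1/2$, rather than merely $<1$, which is exactly why the lemma is stated under this hypothesis.
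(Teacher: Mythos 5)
Your proof is correct, but it takes a genuinely different route from the paper's. The paper works at the level of words: it takes the covering families $A_\Phi(\alpha,l,q)$ from Lemma \ref{initial covering lemma Mar15}, picks a generic point $\tau$ for an ergodic measure realising $\beta$ (via the sub-additive ergodic theorem, so that $\varphi^s(\tau|_l)\ge\theta(\tau)^l$), and concatenates each covering word with the block $\tau|_{\lceil k\rho\rceil}$ using the quasi-multiplicative star operation; the resulting words witness that the variational quantity at $\gamma=\rho\beta+(1-\rho)\alpha$ is at least $s$, the loss $\theta(\tau)^{k\rho}$ being absorbed by the gain $2^{k(1-\rho)(t-s)}$. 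You instead work at the level of measures: you extract near-optimal measures $\mu_{n,k}$ for $\alpha$ from the variational formula, mix them with a fixed compactly supported invariant measure realising $\beta$, and use concavity of $\mu\mapsto\sum_{\omega\in\Sigma_k}\mu([\omega])\log\bigl(\varphi^s(\omega)/\mu([\omega])\bigr)$ together with the same order-$k$ gain $k\eta\log 2$ (coming from $D_k(\mu_{n,k})>s+\eta$ and the uniform contraction) to absorb the order-$k$ loss $kC_\rho$ caused by the mixing. Your route avoids both the quasi-multiplicativity machinery and the generic-point argument in this step, which is a genuine simplification; the trade-off is that it leans on the measure-level formulation (Theorem \ref{General theorem}, equivalently Proposition \ref{General upper bound prop Mar15}) to produce the $\mu_{n,k}$, whereas the paper's word-level argument is self-contained from Lemma \ref{initial covering lemma Mar15}. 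One small correction to your closing remark: the absorption does not require $\sup_{i}\|T_i\|<\tfrac12$ specifically — any uniform bound $\sup_i\|T_i\|=\theta<1$ yields the gain $k\eta\log(1/\theta)$, which suffices; the constant $\tfrac12$ is needed for Theorem \ref{General theorem} itself (Falconer's almost-sure dimension formula), not for this absorption step.
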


\begin{proof}
Fix $\eps>0$ and let $\dimh(J_{\Phi}^{\mathbf{a}}(\alpha)) - \eps < s < t < \dimh(J_{\Phi}^{\mathbf{a}}(\alpha))$. By Lemma \ref{first lemma in The space of integrals with respect to invariant measures} and the first part of Lemma \ref{second lemma in The space of integrals with respect to invariant measures}, we may choose $\beta \in \text{int} \left( \mathcal{P}(\Phi)\right)\cap \mathcal{P}_e(\Phi,I)$, with respect to some finite subset $I \subset \N$, satisfying $|\beta-\alpha| < 1/n$.
Since $\beta \in \mathcal{P}_e(\Phi,I)$ there is an ergodic invariant measure $\nu \in \M_{\sigma}(\Sigma)$ with $\nu(I^{\N})=1$ and $\int \phi_i d\nu = \beta_i$ for all $i \in \{ 1,\ldots,N \}$. Since $\nu(I^{\N}) = 1$ we also have $\Lambda_\nu(\varphi^s) > -\infty$. By the sub-additive ergodic theorem there exist $\tau \in \Sigma$, a constant $\theta(\tau)>0$, and $L(\tau) \in \N$ such that
\begin{equation*}
  |A_l\phi_i(\tau)-\beta_i| < l^{-1} \text{ and } \varphi^s(\tau|_l) \geq \theta(\tau)^l
\end{equation*}
for all $l\geq L(\tau)$ and $i \in \{ 1,\ldots,N\}$. Choose $0 < \rho < \min\{ 1,\eps/|\beta-\alpha| \}$ so that
\begin{equation*}
  2^{(1-\rho)(t-s)}\theta(\tau)^{\rho} > 1
\end{equation*}
and let $\gamma = \rho \beta + (1-\rho) \alpha$. Since $\beta \in  \inter(\mathcal{P}(\Phi))$ and $\alpha \in \overline{\mathcal{P}(\Phi)}$ we have $\gamma \in  \inter(\mathcal{P}(\Phi))$ by the elementary properties of convex sets in $\R^N$. Moreover, since $\rho < \eps/|\beta-\alpha|$ we have $|\alpha - \gamma| < \eps$. We shall now show that $\dimh(J_{\Phi}^\mathbf{a}(\gamma)) \geq \dimh(J_{\Phi}^\mathbf{a}(\alpha)) - \eps$.

Since $t < \dimh(J_{\Phi}^\mathbf{a}(\gamma))$, it follows from Lemma \ref{initial covering lemma Mar15} that for all $l\in \N$ there exists $q(l) \in \N$ such that
\begin{equation*}
  \sum_{\kappa \in A_{\Phi}(\alpha,l,q)} \varphi^t(\kappa) > 1.
\end{equation*}
for all $q \geq q(l)$. Since $\sup_{i \in \N}\| T_i \| < \tfrac12$ and $s<t$ it follows that
\begin{equation*}
  \sum_{\kappa \in A_{\Phi}(\alpha,l,q)}\varphi^s(\kappa) > 2^{l(t-s)}.
\end{equation*}
for all $q \geq q(l)$. For every $\alpha, \beta \in \Sigma_*$, according to the quasi-multiplicativity of $\varphi^s$, there exists $\omega \in \Gamma$ such that
\begin{equation*}
  \varphi^s\left(\alpha\omega\beta \right) \geq c\varphi^s(\alpha) \varphi^s(\beta),
\end{equation*}
where $c>0$ is a constant depending only on $s$. As in \S \ref{sec:multifractal_tree_structure}, we write $\alpha \star \beta$ for $\alpha\omega\beta$. Note that for any given $\alpha,\beta \in \Sigma_*$ there are at most $K = \max\left\lbrace |\omega|: \omega \in \Gamma \right\rbrace$ finite words $\beta' \in \Sigma_*$ with $\alpha\star\beta' = \alpha\star\beta$

Our choice of $\rho$ implies that for each $l \in \N$ there exists $A(l) > \max\{ (1-\rho)^{-1}q(l), \rho^{-1}  L(\tau), l\}$ such that
\[
  c\theta(\tau) \bigl(2^{(1-\rho)(t-s)}\theta(\tau)^{\rho }\bigr)^k > 1
\]
for all $k\geq A(l)$. It follows that
\begin{equation}\label{Inqual 1 Mar2012}
\begin{split}
  \sum_{\kappa \in A_{\Phi}(\alpha,l,\lceil k(1-\rho)\rceil)} \varphi^s(\kappa \star (\tau|_{\lceil k\rho \rceil})) &\geq c \sum_{\kappa \in A_{\Phi}(\alpha,l,\lceil k(1-\rho)\rceil)}\varphi^s(\kappa) \varphi^s(\tau|_{\lceil k\rho \rceil}) \\
  &> c 2^{\lceil k(1-\rho)\rceil(t-s)}\theta(\tau)^{\lceil \rho k \rceil}
  > c \theta(\tau) \bigl(2^{(1-\rho)(t-s)}\theta(\tau)^{\rho }\bigr)^k > 1
\end{split}
\end{equation}
for all for all $k \geq A(l)$ and $l\in \N$.
We shall temporarily fix $l \geq d$ and $k \geq A(l)$, and for each $\kappa \in A_{\Phi}(\alpha,l,\lceil k(1-\rho)\rceil)$ we let $r(\kappa)$ be $|\kappa \star (\tau|_{\lceil k\rho \rceil})|-\lceil k\rho \rceil$. Since $[\hat{\kappa}] \subset [\kappa]$ for $\hat{\kappa} = (\kappa \star (\tau|_{\lceil k\rho \rceil}))|_k$ we have
\begin{equation*}
\lceil k(1-\rho)\rceil(\alpha_i-l^{-1}) < S_{\lceil k(1-\rho)\rceil}\phi_i(\omega) < \lceil k(1-\rho)\rceil(\alpha_i+l^{-1})
\end{equation*}
for each $i \in \{ 1,\ldots,N\}$ by the definition of $A_{\Phi}(\alpha,l,\lceil k(1-\rho)\rceil)$. We also have
\begin{equation*}
\lceil k\rho\rceil(\beta_i-l^{-1})-(K+1)\|\phi_i\| < S_{k-r}\phi_i(\sigma^r(\omega))
< \lceil k \rho \rceil(\beta_i+l^{-1})+(K+1)\|\phi_i\|.
\end{equation*}
Since $r-\lceil k(1-\rho)\rceil \leq K$ it follows that
\begin{align*}
\lceil k(1-\rho)\rceil(\alpha_i-l^{-1}) &+ \lceil k\rho\rceil(\beta_i-l^{-1})-(2K+1)\|\phi_i\| < S_{k}\phi_i(\omega) \\
&< \lceil k \rho \rceil(\beta_i+l^{-1})+(2K+1)\|\phi_i\| + \lceil k(1-\rho)\rceil(\alpha_i+l^{-1}).
\end{align*}
Furthermore, since $|\alpha_i|,|\beta|< \|\phi_i\|$, $\gamma_i=(1-\rho)\alpha_i+\rho \beta_i$, and $k\geq l$ we have
\begin{equation*}
\gamma_i-l^{-1}\left((2K+3)\|\phi_i\|+1\right)< A_{k}\phi_i(\omega) < \gamma_i+l^{-1}\left((2K+3)\|\phi_i\|+1\right).
\end{equation*}
Hence, if $Q = ((2K+3)\max_{i \in \{ 1,\ldots,N \}} \|\phi_i\| + 1)^{-1}$, then we have $\hat{\kappa} \in A_{\Phi}(\gamma,n, k)$ for all $n \leq \lfloor Ql\rfloor$.
Since $\hat{\kappa}$ is an initial substring of $\kappa\star(\tau|_{\lceil k\rho \rceil})$ for any given $\kappa \in A_{\Phi}(\alpha,l,\lceil k(1-\rho)\rceil)$ it follows from (\ref{Inqual 1 Mar2012}) that
\begin{equation*}
\sum_{\hat{\kappa} \in  A_{\Phi}(\gamma,n, k)} \varphi^s(\hat{\kappa}) \geq \sum_{\kappa \in A_{\Phi}(\alpha,l,\lfloor k(1-\rho)\rfloor)}\varphi^s(\kappa\star(\tau|_{\lceil k\rho \rceil}))>1
\end{equation*}
for all $n \leq \lfloor Ql\rfloor$ and $k \geq A(l)$.
For each $n$ we choose $l(k)\in \N$ so that $n \leq \lfloor Ql(n)\rfloor$ and let $B(n) = A(l(n))$. It follows that
\begin{equation*}
\sum_{\hat{\kappa} \in  A_{\Phi}(\gamma,n, k)} \varphi^s(\hat{\kappa}) >1
\end{equation*}
for all $n \in \N$ and for all $k \geq B(n)$.
As in the proof of Proposition \ref{General upper bound prop Mar15}, we get
\begin{align*}
 \dimh(J_{\Phi}^\mathbf{a}(\alpha)) - \eps < s
 \leq \lim_{n \rightarrow \infty} \lim_{k \rightarrow \infty} \sup \bigl\{ D_k(\mu) : \;&\mu \in \M^*_{\sigma^k}(\Sigma) \text{ so that } \\ &\int A_k(\phi_i) d\mu \in B_n(\gamma_i) \text{ for all } i \in \{ 1,\ldots,n \}\bigr\}
\end{align*}
for all $\mathbf{a} \in \mathbf{A}$. Theorem \ref{General theorem} finishes the proof.
\end{proof}

\subsection{Proof of the upper bound in Theorem \ref{main theorem for bounded potentials}}\label{sec:conditional_completeupperbound}
For potentials $\Phi = (\phi_i)_{i \in \N}$ taking values in $\R^\N$ we similarly write $\int \Phi d\nu = (\int \phi_1 d\nu, \int \phi_2 d\nu, \ldots)$ for all $\nu \in \MM_\sigma(\Sigma)$ and $\PP(\Phi) = \{ \int \Phi d\nu : \nu \in \MM_\sigma(\Sigma) \} \subset \R^\N$. The closure of $\PP(\Phi)$ with respect to the product topology is denoted by $\overline{\PP(\Phi)}$.

The following proposition proves the upper bound in Theorem \ref{main theorem for bounded potentials}. In Lemma \ref{ergodic limit is an int of inv measure}, we show that if $\alpha \notin \overline{\PP(\Phi)}$, then $J_\Phi^\mathbf{a}(\alpha) = \emptyset$ for all $\mathbf{a} \in \mathbf{A}$.

\begin{prop}\label{upper bound for main theorem for bounded potentials}
  If $(T_i)_{i \in \N} \in \GL^\N$ is such that $\sup_{i \in \N}\| T_i \| < \tfrac12$, the singular value function $\varphi^s$ is quasi-multiplicative for all $0\le s \le d$, $\Phi \colon \Sigma \to \R^\N$ is bounded with summable variations, and $\alpha \in \overline{\PP(\Phi)}$, then
  \begin{equation*}
    \dimh(J^\mathbf{a}_\Phi(\alpha)) \leq \min\bigl\{ d,\max\bigl\{ s_\infty, \sup\{ D(\mu) : \mu \in \MM_\sigma(\Sigma) \text{ so that } \int \Phi d\mu = \alpha \} \bigr\} \bigr\}
  \end{equation*}
  for $\LL_\mathbf{A}$-almost all $\mathbf{a} \in \mathbf{A}$.
\end{prop}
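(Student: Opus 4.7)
The plan is to reduce this infinite-dimensional boundary statement to the finite-dimensional interior case handled by Proposition~\ref{special case of main theorem for bounded potentials - finite, interior} via truncation, use Lemma~\ref{finitely many potentials lemma} to step off the boundary, and then pass to the limit via the quasi upper-semicontinuity of $D$ furnished by Proposition~\ref{Upper-semicont prop Mar15}. The inequality is immediate if $\dimh(J^\mathbf{a}_\Phi(\alpha))\le s_\infty$, so I fix $s_\infty<s<\dimh(J^\mathbf{a}_\Phi(\alpha))$ and aim to produce $\mu\in\MM_\sigma(\Sigma)$ with $\int\Phi\,d\mu=\alpha$ and $D(\mu)\ge s$.

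For each $N\in\N$ I truncate to $\Phi^{(N)}=(\phi_1,\ldots,\phi_N)$ and $\alpha^{(N)}=(\alpha_1,\ldots,\alpha_N)$. Since projection to the first $N$ coordinates sends the product-topology closure of $\PP(\Phi)$ into the Euclidean closure of $\PP(\Phi^{(N)})$, we have $\alpha^{(N)}\in\overline{\PP(\Phi^{(N)})}$, and the inclusion $J^\mathbf{a}_\Phi(\alpha)\subset J^\mathbf{a}_{\Phi^{(N)}}(\alpha^{(N)})$ gives $\dimh(J^\mathbf{a}_{\Phi^{(N)}}(\alpha^{(N)}))>s$. If $\PP(\Phi^{(N)})$ happens to lie in a proper affine subspace, every $\nu\in\MM_\sigma(\Sigma)$ satisfies a fixed affine relation amongst the integrals $\int\phi_i\,d\nu$; since $\alpha^{(N)}$ lies in that same subspace, this relation lets me drop dependent coordinates without changing either the admissible measure class $\{\mu : \int\Phi^{(N)}d\mu=\alpha^{(N)}\}$ or the target upper bound (the level set can only grow under such a drop, and the supremum of $D$ over the two classes of measures coincides). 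After such a reduction, Lemma~\ref{finitely many potentials lemma} becomes applicable and provides $\gamma_N\in\inter(\PP(\Phi^{(N)}))$ with $|\gamma_N-\alpha^{(N)}|<1/N$ and $\dimh(J^\mathbf{a}_{\Phi^{(N)}}(\gamma_N))>s-1/N>s_\infty$, off a single $\LL_\mathbf{A}$-null set obtained as a countable union over $N$. Proposition~\ref{special case of main theorem for bounded potentials - finite, interior} then yields $\mu_N\in\MM_\sigma(\Sigma)$ with $\int\Phi^{(N)}d\mu_N=\gamma_N$ and $D(\mu_N)\ge s-1/N$.

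Since $\limsup_{N\to\infty}D(\mu_N)\ge s>s_\infty$, Proposition~\ref{Upper-semicont prop Mar15} now supplies a subsequence $(\mu_{N_j})_{j\in\N}$ converging weakly to some $\mu\in\MM_\sigma(\Sigma)$ with $D(\mu)\ge s$. Each coordinate $\phi_i$ is bounded and, by summability of variations, continuous on $\Sigma$, so weak convergence along the subsequence gives $\int\phi_i\,d\mu=\lim_{j\to\infty}\int\phi_i\,d\mu_{N_j}$ for every $i\in\N$; combined with $|\int\phi_i\,d\mu_{N_j}-\alpha_i|<1/N_j$ whenever $N_j\ge i$ this forces $\int\Phi\,d\mu=\alpha$. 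Letting $s\uparrow\dimh(J^\mathbf{a}_\Phi(\alpha))$ then completes the proof.

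The principal obstacle is that non-compactness of $\Sigma$ costs us both weak$^*$ compactness of $\MM_\sigma(\Sigma)$ and upper-semicontinuity of entropy, so a priori there is no reason for the truncation-level maximisers $\mu_N$ to accumulate on an invariant measure, let alone one retaining the lower bound on $D$. Proposition~\ref{Upper-semicont prop Mar15} is engineered precisely to overcome this, extracting the necessary tightness from the uniform bound $D(\mu_N)>s_\infty$; the only other delicate point is the hyperplane reduction, which works cleanly because $\alpha^{(N)}\in\overline{\PP(\Phi^{(N)})}$ inherits the affine relation satisfied by all invariant integrals.
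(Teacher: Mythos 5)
Your overall strategy is the paper's: truncate to finitely many potentials, quotient out any affine degeneracy of $\PP(\Phi^{(N)})$, step off the boundary into the interior with Lemma~\ref{finitely many potentials lemma}, extract a near-optimal measure from Proposition~\ref{special case of main theorem for bounded potentials - finite, interior}, and pass to a weak$^*$ limit with Proposition~\ref{Upper-semicont prop Mar15}. The only structural difference is that you run the two approximations (boundary-to-interior and finite-to-countable) simultaneously along one diagonal sequence, whereas the paper performs them in separate stages: Propositions~\ref{closure of interior} and~\ref{upper bound - all finite valued} first produce, for each finite truncation, a measure satisfying $\int\Phi^{(N)}d\mu_N=\alpha^{(N)}$ \emph{exactly}, and only then is the countable limit taken. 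This reorganisation is where the one genuine gap appears.

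The bound $|\int\phi_i\,d\mu_{N_j}-\alpha_i|<1/N_j$ for all $i\le N_j$ is not justified once the hyperplane reduction has been performed. Lemma~\ref{finitely many potentials lemma} controls $|\gamma_N-\alpha^{(N)}|$ only in the retained coordinates $(\phi_{j_l})_l$; for a dropped coordinate $i$ one has $\int\phi_i\,d\mu_N-\alpha_i=\sum_l\gamma_{il}\bigl((\gamma_N)_l-\alpha_{j_l}\bigr)$, and the affine coefficients $\gamma_{il}$ depend on $N$ and need not be uniformly bounded, so this error need not tend to zero even though $|\gamma_N-\alpha^{(N)}|<1/N$. (The paper never meets this issue because its finite-level measures satisfy the constraint exactly.) The argument is repairable without changing its shape: either (i) choose the retained index sets nested in $N$, so that for each eventually-retained coordinate $j$ the exact identity $\int\phi_j\,d\mu_{N_j}=(\gamma_{N_j})_{l(j)}$ together with $|(\gamma_{N_j})_{l(j)}-\alpha_j|<1/N_j$ gives $\int\phi_j\,d\mu=\alpha_j$ in the limit, and then apply the affine relation to the limit measure $\mu$ itself (which, being invariant, satisfies the same relation as $\alpha^{(N)}$) to recover the dropped coordinates; or (ii) follow the paper and take an intermediate weak$^*$ limit at each fixed $N$ to achieve $\int\Phi^{(N)}d\mu_N=\alpha^{(N)}$ exactly before diagonalising. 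Either patch is short, but as written the convergence $\int\Phi\,d\mu_{N_j}\to\alpha$ is asserted rather than proved in the degenerate case.
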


Dealing first with the special case in which $\Phi$ takes values in $\R^N$, we extend the upper bound for the interior points of the spectrum found in Proposition \ref{special case of main theorem for bounded potentials - finite, interior} to the closure of the spectrum. The proof of Proposition \ref{upper bound for main theorem for bounded potentials} is given after this.

\begin{prop}\label{closure of interior}
  If $(T_i)_{i \in \N} \in \GL^\N$ is such that $\sup_{i \in \N}\| T_i \| < \tfrac12$, the singular value function $\varphi^s$ is quasi-multiplicative for all $0\le s \le d$, $\Phi \colon \Sigma \to \R^N$ is bounded with summable variations, and $\alpha \in \overline{\inter(\PP(\Phi))}$, then
  \begin{equation*}
    \dimh(J^\mathbf{a}_\Phi(\alpha)) \leq \min\bigl\{ d,\max\bigl\{ s_\infty, \sup\{ D(\mu) : \mu \in \MM_\sigma(\Sigma) \text{ so that } \int \Phi d\mu = \alpha \} \bigr\} \bigr\}
  \end{equation*}
  for $\LL_\mathbf{A}$-almost all $\mathbf{a} \in \mathbf{A}$.
\end{prop}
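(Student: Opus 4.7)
The plan is to reduce to the interior case by approximating $\alpha$ by points $\gamma_n \in \inter(\PP(\Phi))$ along which the Hausdorff dimension of the level set is almost preserved, and then to pass to a limit via the quasi upper-semicontinuity result of \S \ref{sec: uppersemicont lemma}. If $\alpha \in \inter(\PP(\Phi))$, the claim is immediate from Proposition \ref{special case of main theorem for bounded potentials - finite, interior}, so from now on I assume $\alpha \in \overline{\inter(\PP(\Phi))} \setminus \inter(\PP(\Phi))$ and that $\dimh(J^\mathbf{a}_\Phi(\alpha)) > s_\infty$, else the claimed inequality is trivial. Since $\inter(\PP(\Phi)) \neq \emptyset$, the convex set $\PP(\Phi)$ is not contained in any $(N-1)$-dimensional hyperplane, so Lemma \ref{finitely many potentials lemma} is applicable.

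Write $s = \dimh(J^\mathbf{a}_\Phi(\alpha))$ and restrict attention to $\mathbf{a}$ in the countable intersection, still of full $\LL_\mathbf{A}$-measure, of the exceptional sets arising from Lemma \ref{finitely many potentials lemma} with $\eps = 1/n$ for each $n \in \N$. The first step is to pick $\gamma_n \in \inter(\PP(\Phi))$ with $|\gamma_n - \alpha| < 1/n$ and $\dimh(J^\mathbf{a}_\Phi(\gamma_n)) \geq s - 1/n$. For all $n$ large enough this lower bound exceeds $s_\infty$, so Proposition \ref{special case of main theorem for bounded potentials - finite, interior} delivers invariant measures $\mu_n \in \MM_\sigma(\Sigma)$ with $\int \Phi \, d\mu_n = \gamma_n$ and $D(\mu_n) \geq s - 2/n$.

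Next I invoke Proposition \ref{Upper-semicont prop Mar15}: since $\limsup_{n \to \infty} D(\mu_n) \geq s > s_\infty$, a subsequence $(\mu_{n_j})_{j \in \N}$ converges in the weak$^*$ topology to some $\mu \in \MM_\sigma(\Sigma)$ with $D(\mu) \geq s$. The remaining task is to check that $\int \Phi \, d\mu = \alpha$. Each coordinate $\phi_i$ of $\Phi$ is bounded and, because it has summable variations, continuous on $\Sigma$. Hence weak$^*$ convergence yields $\int \phi_i \, d\mu = \lim_{j \to \infty} \int \phi_i \, d\mu_{n_j} = \lim_{j \to \infty}(\gamma_{n_j})_i = \alpha_i$ for every $i \in \{1, \ldots, N\}$, so $\int \Phi \, d\mu = \alpha$. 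Combining this with $D(\mu) \geq s$ and $s \leq d$ produces the required upper bound.

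The main obstacle, and the reason the earlier sections are needed, is the non-compactness of $\MM_\sigma(\Sigma)$ and the failure of upper-semicontinuity of entropy; these are handled exactly by Proposition \ref{Upper-semicont prop Mar15} under the hypothesis $\limsup D(\mu_n) > s_\infty$ that we have arranged through the reduction $s > s_\infty$. A minor bookkeeping point is that the countably many exceptional $\mathbf{a}$-sets arising from successive applications of Lemma \ref{finitely many potentials lemma} must be merged into a single $\LL_\mathbf{A}$-null set, which is immediate by countable subadditivity.
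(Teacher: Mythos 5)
Your proposal is correct and follows essentially the same route as the paper's own proof: reduce to the case $\dimh(J^\mathbf{a}_\Phi(\alpha))>s_\infty$, approximate $\alpha$ by $\gamma_n\in\inter(\PP(\Phi))$ via Lemma \ref{finitely many potentials lemma}, obtain measures $\mu_n$ from Proposition \ref{special case of main theorem for bounded potentials - finite, interior}, and pass to a weak$^*$ limit using Proposition \ref{Upper-semicont prop Mar15}. Your explicit justification that $\int\Phi\,d\mu=\alpha$ (via boundedness and continuity of each $\phi_i$) and the remark about merging the countably many null sets are details the paper leaves implicit, but the argument is the same.
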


\begin{proof}
If $\inter(\PP(\Phi))=\emptyset$, then there is nothing to prove, so we may assume that $\inter(\PP(\Phi))\neq \emptyset$. Note that in this case, $\PP(\Phi)$ cannot be contained within any $(N-1)$-dimensional hyperplane. In addition, if $\dimh(J^\mathbf{a}_\Phi(\alpha)) \leq s_{\infty}$, then the conclusion of the proposition holds, so we may as well assume that $\dimh(J^\mathbf{a}_\Phi(\alpha)) > s_{\infty}$.

Fix $\alpha \in \overline{\inter(\PP(\Phi))}$. By Lemma \ref{finitely many potentials lemma}, for each $n \in \N$ we may choose $\gamma_n \in \inter(\PP(\Phi))$ with $|\alpha-\gamma_n| < 1/n$ and $\dimh (J^\mathbf{a}_\Phi(\gamma_n)) \geq \dimh (J^\mathbf{a}_\Phi(\alpha)) - 1/n$ for $\LL_\mathbf{A}$-almost all $\mathbf{a} \in \mathbf{A}$.  Since $\dimh(J^\mathbf{a}_\Phi(\alpha)) > s_{\infty}$ we have $\dimh(J^\mathbf{a}_\Phi(\gamma_n)) - 1/n > s_{\infty}$ for all sufficiently large $n$. By Proposition \ref{special case of main theorem for bounded potentials - finite, interior} it follows that for all such $n$ there is a measure $\mu_n \in \MM_\sigma(\Sigma)$ so that $\int \Phi d\mu_n=\gamma_n$ and
\[
  D(\mu_n) > \dimh(J^\mathbf{a}_\Phi(\gamma_n)) - 1/n > s_\infty.
\]
Now by Proposition \ref{Upper-semicont prop Mar15} this implies that the sequence $(\mu_n)_{n \in \N}$ has a weak$^*$ limit $\mu \in \M_{\sigma}(\Sigma)$ with $D(\mu) \geq \limsup_{n\rightarrow \infty} D(\mu_n)$. That is, $D(\mu)= \dimh(J^\mathbf{a}_\Phi(\alpha))$ for $\LL_\mathbf{A}$-almost all $\mathbf{a}$. Moreover, since $\lim_{n \rightarrow \infty} \gamma_n= \alpha$ we have $\int \Phi d\mu= \alpha$.
\end{proof}

\begin{prop}\label{upper bound - all finite valued}
  If $(T_i)_{i \in \N} \in \GL^\N$ is such that $\sup_{i \in \N}\| T_i \| < \tfrac12$, the singular value function $\varphi^s$ is quasi-multiplicative for all $0\le s \le d$, $\Phi \colon \Sigma \to \R^N$ is bounded with summable variations, and $\alpha \in \overline{\PP(\Phi)}$, then
  \begin{equation*}
    \dimh(J^\mathbf{a}_\Phi(\alpha)) \leq \min\bigl\{ d,\max\bigl\{ s_\infty, \sup\{ D(\mu) : \mu \in \MM_\sigma(\Sigma) \text{ so that } \int \Phi d\mu = \alpha \} \bigr\} \bigr\}
  \end{equation*}
  for $\LL_\mathbf{A}$-almost all $\mathbf{a} \in \mathbf{A}$.
\end{prop}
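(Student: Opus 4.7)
The plan is to argue by induction on $N$, at each stage splitting via Lemma \ref{first lemma in The space of integrals with respect to invariant measures}: either $\PP(\Phi)$ is not contained in any $(N-1)$-dimensional hyperplane, in which case $\PP(\Phi) \subseteq \overline{\inter(\PP(\Phi))}$ forces $\alpha \in \overline{\PP(\Phi)} \subseteq \overline{\inter(\PP(\Phi))}$ and Proposition \ref{closure of interior} immediately delivers the desired bound; or $\PP(\Phi)$ is degenerate, lying in some hyperplane $H = \{x \in \R^N : \langle v, x \rangle = c\}$ with $v \ne 0$.

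In the degenerate branch I would pick an index $j$ with $v_j \ne 0$, define $\Phi' \colon \Sigma \to \R^{N-1}$ and $\alpha' \in \R^{N-1}$ by removing the $j$th coordinate, observe the trivial inclusion $E_\Phi(\alpha) \subseteq E_{\Phi'}(\alpha')$, note that $\alpha' \in \overline{\PP(\Phi')}$ since the projection is continuous, and apply the induction hypothesis to $\Phi'$. The crucial observation is that every invariant $\mu$ satisfies $\langle v, \int \Phi d\mu \rangle = c$ and $\alpha \in H$ gives $\langle v, \alpha \rangle = c$, so once $\int \phi_i d\mu = \alpha_i$ is fixed for all $i \ne j$, the remaining coordinate $\int \phi_j d\mu = \alpha_j$ is forced. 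Thus the constraint sets $\{\mu : \int \Phi' d\mu = \alpha'\}$ and $\{\mu : \int \Phi d\mu = \alpha\}$ coincide and the induction bound is exactly what we want. The only subcase the induction cannot absorb is the degenerate base case $N=1$ with $\PP(\Phi) = \{c\}$: here $\alpha = c$ forces the integral constraint on $\mu$ to be vacuous, so I would instead use $J^{\mathbf{a}}_\Phi(c) \subseteq F_{\mathbf{a}}$ together with Theorem \ref{thm:dim_result} to obtain $\dimh(J^{\mathbf{a}}_\Phi(c)) \le \min\{d, s_0\}$ with $s_0 = \inf\{s : P(\varphi^s) \le 0\}$, and when $s_0 > s_\infty$ invoke, for each $t \in (s_\infty, s_0)$, the Gibbs measure $\mu_t$ for $\varphi^t$ supplied by Theorems \ref{thm:gibbs_exists} and \ref{Gibbs implies ergodicity} together with Lemma \ref{thm:lyapunov_finite_for_gibbs}. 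Since $P_{\mu_t}(\varphi^t) = P(\varphi^t) > 0$ and the singular value functions are monotone in $s$, one concludes $D(\mu_t) \ge t$, and letting $t \uparrow s_0$ yields $\sup_\mu D(\mu) \ge s_0$ as required.

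The main obstacle I anticipate is precisely this degenerate base case, which is the one place where no reduction to Proposition \ref{closure of interior} is available. Producing a measure attaining the dimension requires the full thermodynamic apparatus of Section \ref{sec:gibbs}, specifically the existence, ergodicity, and finite Lyapunov exponent of the Gibbs measure for a quasi-multiplicative potential above the critical parameter $s_\infty$; the argument is essentially the one used in the proof of Theorem \ref{thm:dim_result_for_gibbs}, so no new technical ideas beyond what has already been established in the paper are required, but one must be slightly careful to work with a sequence of Gibbs measures below $s_0$ rather than at $s_0$ itself to avoid any continuity questions about the pressure at the critical value.
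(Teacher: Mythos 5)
Your proof is correct and follows the same essential strategy as the paper: project onto a subfamily of coordinates on which $\PP$ has nonempty interior, invoke Proposition \ref{closure of interior} there, and use the affine dependence among the integrals to see that the reduced constraint $\int\Phi'\,d\mu=\alpha'$ already forces $\int\Phi\,d\mu=\alpha$. The paper does this in one step, choosing a minimal $M$-dimensional affine subspace containing $\PP(\Phi)$ and passing directly to $M$ selected potentials, whereas you peel off one hyperplane relation at a time by induction on $N$; the two are interchangeable. The one place you go beyond the paper's written argument is the fully degenerate base case where $\PP(\Phi)$ is a single point (the paper's $M=0$), which its proof does not explicitly address: your treatment via $J^{\mathbf{a}}_\Phi(c)\subseteq F_{\mathbf{a}}$, Theorem \ref{thm:dim_result}, and the Gibbs measures $\mu_t$ for $\varphi^t$ with $s_\infty<t<s_0$ (giving $P_{\mu_t}(\varphi^t)=P(\varphi^t)>0$, $\Lambda_{\mu_t}(\varphi^t)>-\infty$, hence $D(\mu_t)\ge t$) is sound and closes that gap cleanly.
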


\begin{proof}
We let $(\phi_i)_{i=1}^N$ denote the collection of real-valued maps with $\Phi(\omega)=(\phi_i(\omega))_{i=1}^N$ for each $\omega \in \Sigma$. We begin by taking the smallest possible integer $M \leq N$ so that there is an $M$-dimensional affine subspace of $\R^N$ which contains $\PP(\Phi)$. Then there exist $(j_{l})_{l=1}^M$ with $j_l \in \{ 1,\ldots,N \}$ and for each $i \in \{ 1,\ldots,N \}$ a tuple of reals $(\gamma_{il})_{l=0}^{M}$ such that
\[
  \int \phi_i d\mu= \gamma_{i0}+ \sum_{l=1}^M \gamma_{il}\int \phi_{j_l}d\mu
\]
for all $\mu \in \M_{\sigma}(\Sigma)$ and all $i \in \{1,\ldots,N\}$.
Now define $\Phi' \colon \Sigma \rightarrow \R^M$ by setting
\[
  \Phi'(\omega) = \left(\phi_{j_l}(\omega)\right)_{l=1}^M
\]
for all $\omega \in \Sigma$.
Given $\alpha=(\alpha_i)_{i=1}^N \in \overline{\PP(\Phi)}$ we let $\alpha' = (\alpha_{j_l})_{l=1}^M$. It follows that $J^\mathbf{a}_\Phi(\alpha)\subset J^\mathbf{a}_{\Phi'}(\alpha')$. Moreover, by our choice of $M$, $\PP(\Phi')\subset \R^M$ cannot be contained within any proper $(M-1)$-dimensional affine space. Thus, by Lemma \ref{first lemma in The space of integrals with respect to invariant measures} we have $\overline{\PP(\Phi')}= \overline{\inter(\PP(\Phi'))}$. Moreover, since $\alpha \in \overline{\PP(\Phi)}$ we have $\alpha' \in \overline{\PP(\Phi')}$. Consequently, by Proposition \ref{closure of interior}, we have
\begin{align*}
\dimh(J^\mathbf{a}_\Phi(\alpha)) &\leq \dimh(J^\mathbf{a}_{\Phi'}(\alpha')) \\
&\leq \min\bigl\{ d,\max\bigl\{ s_\infty, \sup\{ D(\mu) : \mu \in \MM_\sigma(\Sigma) \text{ so that } \int \Phi' d\mu = \alpha' \} \bigr\} \bigr\}
\end{align*}
for $\LL_\mathbf{A}$-almost all $\mathbf{a} \in \mathbf{A}$. Now given any $\mu \in \M_{\sigma}(\Sigma)$ with $\int \Phi' d\mu = \alpha'$ we have $\int \phi_{j_l}=\alpha_{j_l}$ for $l \in \{ 1,\ldots, M \}$. Thus
\begin{align*}
\int \phi_i d\mu =\gamma_{i0}+ \sum_{l=1}^M \gamma_{il}\int \phi_{j_l}d\mu =  \gamma_{i0}+\sum_{l=1}^M \gamma_{il}\alpha_{j_l}=\alpha_i
\end{align*}
for all $i \in \{ 1,\ldots, N \}$ and, consequently, $\int \Phi d\mu=\alpha$. The proof is finished.
\end{proof}

\begin{proof}[Proof of Proposition \ref{upper bound for main theorem for bounded potentials}]
Take a bounded potential $\Phi \colon \Sigma \to \R^{\N}$ with summable variations and fix $\alpha \in \overline{\PP(\Phi)}$. We shall apply Proposition \ref{Upper-semicont prop Mar15} in a similar way to the proof of Proposition \ref{closure of interior}. Again, if $\dimh(J^\mathbf{a}_\Phi(\alpha)) \leq s_{\infty}$ then the conclusion of the proposition holds trivially, so we may as well assume that $\dimh(J^\mathbf{a}_\Phi(\alpha)) > s_{\infty}$.

We take $\phi_i \colon \Sigma \rightarrow \R$ and $\alpha_i \in \R$ so that $\Phi=(\phi_i)_{i \in \N}$ and $\alpha=(\alpha_i)_{i \in \N}$. For each $n \in \N$ we define $\Phi_n =(\phi_i)_{i=1}^n$ and $\alpha_n=(\alpha_i)_{i=1}^n$. Then for each $n \in \N$ we have $J^\mathbf{a}_\Phi(\alpha)\subset J^\mathbf{a}_{\Phi_n}(\alpha_n)$. Thus, by applying Proposition \ref{upper bound - all finite valued} we have
\begin{align*}
  \dimh(J^\mathbf{a}_\Phi(\alpha)) & \leq  \dimh(J^\mathbf{a}_{\Phi_n}(\alpha_n)) \\
  &\leq \min\bigl\{ d,\max\bigl\{ s_\infty, \sup\{ D(\mu) : \mu \in \MM_\sigma(\Sigma) \text{ so that } \int \Phi_n d\mu = \alpha_n \} \bigr\} \bigr\}
\end{align*}
for all $n \in \N$.
Since $\dimh(J^\mathbf{a}_\Phi(\alpha)) > s_{\infty}$ we see that for each $n \in \N$ we may choose $\mu_n \in \MM_\sigma(\Sigma)$ so that
\[
  D(\mu_n) > \max\{ \dimh(J^\mathbf{a}_\Phi(\alpha)) - 1/n, s_{\infty} \}
\]
and $\int \Phi_n d\mu_n = \alpha_n$. By applying Proposition \ref{Upper-semicont prop Mar15} we see that the sequence $(\mu_n)_{n \in \N}$ has a limit $\mu \in \MM_\sigma(\Sigma)$ with
\[
  D(\mu) \geq \limsup_{n \rightarrow \infty} D(\mu_n) \geq \dimh(J^\mathbf{a}_\Phi(\alpha)).
\]
Moreover, since $\int \Phi_n d\mu= \alpha_n$ for each $n \in \N$ we have $\int \Phi d\mu = \alpha$. The proof is finished.
\end{proof}

We finish this section by showing that $E_\Phi(\alpha) = \emptyset$ outside of the closure of the spectrum.

\begin{lemma}\label{ergodic limit is an int of inv measure}
  If $\Phi \colon \Sigma \to \R^\N$ is bounded with summable variations and $\alpha \in \R^{\N}$ satisfies $E_{\Phi}(\alpha)\neq \emptyset$, then $\alpha \in \overline{\mathcal{P}(\Phi)}$.
\end{lemma}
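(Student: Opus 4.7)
The plan is to construct, for any finite index set $F \subset \N$ and any $\eps > 0$, an invariant measure $\nu \in \M_\sigma(\Sigma)$ with $|\int \phi_i d\nu - \alpha_i| < \eps$ for every $i \in F$. Since the product topology on $\R^\N$ is generated by such finite-coordinate neighbourhoods, this is enough to place $\alpha$ in $\overline{\PP(\Phi)}$.

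Take $\omega \in E_\Phi(\alpha)$, fix a finite $F \subset \N$, and fix $\eps > 0$. By the definition of $E_\Phi(\alpha)$ and convergence in the product topology on $\R^\N$, we have $\lim_{n \to \infty} A_n\phi_i(\omega) = \alpha_i$ for each $i \in F$. Using also the summable variations of each $\phi_i$, $i \in F$, we may choose $n \in \N$ large enough that simultaneously
\begin{equation*}
  |A_n\phi_i(\omega) - \alpha_i| < \tfrac{\eps}{2} \quad \text{and} \quad \tfrac{1}{n}\sum_{k=1}^{\infty}\var_k(\phi_i) < \tfrac{\eps}{2}
\end{equation*}
for all $i \in F$.

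Next I would pass to a periodic approximation. Let $\tilde{\omega} \in \Sigma$ be the unique periodic point satisfying $\sigma^{qn}(\tilde{\omega}) \in [\omega|_n]$ for every $q \in \N \cup \{0\}$, and define the $\sigma$-invariant probability measure
\begin{equation*}
  \nu = \tfrac{1}{n}\sum_{j=0}^{n-1}\delta_{\sigma^j \tilde{\omega}}.
\end{equation*}
Then $\int \phi_i d\nu = A_n\phi_i(\tilde{\omega})$. For each $j \in \{0,\ldots,n-1\}$ the points $\sigma^j\omega$ and $\sigma^j\tilde{\omega}$ coincide on their first $n-j$ coordinates, so $|\phi_i(\sigma^j\omega) - \phi_i(\sigma^j\tilde{\omega})| \le \var_{n-j}(\phi_i)$. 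Averaging yields
\begin{equation*}
  |A_n\phi_i(\omega) - A_n\phi_i(\tilde{\omega})| \le \tfrac{1}{n}\sum_{k=1}^{n}\var_k(\phi_i) < \tfrac{\eps}{2}
\end{equation*}
for every $i \in F$. Combining this with the first estimate gives $|\int \phi_i d\nu - \alpha_i| < \eps$ for every $i \in F$, which completes the argument.

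The only delicate point is the non-compactness of $\Sigma$: one cannot simply extract a weak$^*$ limit of the empirical measures $\tfrac{1}{n}\sum_{j=0}^{n-1}\delta_{\sigma^j\omega}$ and invoke invariance in the usual Krylov--Bogolyubov manner. The periodic replacement $\omega \mapsto \tilde{\omega}$ sidesteps this by producing a measure supported on a finite (hence compact) orbit, and summable variations ensures the replacement is harmless on any fixed finite collection of coordinates. Everything else is routine.
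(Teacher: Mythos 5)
Your proof is correct and follows essentially the same route as the paper's: replace $\omega\in E_\Phi(\alpha)$ by the periodic point agreeing with $\omega|_n$, put the uniform measure on its $\sigma$-orbit, and control the error via summable variations (the paper phrases the variation estimate through $\var_N(A_N\phi_i)$, which is the same bound $\tfrac1N\sum_{k=1}^N\var_k\phi_i$ you derive directly). No gaps.
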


\begin{proof}
It suffices to show that for each $q \in \N$ there exists a measure $\mu \in \M_{\sigma}(\Sigma)$ such that
\[
  \int \phi_i d \mu \in \left( \alpha_i - 1/q, \alpha_i + 1/q \right).
\]
for all $i \in \{ 1,\ldots,q \}$.
Now since each $\phi_i$ is uniformly continuous there exists $N \in \N$ for which $\var_n(A_n\phi_i)< (2q)^{-1}$ for all $i \in \{ 1,\ldots,q \}$ and $n \geq N_0$. Moreover, since $E_{\Phi}(\alpha)\neq \emptyset$ we may take $\omega \in E_{\Phi}(\alpha)$. In particular, there exists $N_1 \in \N$ such that for all $i \in \{ 1,\ldots,q \}$ and $n \geq N_1$ we have $A_n\phi_i(\omega) \in (\alpha_i-(2q)^{-1}, \alpha_i+(2q)^{-1})$. From these two facts it follows that if we take $N=\max\left\lbrace N_0,N_1\right\rbrace$ and let $\tau \in \Sigma$ denote the $\sigma^N$ fixed point with $\sigma^{lN}(\tau)\in [\omega|_N]$ for all $l \in \N\cup \{0\}$, then
\[
  A_N\phi_i(\tau) \in \left( \alpha_i - 1/q, \alpha_i + 1/q \right)
\]
for all $i \in \{ 1,\ldots,q \}$.
Thus, if $\mu = N^{-1} \sum_{i=0}^{N-1}\delta_{\sigma^i(\tau)}$, then
\[
  \int\phi_i d\mu \in \left( \alpha_i - 1/q, \alpha_i + 1/q \right)
\]
for all $i \in \{ 1,\ldots,q \}$.
Moreover, since $\tau$ is a fixed point for $\sigma^N$ we conclude that $\mu$ is $\sigma$-invariant.
\end{proof}

\subsection{Proof of the lower bound in Theorem \ref{main theorem for bounded potentials}} \label{sec:conditional_lower}
In this section, we shall prove the lower bound in Theorem \ref{main theorem for bounded potentials}.

\begin{prop} \label{thm:bounded_lower_bound}
  If $(T_i)_{i \in \N} \in \GL^\N$ is such that $\sup_{i \in \N} \| T_i \| < \tfrac12$, the singular value function $\varphi^s$ is quasi-multiplicative for all $0 \le s \le d$, $\Phi \colon \Sigma \to \R^\N$ if bounded with summable variations, and $\alpha \in \overline{\PP(\Phi)}$, then
  \begin{equation*}
    \dimh(J_\Phi^\mathbf{a}(\alpha)) \ge \min\bigl\{ d, \max\bigl\{ s_\infty, \sup\{ D(\mu) : \mu \in \MM_\sigma(\Sigma) \text{ so that } \int\Phi d\mu = \alpha \} \bigr\} \bigr\}
  \end{equation*}
  for $\LL_\mathbf{A}$-almost all $\mathbf{a} \in \mathbf{A}$.
\end{prop}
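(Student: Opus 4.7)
The plan is to deduce the lower bound from Theorem \ref{General theorem}. Write $s^\ast=\max\bigl\{s_\infty,\sup\{D(\mu):\mu\in\MM_\sigma(\Sigma) \text{ with } \int\Phi\,d\mu=\alpha\}\bigr\}$ and fix $s<\min\{d,s^\ast\}$. Since every compactly supported $\sigma$-invariant measure is $\sigma^k$-invariant for every $k$, since $P_{\mu,k}(\varphi^s)\ge P_\mu(\varphi^s)$ follows from the definition of the measure-theoretic pressure, and since $\int A_k\phi_i\,d\mu=\int\phi_i\,d\mu$ by invariance, it suffices by Theorem \ref{General theorem} to exhibit, for each $n\in\N$, a compactly supported invariant $\mu\in\MM_\sigma(\Sigma)$ with $D(\mu)>s$ and $\int\phi_i\,d\mu\in B_n(\alpha_i)$ for all $i\le n$.

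In the regime $s<\sup\{D(\mu):\int\Phi\,d\mu=\alpha\}$, I would pick $\nu\in\MM_\sigma(\Sigma)$ with $\int\Phi\,d\nu=\alpha$ and some $t\in(s,D(\nu))$ for which $P_\nu(\varphi^t)>0$ and $\Lambda_\nu(\varphi^t)>-\infty$ (both holding by the definition of $D(\nu)$ together with monotonicity of $t\mapsto\Lambda_\nu(\varphi^t)$). Then I would apply the truncation construction from the proof of Lemma \ref{second lemma in The space of integrals with respect to invariant measures}: choose $m$ large enough that $\var_m(A_m\phi_i)$ is small for $i\le n$, a finite $I\subset\N$ capturing most of the $\nu$-mass on $\Sigma_m$, renormalise $\nu$'s distribution on $I^m$ to a probability measure, extend to an $m$-th level Bernoulli measure on $I^\N$, and average over $\sigma$-shifts to obtain a compactly supported ergodic $\mu\in\MM_\sigma(\Sigma)$. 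The integral estimate is identical to the one in that lemma; for the dimension, the truncation alters the level-$m$ measure-theoretic pressure of $\varphi^t$ only by a tail that vanishes as $I\uparrow\N$, so $P_{\mu,m}(\varphi^t)>0$ and hence $D(\mu)>t>s$.

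In the complementary regime $s<s_\infty$, Proposition \ref{finite approximation property} furnishes a finite $I\subset\N$ with $P(\varphi^s,I)$ arbitrarily large, whereupon Proposition \ref{finite Gibbs} combined with Lemma \ref{thm:gibbs_equilibrium} produces a compactly supported $\sigma$-invariant Gibbs measure $\mu_I$ on $I^\N$ with $P_{\mu_I}(\varphi^s)=P(\varphi^s,I)$. Independently, since $\alpha\in\overline{\PP(\Phi)}$, Lemma \ref{second lemma in The space of integrals with respect to invariant measures} provides a compactly supported invariant $\mu_0$ on a finite alphabet with $\int\Phi\,d\mu_0$ close to $\alpha$ and $P_{\mu_0}(\varphi^s)$ finite. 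After enlarging $I$ to contain the alphabet of $\mu_0$, the convex combination $\mu_\lambda=\lambda\mu_I+(1-\lambda)\mu_0$ is compactly supported and $\sigma$-invariant, and the concavity of $x\mapsto-x\log x$ together with the linearity of $\mu\mapsto\int\log\varphi^s\,d\mu$ yields
\[
P_{\mu_\lambda}(\varphi^s)\ge\lambda P_{\mu_I}(\varphi^s)+(1-\lambda)P_{\mu_0}(\varphi^s).
\]
Choosing $P(\varphi^s,I)$ large enough to absorb the second term and then $\lambda$ small makes the right-hand side strictly positive while keeping $\int\Phi\,d\mu_\lambda$ close to $\alpha$, so $D(\mu_\lambda)>s$.

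The main obstacle is the tail estimate underlying the truncation step in the first regime: one must argue that restricting $\nu$ to $I^m$ and renormalising preserves strict positivity of the level-$m$ measure-theoretic pressure of $\varphi^t$. This control comes from $\Lambda_\nu(\varphi^t)>-\infty$ for the $\log\varphi^t$ tail and from finiteness of $h_\nu$ for the entropy tail; the corner case $h_\nu=\infty$ is reduced to the previous one by passing to an ergodic component of sufficiently large Lyapunov dimension. Once this tail estimate is in place, the measures produced in the two regimes together certify the required lower bound for every $s<s^\ast$.
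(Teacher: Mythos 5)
Your proposal is correct and follows the paper's overall architecture---reduce via Theorem \ref{General theorem} (more precisely Theorem \ref{LB in main Thm Mar30}) to exhibiting, for each $n$, compactly supported invariant competitors for the double-limit supremum, treating the $\sup D(\mu)$ and $s_\infty$ contributions separately---but both halves are executed differently from the paper. For the $s_\infty$ half, the paper does not pass through Gibbs measures: for $s<s_\infty$ it uses $\sum_{\omega\in\Sigma_k}\varphi^s(\omega)=\infty$ directly to build an explicit $k$-th level Bernoulli measure putting mass $\rho$ on a single cylinder $[\tau|_k]$ tracking $\alpha$ (with $\tau$ generic for an ergodic measure whose integrals are near $\alpha$) and mass $1-\rho$ spread over a finite set $C(k)$ with $\sum_{\kappa\in C(k)}\varphi^s(\kappa)>(\varphi^s(\tau|_k))^{-\rho/(1-\rho)}$, which forces $D_k>s$ by a two-line computation. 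Your route via Proposition \ref{finite approximation property}, finite-alphabet Gibbs states and concavity of $\mu\mapsto P_\mu(\varphi^s)$ reaches the same conclusion with less explicit computation, at the cost of invoking Proposition \ref{finite Gibbs} and Lemma \ref{thm:gibbs_equilibrium} outside their stated hypothesis $P(\varphi^s)<\infty$ (harmless for finite $I$, and the paper itself does this in the proof of Theorem \ref{thm:main_gibbs}, but worth flagging); your order of quantifiers---fix $\lambda$ small for the integral constraint, then enlarge $I$ so that $\lambda P(\varphi^s,I)$ dominates $(1-\lambda)|P_{\mu_0}(\varphi^s)|$---is the right one. For the $\sup D(\mu)$ half, the paper simply inserts $\mu$ itself into the supremum, tacitly treating $D_k(\mu)$ as defined for measures that need not be compactly supported; your truncation is more scrupulous, and the tail control you describe ($\Lambda_\nu(\varphi^t)>-\infty$ for the Lyapunov tail, $h_\nu<\infty$ for the entropy tail, each handled by dominated convergence at a fixed level $m$) does work. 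One small correction: the corner case $h_\nu=\infty$ should not be handled by passing to an ergodic component, since ergodic components do not preserve the constraint $\int\Phi\,d\nu=\alpha$; it needs no separate device at all, because the level-$m$ Lyapunov sum restricted to $I^m$ is bounded below by the full sum (all terms being nonpositive) while the restricted entropy sum tends to $+\infty$ as $I\uparrow\N$, so the truncated level-$m$ pressure is eventually positive automatically.
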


By Theorem \ref{LB in main Thm Mar30}, to prove Proposition \ref{thm:bounded_lower_bound}, it suffices to show that
\begin{align*}
\lim_{n \rightarrow \infty} \lim_{k \rightarrow \infty} \sup \bigl\{ D_k(\mu) : \;&\mu \in \M^*_{\sigma^k}(\Sigma) \text{ so that } \int A_i\phi_i d\mu \in B_n(\alpha_i) \text{ for all } i \in \{ 1,\ldots,n \}\bigr\} \\
&\ge \max\bigl\{ s_\infty, \sup\{ D(\mu) : \mu \in \MM_\sigma(\Sigma) \text{ so that } \int\Phi d\mu = \alpha \} \bigr\}.
\end{align*}
This inequality is shown in the following two lemmas.

\begin{lemma}
  If $(T_i)_{i \in \N} \in \GL^\N$ is such that $\sup_{i \in \N} \| T_i \| < 1$, the singular value function $\varphi^s$ is quasi-multiplicative for all $0 \le s \le d$, $\Phi \colon \Sigma \to \R^\N$ is bounded with summable variations, and $\alpha \in \R^\N$, then
  \begin{equation*}
    D(\mu) \le \lim_{n \rightarrow \infty} \lim_{k \rightarrow \infty} \sup \bigl\{ D_k(\mu) : \mu \in \M^*_{\sigma^k}(\Sigma) \text{ so that } \int A_i\phi_i d\mu \in B_n(\alpha_i) \text{ for all } i \in \{ 1,\ldots,n \}\bigr\}
  \end{equation*}
  for all $\mu \in \MM_\sigma(\Sigma)$ with $\int\Phi d\mu = \alpha$.
\end{lemma}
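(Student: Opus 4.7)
The plan is to approximate $\mu$ by compactly supported $\sigma^k$-invariant Bernoulli-type measures that preserve both the integral constraint and the pressure-like quantity witnessing $D(\mu)$. Fix $s < D(\mu)$ and $n \in \N$; it suffices to construct, for every sufficiently large $k$, a measure $\nu_k \in \M^*_{\sigma^k}(\Sigma)$ satisfying $D_k(\nu_k) \ge s$ and $\int A_k\phi_i d\nu_k \in B_n(\alpha_i)$ for each $i \in \{1,\ldots,n\}$. Using the monotonicity of $t \mapsto \Lambda_\mu(\varphi^t)$ together with the definition of $D(\mu)$, I would first pick $s < t < D(\mu)$ with $\Lambda_\mu(\varphi^t) > -\infty$; Lemma \ref{variational principal upper bound} then yields $P_\mu(\varphi^s) \ge P_\mu(\varphi^t) = h_\mu + \Lambda_\mu(\varphi^t) > 0$ and $\Lambda_\mu(\varphi^s) \ge \Lambda_\mu(\varphi^t) > -\infty$.

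For each $k$ I choose a finite set $I = I(k) \subset \Sigma_k$ with $\eta_k := \mu(\bigcup_{\omega \in I}[\omega])$ close to $1$ and let $\nu_k$ be the $k$-th level Bernoulli measure built from the probability vector $(\eta_k^{-1}\mu([\omega]))_{\omega \in I}$. Since $I$ is finite, $\nu_k$ is supported on the compact set $I^\N$, so $\nu_k \in \M^*_{\sigma^k}(\Sigma)$. A direct computation gives
\begin{equation*}
  \sum_{\omega \in \Sigma_k} \nu_k([\omega]) \log \frac{\varphi^s(\omega)}{\nu_k([\omega])} = \eta_k^{-1} \sum_{\omega \in I(k)} \mu([\omega]) \log \frac{\varphi^s(\omega)}{\mu([\omega])} + \log \eta_k,
\end{equation*}
so $D_k(\nu_k) \ge s$ reduces to showing that the partial sum on the right eventually exceeds $-\eta_k\log\eta_k$.

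The key estimate is that the full-$\Sigma_k$ sum
\begin{equation*}
  \sum_{\omega \in \Sigma_k} \mu([\omega])\log\bigl(\varphi^s(\omega)/\mu([\omega])\bigr) \ge k P_\mu(\varphi^s) > 0
\end{equation*}
by the infimum characterisation in \eqref{eq:def_measure_pressure}; meanwhile the nonpositive summands $\mu\log\varphi^s$ form an absolutely convergent series (total $\ge k\Lambda_\mu(\varphi^s) > -\infty$), and the nonnegative summands $-\mu\log\mu$ are monotone in $I$. Consequently, as the nested sets $I(k) \uparrow \Sigma_k$ with $\eta_k \uparrow 1$, the partial sum tends to the full sum (either finite and positive, or $+\infty$), and a sufficiently rapid choice of $I(k)$ delivers the required positivity. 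For the integral constraint, $\sigma$-invariance of $\mu$ gives $\int A_k\phi_i d\mu = \int \phi_i d\mu = \alpha_i$, and combining this with boundedness of $\phi_i$ and the Bernoulli structure yields
\begin{equation*}
  \bigl|\textstyle\int A_k\phi_i d\nu_k - \alpha_i\bigr| \le 2\eta_k^{-1}(1-\eta_k)\|\phi_i\|_\infty + \var_k A_k\phi_i,
\end{equation*}
which is less than $1/n$ uniformly for $i \le n$ once $\eta_k$ is close to $1$ and $k$ is large enough for $\var_k A_k\phi_i$ to be small (using summability of variations).

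The main technical subtlety is that both $H_k(\mu) = -\sum \mu\log\mu$ and $|k\Lambda_\mu(\varphi^s)| = -\sum\mu\log\varphi^s$ may be infinite, so the partial-sum step must be handled carefully; the sign split above is designed precisely so that the negative contribution is absolutely controlled by $\Lambda_\mu(\varphi^s) > -\infty$ while the positive contribution is monotone in $I$. Together with the monotone exhaustion $\eta_k \uparrow 1$, this delivers $D_k(\nu_k) \ge s$ for every $s < D(\mu)$, and taking the supremum over such $s$ proves the lemma.
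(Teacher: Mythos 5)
Your proof is correct, but it takes a genuinely different and more elaborate route than the paper's. The paper's own argument is a one-liner: for $s < D(\mu)$ one has $P_\mu(\varphi^s) \ge 0$, hence $\sum_{\omega \in \Sigma_k}\mu([\omega])\log\bigl(\varphi^s(\omega)/\mu([\omega])\bigr) \ge 0$ for every $k$ by the infimum characterisation \eqref{eq:def_measure_pressure}, so $D_k(\mu) \ge s$, while $\sigma$-invariance gives $\int A_k\phi_i\,d\mu = \alpha_i \in B_n(\alpha_i)$ exactly; letting $s \uparrow D(\mu)$ finishes. In other words, the paper uses $\mu$ itself as the witness in the supremum. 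What your truncation-and-renormalisation buys is a repair of a point the paper glosses over: the supremum runs over $\M^*_{\sigma^k}(\Sigma)$, the \emph{compactly supported} $\sigma^k$-invariant measures, on which alone $D_k$ is defined, and a general $\mu \in \MM_\sigma(\Sigma)$ with $\int\Phi\,d\mu=\alpha$ need not be compactly supported. Your finite-alphabet Bernoulli approximations $\nu_k$, the sign split (the terms $-\mu\log\mu \ge 0$ are monotone in $I$ while the terms $\mu\log\varphi^s \le 0$ are absolutely summable because $\Lambda_\mu(\varphi^s) > -\infty$), and the $2\eta_k^{-1}(1-\eta_k)\|\phi_i\|_\infty + \var_k A_k\phi_i$ control of the integrals using boundedness of $\Phi$ are exactly what is needed to land inside $\M^*_{\sigma^k}(\Sigma)$, at the cost of a longer argument. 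One small remark: your selection of $t$ with $\Lambda_\mu(\varphi^t) > -\infty$ should be justified by noting that if the defining set for $D(\mu)$ is nonempty then it contains some $s_1 \ge D(\mu)$ with $\Lambda_\mu(\varphi^{s_1}) > -\infty$, whence monotonicity gives $\Lambda_\mu(\varphi^t) > -\infty$ and therefore $P_\mu(\varphi^t) > 0$ for every $t < D(\mu)$; the degenerate case where that set is empty (so $D(\mu)=\infty$) is ignored by the paper's proof as well.
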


\begin{proof}
Fix $\mu \in \M_{\sigma}(\Sigma)$ with $\int \Phi d\mu=\alpha$ and let $0 \le s < D(\mu)$. It follows that $P_\mu(\varphi^s) \ge 0$. Thus
\begin{equation*}
  \sum_{\omega \in \Sigma_k} \mu([\omega]) \log\frac{\varphi^s(\omega)}{\mu([\omega])} \ge 0
\end{equation*}
and $D_k(\mu) \ge s$ for all $k \in \N$. Moreover, since $\mu$ is $\sigma$-invariant we have
\begin{equation*}
\int A_k\Phi d\mu = \int \Phi d\mu = \alpha \in B_n(\alpha)
\end{equation*}
for all $k,n \in \N$. Hence
\begin{equation*}
s < \lim_{k \rightarrow \infty} \sup \bigl\{ D_k(\nu) : \nu \in \M^*_{\sigma^k}(\Sigma) \text{ so that } \int A_i\phi_i d\nu \in B_n(\alpha_i) \text{ for all } i \in \{ 1,\ldots,n \}\bigr\}
\end{equation*}
Letting $n \rightarrow \infty$ and $s \uparrow D(\mu)$ completes the proof of the lemma.
\end{proof}

\begin{lemma}
  If $(T_i)_{i \in \N} \in \GL^\N$ is such that $\sup_{i \in \N} \| T_i \| < 1$, the singular value function $\varphi^s$ is quasi-multiplicative for all $0 \le s \le d$, $\Phi \colon \Sigma \to \R^\N$ is bounded with summable variations, then
  \begin{equation*}
    s_\infty \le \lim_{n \rightarrow \infty} \lim_{k \rightarrow \infty} \sup \bigl\{ D_k(\mu) : \mu \in \M^*_{\sigma^k}(\Sigma) \text{ so that } \int A_i\phi_i d\mu \in B_n(\alpha_i) \text{ for all } i \in \{ 1,\ldots,n \}\bigr\}
  \end{equation*}
  for all $\alpha \in \overline{\PP(\Phi)}$.
\end{lemma}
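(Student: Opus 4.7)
The plan is to fix an arbitrary $0 < s < s_\infty$ (the inequality is trivial when $s_\infty = 0$, since $D_k \ge 0$ by definition) and to produce, for every $n, k \in \N$, a measure $\mu \in \MM_{\sigma^k}^*(\Sigma)$ with $D_k(\mu) \ge s$ and $\int A_k\phi_i \, d\mu \in B_n(\alpha_i)$ for each $i \in \{1,\ldots,n\}$. Once this is established, letting $k \to \infty$, then $n \to \infty$, and finally $s \uparrow s_\infty$ yields the lemma.

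I will build $\mu$ as a convex combination $\mu_t = (1-t)\nu + t\rho_k$, which is automatically compactly supported and $\sigma^k$-invariant. The first ingredient $\nu$ is an ergodic, compactly supported $\sigma$-invariant measure with $|\int \phi_i \, d\nu - \alpha_i| < 1/(2n)$ for all $i \le n$; this exists because $\alpha \in \overline{\PP(\Phi)}$ in the product topology forces $(\alpha_1,\ldots,\alpha_n)$ into the closure of $\{(\int\phi_1 d\nu',\ldots,\int\phi_n d\nu') : \nu' \in \MM_\sigma(\Sigma)\} \subset \R^n$, and the first claim of Lemma \ref{second lemma in The space of integrals with respect to invariant measures} applied to the finite-dimensional potential $(\phi_1,\ldots,\phi_n)$ supplies the required ergodic $\nu$ on $I^\N$ for some finite $I \subset \N$. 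The second ingredient $\rho_k$ is the $k$-th level Bernoulli measure $\rho_k([\omega]) = \varphi^s(\omega)/Z_{F_k}$ on a finite set $F_k \subset \Sigma_k$ disjoint from $I^k$, where $Z_{F_k} = \sum_{\omega \in F_k}\varphi^s(\omega)$ can be made arbitrarily large because $s < s_\infty$ gives $P(\varphi^s) = \infty$ and hence $Z_k(\varphi^s) = \infty$.

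For the integral constraint, $\sigma$-invariance of $\nu$ yields $\int A_k\phi_i \, d\nu = \int \phi_i \, d\nu$, and boundedness of $\Phi$ yields $|\int A_k\phi_i \, d\rho_k| \le \|\phi_i\|_\infty$; combining gives
\begin{equation*}
\Bigl|\int A_k\phi_i \, d\mu_t - \alpha_i\Bigr| \le \tfrac{1-t}{2n} + t(\|\phi_i\|_\infty + |\alpha_i|),
\end{equation*}
which is below $1/n$ once $t$ is small depending only on $n$ and $\|\phi_1\|_\infty,\ldots,\|\phi_n\|_\infty$. For the dimension bound, the concavity of $x \mapsto x\log(y/x)$ applied pointwise to $\mu_t([\omega]) = (1-t)\nu([\omega]) + t\rho_k([\omega])$ gives
\begin{equation*}
\sum_{\omega \in \Sigma_k} \mu_t([\omega]) \log \frac{\varphi^s(\omega)}{\mu_t([\omega])} \ge (1-t) \sum_{\omega \in \Sigma_k} \nu([\omega]) \log \frac{\varphi^s(\omega)}{\nu([\omega])} + t \log Z_{F_k},
\end{equation*}
and since the first sum on the right is a finite constant (depending on $\nu$ and $k$, because $\nu$ is supported on the finitely many $\omega \in I^k$), one can choose $Z_{F_k}$ large enough to make the right-hand side non-negative. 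As the expression defining $D_k$ is strictly decreasing in $s$ (each singular value is $<1$), this forces $D_k(\mu_t) \ge s$. The only subtle point is ordering the parameter choices: $t$ must be fixed first from $n$ and $(\phi_i)_{i \le n}$, then $Z_{F_k}$ from $t$, $\nu$, and $k$. Beyond this, no serious obstacle is expected, since the entire construction is powered by the single observation that divergence of $Z_k(\varphi^s)$ at $s < s_\infty$ provides an unlimited \emph{$s$-mass reservoir} for inflating the Bernoulli component's contribution to $D_k$ without disturbing the finitely supported integral-approximating component.
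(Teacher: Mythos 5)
Your proof is correct, and it runs on the same engine as the paper's: an ergodic, compactly supported measure pins the Birkhoff integrals near $\alpha$, while the divergence of $Z_k(\varphi^s)$ for $s<s_\infty$ supplies a finite set of $k$-blocks carrying an arbitrarily large $\varphi^s$-mass, which forces $D_k$ above $s$. The paper realizes this slightly differently: instead of the convex combination $(1-t)\nu+t\rho_k$, it builds a single $k$-th level Bernoulli measure that places weight $\rho$ on one cylinder $[\tau|_k]$ around a $\nu$-generic point $\tau$ and distributes the remaining weight proportionally to $\varphi^s$ on a large finite family $C(k)$; the $D_k$ bound is then a direct two-term computation rather than your concavity estimate for $x\mapsto x\log(y/x)$, but the integral control requires a variation bound $\var_k(A_k\phi_i)<(4n)^{-1}$ and hence only works for $k\ge k(n)$. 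Your version trades the generic-point/variation step for exact invariance ($\int A_k\phi_i\,d\nu=\int\phi_i\,d\nu$ for every $k$), which makes the construction valid for all $k$ and is, if anything, a little cleaner; the one thing to keep explicit is that the mixture is not itself Bernoulli, so the nonnegativity of $\sum_\omega\mu_t([\omega])\log(\varphi^s(\omega)/\mu_t([\omega]))$ really does need the concavity inequality you invoke, together with the finiteness of $\sum_\omega\nu([\omega])\log(\varphi^s(\omega)/\nu([\omega]))$ coming from the finite support of $\nu$. Your parameter ordering ($t$ from $n$ and the bounds $\|\phi_i\|$, then $Z_{F_k}$ from $t$, $\nu$, $k$) is the right one and matches the role of $\rho$ and $C(k)$ in the paper.
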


\begin{proof}
It suffices to show that for any $s< s_{\infty}$ and $n\in \N$ there exists $k(n) \in \N$ such that for all $n \geq k(n)$ there exists $\mu \in \M^*_{\sigma^k}(\Sigma)$ with $D_k(\mu) \geq s$ and $\int A_k\phi_i d\mu \in B_n(\alpha_i)$ for $i \in \{ 1,\ldots,n \}$.

First take $k_0(n)$ so that $\var_k\left(A_k\phi_i\right)< (4n)^{-1}$ for all $i \in \{ 1,\ldots,n \}$. Let $\Phi^n \colon \Sigma \rightarrow \R^n$ denote the potential $\left(\phi_i \right)_{i=1}^n$. Since $\alpha \in \overline{\mathcal{P}(\Phi)}$ we have $(\alpha_i)_{i=1}^n \in \overline{\mathcal{P}(\Phi^n)} \subset \overline{\bigcup_{I} \mathcal{P}_e(\Phi^n,I)}$ by Lemma \ref{second lemma in The space of integrals with respect to invariant measures}. Thus there exists an ergodic invariant measure $\nu$ with $\int \phi_i d\nu \in B_{4n}(\alpha_i)$ for all $i \in \{ 1,\ldots,n \}$. Since $\nu$ is ergodic we obtain $\tau \in \Sigma$ and $k(n) \geq k_0(n)$ such that for all $k \geq k(n)$ we have $A_k\phi_i(\tau) \in B_{4n}(\alpha_i)$. Since $k(n) \geq k_0(n)$ we have $A_k\phi_i(\kappa) \in B_{2n}(\alpha_i)$ for all $k \geq k(n)$ and $\kappa \in [\tau|_k]$. Now choose $\rho \in (0,1)$ sufficiently large that
\begin{equation*}
 \rho\bigl(\alpha_i-\tfrac{1}{2n}\bigr)-(1-\rho) \|\phi_i\| > \alpha_i-\tfrac{1}{n} \text{ and }
 \rho \bigl(\alpha_i+\tfrac{1}{2n}\bigr)+(1-\rho) \|\phi_i\| < \alpha_i+\tfrac{1}{n}.
\end{equation*}
for all $i \in \{ 1,\ldots,n \}$. It follows that for any $k \geq k(n)$, any measure $\tilde{\mu}$ with $\tilde{\mu}([\tau|_k]) = \rho$ will satisfy $\int A_k\phi_i d \tilde{\mu} \in B_n(\alpha_i)$ for all $i \in \{ 1,\ldots,n \}$.

Since $s < s_{\infty}$ we have $\sum_{\omega \in \Sigma_k} \varphi^s(\omega) = \infty$ for all $k \in \N$. As such, for each $k \ge k(n)$ we choose a finite subset $C(k) \subset \Sigma_k \setminus \{ \tau|_k \}$ with
\begin{equation*}
\sum_{\omega \in C(k)} \varphi^s(\omega) > (\varphi^s(\tau|_k))^{-\rho/(1-\rho)}.
\end{equation*}
Let $\mu$ denote the unique $k$-th level Bernoulli measure satisfying
\begin{equation*}
\mu(\omega) =
\begin{cases}
(1-\rho) \varphi^s(\omega)/\sum_{\kappa \in C(k)} \varphi^s(\kappa), &\text{if } \omega \in C(k), \\
\rho, &\text{if } \omega = \tau|_k, \\
0, &\text{if } \omega \notin C(k) \cup \{ \tau|_k \}.
\end{cases}
\end{equation*}
Since $\mu([\tau|_k])=\rho$ we have $\int A_k\phi_i d\mu \in B_n(\alpha_i)$ for $i \in \{ 1,\ldots,n \}$. Moreover,
\begin{align*}
\sum_{\omega \in \Sigma_k}\mu([\omega])\log \frac{\varphi^s(\omega)}{\mu([\omega])}
&= \rho \log \frac{\varphi^s(\tau|_k)}{\rho} + \sum_{\omega \in C(k)} \frac{(1-\rho)\varphi^s(\omega)}{\sum_{\kappa \in C(k)}\varphi^s(\kappa)} \log \frac{\sum_{\kappa \in C(k)}\varphi^s(\kappa)}{(1-\rho)} \\
&\geq \rho \log \varphi^s(\tau|_k) +(1-\rho)\log \biggl(\sum_{\kappa \in C(k)}\varphi^s(\kappa)\biggr) > 0.
\end{align*}
Hence $D_k(\mu)>s$. This completes the proof of the lemma.
\end{proof}


\begin{thebibliography}{10}

\bibitem{BarralFeng2009}
J.~Barral and D.-J. Feng.
\newblock Weighted thermodynamical formalism and applications.
\newblock 2009.
\newblock preprint, arXiv:0909.4247v1.

\bibitem{BarralMensi2008}
J.~Barral and M.~Mensi.
\newblock Multifractal analysis of {B}irkhoff averages on `self-affine'
  symbolic spaces.
\newblock {\em Nonlinearity}, 21(10):2409--2425, 2008.

\bibitem{BarreiraSaussol2001}
L.~Barreira and B.~Saussol.
\newblock Variational principles and mixed multifractal spectra.
\newblock {\em Trans. Amer. Math. Soc.}, 353(10):3919--3944 (electronic), 2001.

\bibitem{Bowen1975}
R.~Bowen.
\newblock Equilibrium states and the ergodic theory of {A}nosov
  diffeomorphisms.
\newblock {\em Lecture notes in mathematics}, 470, 1975.

\bibitem{CaoFengHuang2008}
Y.-L. Cao, D.-J. Feng, and W.~Huang.
\newblock The thermodynamic formalism for sub-additive potentials.
\newblock {\em Discrete Contin. Dyn. Syst.}, 20(3):639--657, 2008.

\bibitem{FalconerSloan2009}
K.~Falconer and A.~Sloan.
\newblock Continuity of subadditive pressure for self-affine sets.
\newblock {\em Real Anal. Exchange}, 34(2):413--427, 2009.

\bibitem{Falconer1988}
K.~J. Falconer.
\newblock The {H}ausdorff dimension of self-affine fractals.
\newblock {\em Math. Proc. Cambridge Philos. Soc.}, 103(2):339--350, 1988.

\bibitem{Falconer1997}
K.~J. Falconer.
\newblock {\em Techniques in Fractal Geometry}.
\newblock John Wiley {\&} Sons Ltd., England, 1997.

\bibitem{FanLiaoMaWang2010}
A.~Fan, L.~Liao, J.~Ma, and B.~Wang.
\newblock Dimension of {B}esicovitch-{E}ggleston sets in countable symbolic
  space.
\newblock {\em Nonlinearity}, 23(5):1185--1197, 2010.

\bibitem{FanFengWu2001}
A.-H. Fan, D.-J. Feng, and J.~Wu.
\newblock Recurrence, dimension and entropy.
\newblock {\em J. London Math. Soc. (2)}, 64(1):229--244, 2001.

\bibitem{FanJordanLiaoRams2011}
A.-H. Fan, T.~Jordan, L.~Liao, and M.~Rams.
\newblock Multifractal analysis for expanding interval maps with infinitely
  many branches.
\newblock preprint, 2011.

\bibitem{FanLiaoMa2010}
A.-H. Fan, L.~Liao, and J.~Ma.
\newblock On the frequency of partial quotients of regular continued fractions.
\newblock {\em Math. Proc. Cambridge Philos. Soc.}, 148(1):179--192, 2010.

\bibitem{FanLiaoWangWu2009}
A.-H. Fan, L.-M. Liao, B.-W. Wang, and J.~Wu.
\newblock On {K}hintchine exponents and {L}yapunov exponents of continued
  fractions.
\newblock {\em Ergodic Theory Dynam. Systems}, 29(1):73--109, 2009.

\bibitem{Feng2009}
D.-J. Feng.
\newblock Lyapunov exponents for products of matrices and multifractal
  analysis. {II}. {G}eneral matrices.
\newblock {\em Israel J. Math.}, 170:355--394, 2009.

\bibitem{Feng2011}
D.-J. Feng.
\newblock Equilibrium states for factor maps between subshifts.
\newblock {\em Adv. Math.}, 226(3):2470--2502, 2011.

\bibitem{FengKaenmaki2011}
D.-J. Feng and A.~K{\"a}enm{\"a}ki.
\newblock Equilibrium states for the pressure function for products of
  matrices.
\newblock {\em Discrete Contin. Dyn. Syst.}, 30(3):699--704, 2011.

\bibitem{FengLau2002}
D.-J. Feng and K.-S. Lau.
\newblock The pressure function for products of non-negative matrices.
\newblock {\em Math. Res. Lett.}, 9(2):363--378, 2002.

\bibitem{FengLauWu2002}
D.-J. Feng, K.-S. Lau, and J.~Wu.
\newblock Ergodic limits on the conformal repellers.
\newblock {\em Adv. Math.}, 169(1):58--91, 2002.

\bibitem{Furstenberg2008}
H.~Furstenberg.
\newblock Ergodic fractal measures and dimension conservation.
\newblock {\em Ergodic Theory Dynam. Systems}, 28(2):405--422, 2008.

\bibitem{IommiJordan2010}
G.~Iommi and T.~Jordan.
\newblock Multifractal analysis of {B}irkhoff averages for countable markov
  maps.
\newblock preprint, arXiv:1003.2979v3, 2010.

\bibitem{IommiYayama2012}
G.~Iommi and Y.~Yayama.
\newblock Almost-additive thermodynamic formalism for countable {M}arkov
  shifts.
\newblock {\em Nonlinearity}, 25:165--191, 2012.

\bibitem{JaerischKessebohmer2011}
J.~Jaerisch and M.~Kesseb{\"o}hmer.
\newblock Regularity of multifractal spectra of conformal iterated function
  systems.
\newblock {\em Trans. Amer. Math. Soc.}, 363(1):313--330, 2011.

\bibitem{JordanPollicottSimon2007}
T.~Jordan, M.~Pollicott, and K.~Simon.
\newblock Hausdorff dimension for randomly perturbed self-affine attractors.
\newblock {\em Comm. Math. Phys.}, 270(2):519--544, 2007.

\bibitem{JordanSimon2007}
T.~Jordan and K.~Simon.
\newblock Multifractal analysis of {B}irkhoff averages for some self-affine
  {IFS}.
\newblock {\em Dyn. Syst.}, 22(4):469--483, 2007.

\bibitem{Kaenmaki2004}
A.~K{\"a}enm{\"a}ki.
\newblock On natural invariant measures on generalised iterated function
  systems.
\newblock {\em Ann. Acad. Sci. Fenn. Math.}, 29(2):419--458, 2004.

\bibitem{KaenmakiVilppolainen2010}
A.~K{\"a}enm{\"a}ki and M.~Vilppolainen.
\newblock Dimension and measures on sub-self-affine sets.
\newblock {\em Monatsh. Math.}, 161:271--293, 2010.

\bibitem{KessebohmerMundayStratmann2010}
M.~Kesseb{\"o}hmer, S.~Munday, and B.~O. Stratmann.
\newblock Strong renewal theorems and lyapunov spectra for {$\alpha$}-farey and
  {$\alpha$}-l{\"u}roth systems.
\newblock preprint, arXiv:1006.5693v2, 2010.

\bibitem{KessebohmerStratmann2007}
M.~Kesseb{\"o}hmer and B.~O. Stratmann.
\newblock A multifractal analysis for {S}tern-{B}rocot intervals, continued
  fractions and {D}iophantine growth rates.
\newblock {\em J. Reine Angew. Math.}, 605:133--163, 2007.

\bibitem{Olsen2003}
L.~Olsen.
\newblock Multifractal analysis of divergence points of deformed measure
  theoretical {B}irkhoff averages.
\newblock {\em J. Math. Pures Appl. (9)}, 82(12):1591--1649, 2003.

\bibitem{Olsen2008}
L.~Olsen.
\newblock Multifractal analysis of divergence points of deformed measure
  theoretical {B}irkhoff averages. {IV}. {D}ivergence points and packing
  dimension.
\newblock {\em Bull. Sci. Math.}, 132(8):650--678, 2008.

\bibitem{PesinWeiss2001}
Y.~Pesin and H.~Weiss.
\newblock The multifractal analysis of {B}irkhoff averages and large
  deviations.
\newblock In {\em Global analysis of dynamical systems}, pages 419--431. Inst.
  Phys., Bristol, 2001.

\bibitem{Reeve2011}
H.~W.~J. Reeve.
\newblock Multifractal analysis for {B}irkhoff averages on {L}alley-{G}atzouras
  repellers.
\newblock {\em Fund. Math.}, 212(1):71--93, 2011.

\bibitem{Reeve2012}
H.~W.~J. Reeve.
\newblock Infinite non-conformal iterated function systems.
\newblock {\em Israel J. Math.}, 2012.
\newblock to appear.

\bibitem{Solomyak1998}
B.~Solomyak.
\newblock Measure and dimension for some fractal families.
\newblock {\em Math. Proc. Cambridge Philos. Soc.}, 124(3):531--546, 1998.

\bibitem{Walters1982}
P.~Walters.
\newblock {\em An Introduction to Ergodic Theory}.
\newblock Springer-Verlag, New York-Berlin, 1982.

\end{thebibliography}

\end{document}